\newtheorem{lemma}{Lemma}[section]
\newtheorem{remark}[lemma]{Remark}
\newtheorem{theorem}[lemma]{Theorem}
\newtheorem{corollary}[lemma]{Corollary}
\newtheorem{definition}[lemma]{Definition}
\newtheorem{proposition}[lemma]{Proposition}
\newtheorem{axiom}[lemma]{Axiom}
\newtheorem{assumption}[lemma]{Assumption}
\begin{document}




\setlength{\baselineskip}{0.780cm} \pagestyle{empty}
\begin{center}
\Large{ \bf  Donaldson-Thomas Theory for \\ Calabi-Yau Four-folds}
\end{center}

\vspace{16mm}
\begin{center}
Yalong Cao
\end{center}

\vspace{16mm}
\begin{center}
A Thesis Submitted in Partial Fulfilment\\ of the Requirements for the Degree of\\
Master of Philosophy \\ in \\ Mathematics
\end{center}

\vspace{16mm}
\begin{center}
The Chinese University of Hong Kong \\
July 2013
\end{center}

\newpage
\setcounter{page}{1}
\pagestyle{myheadings}
\markright{Donaldson-Thomas theory for Calabi-Yau four-folds}

\noindent
{\Huge {\bf Abstract}}
\vspace{1.2cm}

\noindent

Let $X$ be a complex four-dimensional compact Calabi-Yau manifold equipped with a  K\"ahler form $\omega$ and a holomorphic four-form $\Omega$.
Under certain assumptions, we define Donaldson-Thomas type deformation invariants by studying the moduli space of the solutions of Donaldson-Thomas
equations on the given Calabi-Yau manifold. We also study sheaves counting on local
Calabi-Yau four-folds. We relate the sheaves countings over $K_{Y}$ with the Donaldson-Thomas invariants for the associated compact three-fold $Y$.
In some very special cases, we prove the DT/GW correspondence for $X$.
Finally, we compute the Donaldson-Thomas invariants of certain Calabi-Yau four-folds when the moduli spaces are smooth.

\newpage
\begin{center}
{\large {\bf Acknowledgments}}
\end{center}

\vspace{5mm}

I express my deep and sincere gratitude to my supervisor \emph{Naichung Conan Leung} for his invaluable insight, idea and encouragement.
It is him who brought me to the fascinated area of higher dimensional gauge theory and spent so much time discussing with me.
Without his guidance, the research program would not even start.

I am very grateful to several brilliant mathematicians for their academic discussions. During my first
semester in CUHK, I was so fortunate to have many discussions with \emph{Huailiang Chang} on obstruction theory and discussions with \emph{Weiping Li} on sheaf moduli spaces and his paper \cite{lq}. I also want to thank \emph{Zheng Hua} for explaining his works and useful discussions.

I also want to express my gratitude to my previous teachers \emph{Guangyuan Zhang}, \emph{Ming Xu} and \emph{Lieming Li} when I was an undergraduate student. They always gave me so many encouragement when I met difficulty during my self-learning in mathematics. Without them, I might already give up.

Let me also thank my fellow colleagues in the Institute of
Mathematical Science and Mathematics Department of The Chinese University of Hong Kong, including \emph{Kai-Leung Chan, Yunxia Chen, Nikolas Ziming Ma, Yi Zhang, Yin Li}. I also enjoyed so many discussions with my former academic brothers \emph{Kwokwai Chan}, \emph{Jiajing Zhang} and \emph{Changzheng Li}. I express my special gratitude to \emph{Yi Zhang} for helping me a lot in my application to the graduate school of CUHK, and special thanks to \emph{Yin Li} for teaching me so much on Tex-typing.

Finally, I would like to give special thanks to my family, to my mother, my grandma and grandpa who always give me unlimited support and care. This thesis is dedicated to them.

\newpage

\tableofcontents
\newpage

\section{Introduction}
In this thesis we study Donaldson-Thomas theory for Calabi-Yau four-folds.
Originally Floer studied Chern-Simons theory for oriented real three dimensional closed manifolds and
defined instanton Floer homology generalizing the Casson invariants. For oriented real four dimensional closed manifolds, Donaldson \cite{d},\cite{dk}
defined a polynomial invariant by studying the moduli space of anti-self dual connections on $SU(2)$ bundles over the given four manifold.
Thomas \cite{th} then studied complex analogue of Chern-Simons gauge theory on Calabi-Yau three-folds and defined the so called
Donaldson-Thomas invariants. As complex analogue of
Donaldson theory for Calabi-Yau four-folds, we study equations written by Donaldson and Thomas and define the corresponding invariants under certain assumptions. \\
${}$ \\
\textbf{Notation and convention}. Throughout this thesis, unless specified otherwise, $\big(X,\mathcal{O}(1)\big)$ will be
a polarized compact connected Calabi-Yau four-fold equipped with a K\"ahler form $\omega$ and a holomorphic four-form $\Omega$
such that $c_{1}(\mathcal{O}(1))=[\omega]$.

By Yau's celebrated theorem proving Calabi's conjecture,
there is a Ricci flat K\"ahler metric $g$ on $X$ such that $\Omega\wedge\overline{\Omega}=dvol$, where $dvol$ is the volume form of $g$. \\
${}$ \\
We denote $(E,h)$ to be a complex vector bundle with a Hermitian metric over $X$ and $G$ to be the structure group of $E$ with center $C(G)$. \\
${}$ \\
We denote $\mathcal{A}$ to be the space of all $L_{k}^{2}$ (Sobolev norm) unitary connections on $E$ and $\mathcal{G}$ to be the $L_{k+1}^{2}$ unitary gauge transformation group, where $k$ is a large enough positive integer. Denote $\Omega^{0,i}(X,EndE)_{k}$ to be the completion of $\Omega^{0,i}(X,EndE)$ by $L_{k}^{2}$ norm.\\
${}$ \\
We denote the space of irreducible unitary connections by
\begin{equation}\mathcal{A}^{*}=\{A\in \mathcal{A} \textrm{ } | \textrm{ } \Gamma_{A}=C(G)\}, \nonumber \end{equation}
where $\Gamma_{A}=\{u\in\mathcal{G} \textrm{ } | \textrm{ } u(A)=A \}$ is the isotropic group at $A$.
$\mathcal{A}^{*}$ is a dense open subset of $\mathcal{A}$ \cite{dk}.
Let $\mathcal{G}^{0}=\mathcal{G}/C(\mathcal{G})$ be the reduced gauge group. We know the action $\mathcal{G}^{0}$ on $\mathcal{A}^{*} $ is free.
Define $\mathcal{B}_{1}=\mathcal{A}^{*}/\mathcal{G}^{0}$, which is a Banach manifold \cite{d}, \cite{fu}. \\
${}$ \\
We denote $\mathcal{M}_{c}(X,\mathcal{O}(1))$ or simply $\mathcal{M}_{c}$ to be the Gieseker moduli space of $\mathcal{O}(1)$-stable sheaves with given Chern character $c$.
We always assume $\mathcal{M}_{c}$ is compact, i.e. $\mathcal{M}_{c}=\overline{\mathcal{M}}_{c}$ ($\overline{\mathcal{M}}_{c}$ is the Gieseker moduli space of semi-stable sheaves)
which is satisfied under the coprime condition of degree and rank of the coherent sheaves \cite{hl}.
Let $\mathcal{M}_{c}^{o}$ be the analytic open subspace of $\mathcal{M}_{c}$ consisting of slope-stable holomorphic bundles which is possibly empty.

In this thesis, when we say $\mathcal{M}_{c}$ is smooth, we always mean it in the strong sense, namely the Kuranishi maps are zero. \\
${}$ \\
We define
\begin{equation}*_{4}: \Omega^{0,2}(X)\rightarrow \Omega^{0,2}(X),
\nonumber \end{equation}
\begin{equation}\alpha\wedge *_{4}\beta=(\alpha,\beta)_{g}\overline{\Omega}.
\nonumber \end{equation}
Coupled with bundle $(E,h)$, it is extended to
\begin{equation}*_{4}: \Omega^{0,2}(X,EndE)\rightarrow \Omega^{0,2}(X,EndE)
\nonumber \end{equation}
with $*_{4}^{2}=1$ \cite{dt}. Hence we can use this $*_{4}$ operator to define the (anti) self dual subspace of $\Omega^{0,2}(X,EndE)$ and furthermore $*_{4}$ splits the corresponding harmonic subspace into self dual and anti-self dual parts.

Then the $DT_{4}$ equation (\ref{complex ASD equation}) is defined to be
\begin{equation} \left\{ \begin{array}{l}
  F^{0,2}_{+}=0 \\ F\wedge\omega^{3}=0 ,    
\end{array}\right.
\nonumber \end{equation}
where the first equation is $F^{0,2}+*_{4}F^{0,2}=0$ and we assume $c_{1}(E)=0$ for simplicity in the moment map equation $ F\wedge\omega^{3}=0$. \\
${}$ \\
We denote $\mathcal{M}^{DT_{4}}(X,g,[\omega],c,h)$ or simply $\mathcal{M}^{DT_{4}}_{c}$ to be the space of gauge equivalence classes of solutions of the $DT_{4}$ equation
(\ref{complex ASD equation}) with respect to the given Chern character $c=ch(E)$. $r=2-\chi(E,E)$ denotes the corresponding real virtual dimension,
where $\chi(E,E)\triangleq\sum_{i}(-1)^{i}h^{i}(X,EndE)$. \\
${}$ \\
To define Donaldson type invariants using $\mathcal{M}^{DT_{4}}_{c}$, we need
\begin{center}(1) orientation, \quad (2) compactness,  \quad (3) transversality.  \end{center}
The orientability issue for $\mathcal{M}^{DT_{4}}_{c}$ is concerning the determinant line bundle $\mathcal{L}$ of the index bundle of the twisted Dirac operators.
We remark that if $\mathcal{M}_{c}^{o}\neq\emptyset$,
then $\mathcal{M}_{c}^{DT_{4}}=\mathcal{M}_{c}^{o}$ as sets (Theorem \ref{mo mDT4}). In this case,
\begin{equation}\mathcal{L}|_{E}=\big(\wedge^{top}Ext^{2}_{+}(E,E)\big)^{-1}\otimes \wedge^{top}Ext^{1}(E,E), \nonumber \end{equation}
where $Ext^{2}_{+}(E,E)$ is the self-dual subspace of $Ext^{2}(E,E)$. We always assume $\mathcal{L}$ is oriented (there are some partial results towards the orientability listed in the appendix).

Note that $w_{1}(\mathcal{L})=w_{1}(Ind)$, where $Ind$ is the index virtual bundle over $\mathcal{M}_{c}^{DT_{4}}$ for the operator
\begin{equation} \Omega^{1}(X,g_{E})_{k}\rightarrow \Omega^{0,2}_{+}(X,EndE)_{k-1}\oplus\Omega^{0}(X,g_{E})_{k-1}\oplus\Omega^{0}(X,g_{E})_{k-1},
\nonumber\end{equation}
\begin{equation}a=a^{1,0}+a^{0,1}\mapsto (\pi_{+}\overline{\partial}_{A}a^{0,1}, d_{A}^{*}a, {d_{A}^{c}}^{*}a).
\nonumber \end{equation}
where $A\in\mathcal{M}_{c}^{DT_{4}}$. We have
\begin{equation}Ind|_{E}=Ext^{1}(E,E)-Ext^{2}_{+}(E,E).  \nonumber \end{equation}
Define the complexified index bundle $Ind_{\mathbb{C}}\triangleq Ind\otimes_{\mathbb{R}}\mathbb{C}$. By Remark \ref{remark1},
\begin{equation}Ind_{\mathbb{C}}|_{E}\cong Ext^{1}(E,E)-Ext^{2}(E,E)+Ext^{3}(E,E),\nonumber \end{equation}
which has the Serre duality quadratic form $Q_{Serre}$.
Similar to a construction in $K$-theory \cite{atiyah}, there exists a trivial bundle with the trivial standard quadratic form $(\mathbb{C}^{N},q)$ such that
$(Ind_{\mathbb{C}},Q_{Serre})\oplus (\mathbb{C}^{N},q)$ becomes a quadratic bundle (a vector bundle with a non-degenerate quadratic form). If $c_{1}(Ind_{\mathbb{C}})=0$, then the structure group of the quadratic bundle can be reduced to $SO(n,\mathbb{C})$ which makes the structure group of the corresponding real bundle inside $SO(n,\mathbb{R})$ \cite{eg}.
However, the above reduction of the structure group to $SO(n,\mathbb{C})$ involves a choice on each component of $\mathcal{M}_{c}^{DT_{4}}$ \cite{eg}, which corresponds exactly to a choice of orientation of $\mathcal{L}$ on $\mathcal{M}_{c}^{DT_{4}}$.
\begin{definition}\label{nat cpx ori}
If $c_{1}(Ind_{\mathbb{C}})=0$, we call the above $SO(n,\mathbb{C})$-reduction a choice of orientation for $Ind_{\mathbb{C}}$. If the corresponding real bundle has a complex orientation, then we call $Ind_{\mathbb{C}}$ has a natural complex orientation, denoted by $o(\mathcal{O})$.
\end{definition}
We note that, $Ind_{\mathbb{C}}$ has the advantage over $Ind$ by being well defined also on $\mathcal{M}_{c}$
whereas $\mathcal{L}$ can be defined even when $c\notin\bigoplus_{k}\textrm{ }H^{k,k}(X)$.

To pick a coherent choice of orientation for all components of the moduli space, as in Donaldson theory \cite{dk}, we need to extend the index bundle and its determinant line bundle to some big connected space such that $\mathcal{M}_{c}^{DT_{4}}$ (or $\mathcal{M}_{c}$) embeds inside with induced index (or determinant line) bundle.
We know $\mathcal{M}_{c}^{DT_{4}}\hookrightarrow \mathcal{B}_{1}$, where the determinant line bundle $\mathcal{L}$ extends naturally.

For $\mathcal{M}_{c}$ with $Hol(X)=SU(4)$, by Seidel-Thomas twist \cite{js}, we can identify it with some component of $\mathcal{M}_{bdl}$, the moduli space of simple holomorphic bundles with some fixed Chern class, where the index bundle $Ind_{\mathbb{C}}$ also extends. By choosing a Hermitian metric, we can further imbed $\mathcal{M}_{bdl}$ into the space of gauge equivalence classes of irreducible unitary connections $\mathcal{B}^{*}$, where the determinant line bundle $\mathcal{L}$ of the operator mentioned above is defined. Note that, one choice of orientation of $\mathcal{L}$ gives an orientation for $Ind_{\mathbb{C}}$ on $\mathcal{M}_{bdl}$.

By \cite{dk}, $\mathcal{B}_{1}$ and $\mathcal{B}^{*}$ are connected, there are only two orientations for any orientable bundle. We assume from now on that the determinant line bundle $\mathcal{L}$ on $\mathcal{B}^{*}$ (or $\mathcal{B}_{1}$) is oriented.
\begin{definition}
We define an orientation data, denoted by $o(\mathcal{L})$ to be a choice of orientation for $\mathcal{L}$ on $\mathcal{M}_{c}^{DT_{4}}$ which is induced from an orientation of $\mathcal{L}$ on $\mathcal{B}_{1}$ or a choice of an orientation of $Ind_{\mathbb{C}}$ on $\mathcal{M}_{c}$ which is induced from an orientation of $\mathcal{L}$ on $\mathcal{B}^{*}$.
\end{definition}
\begin{remark} ${}$ \\
1. The orientation data may involve a choice of Seidel-Thomas twist for $\mathcal{M}_{c}$. \\
2. Making a choice of orientation on $\mathcal{L}$ from the ambient space $\mathcal{B}^{*}$ is for the purpose of deformation invariance of the theory. If we have natural orientation for $Ind_{\mathbb{C}}$ on $\mathcal{M}_{c}$, such as  $Ind_{\mathbb{C}}$ has a natural complex orientation (Definition \ref{nat cpx ori}, as we will see there are plenty such examples), then we will just use that natural orientation without referring to $\mathcal{B}^{*}$ and we assume the natural orientation can also be induced from an orientation of $\mathcal{L}$ on $\mathcal{B}^{*}$.
\end{remark}

To compactify the $DT_{4}$ moduli space $\mathcal{M}_{c}^{DT_{4}}$, we start with its local Kuranishi structure (Theorem \ref{Kuranishi str of cpx ASD thm}).
\begin{theorem}\label{mo mDT4}
If $\mathcal{M}_{c}^{o}\neq \emptyset$, the local Kuranishi model of $\mathcal{M}_{c}^{DT_{4}}$ near $d_{A}$ with $F^{0,2}_{A}=0$ can be described as
\begin{equation} \xymatrix@1{
\kappa_{+}=\pi_{+}(\kappa): H^{0,1}(X,EndE) \ar[r]^{\quad \quad \quad \kappa}
& H^{0,2}(X,EndE)\ar[r]^{\pi_{+}} & H^{0,2}_{+}(X,EndE) },  \nonumber \end{equation}
where $\kappa$ is a canonical Kuranishi map for $\mathcal{M}_{c}^{o}$ near $\overline{\partial}_{A}$ (the (0,1) part of $d_{A}$) determined by the $DT_{4}$ equation and $\pi_{+}$ is projection to the self-dual forms.

Furthermore, the closed imbedding between analytic spaces possibly with non-reduced structures
\begin{equation}\mathcal{M}_{c}^{o}\hookrightarrow \mathcal{M}_{c}^{DT_{4}}  \nonumber \end{equation}
is also a bijective map on closed points.
\end{theorem}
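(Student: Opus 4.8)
The plan is to establish the local Kuranishi model first, read off from it the existence of the closed imbedding, and then verify bijectivity on closed points by means of an energy identity together with the Donaldson--Uhlenbeck--Yau correspondence.

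For the local model I would start from the general Kuranishi structure of $\mathcal{M}_c^{DT_4}$ supplied by Theorem \ref{Kuranishi str of cpx ASD thm}, whose deformation--obstruction data at $d_A$ is carried by the operator $\Omega^1(X,g_E)_k\to\Omega^{0,2}_+(X,\mathrm{End}E)_{k-1}\oplus\Omega^0(X,g_E)_{k-1}\oplus\Omega^0(X,g_E)_{k-1}$ displayed above, with $Ind|_E=Ext^1(E,E)-Ext^2_+(E,E)$. When $F^{0,2}_A=0$ the bundle $(E,\overline\partial_A)$ is holomorphic and $A$ is its Chern connection, so the Kobayashi--Hitchin bookkeeping applies: performing the symplectic reduction at the zero level of the moment map $F\wedge\omega^3$ trades the two $\Omega^0(X,g_E)$ summands, together with the reduced real gauge group, for the imaginary directions of the complexified gauge action, and the harmonic spaces of the complex become $H^{0,1}(X,\mathrm{End}E)$ and $H^{0,2}_+(X,\mathrm{End}E)$. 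Solving $\pi_+\big(\overline\partial_A a^{0,1}+a^{0,1}\wedge a^{0,1}\big)=(\text{harmonic})$ recursively (order by order in $a^{0,1}$) in the Coulomb slice $\overline\partial_A^{\,*}a^{0,1}=0$, one sees that the first $DT_4$ equation $F^{0,2}_+=0$ is just $\pi_+$ applied to the integrability equation $F^{0,2}=0$ that cuts out $\mathcal{M}_c^o$; hence the $DT_4$ Kuranishi map at $d_A$ equals $\pi_+\circ\kappa$, where $\kappa\colon H^{0,1}(X,\mathrm{End}E)\to H^{0,2}(X,\mathrm{End}E)$ is the canonical Coulomb-gauge Kuranishi map of $\mathcal{M}_c^o$ at $\overline\partial_A$. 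This proves the first assertion, and since $\kappa^{-1}(0)$ is cut out inside $(\pi_+\circ\kappa)^{-1}(0)$ by the further equations $(1-\pi_+)\kappa=0$, the same identification produces, near every point with $F^{0,2}_A=0$, a closed immersion of Kuranishi germs; these are compatible with the gluing of the two deformation problems and globalise to the closed imbedding $\mathcal{M}_c^o\hookrightarrow\mathcal{M}_c^{DT_4}$ of (possibly non-reduced) analytic spaces.

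It remains to check this imbedding is bijective on closed points. Injectivity is automatic for a closed immersion; concretely, a slope-stable holomorphic bundle determines its Hermitian--Einstein connection uniquely up to unitary gauge and a $DT_4$-connection recovers the holomorphic structure $\overline\partial_A$ up to isomorphism, so non-isomorphic bundles yield non-gauge-equivalent solutions. For surjectivity, take $[A]\in\mathcal{M}_c^{DT_4}$; the crux is to show $F^{0,2}_A=0$. From $F^{0,2}_{A,+}=0$ we have $*_4F^{0,2}_A=-F^{0,2}_A$, so a standard computation using $\alpha\wedge *_4\beta=(\alpha,\beta)_g\overline\Omega$ and $\Omega\wedge\overline\Omega=d\mathrm{vol}$ gives
\begin{equation}
\|F^{0,2}_A\|^2_{L^2}=\int_X\mathrm{tr}\big(F^{0,2}_A\wedge *_4F^{0,2}_A\big)\wedge\Omega=-\int_X\mathrm{tr}\big(F^{0,2}_A\wedge F^{0,2}_A\big)\wedge\Omega=-\int_X\mathrm{tr}(F_A\wedge F_A)\wedge\Omega,
\nonumber
\end{equation}
the last equality because only the $(0,4)$-part of $\mathrm{tr}(F_A\wedge F_A)$ survives the wedge with the $(4,0)$-form $\Omega$. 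Up to a nonzero constant the right-hand side is the pairing $\langle\mathrm{ch}_2(E)\cup[\Omega],[X]\rangle$; since $\mathcal{M}_c^o\neq\emptyset$ there is a holomorphic bundle $E_0$ with $\mathrm{ch}(E_0)=\mathrm{ch}(E)$, hence $\mathrm{ch}_2(E)=\mathrm{ch}_2(E_0)$ lies in $H^{2,2}(X)$ and is killed by cupping with $[\Omega]\in H^{4,0}(X)$ for degree reasons on a four-fold. Thus $\|F^{0,2}_A\|^2_{L^2}=0$, so $F^{0,2}_A=0$ and $\overline\partial_A$ is integrable. The second $DT_4$ equation $F_A\wedge\omega^3=0$ with $c_1(E)=0$ then says $A$ is a zero-slope Hermitian--Einstein connection on $(E,\overline\partial_A)$; by Donaldson--Uhlenbeck--Yau this bundle is slope-polystable, and since we work throughout with irreducible connections ($\mathcal{M}_c^{DT_4}\hookrightarrow\mathcal{B}_1$) it is slope-stable, i.e. a point of $\mathcal{M}_c^o$ mapping to $[A]$.

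The main obstacle is the vanishing $F^{0,2}_A=0$: this is precisely the step that needs the hypothesis $\mathcal{M}_c^o\neq\emptyset$, since on a general K\"ahler four-fold the second Chern character of a smooth complex bundle need not be of Hodge type $(2,2)$, and one could a priori have $DT_4$-solutions whose $F^{0,2}$ is a nonzero $*_4$-anti-self-dual form; one must also be careful about signs and normalisations in the $*_4$-energy identity, and must invoke the \emph{irreducible} case of the Donaldson--Uhlenbeck--Yau theorem to pass from ``admits a Hermitian--Einstein connection'' to ``slope-stable''. By comparison, the identification of the $DT_4$ Kuranishi map with $\pi_+\circ\kappa$, and hence the closed imbedding, are routine once Theorem \ref{Kuranishi str of cpx ASD thm} and the Kobayashi--Hitchin reduction of the moment-map equation are in hand.
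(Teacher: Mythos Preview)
Your argument for bijectivity on closed points is correct and matches the paper's Lemma~\ref{C2 condition} (attributed to Lewis): the energy identity forces $F^{0,2}_A=0$ once $\mathrm{ch}_2(E)\in H^{2,2}(X)$, which is guaranteed by $\mathcal{M}_c^o\neq\emptyset$, and then irreducibility plus Donaldson--Uhlenbeck--Yau gives slope-stability. That part is fine.

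The gap is in your treatment of the local model. You assert that in the Coulomb slice $\overline\partial_A^{\,*}a^{0,1}=0$ the $DT_4$ Kuranishi map is simply $\pi_+\circ\kappa$ with $\kappa$ the \emph{standard} Coulomb-gauge Kuranishi map of $\mathcal{M}_c^o$; this is exactly the point that is not routine. The paper does not work in that slice. It uses the unitary gauge fixing $d_A^*a=0$ together with the moment-map equation, which on the $(0,1)$ side becomes the \emph{nonlinear} condition~(\ref{equation 5}), and the slice equation for $F^{0,2}_+=0$ decomposes as (\ref{equation 3})--(\ref{equation 4}). Equation~(\ref{equation 3}) carries an extra term $*_4P_{\overline\partial_A^*}(a''\wedge a'')$ which is absent from the standard integrability equation, so the ambient smooth model $Q_A$ for the $DT_4$ problem is \emph{not} the one arising in ordinary Kuranishi theory for $\mathcal{M}_c^o$, and the map $\tilde{\tilde{\kappa}}$ one extracts is a priori a different analytic map than the standard $\kappa$.

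The substantive content of the theorem is Proposition~\ref{QA intesect P=0 equals NA}: one must show that $\tilde{\tilde{\kappa}}^{-1}(0)=Q_A\cap P^{-1}(0)$ as analytic spaces, i.e.\ that $\tilde{\tilde{\kappa}}$ really is a Kuranishi map for $\mathcal{M}_c^o$. The paper does this via an auxiliary finite-dimensional model $F$ (Lemmas~\ref{image of lamda}, \ref{F iso to H1 H2}), an a priori estimate (Lemma~\ref{vanishing of prW2}) derived from the Bianchi identity, and then Miyajima's lemma (Lemma~\ref{miyajima lemma}) to promote set-theoretic equality to equality of ideals. Your phrase ``solving recursively order by order'' hides precisely this step, and without it you have neither established the factorization $\kappa_+=\pi_+\circ\kappa$ with $\kappa$ a Kuranishi map for $\mathcal{M}_c^o$, nor the closed imbedding as analytic (possibly non-reduced) spaces.
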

In general, we want to obtain a compactification of the $DT_{4}$ moduli space $\overline{\mathcal{M}}_{c}^{DT_{4}}$ by extending the above bijective map $\mathcal{M}_{c}^{o}\rightarrow \mathcal{M}_{c}^{DT_{4}}$ to $\mathcal{M}_{c}\rightarrow\overline{\mathcal{M}}_{c}^{DT_{4}}$ while
the local analytic structure of $\overline{\mathcal{M}}_{c}^{DT_{4}}$ is given by $\kappa_{+}^{-1}(0)$, where
\begin{equation}\kappa_{+}=\pi_{+}(\kappa): Ext^{1}(\mathcal{F},\mathcal{F})\rightarrow  Ext^{2}_{+}(\mathcal{F},\mathcal{F}) \nonumber \end{equation}
and $\kappa$ is a Kuranishi map for $\mathcal{M}_{c}$ at $\mathcal{F}$.

Note that $\mathcal{M}_{c}$ may not contain any locally free sheaf, but the above gluing approach to define an analytic space $\overline{\mathcal{M}}_{c}^{DT_{4}}$ could still make sense.
We then call $\overline{\mathcal{M}}_{c}^{DT_{4}}$ the generalized $DT_{4}$ moduli space (Definition \ref{generalized DT4}) if it comes from gluing local models of the above type and can be identified with $\mathcal{M}_{c}$ as sets.
The name generalized $DT_{4}$ moduli space comes from the fact that it may not parameterize any locally free sheaf in general while the $DT_{4}$ moduli space consists of connections on bundles only.

It is then obvious that if $\mathcal{M}_{c}=\mathcal{M}_{c}^{o}\neq\emptyset$, $\overline{\mathcal{M}}_{c}^{DT_{4}}$ exists and $\overline{\mathcal{M}}_{c}^{DT_{4}}=\mathcal{M}_{c}^{DT_{4}}$.
We have the following less obvious gluing results (Proposition \ref{gene DT4 if Mc smooth}, \ref{ob=v+v*}).
\begin{proposition}\label{condition of vir gene DT4}
If (i) $\mathcal{M}_{c}$ is smooth or (ii) for any closed point $\mathcal{F}\in \mathcal{M}_{c}$, there exists a complex vector space $V_{\mathcal{F}}$ and a linear isometry
\begin{equation}(Ext^{2}(\mathcal{F},\mathcal{F}),Q_{Serre})\cong (T^{*}V_{\mathcal{F}},Q_{std})  \nonumber \end{equation}
such that the image of a Kuranishi map $\kappa$ of $\mathcal{M}_{c}$ at $\mathcal{F}$ satisfies
\begin{equation}\emph{Image}(\kappa)\subseteq V_{\mathcal{F}},  \nonumber \end{equation}
where $Q_{Serre}$ is the Serre duality pairing and $Q_{std}$ is induced from the standard pairing between $V_{\mathcal{F}}$ and $V_{\mathcal{F}}^{*}$.
Then the generalized $DT_{4}$ moduli space exists and $\overline{\mathcal{M}}^{DT_{4}}_{c}=\mathcal{M}_{c}$ as real analytic spaces.
\end{proposition}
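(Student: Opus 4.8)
The plan is to reduce the statement, under either hypothesis, to a local claim at each closed point $\mathcal{F}\in\mathcal{M}_{c}$: namely that the local $DT_{4}$ model $\kappa_{+}^{-1}(0)\subseteq Ext^{1}(\mathcal{F},\mathcal{F})$ agrees, as a germ of real analytic space, with the Kuranishi model $\kappa^{-1}(0)$ of $\mathcal{M}_{c}$ at $\mathcal{F}$ (the latter regarded as real analytic by forgetting its complex structure), and that these identifications are compatible with the patching data used to build $\mathcal{M}_{c}$. Granting this, the local models ``of the above type'' appearing in Definition \ref{generalized DT4} can be glued using $\mathcal{M}_{c}$'s own transition maps; the resulting space exists, is identified with $\mathcal{M}_{c}$ as a set by construction, and carries the real analytic structure of $\mathcal{M}_{c}$, which is the assertion. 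Case (i) is then immediate: $\mathcal{M}_{c}$ smooth means every Kuranishi map $\kappa$ vanishes, so $\kappa_{+}=\pi_{+}\circ\kappa=0$ and each local model is the linear space $Ext^{1}(\mathcal{F},\mathcal{F})$, i.e.\ literally the smooth chart of $\mathcal{M}_{c}$, and there is nothing to glue beyond what is already done for $\mathcal{M}_{c}$.

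The heart is case (ii), which is linear algebra at each $\mathcal{F}$. Recall (see \cite{dt} and the discussion preceding Definition \ref{nat cpx ori}) that $*_{4}$ is a $\mathbb{C}$-antilinear involution of $Ext^{2}(\mathcal{F},\mathcal{F})$, so its $(+1)$-eigenspace $Ext^{2}_{+}$ is a real form, $Ext^{2}_{-}=i\cdot Ext^{2}_{+}$, and $\dim_{\mathbb{R}}Ext^{2}_{\pm}=\dim_{\mathbb{C}}Ext^{2}$; moreover $Q_{Serre}$ restricts to the (positive definite, $L^{2}$) real form on $Ext^{2}_{+}$, hence to a negative definite one on $Ext^{2}_{-}$. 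Given the Lagrangian $V_{\mathcal{F}}$ of $(Ext^{2},Q_{Serre})$ from the hypothesis, a nonzero vector of $V_{\mathcal{F}}$ is $Q_{Serre}$-isotropic whereas no nonzero vector of $Ext^{2}_{+}$ or of $Ext^{2}_{-}$ is; since $V_{\mathcal{F}}$ is a complex subspace, $u\in V_{\mathcal{F}}\cap Ext^{2}_{-}$ forces $-iu\in V_{\mathcal{F}}\cap Ext^{2}_{+}=0$, so $V_{\mathcal{F}}\cap Ext^{2}_{-}=0$. Comparing real dimensions, $\pi_{+}|_{V_{\mathcal{F}}}\colon V_{\mathcal{F}}\to Ext^{2}_{+}$ is an $\mathbb{R}$-linear isomorphism. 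Because $\mathrm{Image}(\kappa)\subseteq V_{\mathcal{F}}$, we get $\kappa_{+}=\pi_{+}\circ\kappa=(\pi_{+}|_{V_{\mathcal{F}}})\circ\kappa$, a holomorphic map $\kappa\colon Ext^{1}\to V_{\mathcal{F}}$ followed by an $\mathbb{R}$-linear isomorphism; hence $\kappa_{+}^{-1}(0)=\kappa^{-1}(0)$ not merely as sets but with the same induced real analytic structure on $Ext^{1}(\mathcal{F},\mathcal{F})$. This already upgrades the bijection of Theorem \ref{mo mDT4} to an isomorphism of germs of real analytic spaces at every $\mathcal{F}$, locally free or not.

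It then remains to glue these pointwise identifications. The underlying set of $\overline{\mathcal{M}}^{DT_{4}}_{c}$ is unambiguously $\mathcal{M}_{c}$: the identity $V_{\mathcal{F}}\cap Ext^{2}_{-}=0$, hence $\kappa_{+}^{-1}(0)=\kappa^{-1}(0)$ set-theoretically, is independent of the auxiliary choices (the fixed $L^{2}$-metric entering $\pi_{+}$, the Lagrangian $V_{\mathcal{F}}$, the isometry $(Ext^{2},Q_{Serre})\cong(T^{*}V_{\mathcal{F}},Q_{std})$), so on overlaps two such local models describe the same subset of $\mathcal{M}_{c}$. The isometry with $(T^{*}V_{\mathcal{F}},Q_{std})$ is what lets one match, along overlaps, the quadratic-bundle data carried by the real obstruction spaces $Ext^{2}_{+}$ (the same data governing the orientation discussion), so that the transition isomorphisms of the $DT_{4}$ local models can be taken to be induced from the coordinate changes and Kuranishi-model comparison isomorphisms defining the analytic structure of $\mathcal{M}_{c}$, with the $\mathbb{R}$-linear maps $\pi_{+}|_{V_{\mathcal{F}}}$ intertwining them. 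Hence the gluing of Definition \ref{generalized DT4} is realized, $\overline{\mathcal{M}}^{DT_{4}}_{c}$ exists, and equals $\mathcal{M}_{c}$ as a real analytic space.

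The main obstacle is exactly this last step: passing from chart-by-chart isomorphisms to a single real analytic identification requires that the auxiliary choices --- especially the Lagrangians $V_{\mathcal{F}}$ and the isometries to $T^{*}V_{\mathcal{F}}$ --- be compatible, or be shown replaceable by compatible ones, along overlaps; equivalently, that the real obstruction bundle $Ext^{2}_{+}$ with its quadratic structure patches in parallel with the complex obstruction theory of $\mathcal{M}_{c}$. The linear-algebra core --- that $\mathrm{Image}(\kappa)$ lying in a Lagrangian kills the effect of $\pi_{+}$ --- is robust; the real work is in propagating it coherently over $\mathcal{M}_{c}$, and this is precisely where hypotheses (i) and (ii) are used.
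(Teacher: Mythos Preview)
Your proof is correct, and case (i) is identical to the paper's. For case (ii) you take a genuinely different linear-algebra route. The paper \emph{chooses} the Hermitian metric on $Ext^{2}(\mathcal{F},\mathcal{F})$ to be the direct-sum metric coming from a metric on $V_{\mathcal{F}}$; the associated $*_{4}$ then satisfies $*_{4}(V_{\mathcal{F}})=V_{\mathcal{F}}^{*}$, so for $\kappa(\alpha)\in V_{\mathcal{F}}$ one has $*_{4}\kappa(\alpha)\in V_{\mathcal{F}}^{*}$ and hence $\kappa_{+}(\alpha)=0$ forces $\kappa(\alpha)=0$. You instead keep an \emph{arbitrary} $*_{4}$ and argue via isotropy: since $Q_{Serre}$ is definite on $Ext^{2}_{\pm}$ while $V_{\mathcal{F}}$ is isotropic, $V_{\mathcal{F}}\cap Ext^{2}_{-}=0$, whence $\pi_{+}|_{V_{\mathcal{F}}}$ is an $\mathbb{R}$-linear isomorphism by a dimension count. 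Your argument is slightly more general (it shows the conclusion holds for \emph{every} admissible $*_{4}$, not just the adapted one), while the paper's is more constructive. Incidentally, you do not need the detour through $-iu$: the isotropy argument gives $V_{\mathcal{F}}\cap Ext^{2}_{-}=0$ directly, since $Q_{Serre}$ is negative definite there.

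One remark on your last two paragraphs: you are working harder than necessary on the gluing. Once you have established $\kappa_{+}^{-1}(0)=\kappa^{-1}(0)$ as real analytic germs at every $\mathcal{F}$, the real analytic space $\mathcal{M}_{c}$ \emph{itself} already satisfies Assumption \ref{assumption on gluing}: the bijection is the identity, and near each $\mathcal{F}$ the local structure is $\kappa^{-1}(0)=\kappa_{+}^{-1}(0)$, which is of the required form. There is no further compatibility of Lagrangians or isometries to arrange; the transition maps of $\overline{\mathcal{M}}^{DT_{4}}_{c}$ are literally those of $\mathcal{M}_{c}$. This is why the paper's proof stops after the pointwise implication $\kappa_{+}=0\Rightarrow\kappa=0$.
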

After building up the moduli space, we have to handle the transversality issue, i.e. making sense of its fundamental class despite the fact that it may contain many components of different dimensions.

Firstly, we show that when the $DT_{4}$ moduli space is compact, its virtual fundamental class exists.
\begin{theorem}(Theorem \ref{main theorem}) \\
Assume $\mathcal{M}_{c}=\mathcal{M}_{c}^{o}\neq\emptyset$ and assume there exists an orientation data $o(\mathcal{L})$.
Then $\mathcal{M}^{DT_{4}}_{c}$ is compact and its virtual fundamental class exists as a cycle
$[\mathcal{M}^{DT_{4}}_{c}]^{vir}\in H_{r}(\mathcal{B}_{1},\mathbb{Z})$.

Furthermore, if the above assumptions are satisfied by a continuous family of Calabi-Yau four-folds $X_{t}$ parameterized by $t\in [0,1]$, then the cycle in $H_{r}(\mathcal{B}_{1},\mathbb{Z})$ is independent of $t$.
\end{theorem}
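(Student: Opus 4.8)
The plan is to extract everything from the finite-dimensional local model $\kappa_{+}\colon Ext^{1}(E,E)\to Ext^{2}_{+}(E,E)$ of Theorem~\ref{mo mDT4} and to imitate Donaldson's construction of polynomial invariants: cut down the local obstruction bundles by transverse perturbations, glue the resulting zero loci into one closed oriented manifold, and push it into $\mathcal{B}_{1}$. Morally $[\mathcal{M}^{DT_{4}}_{c}]^{vir}$ is the Euler class of the obstruction bundle capped with the (highly non-transverse) moduli cycle. \emph{Compactness} is the cheap part: under $\mathcal{M}_{c}=\mathcal{M}_{c}^{o}\neq\emptyset$, Theorem~\ref{mo mDT4} gives a closed immersion of analytic spaces $\mathcal{M}_{c}^{o}\hookrightarrow\mathcal{M}_{c}^{DT_{4}}$ bijective on closed points; a closed immersion is a homeomorphism onto its (closed) image, and an image containing every closed point of the analytic space $\mathcal{M}_{c}^{DT_{4}}$ must be all of it, so the underlying spaces of $\mathcal{M}_{c}^{DT_{4}}$ and $\mathcal{M}_{c}=\overline{\mathcal{M}}_{c}$ are homeomorphic, whence $\mathcal{M}_{c}^{DT_{4}}$ is compact. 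I would emphasize that no Uhlenbeck bubbling analysis is invoked: the coprimality assumption excludes strictly semistable degenerations, so the gauge-theoretic sequential compactness reduces to the properness of the Gieseker moduli space.

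For the virtual class I would proceed as follows. Using Theorem~\ref{Kuranishi str of cpx ASD thm} and compactness, cover $\mathcal{M}_{c}^{DT_{4}}$ by finitely many Kuranishi charts $(U_{\alpha},\mathcal{E}_{\alpha},s_{\alpha})$, where $U_{\alpha}$ is a finite-dimensional slice in $\mathcal{B}_{1}$ modelled on $Ext^{1}(E_{\alpha},E_{\alpha})$, $\mathcal{E}_{\alpha}\to U_{\alpha}$ is the obstruction bundle with fibre at the centre the \emph{real} vector space $Ext^{2}_{+}(E_{\alpha},E_{\alpha})$ (real because $*_{4}$ is $\mathbb{C}$-antilinear and $*_{4}^{2}=1$), $s_{\alpha}=\kappa_{+}$, and $s_{\alpha}^{-1}(0)=\mathcal{M}_{c}^{DT_{4}}\cap U_{\alpha}$. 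On overlaps one has embeddings $U_{\alpha\beta}\hookrightarrow U_{\beta}$ and bundle embeddings $\mathcal{E}_{\alpha}|_{U_{\alpha\beta}}\hookrightarrow\mathcal{E}_{\beta}$ intertwining the $s$'s modulo normal directions; after shrinking they assemble $\mathcal{M}_{c}^{DT_{4}}$ into an oriented Kuranishi space, the orientation being precisely $o(\mathcal{L})$ together with the canonical complex orientations along the $Ext^{1}$-directions (recall $\mathcal{L}|_{E}=(\wedge^{\mathrm{top}}Ext^{2}_{+})^{-1}\otimes\wedge^{\mathrm{top}}Ext^{1}$). Then perturb each $s_{\alpha}$ to a smooth $s_{\alpha}^{\varepsilon}$ transverse to the zero section, the perturbations built by a finite induction over the charts so as to agree on overlaps via the transition data; the glued vanishing locus $Z=\bigcup_{\alpha}(s_{\alpha}^{\varepsilon})^{-1}(0)$ is a closed topological manifold of dimension $r=2-\chi(E,E)=\dim_{\mathbb{R}}Ext^{1}-\dim_{\mathbb{R}}Ext^{2}_{+}$, oriented by $o(\mathcal{L})$. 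The composite $Z\to\mathcal{M}_{c}^{DT_{4}}\hookrightarrow\mathcal{B}_{1}$ is then a cycle and I would set $[\mathcal{M}_{c}^{DT_{4}}]^{vir}:=[Z]\in H_{r}(\mathcal{B}_{1},\mathbb{Z})$. Two admissible perturbations are joined by a homotopy whose vanishing locus is a cobordism in $\mathcal{B}_{1}$, so $[Z]$ depends only on $o(\mathcal{L})$, the opposite orientation negating it.

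For deformation invariance, note first that for a continuous family $X_{t}$, $t\in[0,1]$, the underlying smooth bundle $E$, the gauge group $\mathcal{G}$ and hence $\mathcal{B}_{1}$ are independent of $t$, so all the classes $[\mathcal{M}_{c}^{DT_{4}}(X_{t})]^{vir}$ sit in the one group $H_{r}(\mathcal{B}_{1},\mathbb{Z})$, and the fixed orientation of $\mathcal{L}$ over $\mathcal{B}_{1}$ restricts to a coherent family of orientations of the $\mathcal{M}_{c}^{DT_{4}}(X_{t})$. I would then form the parametrised moduli space $\widehat{\mathcal{M}}=\{(t,[A]):[A]\in\mathcal{M}_{c}^{DT_{4}}(X_{t})\}$; by the compactness step applied fibrewise, together with properness over $[0,1]$, it is compact, and it carries an oriented Kuranishi structure of virtual dimension $r+1$ with obstruction bundle still of type $Ext^{2}_{+}$, the extra direction being $[0,1]$. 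Choosing perturbations on $\widehat{\mathcal{M}}$ restricting at $t\in\{0,1\}$ to admissible perturbations for $X_{0},X_{1}$ produces a compact oriented $(r+1)$-manifold $W\subset\mathcal{B}_{1}\times[0,1]$ whose oriented boundary is $Z_{1}\times\{1\}$ minus $Z_{0}\times\{0\}$; projecting to $\mathcal{B}_{1}$ exhibits $Z_{1}$ and $Z_{0}$ as homologous, i.e. $[\mathcal{M}_{c}^{DT_{4}}(X_{0})]^{vir}=[\mathcal{M}_{c}^{DT_{4}}(X_{1})]^{vir}$.

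The main obstacle is the perturbation-and-gluing step in the construction of $[Z]$. One must produce sections $s_{\alpha}^{\varepsilon}$ that are simultaneously transverse, compatible on every (in particular triple) overlap, and orientation-coherent, so that $Z$ is a genuine closed oriented manifold carrying a fundamental class rather than a pseudocycle with codimension-one singular strata --- and all this over an $\mathcal{M}_{c}$ that we are not assuming smooth or reduced, which makes even the coherent assembly of the Kuranishi charts delicate. This is exactly where complex dimension four is used: the $\mathbb{C}$-antilinearity of $*_{4}$ makes $Ext^{2}_{+}$ an honest real bundle of rank $\mathrm{ext}^{2}$, and the compatibility of the Serre quadratic form $Q_{Serre}$ with the splitting $Ext^{2}=Ext^{2}_{+}\oplus i\,Ext^{2}_{+}$ is what keeps the self-dual projection $\pi_{+}$, and hence the local models, stable under perturbation and the linearised perturbed equation Fredholm-regular. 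Turning the formal Donaldson-theory template into a rigorous gluing is the technically heaviest ingredient.
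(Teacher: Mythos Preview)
Your compactness argument matches the paper's (Corollary~\ref{cptness by stable bdl}). For the virtual class and deformation invariance, however, the paper takes a substantially more economical route than the finite-dimensional Kuranishi-chart scheme you outline.

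Rather than assembling local charts and then confronting the gluing-and-perturbation problem you correctly flag as the main obstacle, the paper works globally from the outset. The $DT_{4}$ moduli space is already presented as the zero locus of a \emph{single} section $s=(\wedge F,\,F^{0,2}_{+})$ of a \emph{single} Banach bundle $E=\mathcal{A}^{*}\times_{\mathcal{G}^{0}}\bigl(\Omega^{0}(X,g_{E})_{k-1}\oplus\Omega^{0,2}_{+}(X,EndE)_{k-1}\bigr)$ over the Banach manifold $\mathcal{B}_{1}$. The paper checks directly, via the K\"ahler identity $[\wedge,d_{A}]=i(\overline{\partial}_{A}^{*}-\partial_{A}^{*})$, that the linearisation of $s$ at any solution is Fredholm with kernel $H^{0,1}(X,EndE)$ and cokernel $H^{0}(X,g_{E})\oplus H^{0,2}_{+}(X,EndE)$, and then invokes Brussee's Euler class for Fredholm Banach bundles (Proposition~14 of \cite{brussee}) to produce $[\mathcal{M}^{DT_{4}}_{c}]^{vir}\in H_{r}(\mathcal{B}_{1},\mathbb{Z})$ in one stroke. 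No charts, no transition data, no inductive perturbation. For deformation invariance the paper again stays global: a family $X_{t}$ yields a family of Fredholm sections of isomorphic Banach bundles over the fixed $\mathcal{B}_{1}$, and homotopy invariance of Brussee's Euler class finishes. (The paper also checks independence of $\Omega$ and $h$ by the same mechanism, writing down explicit bundle isomorphisms between the $\Omega^{0,2}_{+}$'s for different $*_{4}$'s.)

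Your approach is not wrong in principle, and would have the virtue of producing an explicit finite-dimensional manifold representative $Z$; but the heaviest step you identify --- coherent transverse perturbation across overlapping charts of varying dimension --- is precisely what the global Fredholm framework makes unnecessary. The paper trades that difficulty for accepting Brussee's machinery as a black box.
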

We also define the virtual fundamental class (Definition \ref{virtual cycle when Mc smooth}, \ref{virtual cycle when ob=v+v*}) for the above two cases when the generalized $DT_{4}$ moduli space exists and $\overline{\mathcal{M}}^{DT_{4}}_{c}=\mathcal{M}_{c}$.

As in the case of Donaldson theory \cite{dk}, we can use the $\mu$-map to cut down the degree of the virtual fundamental class and define the corresponding $DT_{4}$ invariants (Definition \ref{DT4 inv of bundles}, \ref{DT4 inv of sheaves}).

Since we can only define $DT_{4}$ invariants in several cases under different assumptions, to make all cases consistent, we propose several axioms that $DT_{4}$ invariants should satisfy. Axioms $(3)$-$(6)$ are showed in the thesis and axioms $(1)$,$(2)$ are verified when we have definition of virtual fundamental classes of the (generalized) $DT_{4}$ moduli spaces.  \\
${}$ \\
\textbf{Axioms of $DT_{4}$ invariants}:
\begin{axiom}Given a triple $(X,\mathcal{O}(1),c)$ and an auxiliary choice of an orientation data $o(\mathcal{L})$, where $(X,\mathcal{O}(1))$ is a polarized Calabi-Yau four-fold,
$c\in H^{even}_{c}(X,\mathbb{Q})$ is a (compactly supported) cohomology class,
the $DT_{4}$ invariant (i.e. Donaldson-Thomas four-folds invariant) of this quadruple, denoted by $DT_{4}(X,\mathcal{O}(1),c,o(\mathcal{L}))$ is a map
\begin{equation}DT_{4}(X,\mathcal{O}(1),c,o(\mathcal{L})): GrSym^{*}\big(H_{*}(X,\mathbb{Z})\otimes \mathbb{Z}[x_{1},x_{2},...]\big)
\rightarrow \mathbb{Z}, \nonumber \end{equation}
$($$GrSym$ means graded symmetric with respect to the parity of the degree of $H_{*}(X)$ $)$ satisfying : \\
$\textbf{(1)}$ \textbf{Orientation reversed}
\begin{equation}DT_{4}(X,\mathcal{O}(1),c,o(\mathcal{L}))=-DT_{4}(X,\mathcal{O}(1),c,-o(\mathcal{L})),  \nonumber \end{equation}
where $-o(\mathcal{L})$ denotes the opposite orientation of $o(\mathcal{L})$. \\
$\textbf{(2)}$ \textbf{Deformation invariance}
\begin{equation}DT_{4}(X_{0},\mathcal{O}(1)|_{X_{0}},c,o(\mathcal{L}_{0}))=DT_{4}(X_{1},\mathcal{O}(1)|_{X_{1}},c,o(\mathcal{L}_{1})),  \nonumber \end{equation}
where $(X_{t},\mathcal{O}(1))$ is a continuous family of complex structures and $o(\mathcal{L}_{t})$ is an orientation data on the family determinant line bundle with $t\in [0,1]$. \\
$\textbf{(3)}$ \textbf{Vanishing for compact hyper-K\"ahler manifolds  }
\begin{equation}DT_{4}(X,\mathcal{O}(1),c,o(\mathcal{L}))=0,  \nonumber \end{equation}
when $X$ is compact hyper-K\"ahler (\ref{nu+}).  \\
$\textbf{(4)}$ \textbf{$DT_{4}/DT_{3}$ correpsondence}
\begin{equation}DT_{4}(X,\pi^{*}\mathcal{O}_{Y}(1),c,o(\mathcal{O}))=DT_{3}(Y,\mathcal{O}_{Y}(1),c^{'}), \nonumber \end{equation}
where $\pi: X=K_{Y}\rightarrow Y$ is projection and $(Y,\mathcal{O}_{Y}(1))$ is a polarized compact Fano threefold with $H^{0}(Y,K_{Y}^{-1})\neq0$. $c=(0,c|_{H_{c}^{2}(X)}\neq 0,c|_{H_{c}^{4}(X)},c|_{H_{c}^{6}(X)},c|_{H_{c}^{8}(X)})$ and $\mathcal{M}_{c}$ consists of slope-stable sheaves.

In this setup, sheaves in $\mathcal{M}_{c}$ is of type $\iota_{*}(\mathcal{F})$ where $\iota: Y\rightarrow K_{Y}$ is the zero section and $c^{'}=ch(\mathcal{F})\in H^{even}(Y)$ is uniquely determined by $c$. $o(\mathcal{O})$ is the natural complex orientation for $Ind_{\mathbb{C}}$ over $\mathcal{M}_{c}$.

$DT_{3}(Y,\mathcal{O}_{Y}(1),c^{'})$ is the $DT_{3}$ invariant of $(Y,\mathcal{O}_{Y}(1))$ with certain insertion fields (Theorem \ref{compact supp DT4}). \\
$\textbf{(5)}$ \textbf{Normalization 1}
\begin{equation}DT_{4}(X,\mathcal{O}(1),c,o(\mathcal{L}))=DT_{4}^{\mu_{1}}(X,\mathcal{O}(1),c,o(\mathcal{L})), \nonumber \end{equation}
if $X$ is compact and $\mathcal{M}_{c}=\mathcal{M}_{c}^{o}\neq\emptyset$.

$DT_{4}^{\mu_{1}}(X,\mathcal{O}(1),c,o(\mathcal{L}))$ is defined using virtual fundamental class of $\mathcal{M}_{c}^{DT_{4}}$ and the corresponding $\mu$-map (\ref{mu map for bundles}). \\
$\textbf{(6)}$ \textbf{Normalization 2}
\begin{equation}DT_{4}(X,\mathcal{O}(1),c,o(\mathcal{L}))=DT_{4}^{\mu_{2}}(X,\mathcal{O}(1),c,o(\mathcal{L})), \nonumber \end{equation}
if $\mathcal{M}_{c}\neq\emptyset$ is smooth or satisfies the condition in definition \ref{virtual cycle when ob=v+v*}.

$DT_{4}^{\mu_{2}}(X,\mathcal{O}(1),c,o(\mathcal{L}))$ is defined using virtual fundamental class of $\overline{\mathcal{M}}_{c}^{DT_{4}}$ and the corresponding $\mu$-map (\ref{u2 map}). \\
\end{axiom}
In normalization axioms, the construction depends on the existence of the virtual fundamental class and the $\mu$-map descendent
fields as mentioned above. Throughout the thesis, we will often only mention the $DT_{4}$ virtual cycle (the virtual fundamental class of the generalized $DT_{4}$ moduli space) instead of using
the corresponding $DT_{4}$ invariants for convenience purposes. \\

Similar to the case of Calabi-Yau three-folds, we also study the $DT/GW$ correspondence for compact Calabi-Yau four-folds in some specific cases.
We have theorem \ref{DT=GW}, \ref{DT=GW2}.
\begin{theorem}
Let $X$ be a compact Calabi-Yau four-fold. If $\mathcal{M}_{c}$ with the given Chern character $c=(1,0,0,-PD(\beta),-1)$
is smooth and consists of ideal sheaves of smooth connected genus zero imbedded curves only.
Assume the $GW$ moduli space $\overline{\mathcal{M}}_{0,0}(X,\beta)\cong \mathcal{M}_{c}$ and use the natural complex orientation $o(\mathcal{O})$, then $\overline{\mathcal{M}}_{c}^{DT_{4}}$ exists and $\overline{\mathcal{M}}_{c}^{DT_{4}}\cong \overline{\mathcal{M}}_{0,0}(X,\beta)$. Furthermore,

$(1)$ if $Hol(X)=SU(4)$, then $[\overline{\mathcal{M}}_{c}^{DT_{4}}]^{vir}=[\overline{\mathcal{M}}_{0,0}(X,\beta)]^{vir}$.

$(2)$ if $Hol(X)=Sp(2)$, i.e compact irreducible hyper-K\"ahler, \\  then $[\overline{\mathcal{M}}_{c}^{DT_{4}}]^{vir}=0$.

Furthermore, $[\overline{\mathcal{M}}_{c}^{DT_{4}}]^{vir}_{hyper-red}=[\overline{\mathcal{M}}_{0,0}(X,\beta)]^{vir}_{red}$ (see theorem \ref{DT=GW2})
\end{theorem}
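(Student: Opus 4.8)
The plan is to reduce both virtual classes to an Euler class on the common smooth moduli space $\mathcal{M}_c$ and then identify the two obstruction bundles through the normal bundle $N_{C/X}$. First, since $\mathcal{M}_c$ is smooth, Proposition~\ref{condition of vir gene DT4}(i) gives that $\overline{\mathcal{M}}_c^{DT_4}$ exists with $\overline{\mathcal{M}}_c^{DT_4}=\mathcal{M}_c$ as real analytic spaces, so together with the hypothesis $\overline{\mathcal{M}}_{0,0}(X,\beta)\cong\mathcal{M}_c$ we obtain the asserted isomorphism $\overline{\mathcal{M}}_c^{DT_4}\cong\overline{\mathcal{M}}_{0,0}(X,\beta)$. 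Smoothness also forces the $ext^{i}(\mathcal{F},\mathcal{F})$ to be locally constant on $\mathcal{M}_c$, so $Ext^{2}(\mathcal{F},\mathcal{F})$ and its $*_4$-self-dual part $Ext^{2}_{+}$ are genuine (real, $o(\mathcal{O})$-oriented) vector bundles over $\mathcal{M}_c$; the Kuranishi map $\kappa$ and hence $\kappa_{+}=\pi_{+}\kappa$ vanish, and Definition~\ref{virtual cycle when Mc smooth} reads $[\overline{\mathcal{M}}_c^{DT_4}]^{vir}=e(Ext^{2}_{+})\cap[\mathcal{M}_c]$, where $o(\mathcal{O})$ is the orientation induced by the complex structure of $Ind_{\mathbb{C}}$ over the complex manifold $\mathcal{M}_c$.

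Next I would compute $Ext^{\bullet}(\mathcal{I}_C,\mathcal{I}_C)$ for a smooth connected rational curve $C$ with ideal sheaf $\mathcal{I}_C$, feeding $0\to\mathcal{I}_C\to\mathcal{O}_X\to\mathcal{O}_C\to 0$ into the local-to-global $Ext$ spectral sequence and using $\mathcal{E}xt^{p}_{\mathcal{O}_X}(\mathcal{O}_C,\mathcal{O}_C)=\wedge^{p}N_{C/X}$, $H^{1}(\mathcal{O}_X)=0$, and Serre duality on $C\cong\mathbb{P}^{1}$. This yields $Ext^{1}(\mathcal{I}_C,\mathcal{I}_C)_{0}\cong H^{0}(N_{C/X})=T_{[C]}\mathcal{M}_c$ and, for the traceless part, $Ext^{2}(\mathcal{I}_C,\mathcal{I}_C)_{0}\cong H^{1}(N_{C/X})\oplus H^{1}(N_{C/X})^{\vee}$ with $Q_{Serre}$ the hyperbolic pairing. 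Since $*_4$ is the $\mathbb{C}$-antilinear involution attached to $Q_{Serre}$ via the Hodge metric, its $+1$-eigenbundle on this traceless part projects real-linearly isomorphically onto the summand $H^{1}(N_{C/X})$, so it is isomorphic, as an $o(\mathcal{O})$-oriented real bundle, to the real bundle underlying the complex bundle $H^{1}(N_{C/X})$. On the Gromov--Witten side every stable map in $\overline{\mathcal{M}}_{0,0}(X,\beta)$ normalizes such a $C$, so $f^{*}TX=T_{\mathbb{P}^{1}}\oplus N_{C/X}$ with $H^{1}(T_{\mathbb{P}^{1}})=0$; hence after dividing out the infinitesimal automorphisms of $\mathbb{P}^{1}$ the Gromov--Witten obstruction bundle over $\overline{\mathcal{M}}_{0,0}(X,\beta)=\mathcal{M}_c$ is $H^{1}(N_{C/X})$ with its complex orientation, and its deformation bundle $H^{0}(N_{C/X})$ agrees with $T\mathcal{M}_c$. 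When $Hol(X)=SU(4)$ we have $H^{2}(\mathcal{O}_X)=0$, so $Ext^{2}_{+}$ is the self-dual part of the traceless $Ext^{2}$, hence this same oriented real bundle, and therefore $[\overline{\mathcal{M}}_c^{DT_4}]^{vir}=e(Ext^{2}_{+})\cap[\mathcal{M}_c]=c_{top}(H^{1}(N_{C/X}))\cap[\mathcal{M}_c]=[\overline{\mathcal{M}}_{0,0}(X,\beta)]^{vir}$, which is part (1).

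For part (2), $Hol(X)=Sp(2)$, write $\Omega=\sigma\wedge\sigma$ up to scale with $\sigma$ the holomorphic symplectic form, so $\overline{\sigma}\in H^{0,2}(X)=H^{2}(\mathcal{O}_X)$ and, with the compatible normalization, $*_4\overline{\sigma}=\overline{\sigma}$. Since each $\mathcal{I}_C$ is simple, the trace decomposition $Ext^{2}(\mathcal{F},\mathcal{F})=Ext^{2}(\mathcal{F},\mathcal{F})_{0}\oplus H^{2}(\mathcal{O}_X)\cdot\mathrm{id}$ shows that $[\mathcal{I}_C]\mapsto\mathrm{id}_{\mathcal{I}_C}\otimes\overline{\sigma}$ is a nowhere-vanishing section of $Ext^{2}_{+}$ spanning a trivial real line subbundle; hence $e(Ext^{2}_{+})=0$ and $[\overline{\mathcal{M}}_c^{DT_4}]^{vir}=0$, which also recovers the vanishing of Axiom (3) in this case. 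The remaining assertion, equality of the hyper-reduced classes, is the content of Theorem~\ref{DT=GW2}: one quotients $Ext^{2}_{+}$ by the canonical line $\langle\mathrm{id}\otimes\overline{\sigma}\rangle$, arrives again at the real bundle underlying $H^{1}(N_{C/X})$, and matches it with the reduced Gromov--Witten obstruction bundle coming from the cosection of $H^{1}(N_{C/X})$ induced by $\sigma$, transporting the identification of part (1) through this reduction.

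The main obstacle is the comparison in the second paragraph: identifying the analytically defined self-dual bundle $Ext^{2}_{+}$, together with the orientation data $o(\mathcal{O})$, with the Gromov--Witten obstruction bundle $H^{1}(N_{C/X})$ as oriented real bundles. This requires pinning down the $*_4$-eigenbundle decomposition of the traceless $Ext^{2}$ against the hyperbolic $Q_{Serre}$-structure, and checking that the sheaf-theoretic Kuranishi map $\kappa_{+}$ and the stable-map obstruction map over $\mathcal{M}_c$ correspond under this identification, orientations included; reconciling the rank-one trivial summand excised on the $DT_4$ side in the hyper-K\"ahler case with the cosection-kernel on the Gromov--Witten side is the secondary point, carried out in Theorem~\ref{DT=GW2}.
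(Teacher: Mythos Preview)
Your argument follows the paper's route closely: both reduce to identifying $H^{1}(C,N_{C/X})$ as a maximal isotropic subbundle of $Ext^{2}(I_C,I_C)$ with respect to $Q_{Serre}$ (the paper's Lemmas~\ref{DT GW 1} and~\ref{DT GW 2}) and then appeal to Lemma~\ref{ASD equivalent to max isotropic} to equate the half Euler class with $c_{\mathrm{top}}(H^{1}(N_{C/X}))$, which is the GW obstruction class on the smooth moduli. For part~(2) you exhibit a nowhere-vanishing section $\mathrm{id}\otimes\overline{\sigma}$ of $ob_{+}$, whereas the paper argues via the odd rank of $ob$ (Lemma~\ref{ASD equivalent to max isotropic}(2)) or equivalently the cosection~(\ref{nu+}); these are equivalent mechanisms for $e(ob_{+})=0$. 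One point to tighten in your hyper-reduced sketch: removing only the trace line leaves the real bundle underlying $H^{1}(N_{C/X})$, but this still contains the trivial summand $H^{1}(C,\Omega_C)\oplus H^{1}(C,\Omega_C)^{*}$ coming from the symplectic cosection, so its Euler class is again zero. The paper's \emph{hyper-reduced} obstruction (see~(\ref{hyper cosection}) and the definition following it) removes all three trivial complex lines and lands on $H^{1}(C,N_{C/X}^{*})$, which is precisely the reduced GW obstruction bundle; your deferral to Theorem~\ref{DT=GW2} is therefore the right move, but your one-line description of what is being quotiented out understates it.
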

When $X=T^{*}S$, we only consider sheaves with scheme theoretical support inside $S$ (it is of type $\iota_{*}\mathcal{F}$, where $\iota: S\hookrightarrow X$). To ensure that they form components of moduli space of sheaves on $X$. We make the following assumption
\begin{equation}\label{introduction vanishing}Ext^{0}_{S}(\mathcal{F},\mathcal{F}\otimes \Omega_{S}^{1})=0, \quad
Ext^{2}_{S}(\mathcal{F},\mathcal{F})=0,  \end{equation}
which is satisfied when (i) $S=\mathbb{P}^{2}$, $\mathcal{F}$ is torsion-free slope stable or (ii) $S$ is del-Pezzo, $\mathcal{F}$ is an ideal sheaf of points.

We then denote $\mathcal{M}_{c}^{S_{cpn}}=\{\iota_{*}\mathcal{F} \textrm{ } | \mathcal{F}\in \mathcal{M}_{c}(S) \}$ to be the components of moduli space of sheaves on $X$ which can be identified with $\mathcal{M}_{c}(S)$ (moduli of stable sheaves on $S$ with Chern character $c\in H^{even}(S)$). Under the above assumptions, we know $\mathcal{M}_{c}^{S_{cpn}}$ is smooth and it can be showed that its virtual dimension is negative which leads to the vanishing of the virtual cycle. However, after taking away the trivial part of the obstruction bundle and consider the reduced virtual cycle (Definition \ref{red vir cycle}), we have
\begin{theorem}
Let $X=T^{*}S$, where $S$ is a compact algebraic surface with $q(S)=0$. Under assumption (\ref{introduction vanishing}), we have $[\mathcal{M}_{c}^{S_{cpn}}]^{vir}=0$. Furthermore,

$(1)$ $[\mathcal{M}_{c}^{S_{cpn}}]^{vir}_{red}=0$, when $c|_{H^{0}(S)}\geq2$.

$(2)$ $[\mathcal{M}_{c}^{S_{cpn}}]^{vir}_{red}=1$, when $c=(1,c|_{H^{2}(S)},0)$.

$(3)$ $[\mathcal{M}_{c}^{S_{cpn}}]^{vir}_{red}=e(Hilb^{n}(S))$, when $c=(1,0,-n)$, $n\geq 1$. \\
Moreover, they fit into the following generating function
\begin{equation}\sum_{n\geq0}[\mathcal{M}_{(1,0,-n)}^{S_{cpn}}]^{vir}_{red}q^{n}=\prod_{k\geq1}\frac{1}{(1-q^{k})^{e(S)}}_{.} \nonumber \end{equation}
\end{theorem}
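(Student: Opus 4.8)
The plan is to reduce everything to a computation on the surface $S$ and then invoke known facts about Hilbert schemes of points. First I would set up the deformation theory: under assumption (\ref{introduction vanishing}), a sheaf $\iota_*\mathcal{F}$ on $X = T^*S$ has its deformations and obstructions controlled by those of $\mathcal{F}$ on $S$ twisted by the normal bundle $N_{S/X} = \Omega^1_S$. Concretely one writes down the spectral-sequence / Koszul computation expressing $\mathrm{Ext}^i_X(\iota_*\mathcal{F},\iota_*\mathcal{F})$ in terms of $\mathrm{Ext}^*_S(\mathcal{F},\mathcal{F}\otimes\wedge^\bullet\Omega^1_S)$, and uses the two vanishings in (\ref{introduction vanishing}) together with $q(S)=0$ to see that $\mathcal{M}_c^{S_{cpn}}$ is smooth with $\mathrm{Ext}^1_X = \mathrm{Ext}^1_S(\mathcal{F},\mathcal{F})$ and the obstruction space $\mathrm{Ext}^2_{X,+}$ splits as a trivial summand (coming from $H^0(S,\mathcal{O}_S)$ and/or the canonical class, the ``trivial part'') plus the genuinely interesting part. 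This identifies the reduced virtual class $[\mathcal{M}_c^{S_{cpn}}]^{vir}_{red}$ with the Euler class of an explicit obstruction bundle over $\mathcal{M}_c(S)$, and shows at once that the unreduced class vanishes (the trivial summand forces $e=0$), giving $[\mathcal{M}_c^{S_{cpn}}]^{vir}=0$.

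Next I would treat the three cases. For (1), $c|_{H^0(S)}\ge 2$: here $\mathcal{M}_c(S)$ is the moduli of higher-rank stable sheaves, and I expect the reduced obstruction bundle to still contain a canonically trivial line subbundle (e.g. from the trace part of $\mathrm{Ext}^2$ paired against $H^0(K_S)$, which does not disappear in the higher-rank situation the way it does for ideal sheaves), so its reduced Euler class vanishes as well. For (2), $c = (1, c|_{H^2(S)}, 0)$: the moduli space $\mathcal{M}_c(S)$ is a single reduced point (a line bundle twisted by its $c_1$, unique since $q(S)=0$ kills the Jacobian), the reduced obstruction bundle is $0$, so the count is $1$. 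For (3), $c = (1,0,-n)$: $\mathcal{M}_c(S) = \mathrm{Hilb}^n(S)$, and after removing the trivial factor the reduced obstruction bundle should be identified with (a twist of) the tangent bundle of $\mathrm{Hilb}^n(S)$ or with $\mathrm{Ext}^1_S(\mathcal{I}_Z,\mathcal{I}_Z)^\vee$-type bundle whose top Chern class integrates to the topological Euler characteristic $e(\mathrm{Hilb}^n(S))$; this is where the Serre-duality isometry hypothesis of Proposition \ref{condition of vir gene DT4}(ii) is used to make the reduced class well-defined and to pin down the sign/orientation via $o(\mathcal{O})$.

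Finally, the generating function follows by plugging case (3) into Göttsche's formula $\sum_{n\ge0} e(\mathrm{Hilb}^n(S))\,q^n = \prod_{k\ge1}(1-q^k)^{-e(S)}$. I would state this as an immediate corollary once (3) is established. The main obstacle I anticipate is case (3): one must carefully identify the reduced obstruction bundle on $\mathrm{Hilb}^n(S)$ — showing that after stripping the trivial summand dictated by (\ref{introduction vanishing}) what remains is precisely a rank-$2n$ bundle whose Euler class is $e(\mathrm{Hilb}^n(S))$ — and verify that the chosen orientation $o(\mathcal{O})$ makes this Euler number come out with a $+$ sign rather than $\pm$. The higher-rank vanishing in (1) is a secondary subtlety: one needs that the trivial sub-bundle of the obstruction really persists, which should follow from $h^0(S,\mathcal{O}_S) \cdot h^0(S,K_S)$-type factors in the Serre-dual pairing but deserves a careful check.
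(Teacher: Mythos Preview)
Your overall structure is right, and cases (2), (3), and the generating function match the paper's approach closely. The genuine gap is in your argument for the unreduced vanishing and for case (1).

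For both of these the paper does \emph{not} argue via a surviving trivial line sub-bundle; it uses a straight virtual-dimension count. Under assumption (\ref{introduction vanishing}) one has (via Hirzebruch--Riemann--Roch, with $r = \mathrm{rk}(\mathcal{F})$)
\[
\dim_{\mathbb{C}}\mathrm{Ext}^1_S(\mathcal{F},\mathcal{F}\otimes\Omega^1_S)=2\dim_{\mathbb{C}}\mathrm{Ext}^1_S(\mathcal{F},\mathcal{F})+r^2 e(S)-2,
\]
so the real virtual dimension of the unreduced problem is $2 - r^2 e(S) < 0$, forcing $[\mathcal{M}_c^{S_{cpn}}]^{vir}=0$. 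After removing the trace factor $H^1(S,\Omega^1_S)$ (this is what ``reduced'' means here), the reduced real virtual dimension becomes
\[
r_{red}=h^{1,1}(S)+2-r^2\bigl(2+h^{1,1}(S)\bigr),
\]
which is negative as soon as $r\ge 2$, giving (1).

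Your proposed mechanism for (1) --- a leftover trivial line sub-bundle ``from the trace part of $\mathrm{Ext}^2$ paired against $H^0(K_S)$'' --- cannot work: the assumption $\mathrm{Ext}^2_S(\mathcal{F},\mathcal{F})=0$ forces $h^{0,2}(S)=0$, so $H^0(K_S)=0$; and the trace factor has already been removed in passing to the reduced bundle. There is no further trivial summand to exploit. Replace this step by the dimension count above.

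For (3) your idea is correct in outline; to make it precise you should show, as the paper does, that the trace-free obstruction space $\mathrm{Ext}^1_S(I,I\otimes\Omega^1_S)_0$ is canonically isomorphic to $\mathrm{Ext}^1_S(\mathcal{O}_Z,\mathcal{O}_Z\otimes\Omega^1_S)\cong \mathrm{Ext}^1_S(\mathcal{O}_Z,\mathcal{O}_Z)\oplus \mathrm{Ext}^1_S(\mathcal{O}_Z,\mathcal{O}_Z)^*$ with $\mathrm{Ext}^1_S(\mathcal{O}_Z,\mathcal{O}_Z)\cong T_{[Z]}\mathrm{Hilb}^n(S)$ sitting as a maximal isotropic sub-bundle. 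Then Lemma \ref{ASD equivalent to max isotropic} gives the half Euler class as $c_{2n}(T\,\mathrm{Hilb}^n(S))$, whose integral is $e(\mathrm{Hilb}^n(S))$; the complex orientation $o(\mathcal{O})$ fixes the sign to $+1$.
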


Actually, Li-Qin \cite{lq} had provided examples when $\mathcal{M}_{c}=\mathcal{M}_{c}^{o}\neq\emptyset$. By studying their examples, we have theorem \ref{liqin eg}.
\begin{theorem}
Let $X$ be a generic smooth hyperplane section in $W=\mathbb{P}^{1}\times\mathbb{P}^{4}$ of bi-degree $(2,5)$.
Let
\begin{equation}cl=[1+(-1,1)|_{X}]\cdot[1+(\epsilon_{1}+1,\epsilon_{2}-1)|_{X}],
\nonumber
\end{equation}
\begin{equation}k=(1+\epsilon_{1})\left(\begin{array}{l}6-\epsilon_{2} \\ \quad 4\end{array}\right), \quad \epsilon_{1},\epsilon_{2}=0,1.
\nonumber\end{equation}
Denote $\overline{\mathcal{M}}_{c}(L_{r})$ to be the moduli space of Gieseker $L_{r}$-semistable rank-2
torsion-free sheaves with Chern character $c$ (which can be easily read from the total Chern class $cl$), where $L_{r}=\mathcal{O}_{W}(1,r)|_{X}$. \\
$(1)$ If
\begin{equation}\frac{15(2-\epsilon_{2})}{6+5\epsilon_{1}+2\epsilon_{2}}<r<\frac{15(2-\epsilon_{2})}{\epsilon_{1}(1+2\epsilon_{2})},
\nonumber\end{equation}
then $\overline{\mathcal{M}}_{c}^{DT_{4}}$ exists and $\overline{\mathcal{M}}_{c}^{DT_{4}}=\overline{\mathcal{M}}_{c}(L_{r})=\mathbb{P}^{k}$, $[\overline{\mathcal{M}}_{c}^{DT_{4}}]^{vir}=[\mathbb{P}^{k}]$. \\
${}$ \\
$(2)$ If
\begin{equation} 0<r<\frac{15(2-\epsilon_{2})}{6+5\epsilon_{1}+2\epsilon_{2}},\nonumber\end{equation}
then $\overline{\mathcal{M}}_{c}^{DT_{4}}=\emptyset $ and $[\overline{\mathcal{M}}_{c}^{DT_{4}}]^{vir}=0$.
\end{theorem}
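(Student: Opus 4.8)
The plan is to reduce everything to the concrete geometry of the Li--Qin examples \cite{lq}, where $\mathcal{M}_c(L_r)$ consists only of slope-stable locally free sheaves and is in fact identified with a projective space $\mathbb{P}^k$. The starting point is the chamber structure on the ample cone coming from the walls for Gieseker stability of rank-$2$ sheaves with the given Chern character on the Calabi--Yau three-fold $X$ (a generic $(2,5)$ hyperplane section of $\mathbb{P}^1\times\mathbb{P}^4$): Li--Qin determine these walls explicitly, and the two numerical ranges in $(1)$ and $(2)$ are precisely ``the chamber adjacent to the relevant wall'' and ``the chamber on the far side'', respectively. So the first step is to quote from \cite{lq} the identification $\overline{\mathcal{M}}_c(L_r)\cong\mathbb{P}^k$ (for $r$ in the range of $(1)$) together with the fact that every sheaf parametrized is slope-stable and locally free, hence $\mathcal{M}_c=\mathcal{M}_c^o\neq\emptyset$; and in the range of $(2)$, the fact that the moduli space is empty because the relevant extensions destabilize.

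Next I would invoke Theorem \ref{mo mDT4} and Proposition \ref{condition of vir gene DT4}(i): since $\mathcal{M}_c=\mathcal{M}_c^o\neq\emptyset$ and $\mathcal{M}_c\cong\mathbb{P}^k$ is smooth, the generalized $DT_4$ moduli space exists, $\mathcal{M}_c^{DT_4}=\overline{\mathcal{M}}_c^{DT_4}=\mathcal{M}_c(L_r)=\mathbb{P}^k$ as real analytic spaces, and by Theorem \ref{main theorem} it is compact with a well-defined virtual cycle $[\overline{\mathcal{M}}_c^{DT_4}]^{vir}\in H_r(\mathcal{B}_1,\mathbb{Z})$ once an orientation data is chosen. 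Here I must check that $o(\mathcal{O})$, the natural complex orientation of $Ind_{\mathbb{C}}$, is available: on a smooth moduli space of bundles the obstruction space is $Ext^2_+(E,E)$, which over $\mathbb{P}^k$ should be a genuine complex vector bundle (I would verify $c_1(Ind_{\mathbb{C}})=0$ from the explicit cohomology of $\mathbb{P}^k$ and the Chern character data, so Definition \ref{nat cpx ori} applies). Then the virtual cycle is computed by the localized Euler class of that obstruction bundle over $\mathbb{P}^k$; I expect that in these examples the $DT_4$ equation $F^{0,2}_+=0$ is already satisfied identically on the nose (the Kuranishi map $\kappa$ vanishes because $\mathcal{M}_c$ is smooth in the strong sense and, after the self-dual projection, $\kappa_+\equiv 0$ as well), so the obstruction bundle contributes its top Chern class — but because the virtual dimension $r$ equals $\dim_{\mathbb{R}}\mathbb{P}^k=2k$ exactly (this is the content of the numerology $r=2-\chi(E,E)$ matching $2k$, which I would check against the Riemann--Roch computation in \cite{lq}), the obstruction bundle has rank zero and $[\overline{\mathcal{M}}_c^{DT_4}]^{vir}=[\mathbb{P}^k]$. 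For part $(2)$, emptiness of $\mathcal{M}_c(L_r)$ gives $\overline{\mathcal{M}}_c^{DT_4}=\emptyset$ and hence the zero cycle immediately.

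The main obstacle is the middle step: pinning down that the obstruction bundle $Ext^2_+(E,E)$ over $\mathbb{P}^k$ is actually trivial of rank zero (equivalently, that $h^{0,2}_+(X,EndE)=0$ pointwise, equivalently that $\dim Ext^1 = \dim Ext^2_+ + (\tfrac r2)$ collapses to $\dim Ext^1 = k$ and $\dim Ext^2_+=0$). This requires a genuine cohomology computation on $X$ for the specific bundles in \cite{lq} — using the resolution of $X$ inside $W=\mathbb{P}^1\times\mathbb{P}^4$, the Koszul complex for the $(2,5)$ section, and the presentation of the stable bundles as extensions of line bundle restrictions — to control $Ext^\bullet(E,E)$ and then apply the Serre-duality-compatible splitting into self-dual/anti-self-dual parts coming from $\ast_4$. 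I would also need to confirm that the complex orientation $o(\mathcal{O})$ agrees (up to the global sign that is the content of Axiom (1)) with one induced from $\mathcal{L}$ on $\mathcal{B}^*$, as required by the standing assumption in the Remark following Definition \ref{nat cpx ori}; on the connected space $\mathbb{P}^k$ this is just a consistency check of one sign. Everything else is bookkeeping: translating the Chern character encoded in the total Chern class $cl$ into the virtual dimension $r$, and reading off $k=(1+\epsilon_1)\binom{6-\epsilon_2}{4}$ from the projective-bundle description in \cite{lq}.
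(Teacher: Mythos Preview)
Your approach is essentially the paper's, with one slip and one unnecessary detour. The slip: $X$ is a Calabi--Yau \emph{four}-fold (a $(2,5)$ hypersurface in $\mathbb{P}^1\times\mathbb{P}^4$ has dimension $4$), not a three-fold. The detour: you flag as the ``main obstacle'' a direct cohomology computation of $Ext^2_+(E,E)$ via Koszul resolutions and line-bundle cohomology on $W$, but the paper shows this is unnecessary. By Hirzebruch--Riemann--Roch one computes $\chi(E,E)=-6,-16,-26,-56$ in the four $(\epsilon_1,\epsilon_2)$ cases; Li--Qin's Lemma~5.2 gives $ext^1(E,E)=k=4,9,14,29$ respectively; and since by Serre duality and simpleness $\chi(E,E)=2-2\,ext^1(E,E)+ext^2(E,E)$, one reads off $ext^2(E,E)=0$ in every case. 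So the obstruction bundle is genuinely \emph{zero} (not merely of vanishing Euler class), the virtual class is the honest fundamental class $[\mathbb{P}^k]$, and all orientation concerns are vacuous. The ``numerology $r=2-\chi(E,E)=2k$'' you already planned to check is precisely this argument; once you combine it with $ext^1=k$ (which you have from smoothness of $\mathbb{P}^k$) you are done, with no further cohomology needed.
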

Lastly, for the case of ideal sheaves of one point, one have theorem \ref{moduli of one point}.
\begin{theorem}
If $Hol(X)=SU(4)$ and $c=(1,0,0,0,-1)$, then $\overline{\mathcal{M}}_{c}^{DT_{4}}$ exists and $\overline{\mathcal{M}}_{c}^{DT_{4}}=X$, $[\overline{\mathcal{M}}_{c}^{DT_{4}}]^{vir}=\pm PD\big(c_{3}(X)\big)$.
\end{theorem}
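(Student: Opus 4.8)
The plan is to identify $\mathcal{M}_c$ with $X$ itself, verify that the smoothness hypothesis of Proposition~\ref{condition of vir gene DT4} holds so that $\overline{\mathcal{M}}_c^{DT_4}$ exists and equals $\mathcal{M}_c=X$ as real analytic spaces, and then compute the virtual fundamental class by analyzing the obstruction bundle explicitly. First I would recall that a sheaf with Chern character $c=(1,0,0,0,-1)$ and trivial determinant is precisely an ideal sheaf $\mathcal{I}_x$ of a single (reduced) point $x\in X$, so the Gieseker moduli space is $\mathcal{M}_c\cong X$, which is smooth and compact. Next I would compute the relevant Ext groups. Using the short exact sequence $0\to\mathcal{I}_x\to\mathcal{O}_X\to\mathcal{O}_x\to0$ together with $\chi(\mathcal{O}_X,\mathcal{O}_X)$, Serre duality on the Calabi--Yau four-fold ($K_X\cong\mathcal{O}_X$), and the fact that $\mathrm{Ext}^i(\mathcal{O}_x,\mathcal{O}_x)\cong\wedge^i T_xX$, one finds $\mathrm{Ext}^1(\mathcal{I}_x,\mathcal{I}_x)\cong T_xX$, $\mathrm{Ext}^3(\mathcal{I}_x,\mathcal{I}_x)\cong T_x^*X$ (dual to $\mathrm{Ext}^1$ by Serre duality, consistent with the four-fold), and $\mathrm{Ext}^2(\mathcal{I}_x,\mathcal{I}_x)\cong\wedge^2 T_xX$. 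Assembling these over $\mathcal{M}_c=X$ gives: the tangent bundle of $\mathcal{M}_c$ is $T X$ (confirming smoothness), and the obstruction space $\mathrm{Ext}^2(\mathcal{I}_x,\mathcal{I}_x)$ fibers into $\wedge^2 TX$, carrying the Serre duality quadratic form which on $\wedge^2 T_xX$ (an $\mathbb{R}^{6}$-dimensional real bundle, complex rank $\binom{4}{2}=6$) is the natural pairing $\wedge^2 V\times\wedge^2 V\to\wedge^4 V\cong\mathbb{C}$.

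With smoothness established, Proposition~\ref{condition of vir gene DT4}(i) gives $\overline{\mathcal{M}}_c^{DT_4}=\mathcal{M}_c=X$ as real analytic spaces. The virtual fundamental class is then, by the definition of the virtual cycle in the smooth case (Definition~\ref{virtual cycle when Mc smooth}), the Euler class of the real form of the self-dual half of the obstruction bundle: one takes $\mathcal{O}b_+=\mathrm{Ext}^2_+(\mathcal{I}_x,\mathcal{I}_x)$, the $+1$-eigenbundle of $*_4$ inside $\mathrm{Ext}^2$, which is a real bundle of rank equal to half the real rank of $\wedge^2 TX$, i.e. real rank $6$. The key computation is to identify this real rank-$6$ bundle $\mathcal{O}b_+$ over $X$ and compute its Euler class. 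Here I would use that $*_4$ on $\Omega^{0,2}$ is built from the holomorphic four-form $\Omega$ and the metric, and that on $\wedge^2 T^*X$ (equivalently $\wedge^{0,2}$) the self-dual part is a real subbundle whose complexification recovers the full $\wedge^2$. A clean way: the real bundle underlying $\wedge^2 T_xX$ with its quadratic form is isomorphic to the complexification of $\mathcal{O}b_+$, so $e(\mathcal{O}b_+)^2=(-1)^{?}\,c_{6}(\wedge^2 TX\text{ as real rank }12)$... more usefully, I expect $\mathcal{O}b_+\cong TX$ as real rank-$6$... no — $TX$ has complex rank $4$, real rank $8$. Instead the correct identification should yield $e(\mathcal{O}b_+)=\pm c_3(X)$: indeed the virtual dimension is $r=2-\chi(\mathcal{I}_x,\mathcal{I}_x)=2-(1-4+6-4+1)=2$, wait $\chi=h^0-h^1+h^2-h^3+h^4=1-4+6-4+1=0$ for $\mathcal{I}_x$? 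Let me instead note $r = \dim_\mathbb{R}\mathrm{Ext}^1 - \dim_\mathbb{R}\mathrm{Ext}^2_+ = 8 - 6 = 2$, so $[\overline{\mathcal{M}}_c^{DT_4}]^{vir}\in H_2(X)$, and by Poincaré duality this is dual to a class in $H^{6}(X)$, namely $\pm c_3(X)$.

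The main obstacle will be pinning down the self-dual obstruction bundle $\mathcal{O}b_+$ precisely enough to evaluate its Euler class as $c_3(X)$ up to sign, and controlling that sign via the orientation data $o(\mathcal{L})$. Concretely I would argue as follows: over $X$, $\wedge^2 TX$ with the wedge-to-$\wedge^4\cong\mathcal{O}_X$ pairing is a complex orthogonal bundle of rank $6$; its real self-dual subbundle $\mathcal{O}b_+$ satisfies $\mathcal{O}b_+\otimes_\mathbb{R}\mathbb{C}\cong\wedge^2 TX$, whence $e(\mathcal{O}b_+)$ is a square root of (a sign times) $e(\wedge^2 TX)=c_6(\wedge^2 TX)$. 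Computing $c_6(\wedge^2 TX)$ in terms of the Chern roots $x_1,\dots,x_4$ of $TX$ (with $\sum x_i=0$ since $X$ is Calabi--Yau), the top Chern class $\prod_{i<j}(x_i+x_j)$ factors, and using $x_1+x_2+x_3+x_4=0$ one gets $x_i+x_j=-(x_k+x_l)$ for $\{i,j,k,l\}=\{1,2,3,4\}$, so $c_6(\wedge^2 TX)=-\big(\prod_{i<j\le 3}(x_i+x_j)\big)^2\cdot(\text{sign})$, and the square root is $\pm\prod(x_i+x_j)$ over the three complementary pairs, which is a degree-$3$ symmetric polynomial in the $x_i$ that one checks equals $\pm c_3(X)$ modulo $c_1=0$. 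Thus $e(\mathcal{O}b_+)=\pm c_3(X)$, and the $DT_4$ virtual class is its Poincaré dual, $[\overline{\mathcal{M}}_c^{DT_4}]^{vir}=\pm PD(c_3(X))$. The sign ambiguity is genuine and reflects the choice of $o(\mathcal{L})$, matching the statement; I would finish by remarking that reversing the orientation data flips the sign, consistent with Axiom (1).
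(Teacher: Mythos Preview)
Your approach is correct and reaches the same conclusion, but by a genuinely different route than the paper. The paper computes the $\mathrm{Ext}$ groups of $\mathcal{O}_p$ via the Koszul resolution (noting that for $\mathrm{Hol}(X)=SU(4)$ this is equivalent to working with ideal sheaves), identifies the obstruction bundle with $\wedge^2 TX$, and then determines $e(ob_+)$ by a representation-theoretic argument: it uses the exceptional isomorphism $SU(4)\cong \mathrm{Spin}(6)$ to realize $\wedge^2 T^*X$ as the complexification of the real vector representation $V$ of $\mathrm{Spin}(6)$, identifies $T^*X$ with the positive spinor bundle $S^+(V)$, and reads off $e(V)=c_3(X)$ from the spinor formalism. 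You instead use the splitting principle directly: with Chern roots $x_1,\dots,x_4$ and $\sum x_i=0$, the six roots of $\wedge^2 TX$ come in three pairs $(x_i+x_j)=-(x_k+x_l)$, so $c_6(\wedge^2 TX)=-\bigl((x_1+x_2)(x_1+x_3)(x_1+x_4)\bigr)^2$, and a short symmetric-function check (using $x_2+x_3+x_4=-x_1$) shows $(x_1+x_2)(x_1+x_3)(x_1+x_4)=e_3(x_1,\dots,x_4)=c_3(X)$; combined with $c_6(E_+\otimes\mathbb{C})=(-1)^3 e(E_+)^2$ this gives $e(ob_+)=\pm c_3(X)$.

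Your argument is more elementary and self-contained, requiring only the splitting principle and the Calabi--Yau relation $c_1=0$; the paper's argument is more conceptual and explains \emph{why} the answer is a third Chern class (via the spinor identification $T^*X\cong S^+(V)$), and it also pins down a specific sign relative to a natural orientation. Two small points worth tightening in your write-up: (i) you should state explicitly that $\mathrm{Hol}(X)=SU(4)$ forces $H^{0,i}(X)=0$ for $i=1,2,3$, which is what makes $\mathrm{Ext}^k(\mathcal{I}_x,\mathcal{I}_x)\cong\mathrm{Ext}^k(\mathcal{O}_x,\mathcal{O}_x)$ for $k=1,2$; and (ii) the phrase ``three complementary pairs'' is ambiguous---better to exhibit the product $(x_1+x_2)(x_1+x_3)(x_1+x_4)$ and verify directly that it equals $e_3$ under $c_1=0$, as above, rather than asserting symmetry.
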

We also study the equivariant $DT_{4}$ theory for ideal sheaves of curves and points. We define the corresponding equivariant $DT_{4}$ invariants
for toric Calabi-Yau four-folds by the virtual localization formula \cite{gp}. Because the torus fixed points of the Gieseker moduli space are discrete, we
do not need $\overline{\mathcal{M}}_{c}^{DT_{4}}$ to define invariants. Hence we get the definition without constraints on moduli spaces. \\
${}$ \\
\textbf{Content of the thesis }: In section 2, we will study the $*_{4}$ operator on the space of bundle valued differential forms which
is key to the definition of $DT_{4}$ equations and the construction of moduli spaces. In section 3, we study the local structure of the $DT_{4}$ moduli spaces and
construct the corresponding finite dimensional model. In section 4, we compactify the $DT_{4}$ moduli spaces under certain assumptions. Under the gluing assumption \ref{assumption on gluing}, we define the
generalized $DT_{4}$ moduli spaces. We show for some cases the gluing assumption \ref{assumption on gluing} is easy to check. In section 5, we construct the virtual fundamental class of (generalized) $DT_{4}$ moduli spaces. In section 6, we study the $DT_{4}$ theory for compactly supported sheaves on local Calabi-Yau four-folds. In section 7,
we define the equivariant $DT_{4}$ invariants for toric $CY_{4}$.
The lase section consists of several calculations of $DT_{4}$ invariants when $\mathcal{M}_{c}$ is smooth.

\newpage

\section{The $*_{4}$ operator}

\subsection{The $*_{4}$ operator for bundles}
In this section, we introduce the $*_{4}$ operator on the space of bundle valued differential forms which
is key to the construction of $DT_{4}$ equations and moduli spaces.

As proposed by Donaldson and Thomas in \cite{dt},
We have a commutative diagram
\begin{equation}   \xymatrix{
                      \Omega^{0,k}(X) \ar[rr]^{*} \ar[dr]_{*_{4}}
                        &  &    \Omega^{4,4-k}(X)  \ar[dl]^{\lrcorner\Omega }    \\
                                  & \Omega^{0,4-k}(X).     } \nonumber\end{equation}
The commutativity of the above diagram defines the  $*_{4}$ operator, i.e.
\begin{equation} \alpha\wedge*_{4}\alpha=|\alpha|^{2}\overline{\Omega}.\nonumber \end{equation}
The $*$ here is the usual $\mathbb{C}$-anti-linear Hodge star and $\alpha$ is a $(0,k)$ form on the the given Calabi-Yau four-fold $X$. The $*_{4}$ also
satisfies the identity $*_{4}^{2}=1$.

More genenerally, the above $*_{4}$ operator can be extended to act on\\
$\Omega^{0,k}(X,EndE)$ as follows: take a unitary frame $ \{e_{i}\} $ with respect to the Hermitian metric $h$ on $E$ while $ \{e^{i}\} $
denotes the dual frame. Define
\begin{equation}*_{4}(\alpha\otimes e_{i} \otimes e^{j})=*_{4}(\alpha)\otimes e_{j} \otimes e^{i}.
\nonumber \end{equation}
This is a well defined operator which satisfies the identity $*_{4}^{2}=1$ too.
\begin{remark}If we take $\theta\Omega$ instead of $\Omega$, where $\theta$ is a complex number s.t $\theta\overline{\theta}=1$,
then we get a different $*_{4}$ which also satisfies $\theta\Omega\wedge\overline{\theta}\overline{\Omega}=dvol$.
\end{remark}

Now we want to show the $*_{4}$ operator descends to the harmonic subspace.
\begin{lemma}\label{lem descendence of star}
$\overline{\partial}_{E}^{*}=\overline{\partial}_{E}^{*_{4}}$ acting on $\Omega^{0,\bullet}(X,EndE)$, where $\overline{\partial}_{E}$
is a holomorphic $(0,1)$ connection and  $\overline{\partial}_{E}^{*_{4}}$ is defined to be $*_{4}\overline{\partial}_{E}*_{4}$ up to a sign
which is the same as that appears in $\overline{\partial}_{E}^{*}$.
\end{lemma}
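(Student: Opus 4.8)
The plan is to reduce the claim to a pointwise computation on $(0,k)$-forms, since everything in sight is a first-order differential operator and the bundle $\mathrm{End}\,E$ enters only through the unitary frame, on which $*_4$ acts by the simple rule $*_4(\alpha\otimes e_i\otimes e^j)=*_4(\alpha)\otimes e_j\otimes e^i$. First I would recall the two definitions of the formal adjoint: the metric adjoint $\overline\partial_E^{*}$ is characterized by $(\overline\partial_E\phi,\psi)_{L^2}=(\phi,\overline\partial_E^{*}\psi)_{L^2}$ for all compactly supported sections, and the global inner product is $\int_X(\alpha,\beta)_g\,dvol$ with $dvol=\Omega\wedge\overline\Omega$. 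The operator $\overline\partial_E^{*_4}$ is by definition $\pm\, *_4\,\overline\partial_E\,*_4$ with the sign fixed to match the usual Hodge-theoretic sign in $\overline\partial_E^{*}=-*\overline\partial_E*$ (up to the standard sign convention on $(0,k)$-forms). So the lemma amounts to showing that $*_4$ can be used in place of the $\mathbb C$-antilinear Hodge star $*$ in the adjoint formula; equivalently, that for $\alpha\in\Omega^{0,k-1}$ and $\beta\in\Omega^{0,k}$ one has $\int_X\overline\partial\alpha\wedge *_4\beta=\int_X\alpha\wedge *_4(\pm *_4\overline\partial *_4\beta)$, i.e. that the pairing $\langle\alpha,\beta\rangle:=\int_X\alpha\wedge *_4\beta$ agrees (up to the relevant sign) with the Hermitian $L^2$ pairing.

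The key step is the identity $\alpha\wedge *_4\beta=(\alpha,\beta)_g\,\overline\Omega$, which is exactly the defining property of $*_4$ stated in the excerpt (polarizing the relation $\alpha\wedge*_4\alpha=|\alpha|^2\overline\Omega$ in the $(0,k)$-form slot, with the $\mathrm{End}\,E$ factors paired by the trace coming from $e_i\otimes e^j\mapsto e_j\otimes e^i$). Wedging with $\Omega$ and using $\Omega\wedge\overline\Omega=dvol$, this gives $\Omega\wedge\alpha\wedge *_4\beta=(\alpha,\beta)_g\,dvol$ up to a fixed sign depending only on $k$ (from moving $\Omega$, a $(4,0)$-form, past $\alpha$, a $(0,k-1)$-form). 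Comparing with the analogous identity $\alpha\wedge *\beta=(\alpha,\beta)_g\,dvol$ for the ordinary Hodge star, I conclude that $\Omega\wedge *_4\beta$ and $*\beta$ differ by a sign, hence $*_4\beta$ and $\Omega\lrcorner\, *\,\beta$ agree — which is precisely the commutative diagram displayed just before the lemma. Once this is in hand, the adjoint computation for $*_4$ becomes formally identical to the standard one for $*$: integrate by parts using Stokes (no boundary term, $X$ compact), noting that $\overline\partial(\Omega\wedge\,\cdot\,)=\Omega\wedge\overline\partial(\cdot)$ since $\Omega$ is holomorphic and $\partial$-terms drop out after wedging with the $(4,0)$-form $\Omega$, so the same manipulation that produces $\overline\partial_E^{*}=-*\overline\partial_E*$ produces $\overline\partial_E^{*}=\pm *_4\overline\partial_E*_4=\overline\partial_E^{*_4}$ with matching sign.

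I expect the main obstacle to be bookkeeping of signs: tracking the $k$-dependent sign picked up when commuting $\Omega$ past a $(0,k)$-form, reconciling it with the convention-dependent sign in the definition of $\overline\partial_E^{*_4}$ ("the same as that appears in $\overline\partial_E^{*}$"), and making sure the $\mathbb C$-antilinearity of both $*$ and $*_4$ is handled consistently when passing between the wedge pairing and the Hermitian $L^2$ inner product. The analytic content is genuinely trivial once the algebraic identity $\alpha\wedge*_4\beta=(\alpha,\beta)_g\overline\Omega$ is granted; the only real work is checking that identity in a unitary frame (using that $h$ is parallel so the frame manipulations are pointwise) and then verifying that the sign conventions have been chosen so that the conclusion is an equality rather than an equality-up-to-sign. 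A clean way to organize this is to prove first the scalar-valued statement $*_4=\pm\,\Omega\lrcorner\,*$ on $\Omega^{0,\bullet}(X)$, then extend over $\mathrm{End}\,E$ frame-wise, then deduce the adjoint identity; I would present it in that order.
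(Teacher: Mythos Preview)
Your proposal is correct, but the paper takes a different and more direct route. Rather than characterizing $\overline\partial_E^{*}$ via the $L^2$ pairing and integrating by parts, the paper works entirely pointwise in a local unitary frame: it expands both $*\,\overline\partial_E\,*$ and $*_4\,\overline\partial_E\,*_4$ applied to a basis element $\alpha\otimes e_i\otimes e^j$ using the Leibniz rule, obtaining in each case a term from differentiating the scalar form $\alpha$ and a term from differentiating the frame. It then matches the two expansions term by term using the relation $*=\Omega\wedge *_4$ (the commutative diagram you cite), the holomorphicity of $\Omega$ (so $\overline\partial$ commutes with wedging by $\Omega$), and $|\Omega|=1$. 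No Stokes' theorem or global argument is invoked; the identity is purely algebraic once $*=\Omega\wedge *_4$ is in hand. Your route via the $L^2$ adjoint is arguably more conceptual and makes the role of the pairing $\alpha\wedge *_4\beta=(\alpha,\beta)_g\,\overline\Omega$ transparent, but it trades the frame expansion for an integration-by-parts step and the attendant sign bookkeeping you flag as the main obstacle. The paper's local computation sidesteps Stokes entirely, at the cost of having to write out and compare the frame terms explicitly.
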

\begin{proof}Take a unitary frame $ \{e_{i}\} $ with respect to Hermitian metric $h$ on $E$.
\begin{eqnarray*}*\overline{\partial}_{E}*(\alpha\otimes e_{i} \otimes e^{j})&=&*\overline{\partial}_{E}(*\alpha\otimes e_{j} \otimes e^{i}) \\
&=&*\big((\overline{\partial}_{E}*\alpha)\otimes e_{j} \otimes e^{i}+(-1)^{|\alpha |} *\alpha\wedge \overline{\partial}_{E}(e_{j} \otimes e^{i})\big) \\
&=&*\overline{\partial}_{E}*(\alpha)\otimes e_{i} \otimes e^{j}+(-1)^{|\alpha |} *(*\alpha\wedge\sum\beta_{k,l}\otimes e_{k} \otimes e^{l})\\
&=&*\overline{\partial}_{E}*(\alpha)\otimes e_{i} \otimes e^{j}+(-1)^{|\alpha |} \sum*(*\alpha\wedge\beta_{k,l})\otimes e_{l} \otimes e^{k}.
\end{eqnarray*}
Similarly, we have
\begin{equation}*_{4}\overline{\partial}_{E}*_{4}(\alpha\otimes e_{i} \otimes e^{j})=*_{4}\overline{\partial}_{E}*_{4}(\alpha)\otimes e_{i}
\otimes e^{j}+(-1)^{|\alpha |}\sum*_{4}(*_{4}\alpha\wedge\beta_{k,l})\otimes e_{l} \otimes e^{k} .\nonumber \end{equation}
Then by the fact that  $*=\Omega\wedge*_{4}$, $\Omega$ is holomorphic with respect to the fixed holomorphic structure of the underlying
Calabi-Yau four-fold and $|\Omega|=1$, we can get the answer directly.
\end{proof}
\begin{corollary}\label{cor star4 cut coho into ASD}
The $*_{4}$ operator splits the space $\Omega^{0,2}(X,EndE)$ into
\begin{equation}\Omega^{0,2}(X,EndE)=\Omega^{0,2}_{+}(X,EndE)\oplus\Omega^{0,2}_{-}(X,EndE) \nonumber\end{equation}
based on the eigenvalues of the operator. Furthermore, it descends to the harmonic subspace and
\begin{equation}H^{0,2}(X,EndE)=H^{0,2}_{+}(X,EndE)\oplus H^{0,2}_{-}(X,EndE).  \nonumber\end{equation}
\end{corollary}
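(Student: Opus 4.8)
The plan is to obtain the splitting of $\Omega^{0,2}(X,EndE)$ purely algebraically from the involution property $*_{4}^{2}=1$, and then promote it to the harmonic subspace by showing $*_{4}$ preserves harmonicity, using Lemma \ref{lem descendence of star}.

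\textbf{Step 1 (algebraic splitting).} Since $*_{4}$ maps $\Omega^{0,2}(X,EndE)$ to itself and satisfies $*_{4}^{2}=1$, the operators $\pi_{\pm}=\tfrac12(1\pm*_{4})$ are complementary idempotents with $\pi_{+}+\pi_{-}=1$ and $\pi_{+}\pi_{-}=0$. Hence every $\alpha$ decomposes uniquely as $\alpha=\pi_{+}\alpha+\pi_{-}\alpha$ with $*_{4}(\pi_{\pm}\alpha)=\pm\pi_{\pm}\alpha$, which gives $\Omega^{0,2}(X,EndE)=\Omega^{0,2}_{+}(X,EndE)\oplus\Omega^{0,2}_{-}(X,EndE)$, where $\Omega^{0,2}_{\pm}$ is the $(\pm1)$-eigenspace of $*_{4}$. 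From $\alpha\wedge*_{4}\beta=(\alpha,\beta)_{g}\overline{\Omega}$ extended to $EndE$-valued forms one also reads off that $*_{4}$ is an $L^{2}$-isometry, so this decomposition is $L^{2}$-orthogonal.

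\textbf{Step 2 ($*_{4}$ preserves harmonic forms).} A form $\alpha\in\Omega^{0,2}(X,EndE)$ is harmonic for $\Delta_{\overline{\partial}_{E}}$ iff $\overline{\partial}_{E}\alpha=0$ and $\overline{\partial}_{E}^{*}\alpha=0$. I will check that $*_{4}\alpha$ satisfies the same two equations. By Lemma \ref{lem descendence of star}, $\overline{\partial}_{E}^{*}=*_{4}\overline{\partial}_{E}*_{4}$ on $\Omega^{0,\bullet}(X,EndE)$, the relevant sign already being incorporated into $\overline{\partial}_{E}^{*}$. Thus $\overline{\partial}_{E}^{*}\alpha=0$ forces $*_{4}\overline{\partial}_{E}*_{4}\alpha=0$, and applying $*_{4}$ together with $*_{4}^{2}=1$ yields $\overline{\partial}_{E}(*_{4}\alpha)=0$; similarly $\overline{\partial}_{E}^{*}(*_{4}\alpha)=*_{4}\overline{\partial}_{E}*_{4}(*_{4}\alpha)=*_{4}\overline{\partial}_{E}\alpha=0$. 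Hence $*_{4}\alpha$ is harmonic, i.e. $*_{4}$ restricts to an involution of the finite-dimensional space $\mathcal{H}^{0,2}(X,EndE)$ of harmonic $EndE$-valued $(0,2)$-forms.

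\textbf{Step 3 (conclusion).} Applying the argument of Step 1 to $\mathcal{H}^{0,2}(X,EndE)$ in place of $\Omega^{0,2}(X,EndE)$, which is legitimate by Step 2, gives $\mathcal{H}^{0,2}(X,EndE)=\mathcal{H}^{0,2}_{+}(X,EndE)\oplus\mathcal{H}^{0,2}_{-}(X,EndE)$. The Hodge isomorphism $H^{0,2}(X,EndE)\cong\mathcal{H}^{0,2}(X,EndE)$ then transports this to $H^{0,2}(X,EndE)=H^{0,2}_{+}(X,EndE)\oplus H^{0,2}_{-}(X,EndE)$, with $H^{0,2}_{\pm}$ defined as the image of $\mathcal{H}^{0,2}_{\pm}$. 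Everything here is essentially formal once Lemma \ref{lem descendence of star} is available; the only points needing care are the sign bookkeeping in the relation $\overline{\partial}_{E}^{*}=*_{4}\overline{\partial}_{E}*_{4}$ (it is precisely the matching of signs that makes the two conditions $\overline{\partial}_{E}\alpha=0$ and $\overline{\partial}_{E}^{*}\alpha=0$ interchange under $*_{4}$) and the verification that $*_{4}$ is $L^{2}$-self-adjoint, so that $\pi_{\pm}$ commute with $\Delta_{\overline{\partial}_{E}}$ and the eigenspace decomposition is orthogonal.
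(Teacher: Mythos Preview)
Your proof is correct and follows exactly the same approach as the paper, which simply writes ``By the fact that $*_{4}^{2}=1$ and Lemma~\ref{lem descendence of star}.'' You have merely unpacked these two ingredients in detail: the involution $*_{4}^{2}=1$ gives the eigenspace splitting, and Lemma~\ref{lem descendence of star} is precisely what is needed to show $*_{4}$ preserves harmonicity.
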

\begin{proof}
By the fact that $*_{4}^{2}=1$ and Lemma \ref{lem descendence of star}.
\end{proof}
\begin{remark}\label{remark1} ${}$\\
1. The above $*_{4}$ operator is naturally extended to the corresponding Banach spaces $\Omega^{0,\bullet}(X,EndE)_{k}$. \\
2. It is obvious that $\sqrt{-1}H^{0,2}_{+}(X,EndE)=H^{0,2}_{-}(X,EndE)$.
Thus these two eigenspaces have the same dimension.
\end{remark}

\subsection{The $*_{4}$ operator for general coherent sheaves}
Now we extend the definition of $*_{4}$ to $Ext^{2}(\mathcal{F},\mathcal{F})$, where $\mathcal{F}$ is any coherent sheaf on $X$.
Because $X$ is projective, we can resolve $\mathcal{F}$ by a complex of holomorphic vector bundles
$E^{*}\rightarrow\mathcal{F}\rightarrow 0$.
Then we form the double complex
\begin{equation}(K^{p,q}=\Omega^{0,q}(X,\mathcal{H}om^{p}(E^{*},E^{*})),\overline{\partial},\delta), \nonumber\end{equation}
where $\overline{\partial}$ is defined in terms of the holomorphic structures on $E^{*}$ and $\delta$ is induced from the differential in the above resolved complex \cite{hl}. There exists two natural filtrations on the total complex $(C^{*},D)$
inducing two spectral sequences abutting to $Ext^{*}(\mathcal{F},\mathcal{F})$, where
\begin{equation} C^{n}\triangleq\bigoplus_{k} \bigoplus_{p+q=n}\Omega^{0,q}(X,\mathcal{H}om(E^{k},E^{k+p}))
\nonumber \end{equation}
and $D=\overline{\partial}+(-1)^{q}\delta$ is the differential of the total complex.
\begin{lemma}\label{double cpx is elliptic}
$(C^{*},D)$ is an elliptic complex.
\end{lemma}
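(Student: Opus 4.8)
The plan is to show that $(C^*, D)$ is elliptic by computing its symbol sequence and reducing it to a known elliptic complex. The key observation is that the total complex $(C^*, D)$ is built from the double complex $(K^{p,q}, \overline{\partial}, \delta)$, where the $\delta$-direction is a complex of vector bundles (hence has trivial symbol in the cotangent directions) and the $\overline{\partial}$-direction is, for each fixed $p$, a direct sum of Dolbeault complexes $\Omega^{0,\bullet}(X, \mathcal{H}om(E^k, E^{k+p}))$. So first I would write down, for a nonzero cotangent vector $\xi \in T^*_x X$, the principal symbol $\sigma_\xi(D)$ acting on the fibers $\bigoplus_k \bigoplus_{p+q=n} \Lambda^{0,q} T^*_x X \otimes \mathcal{H}om(E^k_x, E^{k+p}_x)$. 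Since $\delta$ is an order-zero (bundle) map it contributes nothing to the symbol, and $\sigma_\xi(D) = \sigma_\xi(\overline{\partial}) = \xi^{0,1} \wedge (-)$, the exterior multiplication by the $(0,1)$-part of $\xi$ tensored with the identity on the $\mathcal{H}om$-bundles.

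Next I would verify exactness of the symbol sequence at each spot. Fixing $\xi \neq 0$, the complex $(\Lambda^{0,\bullet} T^*_x X, \xi^{0,1}\wedge -)$ is the Koszul complex of the nonzero vector $\xi^{0,1} \in \Lambda^{0,1} T^*_x X$ (nonzero because $\xi$ is real and nonzero), which is well known to be exact; tensoring with the fixed finite-dimensional vector spaces $\mathcal{H}om(E^k_x, E^{k+p}_x)$ and taking the direct sum over $k$ and over $p+q = n$ preserves exactness, since exactness is preserved under $\otimes$ with a vector space and under finite direct sums. Hence the symbol sequence of $(C^*, D)$ is exact away from the zero section, which is precisely the definition of an elliptic complex. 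Alternatively — and perhaps more cleanly — I would invoke the standard fact that a filtered complex whose associated graded is elliptic is itself elliptic: the $\delta$-filtration on $(C^*, D)$ has associated graded $\bigoplus_p \big(\Omega^{0,\bullet}(X, \mathcal{H}om^p(E^*, E^*)), \overline{\partial}\big)$, a finite direct sum of twisted Dolbeault complexes, each elliptic, so the total complex is elliptic.

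The main obstacle is essentially bookkeeping rather than conceptual: one must be careful that the bigrading/total-grading conventions are consistent (the sign $(-1)^q$ in $D = \overline{\partial} + (-1)^q \delta$, the indexing $C^n = \bigoplus_k\bigoplus_{p+q=n}\cdots$) so that $D^2 = 0$ genuinely holds and so that the symbol computation lands in the right degrees. I expect no real difficulty beyond confirming that only finitely many $E^k$ are nonzero (true, since $X$ is projective so $\mathcal{F}$ admits a finite locally free resolution), which guarantees the direct sums are finite and all the ellipticity arguments go through. Given that, ellipticity follows immediately, and as a corollary $Ext^*(\mathcal{F},\mathcal{F})$ is finite-dimensional and Hodge-theoretic arguments (needed to extend $*_4$ to $Ext^2$) apply.
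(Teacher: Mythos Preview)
Your proposal is correct and rests on the same key observation as the paper's proof: $\delta$ is a bundle map (order zero), so only $\overline{\partial}$ contributes to the principal symbol, and ellipticity reduces to that of the twisted Dolbeault complexes. The difference is one of presentation. You verify ellipticity directly by writing down $\sigma_\xi(D)=\xi^{0,1}\wedge(-)$ and checking exactness of the Koszul-type symbol sequence (or equivalently invoking the filtered/associated-graded argument). The paper instead passes to the Laplacian: after fixing Hermitian metrics it forms $\Delta_D=D^*D+DD^*$ and observes that $\Delta_D=\Delta_{\overline{\partial}}+(\text{first-order and zeroth-order terms})$, so $\Delta_D$ has the same principal symbol as the elliptic operator $\Delta_{\overline{\partial}}$ and is therefore elliptic. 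Your route is the textbook verification of the definition and is arguably cleaner; the paper's route has the side benefit of setting up the $*_4$-adjoint $D^{*_4}=D^*$ and the Hodge-theoretic framework that is immediately used in the next corollary to split $Ext^2(\mathcal{F},\mathcal{F})$.
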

\begin{proof}
Firstly, the operator $D$ is a $C^{\infty}$ differential operator. This is done by recalling
$\delta(\varphi)=\delta_{E}\circ\varphi-(-1)^{deg\varphi}\varphi\circ \delta_{E}$ and $\delta_{E}:E^{k}\rightarrow E^{k+1}$
is a morphism between bundles induced from the differential in the resolved complex. Then $\delta$ is a linear operator on $\Omega^{0,q}(X,\mathcal{H}om (E^{k}, E^{k+p}))$.
While $\overline{\partial}$ is obviously a differential operator, thus $D$ is a differential operator.

Secondly, we fix Hermitian metrics on the underlying topological vector bundles of the resolved complex, denoted by $(E^{k},h_{k})$.
Combining with K\"ahler-Einstein metric and the holomorphic top form $\Omega$,
we can define a operator $*_{4}$ on $\Omega^{0,*}(X,\mathcal{H}om^{*}(E^{*},E^{*}))$ by linearity and a similar construction as before
on each grading piece $\Omega^{0,q}(X,\mathcal{H}om(E^{k},E^{k+p}))$. Then we define $D^{*_{4}}=\pm*_{4}D*_{4}$
(there is a similar argument as Lemma \ref{lem descendence of star}
showing that $D^{*_{4}}=D^{*}$). Define $\Delta_{D}=D^{*}D+DD^{*}$, then $\Delta_{D}=\Delta_{\overline{\partial}}$+ 1st order term of
$\overline{\partial}$ + a linear operator. Thus $D$ is an elliptic operator \cite{wells}.
\end{proof}
\begin{corollary} \label{cutting for sheaves}
The above $*_{4}$ operator on $C^{*}$ descends to $D$-harmonic subspace of the total complex $(C^{*},D)$. Furthermore, it splits $H^{2}(C^{*},D)\cong
Ext^{2}(\mathcal{F},\mathcal{F})$ into
\begin{equation}Ext^{2}(\mathcal{F},\mathcal{F})=Ext^{2}_{+}(\mathcal{F},\mathcal{F})\oplus Ext^{2}_{-}(\mathcal{F},\mathcal{F})
\nonumber \end{equation}
based on the eigenvalues of the operator.
\end{corollary}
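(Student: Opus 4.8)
The plan is to mimic the argument already given for bundles in Corollary~\ref{cor star4 cut coho into ASD}, transporting it through the double-complex setup. The essential point is that $*_4$ is an involution on $C^*$ that commutes with the $D$-Laplacian, hence preserves the space of $D$-harmonic forms, and then Hodge theory identifies that harmonic space with $Ext^2(\mathcal{F},\mathcal{F})$.

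First I would verify that $*_4$ restricts to an involution $*_4^2=1$ on the relevant grading piece. By construction $*_4$ on $\Omega^{0,q}(X,\mathcal{H}om(E^k,E^{k+p}))$ is built diagonally from the scalar $*_4$ on $\Omega^{0,q}(X)$ (which satisfies $*_4^2=1$) twisted by the Hermitian metrics $h_k$, exactly as in the bundle case where $*_4(\alpha\otimes e_i\otimes e^j)=*_4(\alpha)\otimes e_j\otimes e^i$; here the indices run over frames of $E^k$ and $E^{k+p}$. On the total-complex piece relevant to $Ext^2$, namely $C^2=\bigoplus_k\bigoplus_{p+q=2}\Omega^{0,q}(X,\mathcal{H}om(E^k,E^{k+p}))$, one must be slightly careful: $*_4$ on the scalar part sends $\Omega^{0,q}$ to $\Omega^{0,4-q}$, so to stay inside $C^2$ one should use the fact (established inside Lemma~\ref{double cpx is elliptic} and its proof) that $D^{*_4}=D^*$ and that the harmonic theory only sees the middle degree; alternatively, restrict attention to the subcomplex where the relevant $(0,q)$-degrees are $q=0,1,2$ paired with $p=2,1,0$, and note $*_4$ intertwines $q\leftrightarrow 2-q$ on this band, so it genuinely is an involution of $H^2(C^*,D)$.

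Next I would invoke Lemma~\ref{double cpx is elliptic}: $(C^*,D)$ is elliptic, so by Hodge theory $H^n(C^*,D)\cong \mathcal{H}^n_D$, the space of $D$-harmonic elements, and the spectral sequence argument cited there gives $\mathcal{H}^2_D\cong Ext^2(\mathcal{F},\mathcal{F})$. Since the proof of Lemma~\ref{double cpx is elliptic} already shows $D^{*_4}=\pm *_4 D *_4 = D^*$, we get $*_4\Delta_D = *_4(D^*D+DD^*) = *_4(*_4 D *_4 D + D *_4 D *_4)\cdot(\pm1)$; more cleanly, from $D^* = \pm *_4 D *_4$ and $*_4^2=1$ one computes $*_4 D^* *_4 = \pm D$, hence $*_4$ commutes with $\Delta_D = DD^*+D^*D$. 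Therefore $*_4$ preserves $\ker\Delta_D$, i.e. it descends to the $D$-harmonic subspace, and via the Hodge isomorphism it acts on $Ext^2(\mathcal{F},\mathcal{F})$. Because $*_4^2=1$, this action is diagonalizable with eigenvalues $\pm 1$, yielding the direct sum decomposition $Ext^2(\mathcal{F},\mathcal{F})=Ext^2_+(\mathcal{F},\mathcal{F})\oplus Ext^2_-(\mathcal{F},\mathcal{F})$ into the $(+1)$- and $(-1)$-eigenspaces.

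The step I expect to be the main obstacle is checking that $*_4$ genuinely commutes with $\Delta_D$ rather than just with $\Delta_{\overline\partial}$, because $D=\overline\partial+(-1)^q\delta$ includes the algebraic differential $\delta$ and the sign $(-1)^q$ interacts with the degree shift that $*_4$ performs on the $(0,q)$-index. This is exactly the kind of sign-bookkeeping that the proof of Lemma~\ref{lem descendence of star} handled in the bundle case using $*=\Omega\wedge *_4$, $\Omega$ holomorphic, and $|\Omega|=1$; the same three facts, applied componentwise to each $\mathcal{H}om(E^k,E^{k+p})$ piece, force $D^{*_4}=D^*$ with the correct sign, and this identity is precisely what the cited proof of Lemma~\ref{double cpx is elliptic} asserts. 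Granting that identity, the corollary follows formally, so I would present it as: (i) $*_4$ is an involution on the harmonic band, (ii) $*_4$ commutes with $\Delta_D$ by $D^{*_4}=D^*$, (iii) hence it descends to $\mathcal{H}^2_D\cong Ext^2(\mathcal{F},\mathcal{F})$ and splits it by eigenvalue.
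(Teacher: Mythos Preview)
Your proposal is correct and follows exactly the route the paper intends: the paper's own proof is the single line ``By a similar argument as Corollary~\ref{cor star4 cut coho into ASD},'' and you have correctly unpacked that argument using $*_4^2=1$ together with $D^{*_4}=D^*$ from Lemma~\ref{double cpx is elliptic}. One small correction: on $\Omega^{0,q}(X,\mathcal{H}om(E^k,E^{k+p}))$ the operator $*_4$ sends $q\mapsto 4-q$ (not $2-q$) and simultaneously $p\mapsto -p$ via the Hermitian adjoint on $\mathcal{H}om$, so the total degree $n=p+q$ goes to $4-n$; this is why $*_4$ genuinely preserves $C^2$, and with that adjustment your bookkeeping goes through.
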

\begin{proof}
By a similar argument as Corollary \ref{cor star4 cut coho into ASD} .
\end{proof}
\begin{remark} The subspace $Ext^{2}_{+}(\mathcal{F},\mathcal{F})$ depends on the choice of metrics on bundles in the resolved complex.
To make the above subspaces coherent when $\mathcal{F}$ varies in $\mathcal{M}_{c}$,
we should fix a metric on each complex vector bundle and assume the topological type of these bundles and
the length of the resolved complex are fixed over $\mathcal{M}_{c}$. Since we will not use $Ext^{2}_{+}(\mathcal{F},\mathcal{F})$ in the most general situation in this thesis, we do not discuss it at the moment.
\end{remark}

\newpage
\section{Local Kuranishi structure of $DT_{4}$ moduli spaces}
To study gauge theory on $X$,
we need to consider the space of all connections on $E$ up to gauge equivalences $\mathcal{B}=\mathcal{A}/\mathcal{G}$
and some subspace of it. We are especially interested in the moduli space (actually stack) of holomorphic structures on the underlying
topological vector bundle $E$.
To avoid the large automorphisms, we restrict ourselves to the moduli space of stable holomorphic bundles.
The stability here is with respect to the ample line bundle corresponding to the K\"ahler class $[\omega]$.

By the renowned theorem of Donaldson-Uhlenbeck-Yau \cite{UY}, the moduli space of slope stable bundles with underlying topological structure $E$ is
the moduli space of irreducible holomorphic Hermitian-Yang-Mills connections on $E$.

However, the holomorphic Hermitian-Yang-Mills equations
are unfortunately overdetermined in higher dimensional gauge theory. \\
${}$ \\
\textbf{The $DT_{4}$ moduli space}.
We start with our basic data
\begin{equation}
\begin{array}{lll}
      &  \textrm{ } (E,h)
      \\  &  \quad \textrm{ } \downarrow \\   &   (X,g,\omega).
\end{array}\nonumber\end{equation}
Following the idea of Donaldson and Thomas \cite{dt}, we should use the calibrated form $\Omega$ to cut down the number of equations
in the holomorphic HYM elliptic system. We are thus reduced to the Donaldson-Thomas's complex ASD equations \cite{dt}:
\begin{equation} \left\{ \begin{array}{l} \label{complex ASD equation}          
  F^{0,2}_{+}=0 \\ F\wedge\omega^{3}=0 ,    
\end{array}\right.\end{equation}
where the first equation is $F^{0,2}+*_{4}F^{0,2}=0$.
\begin{remark} ${}$ \\
1. For notation simplicity, we assume $c_{1}(E)=0$ in the moment map equation $F\wedge\omega^{3}=0$. \\
2. The unitary gauge transformation group preserves the above equations.  \\
3. We call the above equations the $DT_{4}$ equations.\\
4. Later we can see from the viewpoint of deformation-obstruction theory, the above cutting of $(0,2)$ curvature term ensures that we have perfect-obstruction theory,
which leads to the construction of virtual fundamental class of the moduli space.
\end{remark}
Then the definition of $DT_{4}$ moduli spaces follows from the above $DT_{4}$ equations.
\begin{definition}
We define the \textbf{$DT_{4}$ moduli space} $\mathcal{M}^{DT_{4}}_{c}$ to be the space of gauge equivalence classes of solutions of
equations (\ref{complex ASD equation}) , i.e. $\mathcal{M}^{DT_{4}}_{c}\hookrightarrow \mathcal{A}^{*}/\mathcal{G}^{0}$
as the zero loci of section $s=(\wedge F,F^{0,2}_{+})$ of the Banach bundle $E$ over $\mathcal{B}_{1}$,
where $\mathcal{B}_{1}=\mathcal{A^{*}}/\mathcal{G}^{0}$ and $E=\mathcal{A^{*}}\times_{\mathcal{G}^{0}}(\Omega^{0}(X,g_{E})_{k-1}\oplus\Omega^{0,2}_{+}(X,EndE)_{k-1})$.
\end{definition}
The $DT_{4}$ moduli space is a real analytic space possibly with non-reduced structure.
\begin{remark} The Banach manifold $\mathcal{A}^{*}/\mathcal{G}^{0}$ involves a choice of a large integer $k $ in the $L_{k}^{2}$ Sobolev norm completion.
By essentially the same argument as Proposition 4.2.16 \cite{dk}, we know the $DT_{4}$ moduli space is independent of the choice of $k$.
Because of this, we will omit $k $ in the notation of $DT_{4}$ moduli spaces.
\end{remark}
${}$ \\
\textbf{ Relations between $\mathcal{M}^{DT_{4}}_{c}$ and $\mathcal{M}_{c}^{o}$ }. By the definition of $\mathcal{M}^{DT_{4}}_{c}$, we obviously
have a map between two sets
\begin{equation}\mathcal{M}_{c}^{o}\rightarrow \mathcal{M}^{DT_{4}}_{c}. \nonumber \end{equation}
By Lemma \ref{C2 condition}, if $ch_{2}(E)\in H^{2,2}(X,\mathbb{C})$, $F^{0,2}_{+}=0\Rightarrow F^{0,2}=0$. Then the map is bijective.   \\
${}$ \\
\textbf{Local structure of $DT_{4}$ moduli spaces}. Since we have identified $\mathcal{M}^{DT_{4}}_{c}$ and $\mathcal{M}_{c}^{o}$ as sets,
now we want to set up the relations between the local analytic structures of these two spaces. In fact, we will show that there exists a closed imbedding $\mathcal{M}_{c}^{o}\hookrightarrow \mathcal{M}^{DT_{4}}_{c}$ between these two analytic spaces possibly with non-reduced structures. \\

We start with a unitary connection $d_{A}\in \mathcal{M}^{DT_{4}}_{c}$, denote its $(0,1)$ part by $\overline{\partial}_{A}$.
We have $F^{0,2}_{A}=0$ under the condition $ch_{2}(E)\in H^{2,2}(X,\mathbb{C})$. We assume without loss of generality that $d_{A}$
is a smooth connection by Proposition 4.2.16 \cite{dk}. \\

By the Hodge decomposition theorem with respect to $h$ on $E$.
\begin{equation}\Omega^{0,2}(X,EndE)_{k-1}=H^{0,2}(X,EndE)\oplus \overline{\partial}_{A}\Omega^{0,1}(X,EndE)_{k}
\oplus\overline{\partial}_{A}^{*}\Omega^{0,3}(X,EndE)_{k}  \nonumber \end{equation}
\begin{equation} I=\mathbb{H}^{0,2}+P_{\overline{\partial}_{A}}+ P_{\overline{\partial}_{A}^{*}} \nonumber \end{equation}
Meanwhile
\begin{equation}*_{4}:\overline{\partial}_{A}\Omega^{0,1}(X,EndE)_{k}\cong\overline{\partial}_{A}^{*}\Omega^{0,3}(X,EndE)_{k}
\nonumber\end{equation}
and
\begin{equation}*_{4}: H^{0,2}(X,EndE)\cong H^{0,2}(X,EndE)  \nonumber\end{equation}
induce
\begin{equation}H^{0,2}(X,EndE)=H^{0,2}_{+}(X,EndE)\oplus H^{0,2}_{-}(X,EndE).  \nonumber\end{equation}
Hence, we know
\begin{equation}F^{0,2}_{+}(d_{A}+a)=0 \Leftrightarrow a''\textrm{}\emph{satisfies} \quad (\ref{equation 3}), \textrm{}\textrm{} (\ref{equation 4}), \nonumber\end{equation}
where (\ref{equation 3}), (\ref{equation 4}) are defined to be
\begin{equation}\label{equation 3}\overline{\partial}_{A}a''+P_{\overline{\partial}_{A}}(a''\wedge a'')
+*_{4}P_{\overline{\partial}_{A}^{*}}(a''\wedge a'')=0 ,   \end{equation}
\begin{equation}\label{equation 4}\pi_{+}\circ\mathbb{H}^{0,2}(a''\wedge a'')=0. \end{equation}
Here $a\in\Omega^{1}(X,g_{E})_{k}$ and $a''\in\Omega^{0,1}(X,EndE)_{k}$ is its (0,1) part. \\

Near $d_{A}$, the $DT_{4}$ moduli space $\mathcal{M}^{DT_{4}}_{c}$ can be described as
\begin{equation} \{a\in\Omega^{1}(X,g_{E})_{k} \big{|}\textrm{ } \| a\|_{k} < \epsilon, \textrm{ } d_{A}^{*}a=0 ,
\textrm{ } d_{A}+a  \textrm{ }  s.t  \textrm{ } (\ref{complex ASD equation}) \}, \nonumber\end{equation}
where $d_{A}^{*}a=0$ is the linear unitary gauge fixing condition and $\epsilon$ is a small positive number. \\

We introduce a new space $\mathcal{M}_{A}^{+}$ which will help us establish the relations of local structures
between $\mathcal{M}_{c}^{DT_{4}}$ and $\mathcal{M}_{c}^{o}$.
\begin{lemma}\label{identify SA MA}
The map takes the unitary connection to $(0,1)$ connection
\begin{equation}d_{A}+a\longmapsto \overline{\partial}_{A}+a'' \nonumber \end{equation}
induces a local isomorphism near the origin
\begin{equation}\{a\in\Omega^{1}(X,g_{E})_{k} \big{|}\textrm{ } \|a\|_{k} < \epsilon, \textrm{ } d_{A}^{*}a=0 , \textrm{ } d_{A}+a  \textrm{ }
s.t \textrm{ }
(\ref{complex ASD equation}) \}\cong \mathcal{M}_{A}^{+},  \nonumber \end{equation}
where
\begin{equation}  \mathcal{M}_{A}^{+}\triangleq\{a''\in\Omega^{0,1}(X,EndE)_{k} \textrm{ }\big{|}\textrm{ }\| a''\|_{k} < \epsilon'',\textrm{ }
a'' \textrm{ } s.t \quad (\ref{equation 3}), \textrm{} (\ref{equation 4}), \textrm{} (\ref{equation 5})  \}
\nonumber  \end{equation}
and (\ref{equation 5}) is defined to be
\begin{equation}\label{equation 5}\overline{\partial}_{A}^{*}a''-\frac{i}{2}\wedge(a'\wedge a''+a''\wedge a')=0 ,  \end{equation}
$\epsilon''$ is a small positive number determined by $\epsilon$ and the above isomorphism.
\end{lemma}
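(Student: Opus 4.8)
The plan is to realize the claimed local isomorphism as a consequence of two facts: first, that the unitary gauge-fixing condition $d_A^*a = 0$, together with the $DT_4$ equations, can be repackaged entirely in terms of the $(0,1)$-part $a''$; and second, that the "missing" information in passing from $a$ to $a''$—namely the $(1,0)$-part $a' = -(a'')^*$ (using that $a$ is $g_E$-valued, so $a'$ is determined by $a''$ via the Hermitian metric)—is recovered precisely by equation (\ref{equation 5}). So the map $a \mapsto a''$ is visibly well-defined and, since $a' = -(a'')^*$, injective on the nose; the content is surjectivity onto $\mathcal{M}_A^+$ and the fact that the inverse is continuous (indeed real-analytic) near the origin.

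First I would spell out the correspondence between the defining conditions. Since $a \in \Omega^1(X,g_E)_k$ is skew-Hermitian, we have $a = a' + a''$ with $a' = -(a'')^*$, so the full connection $d_A + a$ is determined by $a''$ alone; this gives a linear isomorphism $\Omega^1(X,g_E)_k \cong \Omega^{0,1}(X,EndE)_k$ before imposing any equations. Under this identification, the first $DT_4$ equation $F^{0,2}_+(d_A+a)=0$ is, by the Hodge-decomposition discussion preceding the lemma, equivalent to the pair (\ref{equation 3}), (\ref{equation 4}) imposed on $a''$. It remains to check that the second $DT_4$ equation $F(d_A+a)\wedge\omega^3 = 0$ together with the gauge-fixing $d_A^*a = 0$ translates into (\ref{equation 5}). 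I would compute $d_A^* a = \overline{\partial}_A^* a'' + \partial_A^* a'$ and use $a' = -(a'')^*$ to write $\partial_A^* a'$ in terms of $\overline{\partial}_A^* a''$; then the contraction identity $\Lambda F(d_A+a) = $ (linear term) $+ \frac{i}{2}\Lambda(a'\wedge a'' + a''\wedge a')$ combines with the linearized gauge condition so that the conjunction of "$d_A^*a=0$" and "$\Lambda F(d_A+a)=0$" is equivalent to the single equation (\ref{equation 5}) on $a''$ (the linear part of (\ref{equation 5}) being the gauge-fixing, the quadratic part absorbing the moment-map term); here one uses $c_1(E)=0$ so that the constant/harmonic ambiguity in the moment map drops out. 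This is essentially the standard Kähler identity bookkeeping, done e.g. in the Donaldson-Uhlenbeck-Yau circle of ideas.

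Having matched the equations, the isomorphism of the two cut-out sets follows: $a \mapsto a''$ sends the set $\{a : \|a\|_k < \epsilon,\ d_A^*a = 0,\ (\ref{complex ASD equation})\}$ bijectively onto $\mathcal{M}_A^+ = \{a'' : \|a''\|_k < \epsilon'',\ (\ref{equation 3}),(\ref{equation 4}),(\ref{equation 5})\}$, with $\epsilon''$ chosen so that $\|a''\|_k < \epsilon''$ forces $\|a\|_k = \|a' + a''\|_k < \epsilon$ (possible since $\|a'\|_k = \|(a'')^*\|_k$ is controlled by $\|a''\|_k$) and conversely. Continuity of the inverse $a'' \mapsto a = -(a'')^* + a''$ is immediate from the boundedness of $a'' \mapsto (a'')^*$ on the Sobolev completion. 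The one genuine subtlety—the part I expect to be the main obstacle—is making the quadratic-term bookkeeping in the derivation of (\ref{equation 5}) precise: one must verify that the nonlinear contribution to $\Lambda F(d_A+a)$ and the nonlinear correction hidden in expressing $\partial_A^* a'$ via $\overline{\partial}_A^* a''$ assemble exactly into the term $-\frac{i}{2}\Lambda(a'\wedge a'' + a''\wedge a')$ appearing in (\ref{equation 5}), with the correct constant and sign, and that no further obstruction component survives. Once the algebra of Kähler identities applied to $EndE$-valued forms is checked carefully, the rest is formal. I would also remark that the real-analyticity of the maps in both directions is automatic since all the equations are polynomial in $a''$ and its first derivatives, so "local isomorphism" can be upgraded to an isomorphism of real-analytic germs, which is what is needed for the Kuranishi-model statement of Theorem \ref{mo mDT4}.
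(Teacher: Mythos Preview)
Your proposal is correct and follows essentially the same route as the paper: both use the linear identification $a\leftrightarrow a''$ and then match the defining equations, with the paper packaging the passage from $\{d_A^*a=0,\ \Lambda F=0\}$ to equation~(\ref{equation 5}) more tersely while you spell out the K\"ahler-identity computation explicitly. One small slip in your parenthetical: the linear part $\overline\partial_A^*a''=0$ of (\ref{equation 5}) encodes \emph{both} the gauge-fixing $d_A^*a=0$ and the linearized moment map $\Lambda d_Aa=0$ (as real and imaginary parts), not just the former---but this does not affect the argument.
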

\begin{proof}
Locally, we consider an ambient space of the $DT_{4}$ moduli space
\begin{equation} \{a\in\Omega^{1}(X,g_{E})_{k} \big{|}\textrm{ } \|a\|_{k} < \epsilon, \textrm{ } d_{A}^{*}a=0 ,
\textrm{ } \wedge(d_{A}a+a\wedge a)=0, \textrm{ }  a''  \textrm{ }  s.t  \textrm{ } (\ref{equation 3}) \}. \nonumber\end{equation}

By the isomorphism sending unitary connections to $(0,1)$ connections,
we identify the above space with an open subset of $Q_{A}$ by implicit function theorem ($\epsilon \ll 1$), where
\begin{equation}   Q_{A}=\{a''\in\Omega^{0,1}(X,EndE)_{k} \textrm{ }\big{|}\textrm{ }\|a''\|_{k} < \epsilon'',\textrm{ }
a'' \textrm{ } s.t \quad (\ref{equation 3}), \textrm{} (\ref{equation 5})  \}. \nonumber  \end{equation}
Because $F^{0,2}_{+}=0$ is not affected by the map
$d_{A}+a\longmapsto \overline{\partial}_{A}+a'' $,
we get isomorphic analytic subspaces after adding it
to both of the two ambient spaces.
\end{proof}
Now we establish the isomorphism between $Q_{A}$ and $H^{0,1}(X,EndE)$.
\begin{lemma}\label{QA iso to H1}
The harmonic projection map
\begin{equation}\mathbb{H}^{0,1}: Q_{A}\rightarrow H^{0,1}(X,EndE)   \nonumber \end{equation}
is a local analytic isomorphism if $\epsilon''$ is small.
\end{lemma}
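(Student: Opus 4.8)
The plan is to show that the map $\mathbb{H}^{0,1}\colon Q_A \to H^{0,1}(X,\mathrm{End}E)$ is injective and surjective onto a neighbourhood of the origin by a standard implicit-function-theorem / elliptic-bootstrap argument, treating $Q_A$ as the zero locus of an analytic map between Banach spaces and showing the linearization of $\mathbb{H}^{0,1}$ at $0$ along $Q_A$ is an isomorphism. First I would record that $Q_A$ is cut out inside $\{a''\in\Omega^{0,1}(X,\mathrm{End}E)_k : \|a''\|_k<\epsilon''\}$ by equations (\ref{equation 3}) and (\ref{equation 5}), both of which have vanishing linear part lying in the complement of the harmonic space: differentiating (\ref{equation 3}) at $a''=0$ gives $\overline{\partial}_A a'' = 0$ modulo the nonlinear terms $P_{\overline{\partial}_A}(a''\wedge a'') + *_4 P_{\overline{\partial}_A^*}(a''\wedge a'')$, and differentiating (\ref{equation 5}) gives $\overline{\partial}_A^* a'' = 0$ modulo the quadratic term. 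So to first order $a''$ is harmonic, which already makes $\mathbb{H}^{0,1}$ an isomorphism on tangent spaces at the origin; the content is to promote this to a genuine local analytic isomorphism.

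The key steps, in order: (1) Use the Hodge decomposition $\Omega^{0,1}(X,\mathrm{End}E)_k = H^{0,1} \oplus \overline{\partial}_A\Omega^{0,0}_{k+1} \oplus \overline{\partial}_A^*\Omega^{0,2}_{k+1}$ to write $a'' = h + \overline{\partial}_A\xi + \overline{\partial}_A^*\eta$ with $h = \mathbb{H}^{0,1}(a'')$. (2) Show that the pair of equations (\ref{equation 3}), (\ref{equation 5}) determines the non-harmonic part $(\overline{\partial}_A\xi, \overline{\partial}_A^*\eta)$ as an analytic function of $h$ for $\|h\|$ small: apply $P_{\overline{\partial}_A^*}$ (the Green's-operator projection onto the $\overline{\partial}_A$-exact part) to (\ref{equation 3}) and $P_{\overline{\partial}_A}$ to (\ref{equation 5}), obtaining a fixed-point equation of the schematic form $(\overline{\partial}_A\xi,\overline{\partial}_A^*\eta) = N(h,\overline{\partial}_A\xi,\overline{\partial}_A^*\eta)$ where $N$ is quadratic-or-higher and involves Green's operators, hence a contraction on a small ball; this is exactly the Kuranishi-type argument. (3) Conclude that $\mathbb{H}^{0,1}$ has an analytic local inverse $h \mapsto a''(h)$, so it is a local analytic isomorphism; elliptic regularity ensures the solutions are smooth and the Banach-space statement is independent of $k$ as already noted in the excerpt. (4) Finally observe that we have not yet imposed (\ref{equation 4}); it is precisely this residual equation, transported through the isomorphism of step (3) and through Lemma \ref{identify SA MA}, that becomes the Kuranishi obstruction map $\kappa_+ = \pi_+\circ\mathbb{H}^{0,2}(a''(h)\wedge a''(h))$ appearing in Theorem \ref{mo mDT4}.

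The main obstacle I expect is step (2): one must be careful that the two ``gauge-fixing-type'' equations (\ref{equation 3}) and (\ref{equation 5}) together pin down both exact pieces $\overline{\partial}_A\xi$ and $\overline{\partial}_A^*\eta$ without overlap, i.e. that the relevant linearized operator restricted to $\overline{\partial}_A\Omega^{0,0}\oplus\overline{\partial}_A^*\Omega^{0,2}$ is invertible — this uses the $*_4$-compatibility $*_4\colon \overline{\partial}_A\Omega^{0,1}_k \xrightarrow{\sim} \overline{\partial}_A^*\Omega^{0,3}_k$ recorded earlier and the fact (Lemma \ref{lem descendence of star}) that $\overline{\partial}_E^{*_4} = \overline{\partial}_E^*$, so that the $*_4$ appearing in (\ref{equation 3}) does not spoil the triangular structure. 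Keeping track of which Sobolev completions the Green's operators gain derivatives on, so that the nonlinear map $N$ is genuinely a contraction on a fixed small ball independent of the auxiliary data, is the routine-but-delicate part; everything else is formal once the contraction is set up.
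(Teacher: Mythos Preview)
Your proposal is correct and follows essentially the same implicit-function-theorem strategy as the paper: the paper packages equations (\ref{equation 3}) and (\ref{equation 5}) together with the harmonic projection into a single analytic map $q:\Omega^{0,1}(EndE)_k\to H^{0,1}\oplus\overline{\partial}_A^*\Omega^{0,1}_k\oplus\overline{\partial}_A^*\Omega^{0,2}_{k-1}$, checks $dq_0(v)=(\mathbb{H}v,\overline{\partial}_A^*v,\overline{\partial}_A^*\overline{\partial}_A v)$ has the explicit Green's-operator inverse $(u_0,u_1,u_2)\mapsto u_0+G\overline{\partial}_A u_1+Gu_2$, and reads off $Q_A=q^{-1}(H^{0,1}\times 0\times 0)$. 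Your anticipated ``main obstacle'' involving $*_4$ is in fact a non-issue: the $*_4 P_{\overline{\partial}_A^*}(a''\wedge a'')$ term in (\ref{equation 3}) is purely quadratic, so it vanishes at the linearization and the triangular structure you want is automatic.
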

\begin{proof}
Define
\begin{equation}q: \Omega^{0,1}(EndE)_{k}\rightarrow H^{0,1}(EndE)\oplus {\overline{\partial}_{A}^{*}\Omega^{0,1}(EndE)}_{k}\oplus
{\overline{\partial}_{A}^{*}\Omega^{0,2}(EndE)}_{k-1}  \nonumber \end{equation}
such that
\begin{equation}q(a'')=\bigg(\mathbb{H}(a''),\overline{\partial}_{A}^{*}a''-\frac{i}{2}\wedge(a'\wedge a''+a''\wedge a'),
\overline{\partial}_{A}^{*}\big(\overline{\partial}_{A}a''+P_{\overline{\partial}_{A}}(a''\wedge a'')
+*_{4}P_{\overline{\partial}_{A}^{*}}(a''\wedge a'')\big)\bigg).  \nonumber \end{equation}
This is well defined because
\begin{eqnarray*}\wedge(\varphi)&=&\wedge(\mathbb{H}(\varphi))+\wedge(\overline{\partial}_{A}\overline{\partial}_{A}^{*}G\varphi)+
\wedge(\overline{\partial}_{A}^{*}\overline{\partial}_{A}G\varphi)  \\
&=& \wedge(\mathbb{H}(\varphi))+ \overline{\partial}_{A}\wedge(\overline{\partial}_{A}^{*}G\varphi)+
\overline{\partial}_{A}^{*}(\wedge\overline{\partial}_{A}G\varphi) \\
&=& \wedge(\mathbb{H}(\varphi))+0+\overline{\partial}_{A}^{*}(\wedge\overline{\partial}_{A}G\varphi),
\nonumber\end{eqnarray*}
where $\varphi=a'\wedge a''+a''\wedge a'\in\Omega^{1,1}(X,End_{0}E)$ and we have used the fact that $\wedge$ vanishes on a one form.  \\
Meanwhile:
\begin{equation} \overline{\partial}_{A}\wedge\mathbb{H}(\varphi)=\wedge\overline{\partial}_{A}\mathbb{H}(\varphi)\pm
\overline{\partial}_{A}^{*}\mathbb{H}(\varphi)=0.
\nonumber\end{equation}
And similarly:
\begin{equation} \overline{\partial}_{A}^{*}\wedge\mathbb{H}(\varphi)=0.\nonumber\end{equation}
Thus $\wedge\mathbb{H}(\varphi)\in H^{0}(End_{0}E)=0$ by the simpleness of
$(E,\overline{\partial}_{A})$.
Hence image of the map $q$ is in the target space.

Now we take the differentiation of $q$ at $0$,
\begin{equation}dq_{0}(v)=(\mathbb{H}(v),\overline{\partial}_{A}^{*}v,\overline{\partial}_{A}^{*}\overline{\partial}_{A}v ), \nonumber\end{equation}
which is a diffeomorphism whose inverse is given by
\begin{equation}dq_{0}^{-1}(u_{0},u_{1},u_{2})=u_{0}+G\overline{\partial}_{A}u_{1}+Gu_{2}. \nonumber \end{equation}
By the implicit function theorem, $q$ is a local analytic isomorphism around the origin and $Q_{A}=q^{-1}(H^{0,1}(EndE)\times0\times0)$.
\end{proof}


By Lemma \ref{identify SA MA}, we are reduced to study the space $\mathcal{M}_{A}^{+}$ (we may need to shrink it by requiring $\epsilon''$ to be smaller). In fact, it is enough to study the following space:
\begin{equation}\mathcal{M}_{A}=\{a''\in\Omega^{0,1}(X,EndE)_{k} \textrm{ }\big{|}\textrm{ }\| a''\|_{k} < \epsilon'',\textrm{ }
\mathbb{H}^{0,2}(a''\wedge a'')=0 ,\textrm{} a'' \textrm{ }
s.t \quad (\ref{equation 3}), \textrm{} (\ref{equation 5}) \}.   \nonumber \end{equation}
By the above lemma, $\mathcal{M}_{A}\hookrightarrow Q_{A}$ is an analytic subspace of the finite dimensional smooth manifold $Q_{A}$. \\

We will show that $\mathcal{M}_{A}$ is isomorphic to an analytic neighbourhood of $\overline{\partial}_{A}$ in $\mathcal{M}_{c}^{o}$ with analytic topology. \\

To achieve this, we first introduce some notations. Denote
\begin{equation}P:\Omega^{0,1}(X,EndE)_{k}\rightarrow \Omega^{0,2}(X,EndE)_{k-1} \nonumber \end{equation}
to be the local analytic map defined by
\begin{equation}P(a'')=\overline{\partial}_{A}a''+a''\wedge a'' \nonumber \end{equation}
and
define an analytic map $\lambda: Q_{A}\rightarrow Q_{A}\times \Omega^{0,2}(X,EndE)_{k-1}$ by
\begin{equation}\lambda(a'')=(a'',P(a'')). \nonumber \end{equation}
Note that
\begin{equation}\label{QA def}  Q_{A}\cap P^{-1}(0)=\{a'' \textrm{ }\big{|}\textrm{ }\| a''\|_{k} < \epsilon'',\textrm{ }
F^{0,2}(\overline{\partial}_{A}+a'')=0 ,\textrm{}  a'' \textrm{ }
s.t \textrm{} (\ref{equation 5}) \},    \end{equation}
which gives a neighbourhood of $\overline{\partial}_{A}$ in $\mathcal{M}_{c}^{o}$ and
$Q_{A}\cap P^{-1}(0)\hookrightarrow \mathcal{M}_{A} $ as a closed analytic subspace.
Now we want to show they are actually the same analytic space
possibly with non-reduced structures, i.e. $Q_{A}\cap P^{-1}(0)=\mathcal{M}_{A}$. This is enough to set up
the relation of analytic structures between $\mathcal{M}_{c}^{DT_{4}}$ and $\mathcal{M}_{c}^{o}$. \\
The image of the above map $\lambda$ satisfies
\begin{lemma}\label{image of lamda}
\begin{equation}Im(\lambda)\subseteq F, \nonumber \end{equation}
where
\begin{equation}  F=\left\{ \begin{array}{lll}
   (a'',\theta)\in Q_{A}\times\Omega^{0,2}_{k-1}\textrm{ }\big{|} & \overline{\partial}_{A}^{*}\theta=
   \overline{\partial}_{A}^{*}(*_{4}\overline{\partial}_{A}^{*}G(a''\wedge \theta)), &
   \overline{\partial}_{A}^{*}(\overline{\partial}_{A}\theta+a''\wedge\theta)=0  
\nonumber\end{array}\right\}.\end{equation}
\end{lemma}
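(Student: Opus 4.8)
The plan is to take an arbitrary element of $Im(\lambda)$, namely a pair $(a'', P(a''))$ with $a'' \in Q_A$, set $\theta = P(a'') = \overline{\partial}_A a'' + a'' \wedge a''$, and verify the two defining relations of $F$ directly. Both relations are identities of the form $\overline{\partial}_A^{*}(\cdots) = \overline{\partial}_A^{*}(\cdots)$, so the strategy is to apply $\overline{\partial}_A^{*}$ to $\theta$ (and to $\overline{\partial}_A\theta + a''\wedge\theta$) and simplify using: the Bianchi-type identity $\overline{\partial}_A\theta = \overline{\partial}_A(a''\wedge a'') = a''\wedge \overline{\partial}_A a'' - \overline{\partial}_A a'' \wedge a'' $ (equivalently $\overline{\partial}_A P(a'') = a'' \wedge P(a'') - P(a'')\wedge a''$, the deformed Bianchi identity), the Hodge decomposition $I = \mathbb{H}^{0,2} + P_{\overline{\partial}_A} + P_{\overline{\partial}_A^{*}}$ on $\Omega^{0,2}(X,EndE)_{k-1}$, and the constraint that $a''$ already satisfies equations (\ref{equation 3}) and (\ref{equation 5}).

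The key computational steps, in order: first, decompose $\theta = \mathbb{H}^{0,2}(a''\wedge a'') + P_{\overline{\partial}_A}(\overline{\partial}_A a'' + a''\wedge a'') + P_{\overline{\partial}_A^{*}}(a''\wedge a'')$ — note $\overline{\partial}_A a''$ is already $\overline{\partial}_A$-exact, so it lands in the middle summand, and equation (\ref{equation 3}) says precisely that the middle summand equals $-*_4 P_{\overline{\partial}_A^{*}}(a''\wedge a'')$. Hence on the nose $\overline{\partial}_A^{*}\theta = \overline{\partial}_A^{*}\big(P_{\overline{\partial}_A^{*}}(a''\wedge a'') - *_4 P_{\overline{\partial}_A^{*}}(a''\wedge a'')\big)$; but $\overline{\partial}_A^{*}$ kills $\overline{\partial}_A^{*}$-exact forms, so the first term dies and $\overline{\partial}_A^{*}\theta = -\overline{\partial}_A^{*}\big(*_4 P_{\overline{\partial}_A^{*}}(a''\wedge a'')\big)$. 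Second, one must recognize $P_{\overline{\partial}_A^{*}}(a''\wedge\theta)$, up to the relevant lower-order corrections coming from (\ref{equation 5}) and the definition of $P$, as $\overline{\partial}_A^{*}G(a''\wedge\theta)$-type data, matching it against $P_{\overline{\partial}_A^{*}}(a''\wedge a'')$ — this uses that $\theta = P(a'')$ differs from $a''\wedge a''$ only by $\overline{\partial}_A a''$, which is harmless after applying $\overline{\partial}_A^{*}$ and projecting. For the second relation, $\overline{\partial}_A^{*}(\overline{\partial}_A\theta + a''\wedge\theta) = 0$, use the deformed Bianchi identity $\overline{\partial}_A\theta + a''\wedge\theta - \theta\wedge a'' = 0$ (i.e. $d_A'' \theta = 0$ where $d_A'' = \overline{\partial}_A + [a'', \cdot]$), which rewrites $\overline{\partial}_A\theta + a''\wedge\theta = \theta\wedge a''$; then one checks $\overline{\partial}_A^{*}(\theta\wedge a'')$ vanishes by the same mechanism — essentially because $\theta$ is a sum of a harmonic piece, a $\overline{\partial}_A^{*}$-exact piece, and the image under $*_4$ of a $\overline{\partial}_A^{*}$-exact piece, all of which interact with $a''$ via (\ref{equation 3}), (\ref{equation 5}) to produce something in the kernel of $\overline{\partial}_A^{*}$.

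The main obstacle I anticipate is the second relation: tracking the wedge-product bookkeeping in $\overline{\partial}_A^{*}(\theta\wedge a'')$ through the Bianchi identity and keeping the three-term decomposition of $\theta$ aligned with the constraints on $a''$, especially handling the $*_4$-twisted term $*_4 P_{\overline{\partial}_A^{*}}(a''\wedge a'')$, which does not behave simply under $\overline{\partial}_A^{*}$ and requires Lemma \ref{lem descendence of star} ($\overline{\partial}_E^{*} = \overline{\partial}_E^{*_4}$) to commute $*_4$ past $\overline{\partial}_A^{*}$. The first relation, by contrast, should fall out almost immediately from substituting (\ref{equation 3}) into the Hodge decomposition of $\theta$ and discarding the $\overline{\partial}_A^{*}$-exact summand. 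Throughout, one works in a small enough neighborhood ($\|a''\|_k < \epsilon''$) that all implicit-function-theorem inversions from Lemmas \ref{identify SA MA} and \ref{QA iso to H1} remain valid, and the Green's operator $G$ and projections are the ones attached to the fixed metric $h$ and connection $d_A$.
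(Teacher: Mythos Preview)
Your decomposition for the first relation is correct through the line $\overline{\partial}_A^{*}\theta = -\overline{\partial}_A^{*}\big(*_4 P_{\overline{\partial}_A^{*}}(a''\wedge a'')\big)$, but the mechanism you propose for converting $P_{\overline{\partial}_A^{*}}(a''\wedge a'')$ into $\overline{\partial}_A^{*}G(a''\wedge\theta)$ is not the right one, and the references to equation~(\ref{equation 5}) and to ``lower-order corrections'' are red herrings. The paper's route is cleaner and purely algebraic: write $P_{\overline{\partial}_A^{*}}(a''\wedge a'') = \overline{\partial}_A^{*}\overline{\partial}_A G(a''\wedge a'') = \overline{\partial}_A^{*} G\,\overline{\partial}_A(a''\wedge a'')$, then observe $\overline{\partial}_A(a''\wedge a'') = \overline{\partial}_A\theta$ (since $\overline{\partial}_A^{2}a''=0$) and invoke the Bianchi identity for the curvature $\theta$ of $\overline{\partial}_A+a''$ to rewrite $\overline{\partial}_A\theta$ as $-a''\wedge\theta$. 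Substituting back gives exactly $\overline{\partial}_A^{*}\theta = \overline{\partial}_A^{*}\big(*_4\overline{\partial}_A^{*}G(a''\wedge\theta)\big)$. So the Bianchi identity is the missing key for the \emph{first} relation, not just the second.

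Conversely, you have inverted where the difficulty lies: the second relation $\overline{\partial}_A^{*}(\overline{\partial}_A\theta + a''\wedge\theta)=0$ is immediate from Bianchi in the paper's convention (the expression inside is identically zero), so no decomposition of $\theta$, no appeal to (\ref{equation 3}) or (\ref{equation 5}), and no use of Lemma~\ref{lem descendence of star} is needed there. Your anticipated ``main obstacle'' evaporates once you recognize this. Equation~(\ref{equation 5}) is used to define $Q_A$ but plays no role in verifying either relation of $F$; likewise the metric identity $\overline{\partial}_E^{*}=\overline{\partial}_E^{*_4}$ is not invoked in this lemma.
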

\begin{proof}
Let
\begin{eqnarray*}\theta &=&\overline{\partial}_{A}a''+a''\wedge a'' \\
&=&  \overline{\partial}_{A}a''+P_{\overline{\partial}_{A}}(a''\wedge a'')+\mathbb{H}(a''\wedge a'')
+P_{\overline{\partial}_{A}^{*}}(a''\wedge a'').
\end{eqnarray*}
By definition
\begin{equation}a''\in Q_{A}\Rightarrow \overline{\partial}_{A}\alpha+P_{\overline{\partial}_{A}}(\alpha\wedge\alpha)
+*_{4}P_{\overline{\partial}_{A}^{*}}(\alpha\wedge\alpha)=0.
\nonumber \end{equation}
Hence
\begin{eqnarray}\label{bianchi 0}\theta-\mathbb{H}(a''\wedge a'')&=&P_{\overline{\partial}_{A}^{*}}(a''\wedge a'')-*_{4}P_{\overline{\partial}_{A}^{*}}(a''\wedge a'') \\
&=& \overline{\partial}_{A}^{*}\overline{\partial}_{A}G(a''\wedge a'')-*_{4}\overline{\partial}_{A}^{*}\overline{\partial}_{A}G(a''\wedge a'').
\end{eqnarray}
Taking $\overline{\partial}_{A}$ to both sides of $\theta=\overline{\partial}_{A}a''+a''\wedge a''$, we get
\begin{equation}\overline{\partial}_{A}\theta=\overline{\partial}_{A}(a''\wedge a'').\nonumber \end{equation}
Combined with the Bianchi identity $\overline{\partial}_{A}\theta+a''\wedge\theta=0$, we have
\begin{equation}\label{bianchi 1}\overline{\partial}_{A}(a''\wedge a'')=-a''\wedge\theta. \end{equation}
Using (\ref{bianchi 0}) and (\ref{bianchi 1}), we have
\begin{equation}\theta-\mathbb{H}^{0,2}(a''\wedge a'')=
\overline{\partial}_{A}^{*}\overline{\partial}_{A}G(a''\wedge a'')+*_{4}\overline{\partial}_{A}^{*}G(a''\wedge \theta). \nonumber \end{equation}
After taking $\overline{\partial}_{A}^{*}$, we finally get
\begin{equation}\overline{\partial}_{A}^{*}\theta=\overline{\partial}_{A}^{*}\big(*_{4}\overline{\partial}_{A}^{*}G(a''\wedge\theta)\big).
\nonumber \end{equation}
\end{proof}
Now we will show that $F$ defined above is actually a finite dimensional smooth manifold.
\begin{lemma}\label{F iso to H1 H2}
The harmonic projection map
\begin{equation}(\mathbb{H}^{0,1}\times \mathbb{H}^{0,2}): F\rightarrow H^{0,1}(X,EndE)\times H^{0,2}(X,EndE) \nonumber \end{equation}
is a local analytic isomorphism.
\end{lemma}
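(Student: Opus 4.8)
The plan is to mimic the proof of Lemma \ref{QA iso to H1}, where a similar harmonic-projection map was shown to be a local isomorphism via the implicit function theorem, now enriched by the second factor coming from the $(0,2)$-part. First I would set up an auxiliary analytic map
\begin{equation}
\Phi: \Omega^{0,1}(X,EndE)_{k}\times\Omega^{0,2}(X,EndE)_{k-1}\rightarrow H^{0,1}\oplus(\text{image terms})\oplus H^{0,2}\oplus(\text{image terms}),
\nonumber
\end{equation}
whose components are the two harmonic projections $\mathbb{H}^{0,1}(a'')$ and $\mathbb{H}^{0,2}(\theta)$ together with the two defining relations of $F$, namely $\overline{\partial}_{A}^{*}\theta-\overline{\partial}_{A}^{*}(*_{4}\overline{\partial}_{A}^{*}G(a''\wedge\theta))$ and $\overline{\partial}_{A}^{*}(\overline{\partial}_{A}\theta+a''\wedge\theta)$, plus the gauge-fixing term (\ref{equation 5}) already built into $Q_{A}$. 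Exactly as in Lemma \ref{QA iso to H1}, one checks these target expressions land in the correct Hodge summands by using the Kähler identities, the simpleness of $(E,\overline{\partial}_{A})$ (so that $H^{0}(End_0 E)=0$), and the commutation of $\wedge$, $\overline{\partial}_{A}$, $\overline{\partial}_{A}^{*}$, $G$; this is the same bookkeeping as before and I would only indicate it.

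The core computation is the linearization at the origin. Since all the nonlinear terms are quadratic in $(a'',\theta)$, differentiating at $0$ kills the wedge terms, and $d\Phi_{0}$ becomes block-diagonal: on the $a''$ factor it is $v\mapsto(\mathbb{H}^{0,1}(v),\overline{\partial}_{A}^{*}v,\overline{\partial}_{A}^{*}\overline{\partial}_{A}v)$, which is precisely the isomorphism $dq_{0}$ inverted in Lemma \ref{QA iso to H1}; on the $\theta$ factor it is $w\mapsto(\mathbb{H}^{0,2}(w),\overline{\partial}_{A}^{*}w,\overline{\partial}_{A}^{*}\overline{\partial}_{A}w)$, whose inverse is $(u_0,u_1,u_2)\mapsto u_0+G\overline{\partial}_{A}u_1+Gu_2$ by the Hodge decomposition of $\Omega^{0,2}(X,EndE)$. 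Hence $d\Phi_{0}$ is a linear isomorphism, so by the implicit function theorem $\Phi$ is a local analytic isomorphism near $0$, and $F=\Phi^{-1}\big(H^{0,1}\times 0\times 0\times H^{0,2}\times 0\times 0\big)$ is a finite-dimensional smooth submanifold on which $(\mathbb{H}^{0,1}\times\mathbb{H}^{0,2})$ restricts to the claimed local isomorphism onto $H^{0,1}(X,EndE)\times H^{0,2}(X,EndE)$.

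The main obstacle I anticipate is not the linear algebra but verifying well-definedness of $\Phi$, i.e. that the two nonlinear "relation" components of $F$ genuinely take values in the $\overline{\partial}_{A}^{*}$-exact Sobolev subspaces (not merely in the full space), uniformly for small $(a'',\theta)$; this requires repeating the Green's-operator manipulation from Lemma \ref{image of lamda} and Lemma \ref{QA iso to H1} and invoking $H^{0}(End_0E)=0$ at the right spot, together with elliptic estimates to control the $L^2_k$ norms of the quadratic remainders. Once that is in place the rest is formal. A secondary point is bookkeeping of Sobolev indices: $a''\wedge\theta$ lies in $L^2_{k-1}$, $G$ and $\overline{\partial}_{A}^{*}$ adjust the index appropriately, and one must confirm the target spaces are the correct completions so that the implicit function theorem applies on Banach spaces; this is routine given $k\gg 0$.
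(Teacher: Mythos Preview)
Your proposal is correct and follows essentially the same implicit-function-theorem strategy as the paper. The paper's execution is slightly more streamlined in two ways. First, it takes $Q_{A}$ itself as the first factor of the domain (since Lemma~\ref{QA iso to H1} already identifies $Q_{A}\cong H^{0,1}$), so the auxiliary map $f$ acts by the identity on the $a''$-slot and only the $\theta$-slot needs analysis; you instead rebuild the $a''$-part of $q$ into $\Phi$, which is harmless but redundant. Second, rather than keeping the two defining relations of $F$ as separate components landing in $\overline{\partial}_{A}^{*}$-image subspaces, the paper applies $\overline{\partial}_{A}$ to the first relation and adds the second relation and $\mathbb{H}^{0,2}(\theta)$ into a single expression
\[
\mathbb{H}^{0,2}(\theta)+\overline{\partial}_{A}\overline{\partial}_{A}^{*}\big(\theta-*_{4}\overline{\partial}_{A}^{*}G(a''\wedge\theta)\big)+\overline{\partial}_{A}^{*}(\overline{\partial}_{A}\theta+a''\wedge\theta)
\]
taking values in the full space $\Omega^{0,2}(X,EndE)_{k-3}$, with linearization $\mathbb{H}^{0,2}+\Delta_{\overline{\partial}_{A}}$ and inverse $\mathbb{H}^{0,2}+G$. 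This sidesteps precisely the ``main obstacle'' you flag: there is no subspace constraint on the target to verify. Your version works too (the relation components are visibly of the form $\overline{\partial}_{A}^{*}(\cdot)$), but the paper's packaging makes the well-definedness automatic.
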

\begin{proof}
Now we define a map $f$
\begin{equation}f:Q_{A}\times\Omega^{0,2}(X,EndE)_{k-1}\rightarrow Q_{A}\times\Omega^{0,2}(X,EndE)_{k-3} \nonumber \end{equation}
as follows
\begin{equation}f(a'',\theta)=\bigg(a'',\mathbb{H}^{0,2}(\theta)+\overline{\partial}_{A}\overline{\partial}_{A}^{*}\big(\theta-*_{4}
\overline{\partial}_{A}^{*}G(a''\wedge\theta)\big)+\overline{\partial}_{A}^{*}(\overline{\partial}_{A}\theta+a''\wedge\theta)\bigg).
\nonumber \end{equation}
It is easy to check that $f$ is a local analytic isomorphism by using implicit function theorem
\begin{equation}df_{0,0}(v_{1},v_{2})=(v_{1},\mathbb{H}^{0,2}v_{2}+\overline{\partial}_{A}\overline{\partial}_{A}^{*}v_{2}
+\overline{\partial}_{A}^{*}\overline{\partial}_{A}v_{2}) \nonumber\end{equation}
whose inverse is given by
\begin{equation}df_{0,0}^{-1}(u_{1},u_{2})=(u_{1},\mathbb{H}^{0,2}u_{2}+Gu_{2}). \nonumber \end{equation}
Hence $F=f^{-1}(Q_{A}\times H^{0,2}(X,EndE))$ and the projection map
\begin{equation}(\mathbb{H}^{0,1}\times \mathbb{H}^{0,2}) : F\rightarrow H^{0,1}(X,EndE)\times H^{0,2}(X,EndE)\nonumber\end{equation}
gives a local chart of $F$.
\end{proof}
\begin{lemma}\label{vanishing of prW2}
If $\mathbb{H}^{0,2}(\theta)=0$ , $(a'',\theta)\in F$ and $\|a''\|_{k}\ll1$, then $\theta=0$. Here
\begin{equation}\mathbb{H}^{0,2}: \Omega^{0,2}(X,EndE)_{k-1}\rightarrow H^{0,2}(X,EndE)\nonumber\end{equation}
is the harmonic projection map.
\end{lemma}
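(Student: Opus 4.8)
The plan is to use the two defining equations of $F$ together with the hypothesis $\mathbb{H}^{0,2}(\theta)=0$ to show that $\theta$ lies in the kernel of an operator which, for $\|a''\|_k$ sufficiently small, is a small perturbation of the (invertible on the orthogonal complement of harmonics) Laplacian-type operator, and hence forces $\theta=0$. Concretely, I would first record that since $\mathbb{H}^{0,2}(\theta)=0$ we may write $\theta=\overline{\partial}_A\overline{\partial}_A^*G\theta+\overline{\partial}_A^*\overline{\partial}_A G\theta$, so $\theta$ is determined by its two "co-closed" and "closed" pieces; it therefore suffices to bound $\overline{\partial}_A^*\theta$ and $\overline{\partial}_A\theta$ (equivalently $\overline{\partial}_A^*\overline{\partial}_A\theta$) in terms of $\|a''\|_k\cdot\|\theta\|$.

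Next I would exploit the first equation defining $F$, namely $\overline{\partial}_A^*\theta=\overline{\partial}_A^*\big(*_4\overline{\partial}_A^*G(a''\wedge\theta)\big)$. The right-hand side is bilinear in $(a'',\theta)$ and involves only bounded operators ($*_4$, $G$, $\overline{\partial}_A^*$ composed with the wedge and the Green's operator, all acting between the relevant Sobolev completions), so by the standard multiplication estimates on $L^2_k$ (with $k$ large so that $L^2_k$ is a Banach algebra in the relevant range, as used throughout Section 3) one gets $\|\overline{\partial}_A^*\theta\|\le C\|a''\|_k\|\theta\|$. Similarly, from the second equation $\overline{\partial}_A^*(\overline{\partial}_A\theta+a''\wedge\theta)=0$ I would deduce $\overline{\partial}_A^*\overline{\partial}_A\theta=-\overline{\partial}_A^*(a''\wedge\theta)$, hence $\|\overline{\partial}_A^*\overline{\partial}_A\theta\|\le C\|a''\|_k\|\theta\|$. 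Feeding these back into the Hodge decomposition of $\theta$ above and using that $G$ and $\overline{\partial}_A^*$ are bounded, one obtains an inequality of the form $\|\theta\|\le C'\|a''\|_k\|\theta\|$ (after possibly absorbing one derivative, i.e. estimating $\|\theta\|$ in a slightly weaker norm and bootstrapping by elliptic regularity since $\theta$ solves $\Delta_{\overline{\partial}_A}\theta=$ lower-order terms in $a''\ast\theta$). Choosing $\|a''\|_k$ small enough that $C'\|a''\|_k<1$ forces $\|\theta\|=0$.

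The main obstacle I anticipate is the bookkeeping of Sobolev indices: the two equations relate $\theta$ (living in $\Omega^{0,2}_{k-1}$) to expressions that a priori land in different completions ($\overline{\partial}_A^*$ drops an index, $G$ raises two), so to close the estimate cleanly one must either phrase everything in the single space $\Omega^{0,2}_{k-1}$ using elliptic regularity for $G$, or run the perturbation argument first in $L^2$ and then bootstrap. A secondary point to be careful about is the use of simpleness of $(E,\overline{\partial}_A)$ (as in Lemma \ref{QA iso to H1}): it guarantees there are no stray harmonic $0$-forms that could otherwise obstruct inverting the relevant operator, and it is implicitly what makes the Green's operator $G$ behave as expected on $EndE$-valued forms. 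Once the indices are pinned down the conclusion is immediate from the contraction estimate.
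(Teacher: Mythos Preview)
Your proposal is correct and follows essentially the same route as the paper: Hodge-decompose $\theta$ (with vanishing harmonic part), use the two defining equations of $F$ to rewrite each piece as $G$ applied to something bilinear in $(a'',\theta)$, and conclude by the contraction estimate $\|\theta\|_{k-1}\le C\|a''\|_k\|\theta\|_{k-1}$. Two small remarks: the Sobolev bookkeeping is cleaner than you fear, since $G$ gains two derivatives and so the whole estimate closes directly in $L^2_{k-1}$ without any bootstrapping; and simpleness of $(E,\overline{\partial}_A)$ plays no role here---the Green's operator behaves as expected simply because $\mathbb{H}^{0,2}(\theta)=0$ already puts $\theta$ in the orthogonal complement of harmonics.
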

\begin{proof}
By the Hodge decomposition and $(a'',\theta)\in F$,
\begin{eqnarray*}\theta&=&\mathbb{H}^{0,2}(\theta)+\overline{\partial}_{A}^{*}\overline{\partial}_{A}G\theta+
\overline{\partial}_{A}\overline{\partial}_{A}^{*}G\theta \\&=&-G\overline{\partial}_{A}^{*}(a''\wedge\theta)+
G\overline{\partial}_{A}\overline{\partial}_{A}^{*}(*_{4}\overline{\partial}_{A}^{*}G(a''\wedge\theta)),\nonumber\end{eqnarray*}
then
\begin{equation}
\|\theta\|_{k-1}\leq C_{1}\|a''\|_{k}\|\theta\|_{k-1}+C_{2}\| a''\|_{k}\|\theta\|_{k-1}
=C\| a''\|_{k}\|\theta\|_{k-1}.\nonumber\end{equation}
$C$ is a constant independent of $a'',\theta$. Hence we can get $\theta=0$ if $\|a''\|_{k}\ll1 $ .
\end{proof}
\begin{corollary}
The following three analytic spaces are set theoretically identical
\begin{equation}\mathcal{M}_{A}=\{a''\in\Omega^{0,1}(X,EndE)_{k} \textrm{ }\big{|}\textrm{ }\|a''\|_{k} < \epsilon'',\textrm{ } \mathbb{H}^{0,2}(a''\wedge a'')=0 ,\textrm{} a'' \textrm{ } s.t \quad (\ref{equation 3}), \textrm{} (\ref{equation 5}) \}, \nonumber \end{equation}
\begin{equation}  Q_{A}\cap P^{-1}(0)=\{a'' \textrm{ }\big{|}\textrm{ }\|a''\|_{k} < \epsilon'',\textrm{ }
F^{0,2}(\overline{\partial}_{A}+a'')=0 ,\textrm{} a'' \textrm{ } s.t \textrm{} (\ref{equation 5}) \}, \nonumber \end{equation}
\begin{equation}Q_{A}\cap P^{-1}(0)=\left\{ \begin{array}{lll}& \|a''\|_{k} < \epsilon'', \textrm{ } a'' \textrm{ } s.t \textrm{} (\ref{equation 5}) \\ a'' \textrm{ }\Bigg{|} & \overline{\partial}_{A}a''+P_{\overline{\partial}_{A}}(a''\wedge a'')=0   \\ & \mathbb{H}^{0,2}(a''\wedge a'')=0 \nonumber\end{array}\right\}.\end{equation}
We use the same notation for the second and the third spaces because they are isomorphic as analytic spaces by the standard Kuranishi theory.
\end{corollary}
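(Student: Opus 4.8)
The plan is to establish the chain of equalities by showing that the three descriptions cut out the same subset of the finite-dimensional manifold $Q_A$ (and then, since $\mathcal{M}_A \hookrightarrow Q_A$ and $Q_A\cap P^{-1}(0)\hookrightarrow \mathcal{M}_A$ are already known to be closed analytic subspaces, the set-theoretic equality combined with the standard Kuranishi identification upgrades to the scheme-theoretic statement in the final display). First I would recall that for $a''\in Q_A$ we automatically have equation (\ref{equation 5}) and the ``$Q_A$-equation'' $\overline{\partial}_A a'' + P_{\overline{\partial}_A}(a''\wedge a'') + *_4 P_{\overline{\partial}_A^*}(a''\wedge a'')=0$. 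So on $Q_A$, imposing $\mathbb{H}^{0,2}(a''\wedge a'')=0$ together with (\ref{equation 3}) — i.e. $\overline{\partial}_A a'' + P_{\overline{\partial}_A}(a''\wedge a'') + *_4 P_{\overline{\partial}_A^*}(a''\wedge a'')=0$, which is the $Q_A$-equation itself — is by construction the definition of $\mathcal{M}_A$; the content is to see this forces $P(a'')=\overline{\partial}_A a'' + a''\wedge a'' = 0$.

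The key step is an application of Lemmas \ref{image of lamda}, \ref{F iso to H1 H2} and \ref{vanishing of prW2} in sequence. Given $a''\in\mathcal{M}_A$, set $\theta = P(a'') = \overline{\partial}_A a'' + a''\wedge a''$. By Lemma \ref{image of lamda}, $(a'',\theta)\in F$. Now $\theta = \overline{\partial}_A a'' + P_{\overline{\partial}_A}(a''\wedge a'') + \mathbb{H}^{0,2}(a''\wedge a'') + P_{\overline{\partial}_A^*}(a''\wedge a'')$, and using the $Q_A$-equation this reduces to $\theta = \mathbb{H}^{0,2}(a''\wedge a'') + P_{\overline{\partial}_A^*}(a''\wedge a'') - *_4 P_{\overline{\partial}_A^*}(a''\wedge a'')$; projecting to harmonic forms and using that the last two terms are $\overline{\partial}_A^*$-exact (hence harmonic-free) gives $\mathbb{H}^{0,2}(\theta) = \mathbb{H}^{0,2}(a''\wedge a'')$. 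Since $a''\in\mathcal{M}_A$ imposes $\mathbb{H}^{0,2}(a''\wedge a'')=0$, we get $\mathbb{H}^{0,2}(\theta)=0$. With $(a'',\theta)\in F$ and $\|a''\|_k\ll 1$, Lemma \ref{vanishing of prW2} then forces $\theta = 0$, i.e. $P(a'')=0$, so $a''\in Q_A\cap P^{-1}(0)$. The reverse inclusion $Q_A\cap P^{-1}(0)\subseteq\mathcal{M}_A$ is immediate: if $F^{0,2}(\overline{\partial}_A + a'')=0$ then $\overline{\partial}_A a'' + a''\wedge a''=0$, whence each Hodge component vanishes, giving both $\mathbb{H}^{0,2}(a''\wedge a'')=0$ and (\ref{equation 3}). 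This establishes $\mathcal{M}_A = Q_A\cap P^{-1}(0)$ as sets, and the equivalence of the second and third presentations of $Q_A\cap P^{-1}(0)$ is just the orthogonal decomposition of the equation $F^{0,2}=0$ into its $P_{\overline{\partial}_A}$-exact part ($\overline{\partial}_A a'' + P_{\overline{\partial}_A}(a''\wedge a'')=0$) and its harmonic part ($\mathbb{H}^{0,2}(a''\wedge a'')=0$), the $\overline{\partial}_A^*$-exact part being automatically zero once these two vanish — this is the standard Kuranishi-theory bookkeeping, which also supplies the identification of analytic (non-reduced) structures.

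The main obstacle I expect is bookkeeping the non-reduced structures rather than the set-theoretic equality: one must be careful that the analytic subspace $\mathcal{M}_A\subseteq Q_A$ is cut out by exactly the equations $\mathbb{H}^{0,2}(a''\wedge a'')=0$ and (\ref{equation 3}) as ideals, and then argue that the argument above (applying $P$, Lemma \ref{image of lamda}, and the contraction estimate in Lemma \ref{vanishing of prW2}) promotes to an equality of ideals, not just of zero loci. The clean way is to invoke Lemma \ref{F iso to H1 H2}: it presents $F$ as a smooth finite-dimensional manifold with chart $(\mathbb{H}^{0,1}\times\mathbb{H}^{0,2})$, and under this chart the graph map $\lambda$ identifies $\mathcal{M}_A$ with $(\kappa_+)^{-1}(0)$ inside the slice $\theta=0$, which is precisely the Kuranishi model for $\mathcal{M}_c^o$ near $\overline{\partial}_A$; since both $\mathcal{M}_A$ and $Q_A\cap P^{-1}(0)$ map isomorphically onto this same model, they agree as analytic spaces with their possibly non-reduced structures. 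The estimate $\|\theta\|_{k-1}\le C\|a''\|_k\|\theta\|_{k-1}$ from Lemma \ref{vanishing of prW2} is what guarantees the slice $\theta=0$ is not merely a component but the whole intersection in a neighborhood, which is the one genuinely analytic (as opposed to formal) input.
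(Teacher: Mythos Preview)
Your proposal is correct and follows essentially the same approach as the paper: for $a''\in\mathcal{M}_A\subseteq Q_A$, apply Lemma~\ref{image of lamda} to get $(a'',\theta)\in F$ with $\theta=P(a'')$, observe (as you do, and as is implicit in equation~(\ref{bianchi 0}) of the paper) that $\mathbb{H}^{0,2}(\theta)=\mathbb{H}^{0,2}(a''\wedge a'')=0$, then invoke Lemma~\ref{vanishing of prW2} to conclude $\theta=0$. Your write-up is in fact more detailed than the paper's terse proof, and your closing paragraph about non-reduced structures correctly anticipates the content of the subsequent Proposition~\ref{QA intesect P=0 equals NA}, though that discussion goes beyond what this Corollary (which is only set-theoretic) requires.
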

\begin{proof}
We only need to show that $Q_{A}\cap P^{-1}(0)$ contains
the first set $\mathcal{M}_{A}$ as its subset i.e. we need to show
\begin{equation}\forall\textrm{ }\overline{\partial}_{A}+a''\in \mathcal{M}_{A} \Rightarrow F^{0,2}(\overline{\partial}_{A}+a'')=0. \nonumber \end{equation}
Since $\mathcal{M}_{A}$ is a subset of $Q_{A}$, we can apply Lemma \ref{image of lamda} to its image under the map $\lambda$.
Combined with Lemma \ref{vanishing of prW2}, we finish the proof.
\end{proof}
Furthermore, we can identify the above three spaces as analytic spaces possibly with non-reduced structures.
Let us first recall a lemma from \cite{m}
\begin{lemma}\label{miyajima lemma}\cite{m}. Let $E$, $G$ be Banach spaces with direct sum decomposition $E=F_{1}+F_{2}$. If a local analytic map
\begin{equation}h: E\rightarrow G
\nonumber\end{equation}
vanishes identically on $F_{2}$, then there exists a local analytic map
\begin{equation}f: E \rightarrow L(F_{2},G),
\nonumber \end{equation}
such that $h(t,s)=<f(t,s),s>$.
\end{lemma}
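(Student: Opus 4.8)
The statement is a Banach-space version of Hadamard's lemma --- ``dividing out the variables that do not appear'' --- so the plan is to write down an explicit formula for $f$ and then verify that it lands in $L(F_2,G)$ and is local analytic. Write a point of $E$ as $(t,s)$ with $t\in F_1$, $s\in F_2$, so the hypothesis reads $h(t,0)=0$ for all such points near the origin. Since a local analytic map between Banach spaces is in particular $C^{\infty}$, the fundamental theorem of calculus gives, for $(t,s)$ small,
$$h(t,s)=h(t,s)-h(t,0)=\int_0^1\frac{d}{d\tau}\,h(t,\tau s)\,d\tau=\int_0^1 D_2h(t,\tau s)[s]\,d\tau,$$
where $D_2h(t,\tau s)\in L(F_2,G)$ is the partial Fr\'echet derivative in the $F_2$-direction. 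This suggests setting
$$f(t,s)\triangleq\int_0^1 D_2h(t,\tau s)\,d\tau\ \in\ L(F_2,G),$$
so that the required identity $h(t,s)=\langle f(t,s),s\rangle$ holds by construction.

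First I would check that $f$ is well defined: $D_2h$ is continuous into the Banach space $L(F_2,G)$, so $\tau\mapsto D_2h(t,\tau s)$ is a continuous $L(F_2,G)$-valued path on the compact interval $[0,1]$, its Bochner (equivalently Riemann) integral exists, and it depends continuously on $(t,s)$. The algebraic identity $h(t,s)=\langle f(t,s),s\rangle$ is then immediate on the common domain.

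The only real obstacle is showing that $f$ is \emph{local analytic}, not merely smooth, and I would argue this in one of two ways. Route (i): in the Banach-analytic category the Fr\'echet derivative of an analytic map is again analytic, so $D_2h\colon E\to L(F_2,G)$ is analytic; hence $(t,s,\tau)\mapsto D_2h(t,\tau s)$ is analytic (a composition of analytic maps, $\tau$ being a real parameter), and integrating an analytic family over $[0,1]$ preserves analyticity --- expand $D_2h$ in its locally uniformly convergent series of continuous homogeneous polynomials, integrate term by term in $\tau$, and note the resulting series still converges locally uniformly. Route (ii): work directly from the expansion $h=\sum_{n\ge0}P_n$ with $P_n=\hat P_n(\cdot,\dots,\cdot)$ for symmetric bounded $n$-linear maps $\hat P_n$, and apply the multilinear binomial formula to $P_n(t,s)=\hat P_n\big((t,0)+(0,s),\dots\big)$; uniqueness of the expansion of $t\mapsto h(t,0)\equiv0$ forces the pure term $P_n(t,0)=\hat P_n((t,0)^n)$ to vanish for every $n$, so each surviving summand of $P_n(t,s)$ contains at least one factor $(0,s)$. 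Pulling one such factor out of each summand defines $f_n(t,s)\in L(F_2,G)$, a continuous homogeneous polynomial of degree $n-1$ in $(t,s)$, with $\langle f_n(t,s),s\rangle=P_n(t,s)$, and one sets $f=\sum_{n\ge1}f_n$. The polarization estimate $\|\hat P_n\|\le \tfrac{n^n}{n!}\|P_n\|\le e^n\|P_n\|$ together with the Cauchy bound $\|P_n\|\le M r^{-n}$ (valid since $h$ is bounded by some $M$ on a ball of radius $r$) yields $\|f_n(t,s)\|\lesssim M\,(2e/r)^{n}\,\|(t,s)\|^{\,n-1}$, so the series for $f$ converges on a possibly smaller ball and exhibits $f$ as a locally uniform limit of homogeneous polynomials, i.e. as a local analytic map. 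Either way, restricting to a small enough neighbourhood of $0$ completes the proof, and I expect essentially all the work to sit in this analyticity check rather than in the formula or the identity.
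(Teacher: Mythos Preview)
Your argument is correct. The paper does not actually prove this lemma: it is quoted without proof from Miyajima \cite{m}, so there is nothing in the paper to compare against beyond the statement itself. Both of your routes are standard and valid; the integral formula $f(t,s)=\int_0^1 D_2h(t,\tau s)\,d\tau$ is the cleanest, and your justification of analyticity (derivative of analytic is analytic, then integrate the power series term by term over the compact interval) is sound. The power-series route (ii) with the polarization estimate is also fine and is closer in spirit to how such lemmas are usually stated in the deformation-theory literature.

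One small remark: you silently corrected a slip in the statement. As written, the paper says $h$ ``vanishes identically on $F_2$'', but with $(t,s)\in F_1\oplus F_2$ the conclusion $h(t,s)=\langle f(t,s),s\rangle$ forces $h(t,0)=0$, i.e.\ vanishing on $F_1$. Your reading $h(t,0)=0$ is the one that matches both the conclusion and the way the lemma is applied immediately afterwards in Proposition~\ref{QA intesect P=0 equals NA} (where $P'$ vanishes when the $H^{0,2}$-coordinate is zero and one factors that coordinate out). So you have the right hypothesis; it is just worth flagging that the printed statement has the subscripts swapped.
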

\begin{proposition}\label{QA intesect P=0 equals NA}
We have the following identification
\begin{equation}\label{Kuranishi theorem} Q_{A}\cap P^{-1}(0)=\mathcal{M}_{A} \nonumber \end{equation}
as analytic spaces possibly with non-reduced structures.
\end{proposition}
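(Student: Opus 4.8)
The plan is to upgrade the set-theoretic equality $Q_A \cap P^{-1}(0) = \mathcal{M}_A$ established in the previous corollary to an isomorphism of (possibly non-reduced) analytic spaces. Recall that $\mathcal{M}_A$ is cut out inside the finite-dimensional smooth manifold $Q_A$ by the two equations (\ref{equation 3}) and $\mathbb{H}^{0,2}(a''\wedge a'')=0$ (equation (\ref{equation 4})), together with the gauge-fixing (\ref{equation 5}) already built into $Q_A$; whereas $Q_A \cap P^{-1}(0)$ is cut out inside $Q_A$ by the single bundle map $P(a'') = \overline{\partial}_A a'' + a''\wedge a''$, whose harmonic, $\overline{\partial}_A$-exact and $\overline{\partial}_A^*$-exact components give, respectively, $\mathbb{H}^{0,2}(a''\wedge a'')$, the ``$P_{\overline{\partial}_A}$'' equation, and the ``$P_{\overline{\partial}_A^*}$'' equation. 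The content is therefore the \emph{scheme-theoretic} identity between the ideal generated by $P$ and the ideal generated by the $\mathcal{M}_A$-equations.

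First I would use Lemma \ref{F iso to H1 H2}: via the chart $(\mathbb{H}^{0,1}\times\mathbb{H}^{0,2}): F \xrightarrow{\sim} H^{0,1}\times H^{0,2}$, the map $\lambda(a'') = (a'', P(a''))$ realizes both spaces inside the smooth model $F$. Concretely, $\mathcal{M}_A$ is the preimage in $Q_A$ of $\{\mathbb{H}^{0,2}=0\}\subseteq F$ (this is exactly equation (\ref{equation 4}) pulled back through $\lambda$, since membership of $\lambda(a'')$ in $F$ already encodes (\ref{equation 3}) and (\ref{equation 5})), while $Q_A\cap P^{-1}(0)$ is the preimage of $\{\theta = 0\}\subseteq F$. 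So the whole proposition reduces to showing that on $F$, near the origin, the ideal $(\theta_1,\dots,\theta_m)$ of the $\Omega^{0,2}$-coordinates coincides with the ideal generated by the components of $\mathbb{H}^{0,2}(\theta)$. One inclusion is trivial. For the reverse, Lemma \ref{vanishing of prW2} says that on $F$ the vanishing of $\mathbb{H}^{0,2}(\theta)$ forces $\theta=0$ \emph{set-theoretically}, and its proof actually gives the estimate $\|\theta\|\le C\|a''\|\,\|\theta\|$; I would feed the decomposition $\theta = \mathbb{H}^{0,2}(\theta) - G\overline{\partial}_A^*(a''\wedge\theta) + G\overline{\partial}_A\overline{\partial}_A^*(*_4\overline{\partial}_A^*G(a''\wedge\theta))$ (from the proof of Lemma \ref{vanishing of prW2}, now keeping the $\mathbb{H}^{0,2}(\theta)$ term) into Miyajima's Lemma \ref{miyajima lemma} with $E = $ the ambient Banach space, $F_2$ the locus $\{\theta=0\}$ (on which this ``error'' expression in $\theta$ vanishes), to write $\theta - \mathbb{H}^{0,2}(\theta) = \langle \Phi(a'',\theta), \theta\rangle$ for some local analytic $\Phi$ valued in operators with small norm. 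Then $(\mathrm{Id} - \Phi(a'',\theta))\theta = \mathbb{H}^{0,2}(\theta)$, and since $\mathrm{Id}-\Phi$ is invertible for $\|a''\|\ll 1$, each $\theta_j$ lies in the ideal generated by the components of $\mathbb{H}^{0,2}(\theta)$. This gives the equality of ideals, hence the equality of analytic subspaces of $F$, hence $Q_A\cap P^{-1}(0) = \mathcal{M}_A$ with their non-reduced structures.

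I would then transport this back: combining with Lemma \ref{identify SA MA} (the unitary-to-$(0,1)$ identification, already an isomorphism of analytic spaces) and Lemma \ref{QA iso to H1}, the resulting chain shows the local model of $\mathcal{M}_c^{DT_4}$ near $d_A$ is $\mathcal{M}_A = Q_A\cap P^{-1}(0)$, which by the standard Kuranishi theory for $\mathcal{M}_c^o$ (the third description in the preceding corollary, identified via Kuranishi with $\{\overline{\partial}_A a'' + P_{\overline{\partial}_A}(a''\wedge a'')=0,\ \mathbb{H}^{0,2}(a''\wedge a'')=0\}$) is precisely the local model of $\mathcal{M}_c^o$. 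This is what yields the closed immersion $\mathcal{M}_c^o \hookrightarrow \mathcal{M}_c^{DT_4}$ of (possibly non-reduced) analytic spaces asserted in Theorem \ref{mo mDT4}, together with $\kappa_+ = \pi_+\circ\kappa$ as its Kuranishi description.

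The main obstacle, I expect, is the careful application of Miyajima's lemma to get the \emph{scheme-theoretic} (ideal-level) statement rather than just the set-theoretic one: one must make sure the ``error term'' in the expansion of $\theta$ genuinely vanishes on the chosen complement $F_2$ so that Lemma \ref{miyajima lemma} applies, keep track that $\Phi$ has operator norm $<1$ uniformly near the origin (which is where $\epsilon''\ll 1$ gets used one more time), and check that all the maps in the chain of identifications are isomorphisms of ringed spaces and not merely homeomorphisms — in particular that the gauge-fixing conditions (\ref{equation 5}) and $d_A^*a=0$ slice transversally so no nilpotents are created or destroyed along the way. The analytic estimates themselves are routine given Lemma \ref{vanishing of prW2}; the bookkeeping of non-reduced structures is the delicate part.
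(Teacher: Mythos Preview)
Your proposal is correct and follows essentially the same route as the paper: both arguments pass through the chart $(\mathbb{H}^{0,1}\times\mathbb{H}^{0,2})$ on $F$, use Lemma~\ref{vanishing of prW2} to see that $\theta$ (equivalently $\pi_2'$) vanishes when $\mathbb{H}^{0,2}(\theta)=0$, and then invoke Miyajima's Lemma~\ref{miyajima lemma} to upgrade this to an ideal inclusion. The only cosmetic difference is that the paper compares the ideals $\mathcal{I}_P$ and $\mathcal{I}_{\mathbb{H}^{0,2}\circ P}$ directly on $Q_A$ after composing with $\lambda'$, whereas you compare the ideals $(\theta)$ and $(\mathbb{H}^{0,2}\theta)$ on $F$ and then pull back along $\lambda$.

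Two small remarks. First, your phrase ``$F_2$ the locus $\{\theta=0\}$'' is misstated: in Lemma~\ref{miyajima lemma} the decomposition $E=F_1\oplus F_2$ is a direct-sum splitting of the ambient Banach space, so you mean $F_2$ to be the $\theta$-direction (the $\Omega^{0,2}$ or $H^{0,2}$ summand), with the error map vanishing on $F_1=\{\theta=0\}$. Second, once you have written down the explicit identity $\theta=\mathbb{H}^{0,2}(\theta)-G\overline{\partial}_A^{*}(a''\wedge\theta)+G\overline{\partial}_A\overline{\partial}_A^{*}\bigl(*_4\overline{\partial}_A^{*}G(a''\wedge\theta)\bigr)$ on $F$, the right-hand side minus $\mathbb{H}^{0,2}(\theta)$ is already \emph{linear} in $\theta$ with operator norm $\le C\|a''\|$, so Miyajima's lemma is not actually needed: you can read off $\Phi(a'')$ directly and invert $\mathrm{Id}-\Phi(a'')$ for $\|a''\|\ll 1$. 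The paper's use of Lemma~\ref{miyajima lemma} is the abstract version of this same step.
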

\begin{proof}
Consider the local analytic map $\lambda:Q_{A}\rightarrow F$ by
\begin{equation}\lambda(\alpha)=(\alpha,P(\alpha)). \nonumber \end{equation}
By Lemma \ref{image of lamda}, the map is well defined. By Lemma \ref{QA iso to H1} and \ref{F iso to H1 H2},
we have the following commutative diagram
\begin{equation}
\xymatrix{\ar @{} [dr] |{} Q_{A} \ar@/^3pc/[rr]^{P(\alpha)=\overline{\partial}_{A}\alpha+\alpha\wedge \alpha}
\ar[d]^{\mathbb{H}^{0,1}}_{\wr\mid} \ar[r]^{\lambda(\alpha)=(\alpha,P(\alpha))} & F \ar[d]^{\mathbb{H}^{0,1}\times \mathbb{H}^{0,2}}_{\wr\mid} \ar[r]^{\pi_{2}(\alpha,\theta)=\theta}
& \Omega^{0,2}(X,EndE)_{k-1} \\
H^{0,1}(X,EndE) \ar[r]^{\lambda^{'}\quad \quad \quad \quad}
\ar@/_5pc/[urr]_{P^{'}(t)=\pi_{2}^{'}\circ \lambda^{'}(t)=\pi_{2}^{'}(t,\mathbb{H}^{0,2}\circ P(\alpha(t)))}
& H^{0,1}(X,EndE)\times H^{0,2}(X,EndE) \ar[ur]_{\quad \quad\pi_{2}^{'}}  }.
\nonumber \end{equation}

With respect to the local charts $(Q_{A},\mathbb{H}^{0,1})$ and $(F,(\mathbb{H}^{0,1}\times \mathbb{H}^{0,2}))$, $\lambda$ is expressed
by $\lambda^{'}=\big(t,\mathbb{H}^{0,2}\circ P(\alpha(t))\big)$, $\pi_{2}$ is expressed by $\pi_{2}^{'}$ and $P$ is expressed by $P^{'}$
where $t\in H^{0,1}(X,EndE)$ and $\alpha(t)=(\mathbb{H}^{0,1})^{-1}(t)$.

By Lemma \ref{vanishing of prW2}, we know that $\mathbb{H}^{0,2}\circ P(\alpha(t))=0\Rightarrow P(\alpha(t))=0$ which obviously implies $P^{'}(t)=0$.

Thus we can apply Lemma \ref{miyajima lemma} to our case, we then get
\begin{eqnarray*}P^{'}(t)&=&\pi_{2}^{'}\big(t,\mathbb{H}^{0,2}\circ P(\alpha(t))\big) \\
&=&<\eta\big(t,\mathbb{H}^{0,2}\circ P(\alpha(t))\big),\textrm{ }\mathbb{H}^{0,2}\circ P(\alpha(t))>
\end{eqnarray*}
for some local analytic map $\eta: H^{0,1}\times H^{0,2} \rightarrow L(H^{0,2},\Omega^{0,2}_{k-1})$ where $L(H^{0,2},\Omega^{0,2}_{k-1})$ is the
space of analytic maps between the Banach spaces $H^{0,2}$ and $\Omega^{0,2}_{k-1}$.

Thus we can see that the two ideals
$\mathcal{I}_{P}\subseteq \mathcal{I}_{\mathbb{H}^{0,2}\circ P}$ when we view $(Q_{A},\mathcal{O}_{Q_{A}})$ as a ringed analytic space.
We then get $\mathcal{I}_{P}=\mathcal{I}_{\mathbb{H}^{0,2}\circ P}$ because $\mathcal{I}_{P}\supseteq \mathcal{I}_{\mathbb{H}^{0,2}\circ P}$ is obviously true.

By writing out the equations we finish the proof.
\end{proof}
Thus we have proved that $\mathcal{M}_{A}$ is isomorphic to an analytic neighbourhood of $\overline{\partial}_{A}$ in $\mathcal{M}_{c}^{o}$ with analytic topology. \\
${}$ \\
\textbf{Conclusion}:
\begin{theorem}\label{Kuranishi str of cpx ASD thm} ${}$ \\
The local Kuranishi model of $\mathcal{M}_{c}^{DT_{4}}$ near $d_{A}$ with $F^{0,2}_{A}=0$ can be described as
\begin{equation}\tilde{\tilde{\kappa}}_{+}=\pi_{+}(\tilde{\tilde{\kappa}}):  H^{0,1}(X,EndE)\cap B_{\epsilon}\rightarrow H^{0,2}_{+}(X,EndE), \nonumber \end{equation}
where $\tilde{\tilde{\kappa}}$ is a Kuranishi map for $\mathcal{M}_{c}^{o}$ near $\overline{\partial}_{A}$ determined by $DT_{4}$ equations
\big(see appendix (\ref{kappa double tilta})\big). $B_{\epsilon}$ is a small open ball containing the origin of the deformation space.
The map $\pi_{+}$ is the projection map to the self-dual subspace of the obstruction space.
\end{theorem}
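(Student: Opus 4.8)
The plan is to assemble the chain of local analytic isomorphisms established in Lemmas \ref{identify SA MA}--\ref{vanishing of prW2} and Proposition \ref{QA intesect P=0 equals NA}, and then read off which equations survive. I would begin from the slice model of $\mathcal{M}_c^{DT_4}$ near $d_A$, namely $\{a \mid \|a\|_k < \epsilon,\ d_A^* a = 0,\ d_A + a \text{ solves } (\ref{complex ASD equation})\}$, and apply Lemma \ref{identify SA MA}: sending a unitary connection to its $(0,1)$-part is a local isomorphism onto $\mathcal{M}_A^+$, the locus in $\Omega^{0,1}(X,\mathrm{End}\,E)_k$ cut out by equations (\ref{equation 3}), (\ref{equation 4}) and the linearized gauge-fixing equation (\ref{equation 5}). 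This reduces the statement to understanding $\mathcal{M}_A^+$ as an analytic subspace, possibly non-reduced.

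Next I would isolate the subspace $Q_A$ defined by (\ref{equation 3}) and (\ref{equation 5}) and invoke Lemma \ref{QA iso to H1}: the harmonic projection $\mathbb{H}^{0,1}$ is a local analytic isomorphism $Q_A \xrightarrow{\sim} H^{0,1}(X,\mathrm{End}\,E)\cap B_\epsilon$. Under this identification $\mathcal{M}_A^+$ becomes the zero locus, inside $H^{0,1}\cap B_\epsilon$, of the single remaining equation (\ref{equation 4}), i.e. of $\pi_+\circ \mathbb{H}^{0,2}(a''\wedge a'')$ transported through the chart. It then remains only to recognize this transported map as $\pi_+$ applied to a genuine Kuranishi map for $\mathcal{M}_c^o$.

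For that I would use Proposition \ref{QA intesect P=0 equals NA} together with Lemmas \ref{image of lamda}, \ref{F iso to H1 H2}, \ref{vanishing of prW2}: the zero locus of $P(a'') = \overline{\partial}_A a'' + a''\wedge a''$ inside $Q_A$ --- which by (\ref{QA def}) is precisely a neighbourhood of $\overline{\partial}_A$ in $\mathcal{M}_c^o$ --- coincides, as an analytic space possibly with non-reduced structure, with the zero locus of $\mathbb{H}^{0,2}\circ P$; this is the equality of ideals $\mathcal{I}_P = \mathcal{I}_{\mathbb{H}^{0,2}\circ P}$ proved there via Miyajima's Lemma \ref{miyajima lemma}. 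Transporting $\mathbb{H}^{0,2}\circ P$ through the chart $\mathbb{H}^{0,1}$ yields a Kuranishi map $\tilde{\tilde\kappa}\colon H^{0,1}(X,\mathrm{End}\,E)\cap B_\epsilon \to H^{0,2}(X,\mathrm{End}\,E)$ for $\mathcal{M}_c^o$, with the explicit formula recorded in the appendix as (\ref{kappa double tilta}); and since the $DT_4$ moduli space imposes only the self-dual half of $\mathbb{H}^{0,2}(a''\wedge a'')=0$ --- recall the splitting $H^{0,2}=H^{0,2}_+\oplus H^{0,2}_-$ of Corollary \ref{cor star4 cut coho into ASD} --- the local model near $d_A$ is exactly $\tilde{\tilde\kappa}_+^{-1}(0)$ with $\tilde{\tilde\kappa}_+ = \pi_+\circ\tilde{\tilde\kappa}$, which is the assertion.

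The main obstacle is the scheme-theoretic bookkeeping rather than any single hard estimate: one must ensure that passing from $F^{0,2}_+=0$ to the pair (\ref{equation 3}) plus (\ref{equation 4}) is an isomorphism of analytic spaces with their (possibly non-reduced) structures and not merely of point sets, that the auxiliary equation (\ref{equation 5}) cuts out a smooth slice transverse to the gauge orbit without imposing any constraint on $H^{0,1}$, and that $\pi_+$ commutes with the harmonic projection and with the chart maps so that $\pi_+\tilde{\tilde\kappa}$ is well defined on the nose. All of these are supplied by the cited lemmas, but tracking exactly which half of $H^{0,2}$ each equation produces, together with the signs coming from $*_4^2=1$, is where an error would most easily creep in; the repeated shrinking of $\epsilon$ and $\epsilon''$ along the way is harmless and I would acknowledge it once at the end.
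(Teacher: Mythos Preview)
Your proposal is correct and follows essentially the same route as the paper: the paper's proof simply records that $\mathcal{M}_A = \tilde{\tilde\kappa}^{-1}(0)$ by definition of $q$ and $Q_A$, invokes Proposition \ref{QA intesect P=0 equals NA} to conclude $\tilde{\tilde\kappa}$ is a Kuranishi map for $\mathcal{M}_c^o$, and then observes $\mathcal{M}_A^+ = (\pi_+\tilde{\tilde\kappa})^{-1}(0)$, which is exactly the chain you have spelled out. Your more careful tracking of the scheme-theoretic structure and of which equations survive is a faithful expansion of the paper's terse argument.
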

\begin{proof}
By definition
\begin{equation}\mathcal{M}_{A}={\tilde{\tilde{\kappa}}}^{-1}(0), \nonumber \end{equation}
where $\tilde{\tilde{\kappa}}: H^{0,1}(X,EndE)\cap B_{\epsilon}\rightarrow H^{0,2}(X,EndE) $ is define to be
\begin{equation}\tilde{\tilde{\kappa}}(\alpha)=\mathbb{H}^{0,2}\big(q^{-1}(\alpha)\wedge q^{-1}(\alpha)\big)
\nonumber\end{equation}
and $q$ is defined in appendix, see (\ref{g double tilta}). By Proposition \ref{QA intesect P=0 equals NA}, $\tilde{\tilde{\kappa}}$ is a Kuranishi map for $\mathcal{M}_{c}^{o}$.
Composing with $\pi_{+}$, we get $\mathcal{M}_{A}^{+}=\big(\pi_{+}\tilde{\tilde{\kappa}}\big)^{-1}(0) $.
\end{proof}
\begin{remark} ${}$ \\
1. Under the assumption $\mathcal{M}_{c}^{o}\neq\emptyset$, we have a bijective map
\begin{equation}\mathcal{M}_{c}^{o}\rightarrow \mathcal{M}_{c}^{DT_{4}}.  \nonumber \end{equation}
By Proposition \ref{QA intesect P=0 equals NA} and the above Kuranishi theorem on $\mathcal{M}_{c}^{DT_{4}}$, we know the bijective map is
actually a closed imbedding as analytic space possibly with non-reduced structures. \\
2. For notation simplicity, we will always restrict to a small neighbourhood of the origin in $Ext^{1}(\mathcal{F},\mathcal{F})$
when we talk about Kuranishi maps
\begin{equation}\kappa: Ext^{1}(\mathcal{F},\mathcal{F})\rightarrow Ext^{2}(\mathcal{F},\mathcal{F}) \nonumber \end{equation}
for any coherent sheaf $\mathcal{F}$ and omit the small ball $B_{\epsilon}$ in the notation from now on.
\end{remark}

\newpage
\section{Compactification of $DT_{4}$ moduli spaces}
Now we come to the issue of compactification of the $DT_{4}$ moduli space (\ref{complex ASD equation}).
As Uhlenbeck, generally Tian \cite{t} have shown,
we need to consider connections with singularities supported on codimension 4 subspaces when we compactify the moduli space of holomorphic HYM connections. This becomes very difficult when the real dimension of the underlying manifold is bigger than 4.
Even if one could compactify it, as Tian showed in his paper,
one was still lack of understanding of the local Kuranishi structures of the compactified moduli space.

Our attempted approach here is the algebro-geometric compactification using moduli space of semi-stable sheaves.

\subsection{Stable bundles compactification of $DT_{4}$ moduli spaces}
In this subsection, under the assumption that $\mathcal{M}_{c}\neq\emptyset$ consists of slope-stable bundles only, we prove that $\mathcal{M}_{c}^{DT_{4}}$ is compact.

Given a connection on $E$ with curvature $F$. By Chern-Weil theory, we have
\begin{equation} Tr(F^{2})=-8\pi^{2}ch_{2}(E). \nonumber \end{equation}
Then
\begin{equation}
-8\pi^{2}\int ch_{2}(E)\wedge\Omega=\int Tr(F^{0,2}\wedge F^{0,2})\wedge\Omega \nonumber\end{equation}
\begin{equation}=\int Tr(F^{0,2}_{+}\wedge F^{0,2}_{+})\wedge\Omega+\int Tr(F^{0,2}_{-}\wedge F^{0,2}_{-})\wedge\Omega\nonumber\end{equation}
\begin{equation}+\int Tr(F^{0,2}_{+}\wedge F^{0,2}_{-})\wedge\Omega+\int Tr(F^{0,2}_{-}\wedge F^{0,2}_{+})\wedge\Omega\nonumber\end{equation}
\begin{equation}\label{plus norm equal minus norm}
=\int\mid F^{0,2}_{+}\mid^{2}\wedge\Omega\wedge\overline{\Omega} -\int\mid
F^{0,2}_{-}\mid^{2}\wedge\Omega\wedge\overline{\Omega}+\int\sqrt{-1}\chi\wedge\Omega\wedge\overline{\Omega},\end{equation}
where $\chi$ is some real valued function.
\begin{lemma}(Lewis) \cite{lewis}\label{C2 condition}
If $ch_{2}(E)\in H^{2,2}(X,\mathbb{C})$ or has no component of type $(4,0)$,
then $F^{0,2}_{+}=0$ implies $F^{0,2}=0$.
\end{lemma}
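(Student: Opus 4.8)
The proof will be a direct reading of the Chern--Weil computation already assembled in (\ref{plus norm equal minus norm}). The plan has two steps: first, show that the left-hand side $-8\pi^{2}\int_{X}ch_{2}(E)\wedge\Omega$ vanishes under either hypothesis; second, substitute $F^{0,2}_{+}=0$ into (\ref{plus norm equal minus norm}) to force $\int_{X}|F^{0,2}_{-}|^{2}\,\Omega\wedge\overline{\Omega}=0$.

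For the first step I would argue cohomologically. Since $\int_{X}(-)\wedge\Omega$ depends only on the de Rham class, I may replace the Chern--Weil form $Tr(F^{2})$ by the class it represents and decompose $[ch_{2}(E)]=\sum_{p+q=4}[ch_{2}(E)]^{p,q}$ in the Hodge decomposition of $H^{4}(X,\mathbb{C})$. As $\Omega$ is of type $(4,0)$, the summand $[ch_{2}(E)]^{p,q}\cup[\Omega]$ lies in $H^{p+4,q}(X)$, which can be nonzero inside $H^{8}(X)=H^{4,4}(X)$ only when $(p,q)=(0,4)$; hence $\int_{X}ch_{2}(E)\wedge\Omega$ depends only on $[ch_{2}(E)]^{0,4}$. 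If $ch_{2}(E)\in H^{2,2}(X,\mathbb{C})$ this component is zero, and since $ch_{2}(E)$ is a real class one has $[ch_{2}(E)]^{0,4}=\overline{[ch_{2}(E)]^{4,0}}$, so the absence of a type $(4,0)$ component equally gives $\int_{X}ch_{2}(E)\wedge\Omega=0$.

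For the second step, assume $F^{0,2}_{+}=0$. Then $\int_{X}|F^{0,2}_{+}|^{2}\,\Omega\wedge\overline{\Omega}=0$, and the cross terms in (\ref{plus norm equal minus norm}) --- the real function $\chi$, which is built out of $Tr(F^{0,2}_{+}\wedge F^{0,2}_{-})$ --- vanish identically, so (\ref{plus norm equal minus norm}) reduces to
\begin{equation}
-8\pi^{2}\int_{X}ch_{2}(E)\wedge\Omega=-\int_{X}|F^{0,2}_{-}|^{2}\,\Omega\wedge\overline{\Omega}.
\nonumber
\end{equation}
By the first step the left-hand side is $0$, so $\int_{X}|F^{0,2}_{-}|^{2}\,dvol=0$ and hence $F^{0,2}_{-}=0$, which gives $F^{0,2}=F^{0,2}_{+}+F^{0,2}_{-}=0$.

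The only point needing care --- and it is already absorbed into the derivation of (\ref{plus norm equal minus norm}) --- is the sign and positivity bookkeeping behind identifying $\int_{X}Tr(F^{0,2}_{-}\wedge F^{0,2}_{-})\wedge\Omega$ with $-\|F^{0,2}_{-}\|_{L^{2}}^{2}$: this rests on $*_{4}F^{0,2}_{-}=-F^{0,2}_{-}$, on the extension of the identity $\alpha\wedge *_{4}\alpha=|\alpha|^{2}\overline{\Omega}$ to $EndE$-valued forms, and on the skew-Hermitian nature of the curvature, which makes $-Tr$ positive definite on the fibres. Beyond keeping track of these constants, no new estimate is required; once (\ref{plus norm equal minus norm}) is in hand the argument is purely formal.
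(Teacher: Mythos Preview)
Your proof is correct and follows the same route as the paper: both read the conclusion directly off the Chern--Weil identity~(\ref{plus norm equal minus norm}), using that $\Omega$ is of type $(4,0)$ to kill the left-hand side and then forcing $\|F^{0,2}_{-}\|^{2}=0$. The paper's one-line proof invokes the realness of $\chi$ (so one can take real parts), while you instead observe that $\chi$ is a cross term in $F^{0,2}_{+}$ and hence vanishes outright under the hypothesis --- a slightly cleaner way to dispose of it, but not a different argument.
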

\begin{proof}
Note that $\Omega$ is $(4,0)$ form and $\chi$ is a real valued function.
\end{proof}
\begin{corollary} \label{cptness by stable bdl}
If $\mathcal{M}_{c}^{o}=\mathcal{M}_{c}=\overline{\mathcal{M}}_{c}\neq\emptyset$,
then $\mathcal{M}_{c}^{DT_{4}}$ is compact.
\end{corollary}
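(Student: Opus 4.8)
The plan is to prove the corollary by identifying $\mathcal{M}_{c}^{DT_{4}}$, as a topological space, with the compact Gieseker moduli space, and then reading off compactness. First I would observe that since $\mathcal{M}_{c}^{o}=\mathcal{M}_{c}\neq\emptyset$, the topological bundle $E$ underlying the whole construction admits a slope-stable holomorphic structure with $ch(E)=c$; in particular $ch_{2}(E)$ is of Hodge type $(2,2)$, because Chern classes of holomorphic bundles on the projective manifold $X$ are algebraic. This is exactly the input required by Lemma \ref{C2 condition}: any unitary connection $d_{A}$ whose class lies in $\mathcal{M}_{c}^{DT_{4}}$ satisfies $F^{0,2}_{+}=0$, hence $F^{0,2}=0$. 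Concretely this is the real part of identity (\ref{plus norm equal minus norm}): when $ch_{2}(E)\in H^{2,2}(X,\mathbb{C})$ the left-hand side vanishes (a $(6,2)$-form on a four-fold is zero), and one reads off $\|F^{0,2}_{+}\|^{2}=\|F^{0,2}_{-}\|^{2}$.

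Next I would combine $F^{0,2}_{A}=0$ with the remaining $DT_{4}$ equation $F\wedge\omega^{3}=0$ and the normalization $c_{1}(E)=0$: such a $d_{A}$ is a Hermitian--Yang--Mills connection, and since $d_{A}\in\mathcal{A}^{*}$ it is irreducible. By the Donaldson--Uhlenbeck--Yau theorem \cite{UY}, its gauge-equivalence class corresponds to a slope-stable holomorphic bundle structure on $E$, i.e. a point of $\mathcal{M}_{c}^{o}$; conversely every point of $\mathcal{M}_{c}^{o}$ produces such a solution. Thus the natural map $\mathcal{M}_{c}^{o}\to\mathcal{M}_{c}^{DT_{4}}$ is a bijection. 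By Theorem \ref{mo mDT4} this map is moreover a closed imbedding of analytic spaces, so it is a homeomorphism onto its (closed) image; being surjective, it is a homeomorphism $\mathcal{M}_{c}^{o}\cong\mathcal{M}_{c}^{DT_{4}}$. Since by hypothesis $\mathcal{M}_{c}^{o}=\mathcal{M}_{c}=\overline{\mathcal{M}}_{c}$ carries the (compact) Gieseker topology, $\mathcal{M}_{c}^{DT_{4}}$ is compact. A parallel, perhaps more transparent, route is to argue sequentially inside the Banach manifold $\mathcal{B}_{1}=\mathcal{A}^{*}/\mathcal{G}^{0}$: given $[d_{A_{i}}]\in\mathcal{M}_{c}^{DT_{4}}$, the two steps above produce points $p_{i}$ in the compact space $\mathcal{M}_{c}$; a convergent subsequence $p_{i_{k}}\to p$ together with continuity of the inverse correspondence $\mathcal{M}_{c}^{o}\to\mathcal{M}_{c}^{DT_{4}}$ yields $[d_{A_{i_{k}}}]\to[d_{A}]\in\mathcal{M}_{c}^{DT_{4}}$.

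The hard part will be the topological identification in the last lines above (equivalently, the continuity of $\mathcal{M}_{c}\to\mathcal{M}_{c}^{DT_{4}}$): one must know that the complex-analytic topology on $\mathcal{M}_{c}$ agrees with the $C^{\infty}$ gauge-theoretic topology that $\mathcal{M}_{c}^{DT_{4}}$ inherits from $\mathcal{B}_{1}$. This is the usual Uhlenbeck-compactness issue, but it is harmless here precisely because the hypothesis $\mathcal{M}_{c}=\overline{\mathcal{M}}_{c}=\mathcal{M}_{c}^{o}$ excludes both bubbling and degeneration to non-locally-free sheaves; in practice I would simply invoke the closed-imbedding statement of Theorem \ref{mo mDT4}, whose proof already contains the local Kuranishi comparison of Theorem \ref{Kuranishi str of cpx ASD thm} that pins down this topology. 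Everything else --- the Hodge-type statement for $ch_{2}(E)$, the application of Lemma \ref{C2 condition}, and Donaldson--Uhlenbeck--Yau --- is standard, so the corollary is essentially immediate once Theorem \ref{mo mDT4} and Lemma \ref{C2 condition} are in hand.
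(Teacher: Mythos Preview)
Your argument is correct and follows essentially the same route as the paper, whose proof is the single line ``By the assumptions and Lemma \ref{C2 condition}.'' You have simply unpacked what that line presupposes: the $(2,2)$-type of $ch_{2}(E)$, the Donaldson--Uhlenbeck--Yau bijection $\mathcal{M}_{c}^{o}\to\mathcal{M}_{c}^{DT_{4}}$ already discussed in Section~3, and the topological upgrade of that bijection via the closed-imbedding statement of Theorem \ref{mo mDT4}. Your explicit attention to the comparison of topologies is a point the paper leaves entirely implicit.
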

\begin{proof}
By the assumptions and Lemma \ref{C2 condition}.
\end{proof}
From the viewpoint of local Kuranishi models (i.e. Theorem \ref{Kuranishi str of cpx ASD thm}), Lemma \ref{C2 condition} says
\begin{equation}\pi_{+}\tilde{\tilde{\kappa}}=0 \Rightarrow \tilde{\tilde{\kappa}}=0 .\nonumber\end{equation}

\begin{proposition} Given a local holomorphic map
\begin{equation}\kappa: H^{0,1}(X,EndE)\rightarrow H^{0,2}(X,EndE) \nonumber\end{equation}
such that $\kappa_{+}=0 \Rightarrow \kappa=0$ and $\kappa(0)=0$, where
\begin{equation}\kappa_{+}=\pi_{+}\circ\kappa : H^{0,1}(X,EndE)\rightarrow H^{0,2}_{+}(X,EndE). \nonumber\end{equation}
Then image of $\kappa$ can not be a neighbourhood of the origin.
\end{proposition}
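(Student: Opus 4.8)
\emph{Proof proposal.} The plan is to argue by contradiction and to extract, purely formally, the constraint that the hypothesis ``$\kappa_+=0\Rightarrow\kappa=0$'' places on the image. Suppose $\mathrm{Image}(\kappa)$ were a neighbourhood of the origin in $H^{0,2}(X,EndE)$, so that it contains an open ball $B_\delta$ about $0$. I would aim to contradict this by showing that $\mathrm{Image}(\kappa)$ in fact meets the anti-self-dual subspace $H^{0,2}_-(X,EndE)$ only at the origin, while $B_\delta\cap H^{0,2}_-(X,EndE)$ is infinite.

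The first step is to rewrite the hypothesis in terms of preimages, using the splitting $H^{0,2}(X,EndE)=H^{0,2}_+(X,EndE)\oplus H^{0,2}_-(X,EndE)$ of Corollary \ref{cor star4 cut coho into ASD}. Since $\pi_+$ is the linear projection with kernel $H^{0,2}_-(X,EndE)$, one has $\kappa_+^{-1}(0)=\kappa^{-1}\big(H^{0,2}_-(X,EndE)\big)$. The assumption then reads $\kappa^{-1}\big(H^{0,2}_-(X,EndE)\big)\subseteq\kappa^{-1}(0)$, while the reverse inclusion is automatic, so the two loci coincide. Applying $\kappa$ and using $\kappa(0)=0$ together with the set-theoretic identity $f(f^{-1}(S))=\mathrm{Image}(f)\cap S$, I would conclude
\[
\mathrm{Image}(\kappa)\cap H^{0,2}_-(X,EndE)=\{0\}.
\]

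The second step is to note, via Remark \ref{remark1}, that $H^{0,2}_-(X,EndE)=\sqrt{-1}\,H^{0,2}_+(X,EndE)$ is a nonzero complex vector space (the degenerate case $H^{0,2}(X,EndE)=0$, in which $\kappa\equiv0$, being excluded at the outset). Hence $B_\delta\cap H^{0,2}_-(X,EndE)$ is infinite and, by the displayed identity, contained in $\{0\}$ — a contradiction, which proves the claim.

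There is essentially no hard step here: the argument uses neither holomorphicity nor any dimension count, only $\kappa(0)=0$ and the direct-sum decomposition. The only points deserving care are the bookkeeping with preimages (that $\kappa_+^{-1}(0)=\kappa^{-1}(H^{0,2}_-)$ and that the hypothesis gives equality of the two zero loci) and the explicit exclusion of $H^{0,2}(X,EndE)=0$. Conceptually this is the Kuranishi-model shadow of Lemma \ref{C2 condition}: vanishing of the self-dual part of the obstruction forces the entire obstruction to vanish, so $\kappa$ is confined to $\big(H^{0,2}(X,EndE)\setminus H^{0,2}_-(X,EndE)\big)\cup\{0\}$ and can never be locally surjective.
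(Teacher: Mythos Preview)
Your proof is correct and follows essentially the same route as the paper's one-line argument, which simply observes that the image of $\kappa$ meets a half-dimensional real subspace of $H^{0,2}(X,EndE)$ only at the origin and hence cannot be open. (The paper writes $H^{0,2}_+$ where the logic requires $H^{0,2}_-$, an apparent typo your version avoids; your explicit exclusion of the degenerate case $H^{0,2}(X,EndE)=0$ is also more careful than the paper.)
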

\begin{proof}
By assumptions, $\kappa(U(0))\cap H^{0,2}_{+}(X,EndE)=\{0\}$, where $U(0)$ is a small neighbourhood of the origin in $H^{0,1}(X,EndE)$.
\end{proof}
\begin{remark}
In fact, we can find examples of local analytic maps such that $rank(\kappa)=ext^{2}(E,E)-1$
(but so far we do not know any example of $\mathcal{M}_{c}$ whose local analytic structure is of this type).

Because $\mathcal{M}_{c}^{o}=\mathcal{M}_{c}^{DT_{4}}$ as sets, the dimension of
$\mathcal{M}_{c}^{DT_{4}}$ may be less than its virtual dimension.
Hence it is possible that some local parts of $\mathcal{M}_{c}^{DT_{4}}$ do not contribute to the $DT_{4}$
invariants which will be defined later.
\end{remark}

\subsection{Attempted general compactification of $DT_{4}$ moduli spaces}
In this subsection, we propose an attempted approach to the general compactification of $\mathcal{M}_{c}^{DT_{4}}$. Under the gluing assumptions, we define the generalized
$DT_{4}$ moduli space $\overline{\mathcal{M}}_{c}^{DT_{4}}$ as the gluing of local models.

In some cases, we can get rid of the gluing assumption and prove the existence of $\overline{\mathcal{M}}_{c}^{DT_{4}}$ directly.\\

Recall, if we assume
$\mathcal{M}_{c}^{o}\neq\emptyset$, we have a bijective map
\begin{equation}\mathcal{M}_{c}^{o}\rightarrow \mathcal{M}_{c}^{DT_{4}},  \nonumber \end{equation}
which is a closed imbedding as analytic space possibly with non-reduced structures.
The idea of general compactification is to extend the above map to a bijective map
\begin{equation}\mathcal{M}_{c}\rightarrow \overline{\mathcal{M}}_{c}^{DT_{4}},  \nonumber \end{equation}
where $\overline{\mathcal{M}}_{c}^{DT_{4}}$ is realized as the gluing of local models. \\

Furthermore, the local model of $\overline{\mathcal{M}}_{c}^{DT_{4}}$ near a stable sheaf $\mathcal{F}$ should be defined
as
\begin{equation}\kappa_{+}=\pi_{+}\circ\kappa:Ext^{1}(\mathcal{F},\mathcal{F})\rightarrow Ext^{2}_{+}(\mathcal{F},\mathcal{F}),
\nonumber \end{equation}
where $\kappa$ is a Kuranishi map of $\mathcal{M}_{c}$ at $\mathcal{F}$ and
$\pi_{+}$ is the projection map as in Corollary \ref{cutting for sheaves}.

However, the Kuranishi map $\kappa$ is unique only up to local re-parametrizations of $Ext^{1}(\mathcal{F},\mathcal{F})$.
Meanwhile, the $*_{4}$ is a real operator and if we use different re-parametrization, the resulting models may be different in general, i.e.
\begin{equation}(\pi_{+}\circ\kappa_{1})^{-1}(0)\ncong (\pi_{+}\circ\kappa_{2})^{-1}(0)  \nonumber \end{equation}
for different $\kappa_{i}$, $i=1, 2$.

For the purpose of gluing, we need to pick a coherent choice of local Kuranishi models for $\mathcal{M}_{c}$. In the case when
$\mathcal{M}_{c}=\mathcal{M}_{c}^{o}$, the moment map equation in $DT_{4}$ equations (\ref{complex ASD equation}) gives such a choice.
In general, we need a similar moment map equation for $\mathcal{M}_{c}$. This is achieved by a quiver representation of $\mathcal{M}_{c}$
due to \cite{bchr}. We then propose a candidate local model near each $\mathcal{F}\in \mathcal{M}_{c}$ based on their work. Since we do not know how to glue the local models at the moment, we leave the construction to the appendix. The notations and results of \cite{bchr} will also be recalled in the appendix. \\
${}$ \\
We will always make the following assumptions when we talk about
the $DT_{4}$ invariants in the case when $\mathcal{M}_{c}^{o}\neq \mathcal{M}_{c}$.
\begin{assumption}\label{assumption on gluing} ${}$\\
We assume there exists a real analytic space $\overline{\mathcal{M}}^{DT_{4}}_{c}$ and a bijective map
\begin{equation}\mathcal{M}_{c}\rightarrow\overline{\mathcal{M}}^{DT_{4}}_{c}  \nonumber \end{equation}
such that near each closed point of $\mathcal{M}_{c}$ say $\mathcal{F}$, the
local structure of $\overline{\mathcal{M}}^{DT_{4}}_{c}$ is of form $\kappa_{+}^{-1}(0)$, where
\begin{equation}\kappa_{+}=\pi_{+}\circ\kappa:Ext^{1}(\mathcal{F},\mathcal{F})\rightarrow Ext^{2}_{+}(\mathcal{F},\mathcal{F}).
\nonumber \end{equation}
$\kappa$ is a Kuranishi map at $\mathcal{F}$
and $Ext^{2}_{+}(\mathcal{F},\mathcal{F})$ is a half dimensional real subspace of $Ext^{2}(\mathcal{F},\mathcal{F})$ on which the Serre duality quadratic form is positive definite.
\end{assumption}
\begin{definition}\label{generalized DT4}
Under Assumption \ref{assumption on gluing}, we get a compact real analytic space $\overline{\mathcal{M}}^{DT_{4}}_{c}$. We call it the
\textbf{generalized $DT_{4}$ moduli space}.
\end{definition}
%
In the case when $\mathcal{M}_{c}$ is smooth, we have the following obvious gluing result.
\begin{proposition}\label{gene DT4 if Mc smooth}
If the Gieseker moduli space $\mathcal{M}_{c}$ is smooth, the generalized $DT_{4}$ moduli space exists and
$\overline{\mathcal{M}}^{DT_{4}}_{c}=\mathcal{M}_{c}$ as real analytic spaces.
\end{proposition}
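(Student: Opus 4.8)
The plan is to show that smoothness of $\mathcal{M}_{c}$ forces the proposed local models $\kappa_{+}^{-1}(0)$ to coincide with $\mathcal{M}_{c}$ itself near every point, so that the gluing in Assumption \ref{assumption on gluing} becomes trivial and $\overline{\mathcal{M}}^{DT_{4}}_{c}=\mathcal{M}_{c}$ as real analytic spaces. The key observation is that when $\mathcal{M}_{c}$ is smooth in the strong sense of the Notation section (the Kuranishi maps vanish identically), then for every closed point $\mathcal{F}\in\mathcal{M}_{c}$ one may take the Kuranishi map $\kappa\colon Ext^{1}(\mathcal{F},\mathcal{F})\to Ext^{2}(\mathcal{F},\mathcal{F})$ to be $\kappa\equiv 0$. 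Composing with the projection $\pi_{+}$ from Corollary \ref{cutting for sheaves} gives $\kappa_{+}=\pi_{+}\circ\kappa\equiv 0$, hence $\kappa_{+}^{-1}(0)$ is all of a neighbourhood of the origin in $Ext^{1}(\mathcal{F},\mathcal{F})$, i.e. a smooth chart of dimension $\dim_{\mathbb{C}}Ext^{1}(\mathcal{F},\mathcal{F})$ which agrees with the corresponding chart of $\mathcal{M}_{c}$ at $\mathcal{F}$.

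First I would record that smoothness of $\mathcal{M}_{c}$ means $\mathcal{M}_{c}$ is (locally near each $\mathcal{F}$) the smooth complex manifold $Ext^{1}(\mathcal{F},\mathcal{F})\cap B_{\epsilon}$, with the zero Kuranishi map; this is exactly the convention fixed in the Notation and convention paragraph, so no work is needed here beyond citing it. Next, for each $\mathcal{F}$ I would take the candidate local model $\kappa_{+}^{-1}(0)$ appearing in Assumption \ref{assumption on gluing}: since any admissible Kuranishi map is, for smooth $\mathcal{M}_{c}$, forced to be identically zero (all choices of re-parametrization of $Ext^{1}$ send the zero map to the zero map), the model is canonically the smooth chart $Ext^{1}(\mathcal{F},\mathcal{F})\cap B_{\epsilon}$, independent of any auxiliary choice. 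Then I would check that these charts glue: the identity map $\mathcal{M}_{c}\to\mathcal{M}_{c}$ is the required bijective map, and on overlaps the transition functions are precisely those of $\mathcal{M}_{c}$ since on each chart the $DT_{4}$ cutting by $\pi_{+}$ does nothing. This verifies Assumption \ref{assumption on gluing} with $\overline{\mathcal{M}}^{DT_{4}}_{c}=\mathcal{M}_{c}$, and by Definition \ref{generalized DT4} the generalized $DT_{4}$ moduli space exists.

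Finally I would note the identification is as real analytic spaces: $\mathcal{M}_{c}$ is a complex manifold, hence a reduced real analytic space with no nilpotents, and the local models $\kappa_{+}^{-1}(0)$ carry exactly this reduced real analytic structure because $\kappa_{+}\equiv 0$. So the set-theoretic bijection of Assumption \ref{assumption on gluing} upgrades to an isomorphism of real analytic spaces. I expect no serious obstacle here; the entire content is that ``smooth in the strong sense'' was defined precisely so that the Kuranishi maps vanish, which kills the ambiguity in the choice of $\kappa$ (the issue flagged just before the proposition, that $(\pi_{+}\circ\kappa_{1})^{-1}(0)\ncong(\pi_{+}\circ\kappa_{2})^{-1}(0)$ for different representatives) and removes any genuine gluing problem. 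The only minor point to be careful about is making sure that the half-dimensional real subspace $Ext^{2}_{+}(\mathcal{F},\mathcal{F})$ is well defined along $\mathcal{M}_{c}$ in a way compatible with varying $\mathcal{F}$ — but since $\kappa$ never maps into it, its precise choice is irrelevant to the underlying space, so this causes no difficulty.
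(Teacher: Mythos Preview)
Your proposal is correct and follows exactly the same approach as the paper, which simply notes that by the smoothness assumption all Kuranishi maps are zero, whence the conclusion is immediate from the definitions. You have spelled out in detail what the paper compresses into one line, but the logical content is identical.
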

\begin{proof}
By the assumption, all Kuranishi maps are zero. The conclusion is obvious from the definitions.
\end{proof}
There is another interesting case when we can get $\overline{\mathcal{M}}^{DT_{4}}_{c}$ without the gluing assumption.
\begin{proposition}\label{ob=v+v*}
Assume for any closed point $\mathcal{F}\in \mathcal{M}_{c}$, there is a splitting of obstruction space
\begin{equation}Ext^{2}(\mathcal{F},\mathcal{F})=V_{\mathcal{F}}\oplus V_{\mathcal{F}}^{*}  \nonumber \end{equation}
such that $V_{\mathcal{F}}$ is its maximal isotropic subspace with respect to the Serre duality pairing and the image of a Kuranishi map $\kappa$ at $\mathcal{F}$ satisfies
\begin{equation}\emph{Image}(\kappa)\subseteq V_{\mathcal{F}}.  \nonumber \end{equation}
Then the generalized $DT_{4}$ moduli space exists and $\overline{\mathcal{M}}^{DT_{4}}_{c}=\mathcal{M}_{c}$ as real analytic spaces.
\end{proposition}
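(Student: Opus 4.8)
The plan is to show that under the hypothesis $\mathrm{Image}(\kappa)\subseteq V_{\mathcal{F}}$ together with the Lagrangian splitting $Ext^{2}(\mathcal{F},\mathcal{F})=V_{\mathcal{F}}\oplus V_{\mathcal{F}}^{*}$, the real half-dimensional subspace $Ext^{2}_{+}(\mathcal{F},\mathcal{F})$ on which $Q_{Serre}$ is positive definite can be chosen so that the projection $\pi_{+}\colon Ext^{2}(\mathcal{F},\mathcal{F})\to Ext^{2}_{+}(\mathcal{F},\mathcal{F})$ restricts to an \emph{isomorphism} on $V_{\mathcal{F}}$. Granting this, $\kappa_{+}=\pi_{+}\circ\kappa$ and $\kappa$ have the same zero locus \emph{as analytic spaces} (not merely as sets), because $\pi_{+}|_{V_{\mathcal{F}}}$ is a linear isomorphism and $\kappa$ already lands in $V_{\mathcal{F}}$; hence the local model $\kappa_{+}^{-1}(0)$ of Assumption \ref{assumption on gluing} agrees with the genuine local model $\kappa^{-1}(0)$ of $\mathcal{M}_{c}$ near $\mathcal{F}$. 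Since this identification is canonical (it does not depend on the re-parametrization ambiguity of $\kappa$, as any reparametrization is absorbed on the source $Ext^{1}$), the local models glue tautologically to give $\overline{\mathcal{M}}_{c}^{DT_{4}}=\mathcal{M}_{c}$ as real analytic spaces, and in particular Assumption \ref{assumption on gluing} is satisfied without being assumed.

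The key linear-algebra step I would carry out first: given a complex vector space $W=Ext^{2}(\mathcal{F},\mathcal{F})$ with a non-degenerate \emph{symmetric} $\mathbb{C}$-bilinear form $Q_{Serre}$ (the Serre duality pairing on $Ext^{2}$ of a Calabi-Yau four-fold is symmetric) and a Lagrangian decomposition $W=V\oplus V^{*}$, I want a real subspace $W_{+}$ of real dimension $\dim_{\mathbb{C}}W$ with $Q_{Serre}|_{W_{+}}$ real positive definite and $W=W_{+}\oplus \sqrt{-1}\,W_{+}$, such that the real-linear projection to $W_{+}$ along $\sqrt{-1}\,W_{+}$ is injective on $V$. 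Concretely, pick a $\mathbb{C}$-basis $e_{1},\dots,e_{m}$ of $V$ with dual basis $e_{1}^{*},\dots,e_{m}^{*}$ of $V^{*}$ so that $Q_{Serre}(e_{i},e_{j}^{*})=\delta_{ij}$, $Q_{Serre}(e_{i},e_{j})=Q_{Serre}(e_{i}^{*},e_{j}^{*})=0$; then set $f_{i}=\tfrac{1}{\sqrt2}(e_{i}+e_{i}^{*})$ and $g_{i}=\tfrac{\sqrt{-1}}{\sqrt2}(e_{i}-e_{i}^{*})$, and let $W_{+}$ be the real span of the $f_{i}$ and $g_{i}$. One checks $Q_{Serre}$ is real and positive definite on $W_{+}$, and that $V\cap \sqrt{-1}\,W_{+}=0$ (an element $\sum a_i e_i$ of $V$ lies in $\sqrt{-1}W_+$ only if all $a_i=0$), so $\pi_{+}|_{V}$ is injective, hence an isomorphism onto its image which is all of $W_+$ by dimension count. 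This is exactly the abstract model behind "$(Ext^{2},Q_{Serre})\cong(T^{*}V_{\mathcal{F}},Q_{std})$" appearing in Proposition \ref{condition of vir gene DT4}, and I would phrase it that way for consistency.

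Next I would verify coherence across $\mathcal{M}_{c}$: the choice of $W_{+}=Ext^{2}_{+}(\mathcal{F},\mathcal{F})$ should vary (locally) continuously/real-analytically with $\mathcal{F}$, which follows from Corollary \ref{cutting for sheaves} once we fix metrics on the bundles in a resolving complex as in the Remark after that Corollary; the splitting $V_{\mathcal{F}}\oplus V_{\mathcal{F}}^{*}$ is part of the hypothesis and the isomorphism $\pi_{+}|_{V_{\mathcal{F}}}$ is then automatic on a neighbourhood by openness of the "injective" condition. Then on overlaps of Kuranishi charts the transition data for $\mathcal{M}_{c}$ and for the $\kappa_{+}^{-1}(0)$-models coincide because on each chart the two are identified compatibly with the underlying reparametrizations of $Ext^{1}$; so the glued space is just $\mathcal{M}_{c}$ with its real-analytic structure.

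The main obstacle I anticipate is not the pointwise linear algebra but making the identification $\kappa_{+}^{-1}(0)=\kappa^{-1}(0)$ genuinely \emph{scheme-theoretic / non-reduced}: one must be careful that $\pi_{+}$ being an isomorphism on the linear subspace $V_{\mathcal{F}}$ really forces equality of ideals $\mathcal{I}_{\kappa}=\mathcal{I}_{\pi_{+}\circ\kappa}$ in the local ring, and that the positivity of $Q_{Serre}$ on $W_{+}$ is what prevents $\kappa$ (valued in the Lagrangian $V_{\mathcal{F}}$) from having extra vanishing after projection — i.e. the inclusion $\mathcal{I}_{\pi_{+}\circ\kappa}\subseteq\mathcal{I}_{\kappa}$. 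This is the analogue of the argument via Lemma \ref{miyajima lemma} used in Proposition \ref{QA intesect P=0 equals NA}, and I would expect to invoke that same Miyajima-type lemma (with $F_{2}=V_{\mathcal{F}}^{*}$, on which $\pi_{+}\circ\kappa$ trivially vanishes because $\kappa$ avoids $V_{\mathcal{F}}^{*}$ entirely) to upgrade the set-theoretic equality to an equality of analytic germs with possibly non-reduced structure. Once that is in hand, existence of $\overline{\mathcal{M}}_{c}^{DT_{4}}$ and the identity $\overline{\mathcal{M}}_{c}^{DT_{4}}=\mathcal{M}_{c}$ as real analytic spaces both follow immediately.
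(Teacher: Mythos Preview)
Your proposal is correct and follows essentially the same strategy as the paper: exhibit a choice of $Ext^{2}_{+}(\mathcal{F},\mathcal{F})$ for which $\pi_{+}|_{V_{\mathcal{F}}}$ is a real-linear isomorphism onto $Ext^{2}_{+}$, so that $\kappa_{+}^{-1}(0)=\kappa^{-1}(0)$ as analytic germs and the local models glue tautologically to $\mathcal{M}_{c}$. The paper packages the linear algebra slightly differently: rather than writing down a basis $f_{i},g_{i}$ of $W_{+}$, it picks a Hermitian metric $h$ on $V_{\mathcal{F}}$, takes the induced direct-sum metric on $V_{\mathcal{F}}\oplus V_{\mathcal{F}}^{*}$, defines $*_{4}$ by $Q_{Serre}(\alpha,*_{4}\beta)=h(\alpha,\beta)$, and observes that this $*_{4}$ swaps $V_{\mathcal{F}}$ and $V_{\mathcal{F}}^{*}$; your explicit $W_{+}$ is precisely the $+1$-eigenspace of this operator. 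One point where you are over-cautious: your final paragraph's concern about needing Lemma~\ref{miyajima lemma} is unnecessary. Since $\mathrm{Image}(\kappa)\subseteq V_{\mathcal{F}}$ and $\pi_{+}|_{V_{\mathcal{F}}}$ is an invertible linear map, the components of $\kappa_{+}=(\pi_{+}|_{V_{\mathcal{F}}})\circ\kappa$ and of $\kappa$ generate the \emph{same} ideal in the local ring (apply $(\pi_{+}|_{V_{\mathcal{F}}})^{-1}$), so the equality of possibly non-reduced germs is immediate; the Miyajima argument was needed in Proposition~\ref{QA intesect P=0 equals NA} only because there the relevant projection was not known a priori to be injective on the image.
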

\begin{proof}
Pick a Hermitian metric $h$ on $V_{\mathcal{F}}$ which induces a Hermitian metric on $V_{\mathcal{F}}^{*}$. We abuse the notation $h$ for the direct sum Hermitian metric on $Ext^{2}(\mathcal{F},\mathcal{F})=V_{\mathcal{F}}\oplus V_{\mathcal{F}}^{*}$. Define $*_{4}: Ext^{2}(\mathcal{F},\mathcal{F})\rightarrow Ext^{2}(\mathcal{F},\mathcal{F})$ such that $Q_{Serre}(\alpha,*_{4}\beta)=h(\alpha,\beta)$, where $Q_{Serre}$ denotes the Serre duality pairing. Then for $\kappa(\alpha)\in V_{\mathcal{F}}$, we have $*_{4}\big(\kappa(\alpha)\big)\in V_{\mathcal{F}}^{*}$ which implies $\kappa_{+}=0\Rightarrow \kappa=0$ by the assumption $\emph{Image}(\kappa)\subseteq V_{\mathcal{F}}$.
\end{proof}
\begin{remark} We will see the above conditions are satisfied for compactly supported sheaves on certain local $CY_{4}$ manifolds.
\end{remark}

\newpage
\section{Virtual cycle construction}
For moduli problems in algebraic geometry, we can use GIT construction to form the moduli space.
If one wants to define invariants of a moduli problem, we need to define the fundamental class of the moduli space.
But in general the moduli space is very singular and may have a lot of different components with different dimensions,
how can one get a correct cycle (deformation invariant) to represent the fundamental class of the moduli space.
The answer came from the original idea of Fulton-MacPherson's localized top Chern class \cite{fulton},
then it is generalized to Fredholm Banach bundles over Banach manifolds by Brussee \cite{brussee} and developed in moduli problems by Li-Tian \cite{lt1}, Behrend-Fantechi \cite{bf} in full generality.

The idea is to imbed the moduli space into a smooth ambient space (maybe infinite dimension) as a zero loci of some section of some natural bundle.
Then we generically perturb the section to define the Euler class of that bundle.
The correct cycle one may want to take is the Poincar\'{e} dual of that Euler class.

However in general, the moduli space can only be imbedded in finite dimensional smooth space locally.
Then one may allow the ambient space to be a infinite dimension one (usually a canonical one with nice local structures),
this is the idea of \cite{lt2}. If one does not want to encounter infinite dimensional stuff,
one way of achieving this in algebro-geometric setting is to study the deformation-obstruction theory of the moduli problem.
If the obstruction theory is perfect in sense of Li-Tian \cite{lt1}.
Then one can construct a global cone of equal dimension over the moduli sitting inside the locally free obstruction bundle.
Intersecting the cone with the zero section of the obstruction bundle gives the virtual fundamental class of the moduli space.
The equivalence of the above two approaches above was proved in \cite{lt3}.

Meanwhile, K. Behrend and B. Fantachi \cite{bf} studied the cotangent complex of the moduli space directly.
They constructed a zero dimensional cone stack called intrinsic normal cone inside the intrinsic normal sheaf of the cotangent complex.
Under perfectness assumption too, they defined the virtual fundamental class of the moduli space which turned out to be equivalent
to Li-Tian's construction \cite{kim}.


\subsection{Virtual cycle construction of $DT_{4}$ moduli spaces }

Now let us define the virtual fundamental class of $\mathcal{M}_{c}^{DT_{4}}$.

We have
\begin{equation}\label{Fredholm bundle}
\begin{array}{lll}
      & E =& \mathcal{A^{*}}\times_{\mathcal{G}^{0}}(\Omega^{0}(X,g_{E})_{k-1}\oplus\Omega^{0,2}_{+}(X,EndE)_{k-1})
      \\  & \quad  & \qquad \downarrow \\ \mathcal{M}^{DT_{4}}_{c} & \hookrightarrow & \mathcal{B}_{1}=\mathcal{A^{*}}/\mathcal{G}^{0},
\end{array}\end{equation}
where $\mathcal{M}^{DT_{4}}_{c}\hookrightarrow \mathcal{B}_{1}$ is the $DT_{4}$ moduli space
defined as the zero loci of section $s=(\wedge F,F^{0,2}_{+})$ of the above Banach bundle.
We assume $\mathcal{M}_{c}=\mathcal{M}_{c}^{o}\neq\emptyset$ which leads to the compactness of $\mathcal{M}_{c}^{DT_{4}}$.\\
${}$ \\
\textbf{Remark}: The orientation of the above Banach bundle may be determined by the choice of orientation on the virtual vector space $-H^{0}(X)+H^{0,1}(X)-H^{0,2}_{+}(X)$ as in the case of Donaldson theory. We will discuss this in detail in a next paper and let us assume the determinant line bundle is orientable and choose an orientation data $o(\mathcal{L})$. Otherwise, all the invariants defined in the thesis will be mod 2. Actually, as we can see for many cases considered in this thesis, the orientation can be obviously obtained and some partial results on the orientability is showed in the appendix. \\

Now we check the Fredholm property of the above Banach bundle.
Take an open cover $\{U_{i}\}$ of $s^{-1}(0)$ in $\mathcal{A}^{*}/\mathcal{G}^{0}$, where
\begin{equation}U_{i}=\{d_{A_{i}}+a \textrm{ } \big{|} \textrm{ }  \|a\|_{k}< \epsilon ,  \textrm{ }  d_{A_{i}}^{*}a=0 \}. \nonumber\end{equation}
Note that
\begin{equation}E\mid_{U_{i}}=U_{i}\times(\Omega^{0}(X,g_{E})_{k-1}\oplus\Omega^{0,2}_{+}(X,EndE)_{k-1}). \nonumber\end{equation}
On the intersection of two charts, we have the following commutative diagram:
\begin{equation}\xymatrix{
  E\mid_{U_{i}} \ar[d]_{\pi_{i}} \ar[r]^{\Phi_{ij}} & E\mid_{U_{j}} \ar[d]^{\pi_{j}} \\
  U_{i} \ar[r]^{\phi_{ij}} & U_{j}   }
\nonumber\end{equation}
the map $\phi_{ij}$ is the gauge transformation on $U_{ij}$, and $\Phi_{ij}$ is adjoint action in the fiber direction.

The section $s$ near $d_{A}$ with $\overline{\partial}_{A}^{2}=0$ is given by
\begin{equation}  \mathcal{G}\curvearrowright \Omega^{1}(X,g_{E})_{k}\rightarrow \Omega^{0}(X,g_{E})_{k-1}\oplus\Omega^{0,2}_{+}(X,EndE)_{k-1},
\nonumber\end{equation}
\begin{equation}a=a^{0,1}+a^{1,0}\mapsto (\wedge F(d_{A}+a^{0,1}+a^{1,0}), F^{0,2}_{+}(\overline{\partial}_{A}+a^{0,1})).
\nonumber \end{equation}
Because we fix the Hermitian metric $h$ on $E$, we identify unitary connections with $(0,1)$ connections
$\Omega^{1}(X,g_{E})_{k}\cong\Omega^{0,1}(X,EndE)_{k} $.
When we fix the unitary gauge, we get a subspace $kerd_{A}^{*}\subseteq\Omega^{1}(X,g_{E})_{k}$.

By the K\"ahler identity, $[\wedge,d_{A}]=i(\overline{\partial}_{A}^{*}-{\partial}_{A}^{*})$. We have
\begin{equation}ker(ds)\mid_{A}\cong H^{0,1}(X,EndE),\nonumber\end{equation}
\begin{eqnarray*}coker(ds)\mid_{A}&=&\frac{\Omega^{0}(X,g_{E})_{k-1}\oplus
\Omega^{0,2}_{+}(X,EndE)_{k-1}}{(i\overline{\partial}_{A}^{*}a^{0,1}-i\partial^{*}a^{1,0})\oplus\overline{\partial}_{A}^{+}a^{0,1}} \\
&=&\frac{\Omega^{0}(X,g_{E})_{k-1}\oplus \Omega^{0,2}_{+}(X,EndE)_{k-1}}{(2i\overline{\partial}_{A}^{*}a^{0,1})\oplus\overline{\partial}_{A}^{+}a^{0,1}} \\
&=& H^{0}(X,g_{E})\oplus H^{0,2}_{+}(X,EndE).\nonumber\end{eqnarray*}
The second equality is by gauge fixing condition $d_{A}^{*}(a)=\overline{\partial}_{A}^{*}a^{0,1}+\partial_{A}^{*}a^{1,0}=0 $. \\
Thus we have proved the Fredholm property of $s$ on $s^{-1}(0)$.

Then by Proposition 14 of \cite{brussee}, the Euler class of the above Fredholm Banach bundle exists, i.e. $[\mathcal{M}^{DT_{4}}_{c}]^{vir}\in H_{r}(\mathcal{B}_{1},\mathbb{Z})$ and we call it the virtual fundamental class of the $DT_{4}$ moduli space. \\
${}$ \\
\textbf{Deformation invariance}.
Now we show the virtual cycle defined above is independent of the choice of \\
$(1)$ the holomorphic top form $\Omega$, \\
$(2)$ the Hermitian metric $h$ on $E$,  \\
$(3)$ the parameter $t$ of any continuous deformation of complex structures of Calabi-Yau four-fold $X_{t}$ under the assumption that
$\mathcal{M}_{c}^{o}=\mathcal{M}_{c}\neq\emptyset$ for all $X_{t}$.

We will show in detail the independence of $(1)$ here, $(2)$ and $(3)$ can be proved similarly.
\begin{remark}Because the factor $\Omega^{0}(X,g_{E})_{k-1}$ is independent of some of the above choices, we will sometimes omit it in the
expression of the Banach bundle for notation simplicity.
\end{remark}
${}$ \\
\textbf{Independence of $\Omega$}:
We fix the Hermitian metric $h$, the complex structure of $X$ and an orientation data $o(\mathcal{L})$. Choose two holomorphic top form $\Omega^{4,0}$,
$\theta_{1}\Omega^{4,0}$ ($\mid\theta_{1}\mid=1$), which corresponds to $*$, $*_{1}$ respectively
(we use the notation $*$ instead of $*_{4}$ for simplicity in this section). Let $\theta_{1}=a+b\sqrt{-1}$,
then $*_{1}=(a+b\sqrt{-1})* $ with $a^{2}+b^{2}=1$. We first assume $b\neq0$. Then there exists a bundle isomorphism
\begin{equation}
\xymatrix{
  \Omega^{0,2}_{+}(X,EndE)_{k-1} \ar[rr]^{f_{1}} \ar[dr]_{\pi}
                &  &    \Omega^{0,2}_{+_{1}}(X,EndE)_{k-1} \ar[dl]^{\pi_{1}}    \\
                & B\triangleq\mathcal{B}_{1}                 }.   \nonumber\end{equation}
The map $f_{1}$ is defined to be fiberwise multiplication by $\frac{1}{2}(a+1+b\sqrt{-1})$ (if $b=0$ $a=-1$, $f$ is
defined as multiplication by $\sqrt{-1}$). It is easy to show that $f_{1}$ is well-defined.

Denote $s_{1}^{'}=f_{1}^{-1}\circ s_{1}$, where $s_{1}=F^{0,2}_{+_{1}}$ is the canonical section of the
Banach bundle when the holomorphic top form is $\theta_{1}\Omega^{4,0}$.
By similar argument as before, we know $s_{1}^{'}: B\rightarrow\Omega^{0,2}_{+}(X,EndE)_{k-1}$ is also a Fredholm Banach bundle when adding back the factor $\Omega^{0}(X,g_{E})_{k-1}$ with the moment map section.

By the functorial property of the Euler class, we are reduced to prove
\begin{equation}e([s_{1}^{'}:B\rightarrow\Omega^{0,2}_{+}(X,EndE)_{k-1}])=e([s :B\rightarrow \Omega^{0,2}_{+}(X,EndE)_{k-1}]), \nonumber \end{equation}
where $s=F^{0,2}_{+}$ is the canonical section of the Banach bundle when the holomorphic top form is $\Omega^{4,0}$.

Now we consider a family of sections of the following Banach bundle
\begin{equation}s_{t}^{'}: B\rightarrow \Omega^{0,2}_{+}(X,EndE)_{k-1}, \nonumber \end{equation}
where
\begin{equation}s_{t}^{'}=f_{t}^{-1}\circ s_{t}=\big(\frac{1}{2}(\sqrt{1-t^{2}b^{2}}+1+t\cdot b\sqrt{-1})\big)^{-1}\cdot s_{t}
\nonumber\end{equation}
and $s_{t}=F^{0,2}_{+_{t}}$ is the canonical section of the Banach bundle when the holomorphic top form is
$\theta_{t}\cdot\Omega^{4,0}$, where $\theta_{t}=(\sqrt{1-t^{2}b^{2}}+t\cdot b\sqrt{-1})$.

We have the following commutative relation
\begin{equation}f_{t}\circ *\circ F^{0,2}=*_{t}\circ f_{t}\circ F^{0,2}, \nonumber \end{equation}
where $*_{t}=\theta_{t}\circ*$. Then,
\begin{equation}s_{t}^{'}=f_{t}^{-1}\circ s_{t}=f_{t}^{-1}\circ \pi_{+_{t}}F^{0,2}=\pi_{+}\circ(f_{t}^{-1}F^{0,2}),\nonumber \end{equation}
which connects $s_{0}^{'}=s$ and $s_{1}^{'}$.

Define
\begin{equation}S: B\times[0,1]\rightarrow \mathcal{A^{*}}\times_{\mathcal{G}^{0}}(\Omega^{0}(X,g_{E})_{k-1}\oplus\Omega^{0,2}_{+}(X,EndE)_{k-1}),
\nonumber \end{equation}
\begin{equation}S(A,t)=(\wedge F(A),s_{t}^{'}), \nonumber \end{equation}
which is an oriented Fredholm Banach bundle of index $v.d_{\mathbb{R}}(\mathcal{M}^{DT_{4}}_{c})+1$ such that $S|_{B\times 0}=s$
and $S|_{B\times 1}=s_{1}^{'}$.
\begin{remark} By topological reason, the above family miss the case when $*_{1}=-*$
which can be remedied by moving $*$ a little bit and then we can cover the whole $S^{1}$ family.
\end{remark}
Thus, we know that the Euler class $e([s: B\rightarrow E])$ is independent of the choice of the holomorphic top form by \cite{brussee}. \\
${}$ \\
\textbf{Independence of $h$}:
As for the case when we change the Hermitian metric $h$, because the space of all Hermitian metrics on the
given topological bundle is connected and contractable, the above argument go through
(actually we do not need the explicit expression of the isomorphism $f_{t}$ as stated above). The only difference is that we also need to identify the base $B$ for different choices of $h$ which is standard.\\
${}$ \\
\textbf{Independence of complex structures}:
For the last case, we fix a Hermitian metric, a continuous deformation of complex structures $J_{t}$ of $X$ and an orientation data $o(\mathcal{L})$ (it does not depend on the parameter $t$). We consider the following Fredholm Banach bundle
\begin{equation}s: B\times [0,1]\rightarrow \mathcal{A^{*}}\times_{\mathcal{G}^{0}}(\Omega^{0}(X,g_{E})_{k-1}\oplus\Omega^{0,2}_{+}(X,EndE)_{k-1}),
\nonumber \end{equation}
\begin{equation}s_{t}=(\wedge F, f_{t}^{-1}\circ F^{0,2}_{+_{t}}),\nonumber \end{equation}
where $*_{t}$ is the $*_{4}$ operator with respect to the holomorphic structure $J_{t}$ and
\begin{equation}f_{t}:\Omega^{0,2}_{+}(X,EndE)_{k-1}\rightarrow \Omega^{0,2}_{+_{t}}(X,EndE)_{k-1}\nonumber \end{equation}
is a Banach bundle isomorphism which commutes with the adjoint action of $\mathcal{G}$.
$f_{t}$ exists because the complex structure only affects the differential forms part of the underling manifold, not the topological bundle,
while the unitary gauge transformations act on the bundle $E$ only.

We have
\begin{equation}f_{t}(* F^{0,2})=*_{t}f_{t}(F^{0,2})\nonumber \end{equation}
by extending $f_{t}$ to the ASD (anti-self dual) subspace $\Omega^{0,2}_{-}(X,EndE)_{k-1}$ using the definition
$f_{t}(\sqrt{-1}\alpha)\triangleq\sqrt{-1}f_{t}(\alpha)$,
where $\alpha\in \Omega^{0,2}_{+}(X,EndE)_{k-1}$.

It is easy
to check that $s_{t}$ is a Fredholm Banach bundle of index $v.d_{\mathbb{R}}(\mathcal{M}^{DT_{4}}_{c})+1$ which connects $s_{0}$ and $s_{1}$.
Then by \cite{brussee}, we prove the deformation invariance of the virtual cycle of the $DT_{4}$ moduli space.

To sum up , we have the following result.
\begin{theorem}\label{main theorem}
Assume $\mathcal{M}_{c}=\mathcal{M}_{c}^{o}\neq\emptyset$ and assume there exists an orientation data $o(\mathcal{L})$.
Then $\mathcal{M}^{DT_{4}}_{c}$ is compact and its virtual fundamental class exists as a cycle
$[\mathcal{M}^{DT_{4}}_{c}]^{vir}\in H_{r}(\mathcal{B}_{1},\mathbb{Z})$.

Furthermore, if the above assumptions are satisfied by a continuous family of Calabi-Yau four-folds $X_{t}$ parameterized by $t\in [0,1]$, then the cycle in $H_{r}(\mathcal{B}_{1},\mathbb{Z})$ is independent of $t$.
\end{theorem}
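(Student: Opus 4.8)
The plan is to assemble the statement from three ingredients that are all essentially in place by this point in the text: (a) compactness of $\mathcal{M}^{DT_4}_c$, (b) the Fredholm property of the defining section $s=(\wedge F, F^{0,2}_+)$ on $s^{-1}(0)$, and (c) deformation invariance of the resulting Euler class under variation of the auxiliary data. For (a) I would simply invoke Corollary \ref{cptness by stable bdl}: the hypothesis $\mathcal{M}_c=\mathcal{M}_c^o\neq\emptyset$ forces $ch_2(E)\in H^{2,2}(X,\mathbb{C})$ to behave well, so by Lemma \ref{C2 condition} every $DT_4$ solution actually satisfies $F^{0,2}=0$, hence $\mathcal{M}^{DT_4}_c=\mathcal{M}_c^o=\mathcal{M}_c=\overline{\mathcal{M}}_c$ is compact. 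For (b) I would reproduce the linearization computation already displayed before the statement: using the K\"ahler identity $[\wedge,d_A]=i(\overline{\partial}_A^*-\partial_A^*)$ and the gauge-fixing $d_A^*a=0$, one identifies $\ker(ds)|_A\cong H^{0,1}(X,\mathrm{End}\,E)$ and $\mathrm{coker}(ds)|_A\cong H^0(X,g_E)\oplus H^{0,2}_+(X,\mathrm{End}\,E)$, both finite-dimensional; hence $s$ is Fredholm along its zero locus. Given an orientation data $o(\mathcal{L})$, Proposition 14 of \cite{brussee} then produces the Euler class $[\mathcal{M}^{DT_4}_c]^{vir}\in H_r(\mathcal{B}_1,\mathbb{Z})$, with $r=2-\chi(E,E)$ the Fredholm index.

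For the deformation-invariance clause I would follow the standard cobordism argument for Euler classes of Fredholm bundles. Given a family $X_t$, $t\in[0,1]$, of Calabi-Yau fourfolds with $\mathcal{M}_c^o=\mathcal{M}_c\neq\emptyset$ throughout, I fix a Hermitian metric and the orientation data (which does not vary with $t$ since it is pulled back from the connected ambient space $\mathcal{B}_1$), and I form the one-parameter section
\begin{equation}
S:\ \mathcal{B}_1\times[0,1]\longrightarrow \mathcal{A}^*\times_{\mathcal{G}^0}\big(\Omega^0(X,g_E)_{k-1}\oplus\Omega^{0,2}_+(X,\mathrm{End}\,E)_{k-1}\big),\qquad S(A,t)=\big(\wedge F(A),\,f_t^{-1}\circ F^{0,2}_{+_t}(A)\big),
\end{equation}
where $*_t$ is the $*_4$-operator for the complex structure $J_t$ and $f_t$ is the bundle isomorphism $\Omega^{0,2}_+\to\Omega^{0,2}_{+_t}$ commuting with the adjoint $\mathcal{G}$-action, satisfying $f_t(*F^{0,2})=*_t f_t(F^{0,2})$ after extending by $f_t(\sqrt{-1}\alpha)=\sqrt{-1}f_t(\alpha)$ to the anti-self-dual part. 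The point is that $f_t$ exists because a deformation of complex structure touches only the form part of $X$, not the topological bundle $E$, while $\mathcal{G}$ acts only on $E$; so $S$ is a well-defined Fredholm Banach bundle over $\mathcal{B}_1\times[0,1]$ of index $r+1$ with $S|_{t=0}=s_0$, $S|_{t=1}=s_1$. By \cite{brussee} the Euler class is unchanged, so $[\mathcal{M}^{DT_4}_c]^{vir}$ is independent of $t$; the independence of the auxiliary holomorphic four-form $\Omega$ and of the Hermitian metric $h$ is handled by the same homotopy with the explicit $f_t$ constructed just before the statement (and the remark that the missing antipodal point $*_1=-*$ is reached by perturbing $*$ slightly to cover the whole $S^1$ of phases).

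The step I expect to be the genuine obstacle — and the one on which everything else is conditional — is the \emph{orientability} underlying $o(\mathcal{L})$, i.e. trivializing $w_1$ of the determinant line bundle $\mathcal{L}$ of the index bundle $Ind$ over $\mathcal{B}_1$ so that the Euler class lives in integral (rather than mod-$2$) homology, and making that choice coherently along the family $X_t$. The excerpt explicitly \emph{assumes} an orientation data exists (with only partial results relegated to the appendix), so in the proof I would not attempt to establish it; I would simply note that over the connected Banach manifold $\mathcal{B}_1$ there are exactly two orientations, fix one, verify it extends over $\mathcal{B}_1\times[0,1]$ (again using that the family of metrics/complex structures is contractible so the index bundle extends with its determinant), and thereby conclude. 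A secondary technical point worth a sentence is checking that $S$ is genuinely Fredholm \emph{uniformly} in $t$ on $S^{-1}(0)$ — but since $S^{-1}(0)$ projects to the compact $\mathcal{M}_c$ for every $t$ and the cokernel computation above is continuous in $t$, this is routine.
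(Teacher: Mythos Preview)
Your proposal is correct and follows essentially the same approach as the paper: compactness via Corollary \ref{cptness by stable bdl}, the Fredholm computation of $\ker(ds)$ and $\mathrm{coker}(ds)$ via the K\"ahler identity followed by Brussee's Proposition 14, and deformation invariance via the homotopy $S(A,t)=(\wedge F,\,f_t^{-1}\circ F^{0,2}_{+_t})$ with the bundle isomorphisms $f_t$ constructed exactly as you describe. Your treatment of the orientation hypothesis and the auxiliary $\Omega$, $h$ dependencies also matches the paper's.
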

\begin{remark} ${}$ \\
1. The Banach manifold $\mathcal{B}_{1}=\mathcal{A}^{*}/\mathcal{G}^{0}$ involves a choice of a large integer $k$ in $L_{k}^{2}$ norm completion. As stated before, the $DT_{4}$ moduli space is independent of the choice of $k$. Meanwhile, the homotopy-invariant properties of
$\mathcal{B}_{1}$ are insensitive to $k$ \cite{dk} and it is easy to show the virtual fundamental class does not depend on the choice of $k$. \\
2. Since $SU(4)\subset Spin(7)$, Calabi-Yau four-fold $X$ is also a $Spin(7)$ manifold. We have
\begin{equation} \Omega^{2}(X)=\Omega^{2}_{7}(X)\oplus\Omega^{2}_{21}(X),
\nonumber\end{equation}
\begin{equation} \Omega^{2}(X)\otimes_{\mathbb{R}}\mathbb{C}=\Omega^{1,1}_{0}(X)\oplus\Omega^{0,0}(X)<\omega>\oplus\Omega^{0,2}(X)\oplus\Omega^{2,0}(X).
\nonumber\end{equation}
Coupled with bundles, the deformation complex of $Spin(7)$ instantons \cite{lewis}
\begin{equation}\Omega^{0}(X,g_{E})\rightarrow\Omega^{1}(X,g_{E})\rightarrow\Omega^{2}_{7}(X,g_{E}) \nonumber \end{equation}
is the same as
\begin{equation}\Omega^{0}(X,g_{E})\rightarrow\Omega^{0,1}(X,EndE)\rightarrow\Omega^{0,2}_{+}(X,EndE)\oplus\Omega^{0}(X,g_{E}). \nonumber \end{equation}
Correspondingly, the $Spin(7)$ instanton equation
\begin{equation}\pi_{7}(F)=0
\nonumber
\end{equation}
is equivalent to $DT_{4}$ equation (\ref{complex ASD equation}), where
\begin{equation}\pi_{7}: \Omega^{2}(X,g_{E})\rightarrow \Omega^{2}_{7}(X,g_{E}) \nonumber \end{equation}
is the projection map.
Thus the $Spin(7)$ instantons counting is just the $DT_{4}$ invariant (defined later) when the base manifold is a Calabi-Yau four-fold.\\
3. Unlike the case of Calabi-Yau three-folds with $SU(3)$ holonomy \cite{js},
class in $H^{2,2}(X)$ may not remain in the same space when we deform the complex structure of the underlying Calabi-Yau four-fold $X$.
When we deform it to the case when non-algebraic stuff appears, i.e. $c\notin\bigoplus_{k}\textrm{ }H^{k,k}(X)$,
our definition will not work anymore and one has to use other nice analytic compactification to define the invariants.
However, if one believes in the deformation invariance of the $Spin(7)$ instantons counting on Calabi-Yau four-folds which is not defined in general at the moment,
it will be interesting to consider the invariants defined here.
\end{remark}
${}$ \\
\textbf{The $\mu_{1}$-map}.
Because the virtual dimension of $\mathcal{M}_{c}^{DT_{4}}$ is in general not zero, we need the $\mu$-map to cut down the dimension and define
the invariant.

Recall \cite{dk}, if $G=SU(2)$, there exists a universal $SO(3)$ bundle
\begin{equation}
\begin{array}{lll}
       \quad P^{ad} \\  \quad  \downarrow   \\ \mathcal{B}_{1}\times X.        
\end{array} \nonumber \end{equation}

Then we define the $\mu_{1}$-map using the slant product pairing,
\begin{equation}\mu_{1}: H_{*}(X)\otimes \mathbb{Z}[x_{1},x_{2},...]\rightarrow H^{*}(\mathcal{B}_{1}),  \nonumber\end{equation}
\begin{equation}\mu_{1}(\gamma,P)=P(0,-\frac{1}{4}p_{1}(P^{ad}),0,...)/\gamma. \nonumber \end{equation}
The $\mu_{1}$-map for other structure group $G$ can be similarly defined, but the formula will be more complicated.

We then define $DT_{4}$ invariants in terms of the virtual fundamental class and the $\mu_{1}$-map.
\begin{definition}\label{DT4 inv of bundles}Under the assumption in Theorem \ref{main theorem},
the $DT_{4}$ invariant of $(X,\mathcal{O}(1))$ with respect to Chern character $c$ and an orientation data $o(\mathcal{L})$
is defined to be a map
\begin{equation}\label{mu map for bundles}DT_{4}^{\mu_{1}}(X,\mathcal{O}(1),c,o(\mathcal{L})):
GrSym^{*}\big(H_{*}(X,\mathbb{Z})\otimes \mathbb{Z}[x_{1},x_{2},...]\big) \rightarrow \mathbb{Z} \end{equation}
such that
\begin{equation}DT_{4}^{\mu_{1}}(X,\mathcal{O}(1),c,o(\mathcal{L}))((\gamma_{1},P_{1}),(\gamma_{2},P_{2}),...) \nonumber \end{equation}
\begin{equation}=<\mu_{1}(\gamma_{1},P_{1})\cup \mu_{1}(\gamma_{2},P_{2})\cup... ,[\mathcal{M}^{DT_{4}}_{c}]^{vir}>, \nonumber \end{equation}
where $<,>$ denotes the natural pairing between homology and cohomology groups.
\end{definition}
\begin{remark} ${}$ \\
1. The above $DT_{4}$ invariants can be viewed as mathematically making sense of
partition functions of certain eight dimension quantum field theory \cite{bks} because of the standard super-symmetry localization \cite{witten}. \\
2. If two Calabi-Yau four-folds under Mukai flops \cite{mukai} are deformation equivalent to
each other \cite{h}, then the $DT_{4}$ invariants remain the same under Mukai flops.

\end{remark}

\subsection{Virtual cycle construction of generalized $DT_{4}$ moduli spaces }
In this subsection, we will concentrate on the virtual cycle construction of the generalized $DT_{4}$ moduli space $\overline{\mathcal{M}}_{c}^{DT_{4}}$ when it is defined without the gluing assumption \ref{assumption on gluing}. \\

The first case is when $\mathcal{M}_{c}$ is smooth. In this case, the obstruction sheaf $ob$ such that $ob|_{\mathcal{F}}=Ext^{2}(\mathcal{F},\mathcal{F})$ is a bundle with quadratic form $Q_{Serre}$, where $Q_{Serre}$ is the Serre duality pairing. By Lemma 5 \cite{eg}, there exists a real bundle $ob_{+}$ with positive definite quadratic form such that $ob\cong ob_{+}\otimes_{\mathbb{R}}\mathbb{C}$ as vector bundles with quadratic form and $w_{1}(ob_{+})=0 \Leftrightarrow c_{1}(ob)=0$. We call $ob_{+}$ the self-dual obstruction bundle and choose an orientation data $o(\mathcal{L})$ for $\mathcal{M}_{c}$ which gives an orientation on $ob_{+}$.
\begin{definition}\label{virtual cycle when Mc smooth}
When $\mathcal{M}_{c}$ is smooth, by Proposition \ref{gene DT4 if Mc smooth}, $\overline{\mathcal{M}}_{c}^{DT_{4}}$ exists and $\overline{\mathcal{M}}_{c}^{DT_{4}}=\mathcal{M}_{c}$. Assume there exists an orientation data $o(\mathcal{L})$ for $\mathcal{M}_{c}$.
Then the virtual fundamental class of $\overline{\mathcal{M}}_{c}^{DT_{4}}$ is defined to be
the Poincar\'{e} dual of the Euler class of the self-dual obstruction bundle over $\mathcal{M}_{c}$, i.e.
\begin{equation}[\overline{\mathcal{M}}^{DT_{4}}_{c}]^{vir}=PD(e(ob_{+}))\in H_{r}(\mathcal{M}_{c},\mathbb{Z}),\nonumber \end{equation}
where $r=2-\chi(\mathcal{F},\mathcal{F})$ (determined by $c$) is the real virtual dimension of $\overline{\mathcal{M}}_{c}^{DT_{4}}$.
\end{definition}
When $\mathcal{M}_{c}$ is smooth, we have the following lemma which is useful for later computations of $DT_{4}$ invariants
\begin{lemma}\cite{eg} \label{ASD equivalent to max isotropic}
Let $E\rightarrow U$ be a complex vector bundle with non-degenerate quadratic form. $V$ is a maximal isotropic subbundle of $E$. \\
(1) If $rk(E)=2n$, then the structure group of $E$ reduces to $SO(2n,\mathbb{C})$ and the half Euler class of $E$ (i.e. the Euler class of the corresponding real quadratic bundle) is $\pm c_{n}(V)$
where the sign
depends on the choice of the maximal isotropic family of $V$. \\
(2) If $rk(E)=2n+1$ and the the structure group of $E$ reduces to $SO(2n+1,\mathbb{C})$, then the class is zero.
\end{lemma}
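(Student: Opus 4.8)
The plan is to recall the argument of Edidin--Graham \cite{eg}: use the maximal isotropic subbundle to reduce the structure group of the quadric bundle $(E,q)$, identify the half Euler class with the ordinary Euler class of the associated oriented real bundle $E_{\mathbb{R}}$ (the one with $E_{\mathbb{R}}\otimes_{\mathbb{R}}\mathbb{C}\cong E$ and $q$ positive definite on $E_{\mathbb{R}}$), and then evaluate it by the splitting principle. In case (1), since $\mathrm{rk}\, V=n$ and $q$ is nondegenerate one has $V^{\perp}=V$ and an induced isomorphism $E/V\cong V^{\vee}$. The stabiliser in $O(2n,\mathbb{C})$ of a maximal isotropic subspace is a parabolic subgroup, and because the two connected families of maximal isotropic subspaces are interchanged by any element of $O(2n,\mathbb{C})\setminus SO(2n,\mathbb{C})$, that stabiliser lies in $SO(2n,\mathbb{C})$. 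Hence the choice of $V$, equivalently the choice of its isotropic family, reduces the structure group of $(E,q)$ to $SO(2n,\mathbb{C})$, and correspondingly $E_{\mathbb{R}}$ becomes oriented, with structure group $SO(2n,\mathbb{R})$; the half Euler class is then $e(E_{\mathbb{R}})\in H^{2n}(U,\mathbb{Z})$.

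To evaluate $e(E_{\mathbb{R}})$ I would pull everything back to the full flag bundle of $V$, over which $V\cong L_{1}\oplus\cdots\oplus L_{n}$ splits into line bundles and the pullback map on cohomology is injective. Choosing a Hermitian metric produces a topological splitting $E\cong\bigoplus_{i=1}^{n}(L_{i}\oplus L_{i}^{\vee})$ compatible with $q$, each summand carrying the hyperbolic form. The real form of a hyperbolic rank-two bundle $L\oplus L^{\vee}$ is an oriented real rank-two bundle whose underlying complex line bundle is $L$ (the opposite orientation would give $L^{\vee}$), so its Euler class is $c_{1}(L)$. Therefore $e(E_{\mathbb{R}})=\prod_{i}c_{1}(L_{i})=c_{n}(V)$, the overall sign being exactly the ambiguity in the orientation, i.e. in the isotropic family. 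As a consistency check, $e(E_{\mathbb{R}})^{2}=p_{n}(E_{\mathbb{R}})=c_{n}(V)^{2}$, which already pins $e(E_{\mathbb{R}})$ down to $\pm c_{n}(V)$ away from $2$-torsion.

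For case (2) I would argue by exhibiting a nowhere-vanishing section. Here $\mathrm{rk}\, E=2n+1$ and $\mathrm{rk}\, V=n$, so $V\subsetneq V^{\perp}$ and $N:=V^{\perp}/V$ is a line bundle carrying a nondegenerate quadratic form, while $q$ identifies $E/V^{\perp}\cong V^{\vee}$. Taking determinants, $\det E\cong\det V\otimes N\otimes\det(E/V^{\perp})\cong N$. The hypothesis that the structure group reduces to $SO(2n+1,\mathbb{C})$ forces $\det E\cong\mathcal{O}$, hence $N\cong\mathcal{O}$; since the quadratic form on $N$ is then a nowhere-zero scalar, the real form $E_{\mathbb{R}}$ contains the (trivial) real form of $N$ as a rank-one subbundle. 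This gives a nowhere-vanishing section of $E_{\mathbb{R}}$, so $e(E_{\mathbb{R}})=0$.

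The step that needs the most care is the sign in (1): the Euler class of $E_{\mathbb{R}}$ is intrinsically defined only up to $\pm1$ unless one fixes the component of the isotropic Grassmann bundle containing $V$ (equivalently the ordering in $E_{\mathbb{R}}\otimes_{\mathbb{R}}\mathbb{C}\cong V\oplus V^{\vee}$), and one must check that this ambiguity matches the one already present in the orientation data $o(\mathcal{L})$ used to define the half Euler class, so that ``$\pm c_{n}(V)$, with sign depending on the maximal isotropic family'' is the precise statement. Everything else is routine splitting-principle bookkeeping together with the standard facts $e(W)^{2}=p_{n}(W)$ and $p_{n}(W)=(-1)^{n}c_{2n}(W\otimes\mathbb{C})$ for oriented real bundles $W$ of rank $2n$.
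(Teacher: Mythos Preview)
The paper does not actually prove this lemma: it is stated with the citation \cite{eg} and used as a black box, so there is no ``paper's own proof'' to compare against. Your argument is a correct reconstruction of the Edidin--Graham proof: the reduction of structure group via the stabiliser of a maximal isotropic, the splitting-principle computation $e(E_{\mathbb{R}})=\prod_i c_1(L_i)=c_n(V)$ in the even case, and the observation that $N=V^{\perp}/V$ is trivial in the odd $SO$ case so that $E_{\mathbb{R}}$ has a nowhere-vanishing section, are exactly the ingredients in \cite{eg}. Your remark on the sign ambiguity matching the choice of isotropic family is also the right way to track the orientation.
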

${}$ \\
The next case where we have $\overline{\mathcal{M}}_{c}^{DT_{4}}$ without the gluing assumption is the following.
\begin{definition}\label{virtual cycle when ob=v+v*}
If there exists a perfect obstruction theory \cite{bf}
\begin{equation}\phi: \quad \mathcal{V}^{\bullet}\rightarrow \mathbb{L}^{\bullet}_{\mathcal{M}_{c}},  \nonumber \end{equation}
such that
\begin{equation}H^{0}(\mathcal{V}^{\bullet})|_{\{\mathcal{F}\}}=Ext^{1}(\mathcal{F},\mathcal{F}), \nonumber \end{equation}
\begin{equation}H^{-1}(\mathcal{V}^{\bullet})|_{\{\mathcal{F}\}}\oplus
H^{-1}(\mathcal{V}^{\bullet})|_{\{\mathcal{F}\}}^{*}=Ext^{2}(\mathcal{F},\mathcal{F}).   \nonumber \end{equation}
And $H^{-1}(\mathcal{V}^{\bullet})|_{\{\mathcal{F}\}}$ is a maximal isotropic subspace of $Ext^{2}(\mathcal{F},\mathcal{F})$ with respect
to the Serre duality pairing. Then by Proposition \ref{ob=v+v*}, $\overline{\mathcal{M}}_{c}^{DT_{4}}$ exists, $\overline{\mathcal{M}}_{c}^{DT_{4}}=\mathcal{M}_{c}$ and the index bundle $Ind_{\mathbb{C}}$ has a natural complex orientation $o(\mathcal{O})$.

The virtual fundamental class of $\overline{\mathcal{M}}_{c}^{DT_{4}}$ with respect to the natural complex orientation $o(\mathcal{O})$ is defined to be the virtual fundamental class of the above perfect
obstruction theory, i.e.
\begin{equation}[\overline{\mathcal{M}}^{DT_{4}}_{c}]^{vir}\triangleq[\mathcal{M}_{c},\mathcal{V}^{\bullet}]^{vir}\in A_{\frac{r}{2}}(\mathcal{M}_{c}),   \nonumber \end{equation}
where $r=2-\chi(\mathcal{F},\mathcal{F})$ is the real virtual dimension of $\overline{\mathcal{M}}_{c}^{DT_{4}}$.
\end{definition}
${}$ \\
\textbf{The $\mu_{2}$-map}:
Now we define a $\mu_{2}$-map for the above two cases. Denote the universal sheaf of $\mathcal{M}_{c}$ by $\mathfrak{F}$
\begin{equation}
\begin{array}{lll}
      \quad \quad \mathfrak{F} \\  \quad \quad \downarrow   \\ \mathcal{M}_{c}\times X.        
\end{array} \nonumber \end{equation}
Define the $\mu_{2}$-map using the slant product pairing
\begin{equation}\mu_{2}: H_{*}(X)\otimes \mathbb{Z}[x_{1},x_{2},...]\rightarrow H^{*}(\mathcal{M}_{c}), \nonumber \end{equation}
\begin{equation}\label{mu map for sheaves} \mu_{2}(\gamma,P)=P(c_{1}(\mathfrak{F}),c_{2}(\mathfrak{F}),...)/\gamma.   \end{equation}

\begin{definition}\label{DT4 inv of sheaves}In Definitions \ref{virtual cycle when Mc smooth}, \ref{virtual cycle when ob=v+v*},
the $DT_{4}$ invariants of $(X,\mathcal{O}(1))$ with respect to Chern character $c$ and an orientation data $o(\mathcal{L})$
is defined to be a map
\begin{equation}\label{u2 map}DT_{4}^{\mu_{2}}(X,\mathcal{O}(1),c,o(\mathcal{L})): GrSym^{*}\big(H_{*}(X,\mathbb{Z}) \otimes \mathbb{Z}[x_{1},x_{2},...]\big)
\rightarrow \mathbb{Z} \end{equation}
such that
\begin{equation}DT_{4}^{\mu_{2}}(X,\mathcal{O}(1),c,o(\mathcal{L}))((\gamma_{1},P_{1}),(\gamma_{2},P_{2}),...) \nonumber \end{equation}
\begin{equation}=<\mu_{2}(\gamma_{1},P_{1})\cup \mu_{2}(\gamma_{2},P_{2})\cup... ,[\overline{\mathcal{M}}^{DT_{4}}_{c}]^{vir}>,
\nonumber \end{equation}
where $<,>$ denotes the natural pairing between homology and cohomology groups.
\end{definition}

Actually the definition of $DT_{4}$ invariants here and the definition before (\ref{mu map for bundles}) coincide.
\begin{proposition}
Assume $\mathcal{M}_{c}=\mathcal{M}_{c}^{o}\neq\emptyset$. If (i) the condition in Definition \ref{virtual cycle when Mc smooth}
is satisfied or (ii) the condition in Definition \ref{virtual cycle when ob=v+v*} is satisfied with the further assumption that the Hermitian metric on each $Ext^{2}(E,E)\cong H^{-1}(\mathcal{V}^{\bullet})|_{\{E\}}\oplus H^{-1}(\mathcal{V}^{\bullet})|_{\{E\}}^{*}$ which is induced from the Hermitian metric on $E$ is the direct sum Hermitian metric induced from a metric on $H^{-1}(\mathcal{V}^{\bullet})|_{\{E\}}$
, then
\begin{equation}DT_{4}^{\mu_{1}}(X,\mathcal{O}(1),c,o(\mathcal{L}))=DT_{4}^{\mu_{2}}(X,\mathcal{O}(1),c,o(\mathcal{L})). \nonumber \end{equation}
\end{proposition}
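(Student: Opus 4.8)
Write $\iota:\mathcal{M}_c\hookrightarrow\mathcal{B}_1$ for the inclusion obtained from Theorem~\ref{mo mDT4} and Corollary~\ref{cptness by stable bdl} (under the standing hypothesis $\mathcal{M}_c=\mathcal{M}_c^o\neq\emptyset$ we have $\overline{\mathcal{M}}_c^{DT_4}=\mathcal{M}_c^{DT_4}=\mathcal{M}_c$ as sets, and this set is compact). Since the virtual cycle of Theorem~\ref{main theorem} is supported on $\mathcal{M}_c^{DT_4}$, it is the $\iota_\ast$-image of a class on $\mathcal{M}_c$. The plan is to prove the two ``local'' comparisons
\begin{equation}
\iota_\ast\,[\overline{\mathcal{M}}_c^{DT_4}]^{vir}=[\mathcal{M}_c^{DT_4}]^{vir}\in H_r(\mathcal{B}_1,\mathbb{Z}),\qquad
\iota^\ast\mu_1(\gamma,P)=\mu_2(\gamma,P)\in H^\ast(\mathcal{M}_c),
\nonumber
\end{equation}
and then conclude by the projection formula:
\begin{equation}
\langle\mu_1(\gamma_1,P_1)\cup\cdots,[\mathcal{M}_c^{DT_4}]^{vir}\rangle
=\langle\iota^\ast\mu_1(\gamma_1,P_1)\cup\cdots,[\overline{\mathcal{M}}_c^{DT_4}]^{vir}\rangle
=\langle\mu_2(\gamma_1,P_1)\cup\cdots,[\overline{\mathcal{M}}_c^{DT_4}]^{vir}\rangle.
\nonumber
\end{equation}

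\textbf{Comparison of virtual cycles.} By the local Kuranishi model of Theorem~\ref{Kuranishi str of cpx ASD thm}, near each point the Fredholm section $s=(\wedge F,F^{0,2}_+)$ of the bundle~(\ref{Fredholm bundle}) is, after the gauge fixing, equivalent to the finite–dimensional section $\kappa_+=\pi_+\circ\kappa:Ext^1(\mathcal F,\mathcal F)\to Ext^2_+(\mathcal F,\mathcal F)$ (the $\Omega^0(X,g_E)$–factor of the cokernel drops out because $\mathcal F$ is simple). Hence, by Proposition~14 of \cite{brussee}, $[\mathcal{M}_c^{DT_4}]^{vir}$ equals $\iota_\ast$ of the localized Euler class of the pair $(\textrm{self-dual obstruction bundle},\kappa_+)$ over $\mathcal{M}_c$. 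In case (i) we have $\kappa=0$, hence $\kappa_+=0$, and this localized Euler class is exactly $PD(e(ob_+))$, which is Definition~\ref{virtual cycle when Mc smooth}. In case (ii) the hypothesis that the $h$–induced metric on $Ext^2(\mathcal F,\mathcal F)$ is the direct–sum metric associated to $V_{\mathcal F}=H^{-1}(\mathcal V^\bullet)|_{\{\mathcal F\}}$ forces the analytic $*_4$ to agree with the one built in the proof of Proposition~\ref{ob=v+v*}; consequently $Ext^2_+(\mathcal F,\mathcal F)$ is the graph of $*_4|_{V_{\mathcal F}}$, the real projection $\pi_+|_{V_{\mathcal F}}:V_{\mathcal F}\xrightarrow{\ \sim\ }Ext^2_+(\mathcal F,\mathcal F)$ is an $\mathbb R$–linear isomorphism, and $\kappa_+$ is identified with the obstruction section $\kappa:Ext^1(\mathcal F,\mathcal F)\to V_{\mathcal F}$ of the perfect obstruction theory $\mathcal V^\bullet$. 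The natural complex orientation $o(\mathcal O)$ is precisely the orientation of $ob_+$ induced by the complex structure on $V_{\mathcal F}$, so by Lemma~\ref{ASD equivalent to max isotropic} the localized Euler class of $(\mathbf{},\kappa_+)$ matches $PD$ of the localized top Chern class of $(V,\kappa)$; invoking the equivalence of the analytic Fredholm construction with the algebraic construction of Behrend--Fantechi/Li--Tian \cite{brussee}, \cite{bf}, \cite{lt1}, \cite{lt3}, this is $\iota_\ast[\mathcal M_c,\mathcal V^\bullet]^{vir}$, i.e. Definition~\ref{virtual cycle when ob=v+v*}.

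\textbf{Comparison of descendent maps.} The universal pair $(E,h)$ over $\mathcal B_1$, restricted to the locus $\mathcal M_c\subset\mathcal B_1$ of stable holomorphic bundles, is a universal family of these bundles; since $c_1(E)=0$, one may normalize it so that it coincides with the universal sheaf $\mathfrak F$ of $\mathcal M_c$ over $\mathcal M_c\times X$, and in any case $\iota^\ast P^{ad}$ is the adjoint bundle $\mathcal End_0(\mathfrak F)$. Thus $\iota^\ast\bigl(-\tfrac14 p_1(P^{ad})\bigr)$ is the universal characteristic class that enters $\mu_2$ (for $G=SU(2)$ one has $p_1(P^{ad})=-4c_2$, and with $c_1(E)=0$ the substitutions $(0,-\tfrac14 p_1(P^{ad}),0,\dots)$ and $(c_1(\mathfrak F),c_2(\mathfrak F),\dots)$ agree; the general-$G$ case is the same bookkeeping). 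Applying $\iota^\ast$ and using naturality of the slant product gives $\iota^\ast\mu_1(\gamma,P)=\mu_2(\gamma,P)$, which combined with the previous step yields the claimed equality of invariants.

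\textbf{Main obstacle.} The routine parts are the characteristic-class bookkeeping and the smooth case (i). The real work is in case (ii): one must match Brussee's infinite–dimensional Euler class with the algebraic virtual cycle $[\mathcal M_c,\mathcal V^\bullet]^{vir}$ over an $\mathcal M_c$ that may be singular and non-reduced, and one must track orientations so that the self-dual obstruction orientation agrees with $o(\mathcal O)$. This is exactly the point where the Hermitian-metric compatibility hypothesis is indispensable, and where a comparison-of-virtual-cycles statement (the equivalence of the analytic localized Euler class with the Behrend--Fantechi/Li--Tian construction, \cite{brussee}, \cite{lt1}, \cite{lt3}) has to be invoked rather than reproved.
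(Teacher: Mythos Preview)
Your proposal is correct and follows essentially the same approach as the paper: reduce the Fredholm Euler class to the localized Euler class of the finite-dimensional self-dual obstruction via the Kuranishi model, use the Hermitian-metric hypothesis in case (ii) to identify $Ext^2_+(E,E)\cong V_E$ so that $\kappa_+$ becomes the obstruction section of $\mathcal V^\bullet$, appeal to \cite{lt3} for the analytic/algebraic comparison, and match the two $\mu$-maps by restricting the universal family to $\mathcal M_c$ (cf.\ \cite{dk}). Your write-up is more explicitly structured (projection formula, orientation tracking) than the paper's, but the ideas are the same.
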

\begin{proof}
When $\mathcal{M}_{c}$ is smooth, we have $\overline{\mathcal{M}}_{c}^{DT_{4}}=\mathcal{M}_{c}$ and the virtual fundamental class using the definition in the generalized
$DT_{4}$ moduli space is the Poincar\'{e} dual of the Euler class of the self-dual obstruction bundle. Since $\mathcal{M}_{c}$ consists of bundles only,
we can use the Euler class of the Fredholm Banach bundles as the definition of the virtual fundamental class which is nothing but the pushforward
(into $\mathcal{B}_{1}$) of the Poincar\'{e} dual of the Euler class of the self-dual obstruction bundle. Meanwhile the two universal family coincide when restricted to
$\mathcal{M}_{c}$ \cite{dk}, hence we finish the proof.

For the case when the condition in definition \ref{virtual cycle when ob=v+v*} is satisfied, denote $V_{E}=H^{-1}(\mathcal{V}^{\bullet})|_{\{E\}}$.
We assume the Hermitian metric on $Ext^{2}(E,E)\cong V_{E}\oplus V_{E}^{*}$ induced from the metric on the bundle $E$ is the direct sum Hermitian metric induced from a metric on $V_{E}$. By proposition \ref{ob=v+v*}, $Ext^{2}_{+}(E,E)\cong \{a+a^{*} | a\in V_{E}\}\cong V_{E}$ and the Kuranishi map $\kappa_{+}$ factors through $V_{E}$,
\begin{equation}
\xymatrix{
Ext^{1}(E,E) \ar[r]^{\quad \kappa} \ar[dr]_{\kappa_{+}}
& V_{E} \ar[d]^{a\mapsto a+a^{*}}_{\wr\mid}   \\
& Ext^{2}_{+}(E,E).    }
\nonumber \end{equation}
We then use the natural orientation of $V_{E}$ to give an orientation on $Ext^{2}_{+}(E,E)$. The proof of the equivalence of the virtual cycle constructions is standard \cite{lt3}.
\end{proof}

\newpage
\section{$DT_{4}$ invariants for compactly supported sheaves on local $CY_{4}$}
In this section, we study $DT_{4}$ invariants on local (non-compact) Calabi-Yau four-folds.

We first study the moduli space of compactly supported sheaves on Calabi-Yau four-folds of type $K_{Y}$, where $Y$ is a
compact Fano threefold. We define the $DT_{4}$ invariants and show that there is a $DT_{4}/DT_{3}$ correspondence.

Then we define the $DT_{4}$ virtual cycle for the moduli space of sheaves of type $\iota_{*}(\mathcal{F})$, where
\begin{equation}\iota: S\rightarrow X=T^{*}S \nonumber \end{equation}
is the zero section and $\mathcal{F}$ is a stable sheaf on a compact algebraic surface $S$. Then we relate the $DT_{4}$ virtual cycle on $T^{*}S$
to some known invariant on $S$.

\subsection{The case of $X=K_{Y}$}

We first describe the stability of compactly supported sheaves on $X$.
Denote $\iota: Y\rightarrow K_{Y}$ to be the zero section map and $\pi: K_{Y}\rightarrow Y $ to be the projection map.
Pick an ample line bundle $\mathcal{O}_{Y}(1)$ on $Y$ and define the Hilbert polynomial of a compactly supported coherent sheaf $\mathcal{F}$ on $X=K_{Y}$
to be $\chi(\mathcal{F}\otimes \pi^{*}\mathcal{O}_{Y}(k))$ for $k\gg0$. Then we can talk about Gieseker $\pi^{*}\mathcal{O}_{Y}(1)$-stability on compactly supported sheaves over $X$ \cite{hl}.
\begin{lemma}\label{cp supp}
Given a compact Fano threefold $Y$ with $H^{0}(Y,K_{Y}^{-1})\neq0$, then any $\pi^{*}\mathcal{O}_{Y}(1)$-stable sheaf with three-dimensional compact support on local Calabi-Yau four-fold $X=K_{Y}$ is of type $\iota_{*}(\mathcal{F})$,
where $\mathcal{F}$ is $\mathcal{O}_{Y}(1)$-stable on $Y$.
\end{lemma}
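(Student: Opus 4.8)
The plan is to analyze a $\pi^*\mathcal{O}_Y(1)$-stable sheaf $\mathcal{G}$ with three-dimensional compact support on $X = K_Y$ by looking at how its scheme-theoretic support sits relative to the zero section $\iota: Y \hookrightarrow K_Y$. First I would observe that compact support forces the support to be a closed subscheme of $K_Y$ that is proper over $\mathbb{C}$; since $K_Y \to Y$ is an affine line bundle and $Y$ is proper, any proper closed subscheme must map finitely (indeed, with proper image equal to some closed subscheme of $Y$) and in fact be contained in a finite-order thickening of the zero section along the fiber direction. So there is a filtration of $\mathcal{G}$ by the powers of the ideal of $Y$, i.e. by $K_Y^{-1}$-twists, whose associated graded pieces are pushforwards $\iota_*(\mathcal{F}_i)$ of coherent sheaves $\mathcal{F}_i$ on $Y$. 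The key structural input is that the conormal bundle of $Y$ in $K_Y$ is $K_Y$ itself (as a line bundle on $Y$), so the graded pieces twist down by $K_Y^{-1}$, which has sections by the Fano hypothesis $H^0(Y,K_Y^{-1}) \neq 0$.

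The heart of the argument is then a destabilization estimate: I would compare the reduced Hilbert polynomials (with respect to $\pi^*\mathcal{O}_Y(1)$, computed via $\chi(-\otimes\pi^*\mathcal{O}_Y(k))$ for $k \gg 0$) of $\mathcal{G}$, of the sub-object $\iota_*(\mathcal{F}_0)$ coming from the deepest layer of the support (the part killed by the ideal of $Y$), and of the quotient. Because a nonzero global section $s \in H^0(Y, K_Y^{-1})$ gives a map $K_Y \to \mathcal{O}_Y$, hence a sheaf map relating consecutive graded pieces $\iota_*(\mathcal{F}_{i+1}) \to \iota_*(\mathcal{F}_i)$ (twisting by $s$), the Hilbert polynomials of the layers are controlled: the reduced Hilbert polynomial of $\iota_*\mathcal{F}$ on $X$ equals that of $\mathcal{F}$ on $Y$ (the pushforward is exact and finite, so Euler characteristics agree), and the twist by $s$ forces the layers to have comparable slopes. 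If the support genuinely extended off $Y$ — i.e. if the thickening had length $\geq 2$ — then either $\iota_*(\mathcal{F}_0)$ would be a destabilizing subsheaf or the quotient by it would be a destabilizing quotient, contradicting $\pi^*\mathcal{O}_Y(1)$-stability. This pins the support scheme-theoretically to $Y$, so $\mathcal{G} = \iota_*(\mathcal{F})$ for a single coherent sheaf $\mathcal{F}$ on $Y$.

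Finally I would check that $\mathcal{F}$ is itself $\mathcal{O}_Y(1)$-stable on $Y$. Since $\iota$ is a closed immersion, subsheaves of $\iota_*(\mathcal{F})$ supported on $Y$ correspond to subsheaves of $\mathcal{F}$, and $\chi(\iota_*\mathcal{F}' \otimes \pi^*\mathcal{O}_Y(k)) = \chi(\mathcal{F}' \otimes \mathcal{O}_Y(k))$ for any subsheaf $\mathcal{F}' \subseteq \mathcal{F}$; combined with the previous step (which rules out subsheaves of $\iota_*\mathcal{F}$ not supported on $Y$), the stability inequality on $X$ transfers verbatim to the stability inequality on $Y$.

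The main obstacle I expect is the destabilization estimate in the middle paragraph: one has to rule out the possibility that the fiberwise thickening is nontrivial, and making the comparison of reduced Hilbert polynomials fully rigorous requires carefully using the section $s \in H^0(Y,K_Y^{-1})$ to produce an honest sheaf morphism between the graded layers (so that their Hilbert polynomials are forced into the order that contradicts stability), rather than just comparing leading terms. The Fano condition enters exactly here and must be used quantitatively, not just qualitatively.
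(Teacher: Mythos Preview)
Your approach is plausible and could probably be pushed through, but it takes a longer route than the paper's. The paper does not compare Hilbert polynomials of the filtration layers at all. Instead it uses only the much weaker consequence of stability that $\mathcal{E}$ is \emph{simple}: the trace map gives an injection $H^{0}(Z,\mathcal{O}_{Z})\hookrightarrow \mathrm{Ext}^{0}_{Z}(\mathcal{E},\mathcal{E})\cong\mathbb{C}$, where $Z$ is the scheme-theoretic support. If $Z$ were a genuine $n$-th order thickening of $Y$ with $n\geq 1$, then $H^{0}(Z,\mathcal{O}_{Z})=\bigoplus_{i=0}^{n}H^{0}(Y,K_{Y}^{-i})$ has dimension at least $2$ because $H^{0}(Y,K_{Y}^{-1})\neq 0$, a contradiction. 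That is the whole argument for forcing the support onto $Y$.

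Note that the section $s\in H^{0}(Y,K_{Y}^{-1})$ you invoke is exactly the extra element of $H^{0}(Z,\mathcal{O}_{Z})$ in the paper's argument: rather than using $s$ to build maps between graded layers and then running a Hilbert-polynomial comparison, one observes that multiplication by $s$ is already a nonzero nilpotent endomorphism of $\mathcal{E}$, which immediately contradicts simplicity. Your destabilization estimate is essentially a reproof of ``stable $\Rightarrow$ simple'' in this specific situation, and the obstacle you flag (making the layer comparison rigorous) disappears once you phrase it this way. The final step, transferring stability from $\iota_{*}\mathcal{F}$ to $\mathcal{F}$ via the projection formula $\chi(\iota_{*}\mathcal{F}\otimes\pi^{*}\mathcal{O}_{Y}(k))=\chi(\mathcal{F}\otimes\mathcal{O}_{Y}(k))$, is the same in both approaches.
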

\begin{proof}
The proof here is similar to the proof of Lemma 7.1 of \cite{hua}. We only need to show that any compactly supported stable sheaf is
scheme theoretically supported on $Y$. Denote $Z$ to be the scheme theoretical support of a compactly supported stable sheaf $\mathcal{E}$.
By the trace map \cite{hl}, we have
\begin{equation}H^{0}(Z,\mathcal{O}_{Z})\hookrightarrow Ext^{0}_{Z}(\mathcal{E},\mathcal{E}). \nonumber \end{equation}
It suffices to show the dimension of
$H^{0}(Z,\mathcal{O}_{Z})$ is bigger than one to get contradiction because stable sheaf $\mathcal{E}$ is always simple.

By the assumption, dimension of the support of $\mathcal{E}$ is three.
Then $Z$ is an order $n\geq 1$ thickening of $Y$ in the normal direction inside $X$, i.e.
\begin{equation}Z=\mathbf{Spec}\big(\bigoplus_{i=0}^{n}K_{Y}^{-i}\big).\nonumber \end{equation}
There is a spectral sequence such that $E_{\infty}^{0,0}=H^{0}(Z,\mathcal{O}_{Z})$ with $E_{2}^{0,0}=H^{0}(Y,\oplus_{i=0}^{n}K_{Y}^{-i})$.
Thus
\begin{equation}H^{0}(Z,\mathcal{O}_{Z})=\oplus_{i=0}^{n}H^{0}(Y,K_{Y}^{-i}) .\nonumber \end{equation}
Because $H^{0}(Y,K_{Y}^{-1})\neq 0$, then $dim H^{0}(Z,\mathcal{O}_{Z})\geq 2$, which leads to a contradiction.
Thus $\mathcal{E}$ is of type $\iota_{*}\mathcal{F}$, where $\mathcal{F}$ is a sheaf on $Y$. Now we show $\mathcal{F}$ is stable with respect to $\mathcal{O}_{Y}(1)$.

By the projection formula, for any $k$, we have
\begin{equation}\iota_{*}(\mathcal{F}\otimes_{\mathcal{O}_{Y}}\mathcal{O}_{Y}(k))
=\iota_{*}(\mathcal{F}\otimes_{\mathcal{O}_{Y}}\iota^{*}\pi^{*}\mathcal{O}_{Y}(k))
=\iota_{*}\mathcal{F}\otimes_{\mathcal{O}_{X}}\pi^{*}\mathcal{O}_{Y}(k). \nonumber \end{equation}
Thus
\begin{equation}H^{*}(Y,\mathcal{F}\otimes_{\mathcal{O}_{Y}}\mathcal{O}_{Y}(k))
=H^{*}(X,\iota_{*}\mathcal{F}\otimes_{\mathcal{O}_{X}}\pi^{*}\mathcal{O}_{Y}(k)). \nonumber \end{equation}
Then we know the stability condition for $\iota_{*}\mathcal{F}$ on $X=K_{Y}$ is equivalent to the stability condition for $\mathcal{F}$ on $Y$.
\end{proof}
Now we want to study the relations between the obstruction theory of sheaf of type $\iota_{*}(\mathcal{F})$ on $X=K_{Y}$
and the obstruction theory of sheaf $\mathcal{F}$ on $X$.
\begin{definition}\cite{hua}
Let $L=\oplus_{i=0}^{d}L^{i}$ be a finite dimensional $L_{\infty}$ algebra over $\mathbb{C}$,
with its $\mathbb{C}$ products $\mu_{k}$. Let $\overline{L}$
be the graded vector space $L\oplus L[-d-1]$, i.e $\overline{L}^{i}=L^{i}\oplus(L^{d+1-i})^{*}$.
Define the cyclic pairing and $L_{\infty}$ products $\overline{\mu}_{k}: \wedge^{k}\overline{L}\rightarrow \overline{L}[2-k]$
according to rules: \\
$(1)$ define the bilinear form $\kappa$ on $\overline{L}$ by the natural pairing between $L$ and $L^{*}$, \\
$(2)$ if the inputs of $\overline{\mu}_{k}$ all belong to $L$, then $\overline{\mu}_{k}=\mu_{k}$,  \\
$(3)$ if more than one input belong to $L^{*}$, then define $\overline{\mu}_{k}=0$, \\
$(4)$ if there is exactly one input $a_{i}^{*}\in L^{*}$, then define $\overline{\mu}_{k}$ by
\begin{equation}\kappa(\overline{\mu}_{k}(a_{1},...,a_{i},...,a_{k}),b)=(-1)^{\epsilon}
\kappa(\mu_{k}(a_{i+1},...,a_{k},b,a_{1},...,a_{i-1}),a_{i}^{*})  \nonumber \end{equation}
for arbitary $b\in L$ and $\epsilon$ depends on $a_{i}$, $b$ only.  \\
We then call the $L_{\infty}$ algebra $(\overline{L},\overline{\mu}_{k},\kappa)$ the $d+1$ dimensional cyclic completion of $L$.
\end{definition}
\begin{lemma}\cite{segal} \label{segal theorem}
Let $Y$ be a smooth proper scheme of $dim_{\mathbb{C}}=d-1$, $\iota: Y\rightarrow K_{Y}$ be the zero section map. Then for any $S\in D^{b}(Y)$ the
$A_{\infty}$ algebra $Ext^{*}_{K_{Y}}(\iota_{*}S,\iota_{*}S)$ is the d-dim cyclic completion of $Ext^{*}_{Y}(S,S)$.
\end{lemma}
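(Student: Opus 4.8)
The plan is to compute $Ext^{*}_{K_{Y}}(\iota_{*}S,\iota_{*}S)$ by descending along the zero section, and then to read off the $A_{\infty}$-structure from the two extra symmetries that $X=K_{Y}$ carries: the fibrewise $\mathbb{C}^{*}$-action on $K_{Y}\to Y$ and the trivialization $\omega_{K_{Y}}\cong\mathcal{O}_{K_{Y}}$.

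\textbf{Step 1: the underlying graded space.} The zero section $Y\subset K_{Y}$ is a smooth divisor with $\mathcal{O}_{K_{Y}}(-Y)\cong\pi^{*}K_{Y}^{-1}$, so applying $\mathbf{L}\iota^{*}$ to the Koszul resolution $0\to\pi^{*}K_{Y}^{-1}\to\mathcal{O}_{K_{Y}}\to\iota_{*}\mathcal{O}_{Y}\to0$ and twisting by $S$ gives the self-intersection formula $\mathbf{L}\iota^{*}\iota_{*}S\cong S\oplus(S\otimes K_{Y}^{-1})[1]$, with zero differential since the section cutting out $Y$ restricts to $0$ on $Y$. By the adjunction $\mathbf{L}\iota^{*}\dashv\iota_{*}$ this yields
\[
Ext^{n}_{K_{Y}}(\iota_{*}S,\iota_{*}S)\;\cong\;Ext^{n}_{Y}(S,S)\;\oplus\;Ext^{n-1}_{Y}(S\otimes K_{Y}^{-1},S),
\]
and Serre duality on the $(d-1)$-fold $Y$ rewrites the second summand as $\big(Ext^{d-n}_{Y}(S,S)\big)^{*}$. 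With $L:=Ext^{*}_{Y}(S,S)$ this is exactly the graded vector space $\overline{L}=L\oplus L^{*}[-d]$ underlying the $d$-dimensional cyclic completion.

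\textbf{Step 2: $\mathbb{C}^{*}$-weights and items (2),(3).} The scaling action of $\mathbb{C}^{*}$ on the fibres of $\pi$ fixes $Y$, lifts canonically to $\iota_{*}S$, and hence acts on a dg-enhancement of $\mathbf{R}\mathrm{End}_{K_{Y}}(\iota_{*}S)$ and on its minimal $A_{\infty}$-model, making all products $\mu_{k}$ strictly weight-preserving. In the model coming from Step 1 the summand $L$ sits in weight $0$ and $L^{*}[-d]$ sits in a single nonzero weight $w$ (it originates from the fibre direction of $K_{Y}$). Since $\mu_{k}$ preserves weight, its value on $k$ inputs of which $j$ lie in $L^{*}[-d]$ has weight $-jw$, a weight occurring in $\overline{L}$ only for $j\le1$; hence $\overline{\mu}_{k}$ vanishes once two or more of its inputs lie in $L^{*}$, which is item (3). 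When all inputs lie in $L$ the output has weight $0$, hence lies in $L$, and the resulting products are those of $Ext^{*}_{Y}(S,S)$ because the weight-$0$ sub-dg-algebra of the model is precisely $\mathbf{R}\mathrm{End}_{Y}(S)$; this is item (2).

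\textbf{Step 3: the cyclic pairing and item (4).} Since $\omega_{K_{Y}}\cong\mathcal{O}_{K_{Y}}$ and $\iota_{*}S$ has proper support, compactly-supported Grothendieck--Serre duality supplies a trace $Ext^{d}_{K_{Y}}(\iota_{*}S,\iota_{*}S)\to\mathbb{C}$ and hence a non-degenerate, graded-symmetric, cyclically invariant pairing of degree $-d$ (equivalently, $D^{b}(K_{Y})$ is a $d$-dimensional Calabi--Yau category and one takes a cyclic minimal model). A local computation with the Koszul resolution and the fibre coordinate shows this pairing restricts, under the splitting of Step 1, to the tautological pairing between $L$ and $L^{*}[-d]$ and vanishes on $L\otimes L$ and on $L^{*}\otimes L^{*}$; that is, it is the form $\kappa$ of the definition. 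Finally, for $\overline{\mu}_{k}$ with a single input $a_{i}^{*}\in L^{*}$, cyclic invariance of the pairing gives
\[
\kappa\big(\overline{\mu}_{k}(a_{1},\dots,a_{i}^{*},\dots,a_{k}),b\big)=(-1)^{\epsilon}\,\kappa\big(\overline{\mu}_{k}(a_{i+1},\dots,a_{k},b,a_{1},\dots,a_{i-1}),a_{i}^{*}\big),
\]
and the inner $\overline{\mu}_{k}$ now has all of its first $k$ arguments in $L$, so by item (2) it equals $\mu_{k}$ and lands in $L$; pairing it against $a_{i}^{*}$ through $\kappa$ is exactly the defining formula of item (4). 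Since the underlying space, the pairing $\kappa$, and all products $\overline{\mu}_{k}$ are thereby identified, $Ext^{*}_{K_{Y}}(\iota_{*}S,\iota_{*}S)$ is the $d$-dimensional cyclic completion of $Ext^{*}_{Y}(S,S)$.

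\textbf{Expected main obstacle.} Steps 1 and 2 are essentially bookkeeping; the substance is Step 3 — producing an honest \emph{cyclic} $A_{\infty}$-model for $\mathbf{R}\mathrm{End}_{K_{Y}}(\iota_{*}S)$ (a choice of dg-enhancement of $D^{b}(K_{Y})$ together with its Calabi--Yau structure, and a cyclic minimal model for it) and verifying, with all signs and with the correct normalization of the trace against the holomorphic volume form on $K_{Y}$, that its invariant pairing is literally $\kappa$. Once that is in hand, items (1)--(4) follow formally as above.
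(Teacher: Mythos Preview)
The paper does not give its own proof of this lemma: it is stated with a citation to Segal \cite{segal} and used as a black box, so there is no argument in the paper to compare against. Your outline is essentially the one in the cited reference: the Koszul self-intersection formula to identify the underlying graded space, the fibrewise $\mathbb{C}^{*}$-weight grading to force vanishing of products with two or more $L^{*}$-inputs and to identify the pure-$L$ products with those on $Y$, and the Calabi--Yau trace on $K_{Y}$ to supply the cyclic pairing that pins down the remaining products. Your own assessment of the obstacle is accurate: the genuinely nontrivial point is the existence of a cyclic minimal model compatible with the weight grading and the identification of its pairing with $\kappa$; once that is granted, items (1)--(4) follow formally.
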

Using the above lemma, we have,
\begin{lemma}Let $\mathcal{F}$ be a torsion-free slope-stable sheaf on a compact Fano threefold $Y$.
Denote $\iota: Y\rightarrow K_{Y}=X$ to be the zero section map. Then we have canonical isomorphisms
\begin{equation}Ext^{1}_{X}(\iota_{*}\mathcal{F},\iota_{*}\mathcal{F})\cong Ext^{1}_{Y}(\mathcal{F},\mathcal{F}), \nonumber \end{equation}
\begin{equation}\label{cyclic cpt on coh}Ext^{2}_{X}(\iota_{*}\mathcal{F},\iota_{*}\mathcal{F})\cong
Ext^{2}_{Y}(\mathcal{F},\mathcal{F})\oplus Ext^{2}_{Y}(\mathcal{F},\mathcal{F})^{*}. \end{equation}
And a local Kuranishi map for moduli space of sheaves of type $\iota_{*}\mathcal{F}$ on $X$
\begin{equation}\kappa: Ext^{1}_{X}(\iota_{*}\mathcal{F},\iota_{*}\mathcal{F})\rightarrow
Ext^{2}_{X}(\iota_{*}\mathcal{F},\iota_{*}\mathcal{F}) \nonumber \end{equation}
can be identified with a local Kuranishi map for moduli space of sheaves of type $\mathcal{F}$ on $Y$
\begin{equation}Ext^{1}_{Y}(\mathcal{F},\mathcal{F})\rightarrow Ext^{2}_{Y}(\mathcal{F},\mathcal{F}). \nonumber \end{equation}
Furthermore, under the above identification, $Ext^{2}_{Y}(\mathcal{F},\mathcal{F})$ is a maximal isotropic subspace of $Ext^{2}_{X}(\iota_{*}\mathcal{F},\iota_{*}\mathcal{F})$ with
respect to the Serre duality pairing.
\end{lemma}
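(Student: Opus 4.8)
The plan is to read everything off Segal's theorem (Lemma \ref{segal theorem}) once a single cohomological vanishing is in place. Since $\dim_{\mathbb{C}}Y=3$, we apply Lemma \ref{segal theorem} with $d=4$: the $A_{\infty}$-algebra $A_{X}:=Ext^{*}_{X}(\iota_{*}\mathcal{F},\iota_{*}\mathcal{F})$ is the $4$-dimensional cyclic completion $\overline{L}$ of $L:=Ext^{*}_{Y}(\mathcal{F},\mathcal{F})$, so $\overline{L}^{i}=L^{i}\oplus (L^{4-i})^{*}$, the higher products $\overline{\mu}_{k}$ restrict to the products $\mu_{k}$ of $L$ whenever all inputs lie in $L$ (rule (2) of the definition), and $\overline{L}$ carries the cyclic pairing $\kappa$ induced by the natural pairing of $L$ with $L^{*}$ (rule (1)). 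Note $L^{0}=Hom_{Y}(\mathcal{F},\mathcal{F})=\mathbb{C}$ by slope-stability and $L^{i}=0$ for $i>3$, so in particular $\iota_{*}\mathcal{F}$ is simple on $X$. Everything in the lemma is then a matter of identifying the pieces $\overline{L}^{1}$, $\overline{L}^{2}$ together with the restriction of $\overline{\mu}_{k}$ and of $\kappa$.

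First I would prove $Ext^{3}_{Y}(\mathcal{F},\mathcal{F})=0$; this is where the Fano and torsion-free hypotheses enter. By Serre duality on the threefold $Y$, $Ext^{3}_{Y}(\mathcal{F},\mathcal{F})\cong Hom_{Y}(\mathcal{F},\mathcal{F}\otimes K_{Y})^{*}$. Suppose $\phi\colon\mathcal{F}\to\mathcal{F}\otimes K_{Y}$ is nonzero; let $I=\mathrm{Image}(\phi)$, which is a quotient of $\mathcal{F}$ and a subsheaf of $\mathcal{F}\otimes K_{Y}$, and set $r=\mathrm{rk}\,\mathcal{F}$, $s=\mathrm{rk}\,I$. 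If $s<r$, slope-stability of $\mathcal{F}$ (applied to the quotient $I$) gives $\mu(I)>\mu(\mathcal{F})$, while slope-stability of $\mathcal{F}\otimes K_{Y}$ (applied to the subsheaf $I$) gives $\mu(I)<\mu(\mathcal{F}\otimes K_{Y})=\mu(\mathcal{F})+\deg K_{Y}$; as $Y$ is Fano, $\deg K_{Y}<0$ and these are contradictory. If $s=r$ then $\ker\phi$ is torsion, hence $0$ since $\mathcal{F}$ is torsion-free, so $\mathcal{F}\cong I\subseteq\mathcal{F}\otimes K_{Y}$ with torsion cokernel, giving $\deg\mathcal{F}\le\deg(\mathcal{F}\otimes K_{Y})=\deg\mathcal{F}+r\deg K_{Y}$, again impossible. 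Hence $Hom_{Y}(\mathcal{F},\mathcal{F}\otimes K_{Y})=0$, so $Ext^{3}_{Y}(\mathcal{F},\mathcal{F})=0$. Feeding this into $\overline{L}$ gives the two claimed isomorphisms at once: $Ext^{1}_{X}(\iota_{*}\mathcal{F},\iota_{*}\mathcal{F})=\overline{L}^{1}=L^{1}\oplus (L^{3})^{*}=Ext^{1}_{Y}(\mathcal{F},\mathcal{F})$ and $Ext^{2}_{X}(\iota_{*}\mathcal{F},\iota_{*}\mathcal{F})=\overline{L}^{2}=L^{2}\oplus (L^{2})^{*}=Ext^{2}_{Y}(\mathcal{F},\mathcal{F})\oplus Ext^{2}_{Y}(\mathcal{F},\mathcal{F})^{*}$.

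Next, for the Kuranishi maps I would use that the germ at $\iota_{*}\mathcal{F}$ of the moduli of sheaves of type $\iota_{*}\mathcal{F}$ on $X$ is the Maurer--Cartan locus of the minimal $A_{\infty}$-algebra $A_{X}=\overline{L}$, so that a Kuranishi map is $\kappa_{X}=\sum_{k\ge 2}\pm\,\overline{\mu}_{k}(\,\cdot\,,\dots,\,\cdot\,)$ on $Ext^{1}_{X}(\iota_{*}\mathcal{F},\iota_{*}\mathcal{F})$. Since this space equals $L^{1}$ and $L^{1}\subseteq L\subseteq\overline{L}$, rule (2) gives $\overline{\mu}_{k}(a,\dots,a)=\mu_{k}(a,\dots,a)\in L^{2}$ for $a\in L^{1}$; hence $\mathrm{Image}(\kappa_{X})\subseteq L^{2}=Ext^{2}_{Y}(\mathcal{F},\mathcal{F})$ and, under the identifications above, $\kappa_{X}$ is a Kuranishi map $Ext^{1}_{Y}(\mathcal{F},\mathcal{F})\to Ext^{2}_{Y}(\mathcal{F},\mathcal{F})$ built from the minimal $A_{\infty}$-structure $(\mu_{k})$ on $L$, i.e. a Kuranishi map for $\mathcal{F}$ on $Y$ (recall Kuranishi maps are only fixed up to reparametrization, so the assertion is that a compatible choice exists). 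Finally, the cyclic pairing $\kappa$ restricted to $\overline{L}^{2}=L^{2}\oplus (L^{2})^{*}$ is, up to sign, the hyperbolic form $\big((a,\varphi),(b,\psi)\big)\mapsto\varphi(b)\pm\psi(a)$, for which $L^{2}\oplus 0$ is a maximal isotropic subspace; and since $X=K_{Y}$ is a Calabi--Yau fourfold and $\iota_{*}\mathcal{F}$ is simple, this cyclic pairing is precisely the Serre duality pairing $Ext^{2}_{X}\otimes Ext^{2}_{X}\to Ext^{4}_{X}(\iota_{*}\mathcal{F},\iota_{*}\mathcal{F})\cong\mathbb{C}$. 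Therefore $Ext^{2}_{Y}(\mathcal{F},\mathcal{F})$ is maximal isotropic in $Ext^{2}_{X}(\iota_{*}\mathcal{F},\iota_{*}\mathcal{F})$ with respect to Serre duality, which finishes the argument.

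The hard part will be the second half of the Kuranishi statement, not the algebra of graded pieces: one must invoke the dictionary between minimal $A_{\infty}$-structures on $\mathrm{RHom}$ and Kuranishi models of moduli of sheaves, and verify that the cyclic (Calabi--Yau) pairing supplied by Segal's construction really is the geometric Serre pairing on $X$, rather than merely an abstract nondegenerate form of the correct shape and degree. The vanishing $Ext^{3}_{Y}(\mathcal{F},\mathcal{F})=0$, by contrast, is an elementary stability argument.
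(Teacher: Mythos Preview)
Your proof is correct and follows essentially the same route as the paper: both invoke Segal's theorem (Lemma \ref{segal theorem}) to identify $Ext^{*}_{X}(\iota_{*}\mathcal{F},\iota_{*}\mathcal{F})$ with the cyclic completion of $Ext^{*}_{Y}(\mathcal{F},\mathcal{F})$, read off the graded pieces using $Ext^{3}_{Y}(\mathcal{F},\mathcal{F})=0$, and use the $L_{\infty}/A_{\infty}$ structure on the minimal model to identify the Kuranishi maps. The only minor differences are that you supply the stability argument for $Ext^{3}_{Y}=0$ explicitly (the paper uses it without proof), and for isotropy you appeal directly to the hyperbolic form on $L^{2}\oplus(L^{2})^{*}$ coming from rule (1) of the cyclic completion, whereas the paper observes equivalently that the Yoneda product $Ext^{2}_{Y}\otimes Ext^{2}_{Y}\to Ext^{4}_{Y}$ vanishes for dimension reasons.
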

\begin{proof}
Locally around a sheaf $\iota_{*}\mathcal{F}$, we have the Kuranishi map
\begin{equation}\kappa: Ext^{1}_{X}(\iota_{*}\mathcal{F},\iota_{*}\mathcal{F})\rightarrow
Ext^{2}_{X}(\iota_{*}\mathcal{F},\iota_{*}\mathcal{F}), \nonumber \end{equation}
which can be described by the induced $L_{\infty}$ map on the minimal model.
By Lemma \ref{segal theorem}, we have
\begin{equation}Ext^{1}_{X}(\iota_{*}\mathcal{F},\iota_{*}\mathcal{F})=Ext^{1}_{Y}(\mathcal{F},\mathcal{F})\oplus
Ext^{3}_{Y}(\mathcal{F},\mathcal{F})^{*}=Ext^{1}_{Y}(\mathcal{F},\mathcal{F}), \nonumber \end{equation}
\begin{equation}\label{cyclic cpt on coh}Ext^{2}_{X}(\iota_{*}\mathcal{F},\iota_{*}\mathcal{F})=
Ext^{2}_{Y}(\mathcal{F},\mathcal{F})\oplus Ext^{2}_{Y}(\mathcal{F},\mathcal{F})^{*}. \end{equation}
And we can identify $\kappa$ with the Kuranishi map for the moduli space of stable sheaves on $Y$ near $\mathcal{F}$.

By (\ref{cyclic cpt on coh}), we know the quadratic pairing
\begin{equation}Ext^{2}_{X}(\iota_{*}\mathcal{F},\iota_{*}\mathcal{F})\otimes Ext^{2}_{X}(\iota_{*}\mathcal{F},\iota_{*}\mathcal{F})\rightarrow
Ext^{4}_{X}(\iota_{*}\mathcal{F},\iota_{*}\mathcal{F}) \nonumber \end{equation}
is just the quadratic pairing
\begin{equation}(Ext^{2}_{Y}(\mathcal{F},\mathcal{F})\oplus Ext^{2}_{Y}(\mathcal{F},\mathcal{F})^{*})\otimes (Ext^{2}_{Y}(\mathcal{F},\mathcal{F})\oplus Ext^{2}_{Y}(\mathcal{F},\mathcal{F})^{*})\rightarrow
Ext^{3}_{Y}(\mathcal{F},\mathcal{F}\otimes K_{Y}).   \nonumber \end{equation}
When we restrict to $Ext^{2}_{Y}(\mathcal{F},\mathcal{F})$, the pairing will produce an element in
\begin{equation}Ext^{4}_{Y}(\mathcal{F},\mathcal{F})=0. \nonumber \end{equation}
Hence, we can identify the Serre duality pairing on the space $Ext^{2}_{X}(\iota_{*}\mathcal{F},\iota_{*}\mathcal{F})$ with
the natural pairing between $Ext^{2}_{Y}(\mathcal{F},\mathcal{F})$ and $Ext^{2}_{Y}(\mathcal{F},\mathcal{F})^{*}$.
These two subspaces are both maximal isotropic subspaces.
\end{proof}
By the above lemma and Lemma \ref{cp supp}, we know that for a polarized compact Fano threefold $(Y,\mathcal{O}_{Y}(1))$ with $H^{0}(Y,K_{Y}^{-1})\neq0$, the moduli space of $\pi^{*}\mathcal{O}_{Y}(1)$ slope-stable compactly supported sheaves on $K_{Y}$ with compactly supported Chern character \cite{js} $c=(0,c|_{H_{c}^{2}(X)}\neq 0,c|_{H_{c}^{4}(X)},c|_{H_{c}^{6}(X)},c|_{H_{c}^{8}(X)})$ can be identified with moduli space of torsion-free $\mathcal{O}_{Y}(1)$ stable sheaves on $Y$ with certain Chern character $c^{'}\in H^{even}(Y)$ which is uniquely determined by $c$. Meanwhile condition in definition \ref{virtual cycle when ob=v+v*} is satisfied \cite{th}.

By definition \ref{virtual cycle when ob=v+v*}, the generalized $DT_{4}$ moduli space exists and we can identify its virtual fundamental class with the virtual fundamental class of the moduli space of stable sheaves on $Y$. Furthermore, if we use the same $\mu$-map (\ref{mu map for sheaves}) to define invariants, we can also identify them.
\begin{theorem}\label{compact supp DT4}($DT_{4}/DT_{3}$) \\
Let $\pi: X=K_{Y}\rightarrow Y$ be the projection map and $(Y,\mathcal{O}_{Y}(1))$ be a polarized compact Fano threefold with $H^{0}(Y,K_{Y}^{-1})\neq0$. If $c=(0,c|_{H_{c}^{2}(X)}\neq 0,c|_{H_{c}^{4}(X)},c|_{H_{c}^{6}(X)},c|_{H_{c}^{8}(X)})$ and $\mathcal{M}_{c}(X,\pi^{*}\mathcal{O}_{Y}(1))$ consists of slope-stable sheaves, then sheaves in $\mathcal{M}_{c}(X,\pi^{*}\mathcal{O}_{Y}(1))$ are of type $\iota_{*}(\mathcal{F})$, where $\iota: Y\rightarrow K_{Y}$ is the zero section and $c^{'}=ch(\mathcal{F})\in H^{even}(Y)$ is uniquely determined by $c$. Furthermore, the generalized $DT_{4}$ moduli space exists
\begin{equation}\overline{\mathcal{M}}_{c}^{DT_{4}}(X,\pi^{*}\mathcal{O}_{Y}(1))=\mathcal{M}_{c}(X,\pi^{*}\mathcal{O}_{Y}(1))\cong \mathcal{M}_{c'}(Y,\mathcal{O}_{Y}(1)) \nonumber \end{equation}
and its virtual fundamental class (Definition \ref{virtual cycle when ob=v+v*}) satisfies
\begin{equation}[\overline{\mathcal{M}}_{c}^{DT_{4}}(X,\pi^{*}\mathcal{O}_{Y}(1))]^{vir}=[\mathcal{M}_{c'}(Y,\mathcal{O}_{Y}(1))]^{vir}, \nonumber \end{equation}
where $[\mathcal{M}_{c'}(Y,\mathcal{O}_{Y}(1))]^{vir}$ is the $DT_{3}$ virtual cycle defined in \cite{th}.

Since $H_{*}(X)\cong H_{*}(Y)$ and $H^{*}(\overline{\mathcal{M}}_{c}^{DT_{4}}(X,\pi^{*}\mathcal{O}_{Y}(1)))\cong H^{*}(\mathcal{M}_{c'}(Y,\mathcal{O}_{Y}(1)))$, we can use the same $\mu_{2}$-map (\ref{mu map for sheaves}) to define invariants, then
\begin{equation}DT_{4}^{\mu_{2}}(X,\pi^{*}\mathcal{O}_{Y}(1),c,o(\mathcal{O}))=DT_{3}(Y,\mathcal{O}_{Y}(1),c^{'}), \nonumber \end{equation}
where $o(\mathcal{O})$ is the natural complex orientation and $DT_{3}(Y,\mathcal{O}_{Y}(1),c^{'})$ is defined by pairing $[\mathcal{M}_{c'}(Y,\mathcal{O}_{Y}(1))]^{vir}$ with the $\mu_{2}$-map (\ref{u2 map}).
\end{theorem}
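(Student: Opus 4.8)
The plan is to establish the correspondence in three stages: first as an isomorphism of moduli spaces, then of obstruction theories and virtual classes, and finally of the descendent invariants.

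First I would identify the moduli spaces scheme-theoretically. Since $c$ has vanishing rank and $c|_{H^2_c(X)}\neq 0$, every $\pi^*\mathcal{O}_Y(1)$-stable sheaf parametrized by $\mathcal{M}_c(X,\pi^*\mathcal{O}_Y(1))$ has three-dimensional compact support, so by Lemma \ref{cp supp} it is of the form $\iota_*\mathcal{F}$ with $\mathcal{F}$ an $\mathcal{O}_Y(1)$-stable sheaf on $Y$; Grothendieck--Riemann--Roch for the closed immersion $\iota$ (equivalently the Hilbert polynomial computation via the projection formula used in the proof of Lemma \ref{cp supp}) pins down $c'=ch(\mathcal{F})$ uniquely from $c$. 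The exact, fully faithful functor $\iota_*$ gives a morphism $\mathcal{M}_{c'}(Y,\mathcal{O}_Y(1))\to \mathcal{M}_c(X,\pi^*\mathcal{O}_Y(1))$ which is bijective on closed points by Lemma \ref{cp supp}; by the preceding lemma it induces the isomorphism $Ext^1_X(\iota_*\mathcal{F},\iota_*\mathcal{F})\cong Ext^1_Y(\mathcal{F},\mathcal{F})$ on Zariski tangent spaces and matches Kuranishi maps, hence it is an isomorphism of (possibly non-reduced) moduli spaces.

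Next I would construct $\overline{\mathcal{M}}_c^{DT_4}$ together with its virtual class. The preceding lemma provides the splitting $Ext^2_X(\iota_*\mathcal{F},\iota_*\mathcal{F})=Ext^2_Y(\mathcal{F},\mathcal{F})\oplus Ext^2_Y(\mathcal{F},\mathcal{F})^*$ with $Ext^2_Y(\mathcal{F},\mathcal{F})$ maximal isotropic for the Serre pairing, and shows that the image of a Kuranishi map at $\iota_*\mathcal{F}$ lies in $V_{\iota_*\mathcal{F}}:=Ext^2_Y(\mathcal{F},\mathcal{F})$ --- exactly the hypotheses of Proposition \ref{ob=v+v*}. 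Hence $\overline{\mathcal{M}}_c^{DT_4}(X,\pi^*\mathcal{O}_Y(1))$ exists, equals $\mathcal{M}_c(X,\pi^*\mathcal{O}_Y(1))\cong \mathcal{M}_{c'}(Y,\mathcal{O}_Y(1))$ as real analytic spaces, and carries the natural complex orientation $o(\mathcal{O})$. For the virtual class I would invoke Definition \ref{virtual cycle when ob=v+v*} with $\mathcal{V}^\bullet$ taken to be Thomas's perfect obstruction theory on $\mathcal{M}_{c'}(Y)$: its cohomology sheaves restrict to $Ext^1_Y(\mathcal{F},\mathcal{F})$ and $Ext^2_Y(\mathcal{F},\mathcal{F})$, which under the identifications above (and $\iota_*$) fit the required framework, with $H^{-1}(\mathcal{V}^\bullet)|_{\mathcal{F}}$ maximal isotropic in $Ext^2_X(\iota_*\mathcal{F},\iota_*\mathcal{F})$. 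So $[\overline{\mathcal{M}}_c^{DT_4}(X)]^{vir}$ is by definition the virtual class of this obstruction theory, which is $[\mathcal{M}_{c'}(Y,\mathcal{O}_Y(1))]^{vir}$ of \cite{th}. Finally, for the invariants I would match the descendent maps: the projection $\pi:K_Y\to Y$ from the total space of a line bundle is a homotopy equivalence, so $H_*(X)\cong H_*(Y)$, while the scheme isomorphism gives $H^*(\overline{\mathcal{M}}_c^{DT_4}(X))\cong H^*(\mathcal{M}_{c'}(Y))$; the universal sheaf on $\mathcal{M}_c(X)\times X$ is $(\mathrm{id}\times\iota)_*$ of the one on $\mathcal{M}_{c'}(Y)\times Y$, so the Chern-class insertions in $\mu_2$ (\ref{mu map for sheaves}) correspond, and pairing against the equal virtual classes yields $DT_4^{\mu_2}(X,\pi^*\mathcal{O}_Y(1),c,o(\mathcal{O}))=DT_3(Y,\mathcal{O}_Y(1),c')$.

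The step I expect to be the main obstacle is the comparison at the level of (truncated) cotangent complexes and Atiyah classes rather than merely of $Ext$-groups --- i.e. checking that Thomas's obstruction theory on $\mathcal{M}_{c'}(Y)$ genuinely transports, under $\iota_*$, to an obstruction theory on $\mathcal{M}_c(X,\pi^*\mathcal{O}_Y(1))$ of the form demanded by Definition \ref{virtual cycle when ob=v+v*}, globalizing the pointwise statements of Lemma \ref{segal theorem} and the preceding lemma in families. By comparison, the cohomological and universal-sheaf bookkeeping in the last step is routine.
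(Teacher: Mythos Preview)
Your proposal is correct and follows essentially the same route as the paper: invoke Lemma~\ref{cp supp} to force every stable sheaf to be $\iota_*\mathcal{F}$, use the preceding lemma (via Segal's cyclic-completion result) to match $Ext^1$, split $Ext^2$ into $V\oplus V^*$ with the Kuranishi image in $V=Ext^2_Y(\mathcal{F},\mathcal{F})$, feed this into Proposition~\ref{ob=v+v*} and Definition~\ref{virtual cycle when ob=v+v*} with Thomas's obstruction theory, and then compare $\mu$-maps. The paper's argument is in fact terser than yours --- it simply asserts that ``the condition in Definition~\ref{virtual cycle when ob=v+v*} is satisfied \cite{th}'' and that the $\mu$-maps agree --- so your flagged obstacle about globalizing the pointwise $Ext$-comparison into a genuine perfect obstruction theory on the moduli space is a point the paper does not spell out either.
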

\begin{remark} ${}$ \\
1. If we have a compact complex smooth four-fold $X$ (no need to be Calabi-Yau) containing a
Fano threefold $Y$ such that $\mathcal{N}_{Y/X}=K_{Y}$ and $\mathcal{N}_{Y/X}^{*}$ is ample, e.g $X=P(K_{Y}\oplus\mathcal{O})$.
Then we can define $DT_{4}$ invariants for stable sheaves supported in $Y$ because the renowned theorem of Grauert implies
that $X$ contains $K_{Y}$ as its open subset. \\
2. Under the assumption that $Y$ admits a full strong exceptional collection \cite{bri}, \cite{hua}, we have a quiver representation of $\mathcal{M}_{c}$ which may be helpful for the future study of $DT_{4}$ invariants.
\end{remark}

\subsection{The case of $X=T^{*}S$}

Now we consider $X=T^{*}S$.  It is a hyper-K\"ahler four-fold when $S=\mathbb{P}^{2}$.
We only consider counting torsion sheaves scheme theoretically supported on $S$.

Let $\mathcal{F}$ be a torsion-free Gieseker stable sheaf on a compact algebraic surface $(S,\mathcal{O}_{S}(1))$.
Denote $\iota: S\rightarrow T^{*}S$ to be the inclusion map, then $\iota_{*}(\mathcal{F})$ is a torsion sheaf over $X$
and denote $\pi: T^{*}S\rightarrow S$ to be the projection map. \\

Now we want to relate the obstruction theory of sheaf $\iota_{*}(\mathcal{F})$ on $X$ to the obstruction theory of $\mathcal{F}$ on $S$.

By the projection formula \cite{hart},
\begin{equation}\iota_{*}(\mathcal{F})=\iota_{*}(\iota^{*}\pi^{*}\mathcal{F}\otimes_{\mathcal{O}_{S}}\mathcal{O}_{S})
=\pi^{*}\mathcal{F}\otimes_{\mathcal{O}_{X}}\iota_{*}\mathcal{O}_{S},
\nonumber \end{equation}
where $\mathcal{F}$ is a complex of locally free sheaves on $S$.
Then we have a local to global spectral sequence
\begin{eqnarray*}Ext^{*}_{X}(\iota_{*}\mathcal{F},\iota_{*}\mathcal{F})&\Leftarrow&
H^{*}(X,\mathcal{E}xt^{*}_{X}(\iota_{*}\mathcal{F},\iota_{*}\mathcal{F})) \\
&=& H^{*}(X,\mathcal{E}xt^{*}_{X}(\pi^{*}\mathcal{F}\otimes_{\mathcal{O}_{X}}\iota_{*}\mathcal{O}_{S},
\pi^{*}\mathcal{F}\otimes_{\mathcal{O}_{X}}\iota_{*}\mathcal{O}_{S})) \\
&=& H^{*}(X,\mathcal{E}xt^{*}_{X}(\iota_{*}\mathcal{O}_{S},\iota_{*}\mathcal{O}_{S})\otimes_{\mathcal{O}_{X}}End (\pi^{*}\mathcal{F})) \\
&=& H^{*}(X,\mathcal{E}xt^{*}_{X}(\mathcal{O}_{S},\mathcal{O}_{S})\otimes_{\mathcal{O}_{X}} End (\pi^{*}\mathcal{F})) \\
&=& H^{*}(X,\iota_{*}(\wedge^{*}\mathcal{N}_{S/X})\otimes_{\mathcal{O}_{X}} \pi^{*}End \mathcal{F}) \\
&=& H^{*}(X,\iota_{*}(\wedge^{*}\mathcal{N}_{S/X}\otimes_{\mathcal{O}_{S}} \iota^{*}\pi^{*}End \mathcal{F})) \\
&=& H^{*}(S,\wedge^{*}\Omega^{1}_{S}\otimes End \mathcal{F}) \\
&=& Ext^{*}_{S}(\mathcal{F},\wedge^{*}\Omega^{1}_{S}\otimes \mathcal{F}).
\end{eqnarray*}
We give a criterion when the above spectral sequence degenerates at $E_{2}$ terms.
\begin{lemma}\label{TS lemma1}Let $\mathcal{F}$ be a torsion free sheaf on $S$. \\
$(1)$ If $Ext_{S}^{2}(\mathcal{F},\mathcal{F})=0$, the above spectral sequence degenerates at $E_{2}$.  \\
$(2)$ If the degree of $K_{S}$ is negative with respect to the chosen polarization $\mathcal{O}_{S}(1)$ and $\mathcal{F}$ is slope-stable, then we have $Ext_{S}^{2}(\mathcal{F},\mathcal{F})=0$.
\end{lemma}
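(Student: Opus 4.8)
The plan is to reduce both parts to the two differentials that can be nonzero on the $E_2$ page of the local-to-global spectral sequence displayed above, whose $E_2$ term is $E_2^{p,q}=Ext^p_S(\mathcal{F},\wedge^q\Omega^1_S\otimes\mathcal{F})$, converging to $Ext^{p+q}_X(\iota_*\mathcal{F},\iota_*\mathcal{F})$. Since $S$ is a surface, $\wedge^q\Omega^1_S=0$ for $q\geq3$, and $Ext^p_S(\mathcal{F},-)=0$ for $p\geq3$ by Serre duality on $S$; hence $E_2^{p,q}$ is concentrated in the square $0\leq p,q\leq2$. The differential $d_r$ sends $(p,q)$ to $(p+r,q-r+1)$, which leaves this square for every $r\geq3$, so $E_3=E_\infty$ automatically, and degeneration at $E_2$ is equivalent to the vanishing of the only two $d_2$'s whose source and target can both be nonzero, namely $d_2: E_2^{0,1}\to E_2^{2,0}$ and $d_2: E_2^{0,2}\to E_2^{2,1}$.

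For part $(1)$, assume $Ext^2_S(\mathcal{F},\mathcal{F})=0$. Then the target $E_2^{2,0}=Ext^2_S(\mathcal{F},\mathcal{F})$ of the first differential is zero, and Serre duality on $S$ identifies the source of the second differential, $E_2^{0,2}=Hom_S(\mathcal{F},K_S\otimes\mathcal{F})$, with $Ext^2_S(\mathcal{F},\mathcal{F})^{*}=0$. Hence both $d_2$'s vanish and the spectral sequence degenerates at $E_2$.

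For part $(2)$, by Serre duality on $S$ it is enough to prove $Hom_S(\mathcal{F},\mathcal{F}\otimes K_S)=0$. Write $\mu$ for the slope with respect to $\mathcal{O}_S(1)$. Tensoring by the line bundle $K_S$ preserves slope-stability, and the hypothesis $\deg_{\mathcal{O}_S(1)}K_S<0$ gives $\mu(\mathcal{F}\otimes K_S)=\mu(\mathcal{F})+\deg K_S<\mu(\mathcal{F})$. A nonzero $\phi:\mathcal{F}\to\mathcal{F}\otimes K_S$ would have torsion-free image $I$ (a subsheaf of the torsion-free sheaf $\mathcal{F}\otimes K_S$), which is at the same time a nonzero quotient of the stable sheaf $\mathcal{F}$ and a nonzero subsheaf of the stable sheaf $\mathcal{F}\otimes K_S$. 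If $rk\,I<rk\,\mathcal{F}$, stability of $\mathcal{F}$ forces $\mu(I)>\mu(\mathcal{F})$ while stability of $\mathcal{F}\otimes K_S$ forces $\mu(I)<\mu(\mathcal{F}\otimes K_S)<\mu(\mathcal{F})$, a contradiction; if $rk\,I=rk\,\mathcal{F}$ then $\phi$ is injective and $I$ is a full-rank subsheaf of $\mathcal{F}\otimes K_S$, so $\mu(\mathcal{F})=\mu(I)\leq\mu(\mathcal{F}\otimes K_S)<\mu(\mathcal{F})$, again a contradiction. Thus $\phi=0$.

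None of this is hard; the step to watch is the Serre-duality identification $E_2^{0,2}\cong Ext^2_S(\mathcal{F},\mathcal{F})^{*}$ in part $(1)$, which is exactly what makes the hypothesis $Ext^2_S(\mathcal{F},\mathcal{F})=0$ kill the source of the second $d_2$ (not merely the target of the first), together with the rank-equal case of the slope comparison in part $(2)$, where one invokes that a full-rank subsheaf of a slope-stable sheaf has slope no larger than the ambient one. I do not foresee a genuine obstacle.
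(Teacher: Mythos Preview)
Your proof is correct and follows essentially the same approach as the paper's. The only cosmetic difference is that the paper indexes the spectral sequence with the roles of $p$ and $q$ swapped (it sets $E_2^{p,q}=Ext^q_S(\mathcal{F},\wedge^p\Omega^1_S\otimes\mathcal{F})$), but the two nontrivial $d_2$'s you identify and kill via $Ext^2_S(\mathcal{F},\mathcal{F})=0$ together with Serre duality are exactly the ones the paper uses; your part~(2) is likewise the standard ``no nonzero maps between slope-stable sheaves of different slopes'' argument the paper gives, only with the rank-equal case spelled out separately.
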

\begin{proof}   ${}$ \\
$(1)$ Denote $E_{2}^{p,q}=Ext_{S}^{q}(\mathcal{F},\wedge^{p}\Omega^{1}_{S}\otimes \mathcal{F})$. We have
\begin{equation}E_{2}^{p-2,q+1}\rightarrow E_{2}^{p,q}\rightarrow E_{2}^{p+2,q-1}, \nonumber \end{equation}
whose cohomology is $E_{3}^{p,q}$. Then
\begin{equation}0\rightarrow E_{2}^{1,q}\rightarrow 0 \nonumber \end{equation}
yields $E_{3}^{1,q}\cong E_{2}^{1,q}$. Meanwhile we have
\begin{equation}E_{2}^{0,q+1}\rightarrow E_{2}^{2,q}\rightarrow 0 , 0\rightarrow E_{2}^{0,q}\rightarrow E_{2}^{2,q-1}.
\nonumber \end{equation}
Under the assumption that $Ext_{S}^{2}(\mathcal{F},\mathcal{F})=0$. We have
\begin{equation}E_{2}^{2,0}=E_{2}^{0,2}=0. \nonumber \end{equation}
Thus the above spectral sequence degenerates at $E_{2}$. \\
${}$ \\
$(2)$ By Serre duality, we have
\begin{equation} Ext_{S}^{2}(\mathcal{F},\mathcal{F})=Hom_{\mathcal{O}_{S}}(\mathcal{F},\mathcal{F}\otimes K_{S}).  \nonumber \end{equation}
By assumption,
\begin{equation}\mu(\mathcal{F})=\frac{deg(\mathcal{F})}{rk(\mathcal{F})}>\frac{deg(\mathcal{F}\otimes K_{S})}
{rk(\mathcal{F}\otimes K_{S})}= \mu(\mathcal{F}\otimes K_{S}).\nonumber \end{equation}
If the above homomorphism is not zero, choose such a nonzero morphism
\begin{equation}f:\mathcal{F}\rightarrow \mathcal{F}\otimes K_{S},  \nonumber \end{equation}
then
\begin{equation}0\neq\mathcal{F}/ker(f)\hookrightarrow \mathcal{F}\otimes K_{S}.  \nonumber \end{equation}
By the stability of $\mathcal{F}$, we have
\begin{equation}\mu(\mathcal{F}/ker(f))\geq\mu(\mathcal{F}).  \nonumber \end{equation}
Thus
\begin{equation}\mu(\mathcal{F}\otimes K_{S})<\mu(\mathcal{F})\leq\mu(\mathcal{F}/ker(f)), \nonumber \end{equation}
which contradicts with the semi-stability of $\mathcal{F}\otimes K_{S}$.
\end{proof}
Now, to ensure that sheaves scheme theoretically supported on $S$ can not move outside.
This can be done by finding conditions such that $Ext^{0}_{S}(\mathcal{F},\mathcal{F}\otimes \Omega_{S}^{1})=0$. If $S=\mathbb{P}^{2}$ and $\mathcal{F}$ is torsion-free slope stable, the condition is satisfied since $\Omega_{S}^{1}$ is stable. When $\mathcal{F}=I$ is an ideal sheaf of points, we have
\begin{lemma}\label{TS lemma2}
Let $\mathcal{F}=I$ be an ideal sheaf of points on $S$. If $h^{0,1}(S)=0$, then
$Ext^{0}_{S}(\mathcal{F},\mathcal{F}\otimes \Omega_{S}^{1})=0$.
\end{lemma}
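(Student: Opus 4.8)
The plan is to reduce the statement to the vanishing $H^{0}(S,\Omega^{1}_{S})=0$, using that an ideal sheaf of points agrees with $\mathcal{O}_{S}$ away from a finite set and that $\Omega^{1}_{S}$ is locally free, so that its sections extend across a codimension-two locus. Write $\mathcal{F}=I=I_{Z}$ for the ideal sheaf of a zero-dimensional subscheme $Z\subset S$, set $W=\mathrm{Supp}(Z)$ (so $\mathrm{codim}_{S}W=2$), and let $j\colon U=S\setminus W\hookrightarrow S$ be the open immersion of the complement, over which $I_{Z}|_{U}\cong\mathcal{O}_{U}$.

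First I would check that the restriction-to-$U$ homomorphism
\begin{equation}
\mathrm{Hom}_{S}\big(I_{Z},\,I_{Z}\otimes\Omega^{1}_{S}\big)\;\longrightarrow\;\mathrm{Hom}_{U}\big(I_{Z}|_{U},\,(I_{Z}\otimes\Omega^{1}_{S})|_{U}\big)
\nonumber
\end{equation}
is injective. Since $\Omega^{1}_{S}$ is locally free, tensoring the inclusion $0\to I_{Z}\to\mathcal{O}_{S}$ with $\Omega^{1}_{S}$ realizes $I_{Z}\otimes\Omega^{1}_{S}$ as a subsheaf of the locally free, hence torsion-free, sheaf $\Omega^{1}_{S}$; consequently any homomorphism out of $I_{Z}$ that vanishes on the dense open set $U$ has torsion image inside $I_{Z}\otimes\Omega^{1}_{S}$ and therefore vanishes. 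Over $U$ we have $I_{Z}|_{U}\cong\mathcal{O}_{U}$, so the target is $\mathrm{Hom}_{U}(\mathcal{O}_{U},\Omega^{1}_{S}|_{U})=H^{0}(U,\Omega^{1}_{S}|_{U})$. Next I would invoke algebraic Hartogs extension: $S$ is smooth and $W$ has codimension two, so for the locally free sheaf $\Omega^{1}_{S}$ the natural map $H^{0}(S,\Omega^{1}_{S})\to H^{0}(U,\Omega^{1}_{S}|_{U})$ is an isomorphism (equivalently $j_{*}(\Omega^{1}_{S}|_{U})=\Omega^{1}_{S}$). Finally, Hodge symmetry on the compact algebraic surface $S$ gives $h^{0}(S,\Omega^{1}_{S})=h^{1,0}(S)=h^{0,1}(S)$, which is $0$ by hypothesis. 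Chaining the three identities shows $\mathrm{Hom}_{S}(I_{Z},I_{Z}\otimes\Omega^{1}_{S})=0$, i.e. $\mathrm{Ext}^{0}_{S}(\mathcal{F},\mathcal{F}\otimes\Omega^{1}_{S})=0$.

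I do not expect any serious obstacle: the argument is essentially the two standard facts that a section of a locally free sheaf extends over a codimension-two subset of a smooth variety, and that $h^{1,0}=h^{0,1}$ on a compact algebraic surface; the only point needing an explicit word is the injectivity of the restriction map, which rests on torsion-freeness of $\Omega^{1}_{S}$. I would also remark that this route does not require $Z$ to be a local complete intersection, so it applies uniformly to every point of $\mathrm{Hilb}^{n}(S)$; a naive alternative via $\mathrm{Hom}_{S}(-,I_{Z}\otimes\Omega^{1}_{S})$ applied to $0\to I_{Z}\to\mathcal{O}_{S}\to\mathcal{O}_{Z}\to 0$ is messier, since $\mathrm{Ext}^{1}_{S}(\mathcal{O}_{Z},I_{Z}\otimes\Omega^{1}_{S})$ need not vanish.
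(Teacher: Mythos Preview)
Your proof is correct and takes a genuinely different route from the paper. The paper applies $\mathrm{Hom}(I,-)$ to $0\to I\otimes\Omega^{1}_{S}\to\Omega^{1}_{S}\to\mathcal{O}_{Z}\otimes\Omega^{1}_{S}\to 0$ to embed $\mathrm{Ext}^{0}_{S}(I,I\otimes\Omega^{1}_{S})$ into $\mathrm{Ext}^{0}_{S}(I,\Omega^{1}_{S})$, then uses Serre duality to rewrite the latter as (the dual of) $H^{2}(S,I\otimes K_{S}\otimes T_{S})$, and finally bounds that group by $H^{2}(S,K_{S}\otimes T_{S})\cong H^{0}(S,\Omega^{1}_{S})$ via the long exact sequence coming from $0\to I\to\mathcal{O}_{S}\to\mathcal{O}_{Z}\to 0$ tensored with $K_{S}\otimes T_{S}$. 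Your argument instead restricts to the complement of the points, identifies $\mathrm{Hom}(I,I\otimes\Omega^{1}_{S})$ with a subspace of $H^{0}(U,\Omega^{1}_{S})$ via torsion-freeness, and then invokes Hartogs extension across codimension two. Both reach the same endpoint $H^{0}(S,\Omega^{1}_{S})=0$, but your approach is more elementary and geometric, avoiding Serre duality entirely; the paper's approach, while heavier, fits more naturally with the surrounding Ext-group computations and the spectral sequence framework used throughout that subsection.
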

\begin{proof}
By the short exact sequence
\begin{equation}0\rightarrow I\rightarrow \mathcal{O}_{S}\rightarrow \mathcal{O}_{Z}\rightarrow 0 ,\nonumber \end{equation}
we have
\begin{eqnarray*}0\rightarrow Ext^{0}_{S}(I,I\otimes \Omega_{S}^{1})\rightarrow Ext^{0}_{S}(I,\Omega_{S}^{1})&\cong&
Ext^{2}(\Omega_{S}^{1},I\otimes K_{S}) \\
&=& H^{2}(S,I\otimes K_{S}\otimes T_{S}),
\end{eqnarray*}
while
\begin{equation}0=H^{1}(S,\mathcal{O}_{Z}\otimes K_{S}\otimes T_{S})\rightarrow H^{2}(S,I\otimes K_{S}\otimes T_{S})\rightarrow
H^{2}(S,K_{S}\otimes T_{S})\cong H^{0}(S,\Omega_{S}^{1}) .\nonumber \end{equation}
Hence
\begin{equation}h^{1,0}=0\Rightarrow Ext^{0}_{S}(I,I\otimes \Omega_{S}^{1})=0 .\nonumber \end{equation}
\end{proof}
\begin{proposition}
Under the following assumption
\begin{equation}\label{vanishing assumption}Ext^{0}_{S}(\mathcal{F},\mathcal{F}\otimes \Omega_{S}^{1})=0,
Ext^{2}_{S}(\mathcal{F},\mathcal{F})=0,  \end{equation}
which is satisfied when (i) $S$ is del-Pezzo, $\mathcal{F}$ is an ideal sheaf of points on $S$ or (ii) when $S=\mathbb{P}^{2}$, $\mathcal{F}$ is slope-stable torsion-free on $S$, we have canonical isomorphisms
\begin{equation}Ext^{1}_{X}(\iota_{*}\mathcal{F},\iota_{*}\mathcal{F})\cong Ext^{1}_{S}(\mathcal{F},\mathcal{F}),\nonumber \end{equation}
\begin{equation}Ext^{2}_{X}(\iota_{*}\mathcal{F},\iota_{*}\mathcal{F})\cong Ext^{1}_{S}(\mathcal{F},\mathcal{F}\otimes \Omega_{S}^{1}).
\nonumber \end{equation}
\end{proposition}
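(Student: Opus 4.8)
The plan is to read off both isomorphisms directly from the local-to-global spectral sequence
\[E_2^{p,q}=Ext^q_S(\mathcal{F},\wedge^p\Omega^1_S\otimes\mathcal{F})\ \Longrightarrow\ Ext^{p+q}_X(\iota_*\mathcal{F},\iota_*\mathcal{F})\]
constructed just above from the projection formula and the identification $\mathcal{N}_{S/X}\cong\Omega^1_S$. Since $S$ is a surface, the $p$-direction is supported in $p=0,1,2$ only, with $\wedge^0\Omega^1_S=\mathcal{O}_S$, $\wedge^1\Omega^1_S=\Omega^1_S$ and $\wedge^2\Omega^1_S=K_S$, so in each total degree only finitely many entries contribute. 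First I would invoke Lemma \ref{TS lemma1}(1): the hypothesis $Ext^2_S(\mathcal{F},\mathcal{F})=0$ of (\ref{vanishing assumption}) makes the spectral sequence degenerate at $E_2$, so $Ext^n_X(\iota_*\mathcal{F},\iota_*\mathcal{F})$ carries a finite filtration whose associated graded pieces are exactly the $E_2^{p,q}$ with $p+q=n$.

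In total degree $1$ the two candidate graded pieces are $E_2^{0,1}=Ext^1_S(\mathcal{F},\mathcal{F})$ and $E_2^{1,0}=Ext^0_S(\mathcal{F},\mathcal{F}\otimes\Omega^1_S)$; the latter vanishes by the other half of (\ref{vanishing assumption}). Hence the filtration on $Ext^1_X(\iota_*\mathcal{F},\iota_*\mathcal{F})$ has a single nonzero step $E_2^{0,1}$, and the edge homomorphism furnishes the first isomorphism $Ext^1_X(\iota_*\mathcal{F},\iota_*\mathcal{F})\cong Ext^1_S(\mathcal{F},\mathcal{F})$, which one should picture as restriction along $\iota$ on the leading ($p=0$) part of the deformation complex.

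In total degree $2$ the three candidate pieces are $E_2^{0,2}=Ext^2_S(\mathcal{F},\mathcal{F})$, $E_2^{1,1}=Ext^1_S(\mathcal{F},\mathcal{F}\otimes\Omega^1_S)$ and $E_2^{2,0}=Ext^0_S(\mathcal{F},\mathcal{F}\otimes K_S)$. The first vanishes directly by (\ref{vanishing assumption}); the third satisfies $E_2^{2,0}\cong Ext^2_S(\mathcal{F},\mathcal{F})^{\vee}$ by Serre duality on the surface $S$, hence also vanishes. Thus the associated graded collapses onto the single middle term, giving the second isomorphism $Ext^2_X(\iota_*\mathcal{F},\iota_*\mathcal{F})\cong Ext^1_S(\mathcal{F},\mathcal{F}\otimes\Omega^1_S)$. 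Finally, the two listed settings verify (\ref{vanishing assumption}): for (i) $S$ del Pezzo and $\mathcal{F}$ an ideal sheaf of points, $\deg K_S<0$ and $\mathcal{F}$ is (automatically) slope-stable, so $Ext^2_S(\mathcal{F},\mathcal{F})=0$ by Lemma \ref{TS lemma1}(2), while $q(S)=0$ gives $Ext^0_S(\mathcal{F},\mathcal{F}\otimes\Omega^1_S)=0$ by Lemma \ref{TS lemma2}; for (ii) $S=\mathbb{P}^2$ and $\mathcal{F}$ torsion-free slope-stable, $\deg K_{\mathbb{P}^2}<0$ again gives the first vanishing, and the stability of $\Omega^1_{\mathbb{P}^2}$ together with the strict slope inequality $\mu(\mathcal{F}\otimes\Omega^1_{\mathbb{P}^2})<\mu(\mathcal{F})$ forces $Ext^0_S(\mathcal{F},\mathcal{F}\otimes\Omega^1_S)=0$.

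The argument is essentially spectral-sequence bookkeeping rather than anything deep, so there is no real obstacle; the points that deserve care are (a) keeping straight which of the two hypotheses in (\ref{vanishing assumption}) kills which $E_2$-entry, (b) the Serre-duality identification $E_2^{2,0}\cong Ext^2_S(\mathcal{F},\mathcal{F})^{\vee}$, which is exactly what makes the degree-$2$ filtration trivial, and (c) justifying the word \emph{canonical}: it holds precisely because in each of the two degrees the abutment filtration has a single nonzero step, so there is no extension ambiguity and the relevant map is an honest isomorphism, functorial in $\mathcal{F}$. It is worth flagging that, in contrast to the $K_Y$ situation, the obstruction space is here identified with $Ext^1_S(\mathcal{F},\mathcal{F}\otimes\Omega^1_S)$ rather than with a self-dual sum $Ext^2_S\oplus (Ext^2_S)^{\vee}$, which is exactly the degenerate behaviour one expects in the hyper-K\"ahler direction.
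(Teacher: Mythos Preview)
Your proof is correct and follows exactly the paper's approach: the paper's proof is the single sentence ``By Lemma \ref{TS lemma1}, Lemma \ref{TS lemma2} and the degenerate spectral sequence,'' and you have simply unpacked this by writing out the $E_2$-page, invoking Lemma \ref{TS lemma1}(1) for degeneration, and then checking which $E_2^{p,q}$ vanish in total degrees $1$ and $2$ (including the Serre-duality step $E_2^{2,0}\cong Ext^2_S(\mathcal{F},\mathcal{F})^{\vee}$, which the paper uses implicitly inside the proof of Lemma \ref{TS lemma1}(1)).
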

\begin{proof}
By Lemma \ref{TS lemma1}, Lemma \ref{TS lemma2} and the degenerate spectral sequence.
\end{proof}
Under assumption (\ref{vanishing assumption}), we denote
\begin{equation}\mathcal{M}_{c}^{S_{cpn}}\triangleq\{\iota_{*}\mathcal{F} \textrm{ } | \mathcal{F}\in \mathcal{M}_{c}(S,\mathcal{O}_{S}(1))\}\cong \mathcal{M}_{c}(S,\mathcal{O}_{S}(1)) \nonumber \end{equation}
to be the components of moduli space of sheaves on $X$ which can be identified with $\mathcal{M}_{c}(S,\mathcal{O}_{S}(1))$ (moduli of $\mathcal{O}_{S}(1)$ stable sheaves on $S$ with Chern character $c\in H^{even}(S)$).

We will use the philosophy of defining $DT_{4}$ virtual cycles to define the virtual fundamental class of $\mathcal{M}_{c}^{S_{cpn}}$.

Note that $\mathcal{M}_{c}(S,\mathcal{O}_{S}(1))$ is smooth by assumption (\ref{vanishing assumption}), we define the $DT_{4}$ virtual cycle of $\mathcal{M}_{c}^{S_{cpn}}$ to be the Poincar\'{e} dual of the Euler class of the self-dual obstruction bundle as in Definition \ref{virtual cycle when Mc smooth}.
\begin{proposition}
Under assumption (\ref{vanishing assumption}), $\mathcal{M}_{c}^{S_{cpn}}\cong \mathcal{M}_{c}(S,\mathcal{O}_{S}(1))$ and the $DT_{4}$ virtual cycle $[\mathcal{M}_{c}^{S_{cpn}}]^{vir}=0$.
\end{proposition}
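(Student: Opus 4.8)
The plan is to reduce the claim to a virtual-dimension count together with the elementary fact that the Euler class of a real bundle whose rank exceeds the dimension of the base must vanish.

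First I would pin down the identification $\mathcal{M}_{c}^{S_{cpn}}\cong\mathcal{M}_{c}(S,\mathcal{O}_{S}(1))$ as smooth schemes. The vanishing $Ext^{0}_{S}(\mathcal{F},\mathcal{F}\otimes\Omega^{1}_{S})=0$ in (\ref{vanishing assumption}) is exactly what prevents a sheaf $\iota_{*}\mathcal{F}$ from acquiring deformations off $S$, so $\mathcal{M}_{c}^{S_{cpn}}$ is open and closed inside the moduli of sheaves on $X$; the canonical isomorphisms of the preceding proposition identify $Ext^{1}_{X}(\iota_{*}\mathcal{F},\iota_{*}\mathcal{F})\cong Ext^{1}_{S}(\mathcal{F},\mathcal{F})$ and $Ext^{2}_{X}(\iota_{*}\mathcal{F},\iota_{*}\mathcal{F})\cong Ext^{1}_{S}(\mathcal{F},\mathcal{F}\otimes\Omega^{1}_{S})$, so the two deformation theories agree, and since $Ext^{2}_{S}(\mathcal{F},\mathcal{F})=0$ the moduli space $\mathcal{M}_{c}(S,\mathcal{O}_{S}(1))$ is unobstructed, hence smooth. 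By Definition \ref{virtual cycle when Mc smooth} we then have $[\mathcal{M}_{c}^{S_{cpn}}]^{vir}=PD\big(e(ob_{+})\big)$, where $ob$ is the obstruction bundle with fibre $Ext^{2}_{X}(\iota_{*}\mathcal{F},\iota_{*}\mathcal{F})$ carrying the Serre quadratic form and $ob_{+}$ is the self-dual real bundle with $ob\cong ob_{+}\otimes_{\mathbb{R}}\mathbb{C}$; in particular $\operatorname{rk}_{\mathbb{R}}ob_{+}=\dim_{\mathbb{C}}Ext^{1}_{S}(\mathcal{F},\mathcal{F}\otimes\Omega^{1}_{S})$ and $\dim_{\mathbb{R}}\mathcal{M}_{c}^{S_{cpn}}=2\dim_{\mathbb{C}}Ext^{1}_{S}(\mathcal{F},\mathcal{F})$.

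The crux is to show the real virtual dimension $r=\dim_{\mathbb{R}}\mathcal{M}_{c}^{S_{cpn}}-\operatorname{rk}_{\mathbb{R}}ob_{+}=2-\chi_{X}(\iota_{*}\mathcal{F},\iota_{*}\mathcal{F})$ is negative. Using the local-to-global spectral sequence of the preceding subsection (Euler characteristics being additive in the $E_{2}$-page) together with $\chi_{S}(\mathcal{F},\mathcal{F}\otimes K_{S})=\chi_{S}(\mathcal{F},\mathcal{F})$ (Serre duality on $S$), one gets $\chi_{X}(\iota_{*}\mathcal{F},\iota_{*}\mathcal{F})=2\chi_{S}(\mathcal{F},\mathcal{F})-\chi_{S}(\mathcal{F},\mathcal{F}\otimes\Omega^{1}_{S})$. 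A Hirzebruch--Riemann--Roch computation finishes this: writing $A=\operatorname{ch}(\mathcal{F})^{\vee}\operatorname{ch}(\mathcal{F})\operatorname{td}(S)$ and using that the degree-$2$ component of $\operatorname{ch}(\mathcal{F})^{\vee}\operatorname{ch}(\mathcal{F})$ vanishes, the mixed terms in $\int_{S}A\cdot\operatorname{ch}(\Omega^{1}_{S})$ cancel and leave $\chi_{S}(\mathcal{F},\mathcal{F}\otimes\Omega^{1}_{S})=2\chi_{S}(\mathcal{F},\mathcal{F})-(\operatorname{rk}\mathcal{F})^{2}\,c_{2}(S)$, whence $r=2-(\operatorname{rk}\mathcal{F})^{2}\,c_{2}(S)$. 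Since $q(S)=0$ in the situations at hand (in particular for $S=\mathbb{P}^{2}$ or $S$ del Pezzo), $c_{2}(S)=e(S)=2+b_{2}(S)\ge 3$, so $r\le-1<0$.

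Finally, a negative virtual dimension forces the cycle to vanish: $e(ob_{+})$ lies in $H^{\operatorname{rk}_{\mathbb{R}}ob_{+}}(\mathcal{M}_{c}(S),\mathbb{Z})$ and $\operatorname{rk}_{\mathbb{R}}ob_{+}=\dim_{\mathbb{R}}\mathcal{M}_{c}(S)-r>\dim_{\mathbb{R}}\mathcal{M}_{c}(S)$, so this group is zero because $\mathcal{M}_{c}(S)$ is a smooth compact manifold; hence $[\mathcal{M}_{c}^{S_{cpn}}]^{vir}=PD(e(ob_{+}))=0$. I expect no genuine obstacle in this argument; the only non-formal points are the Riemann--Roch bookkeeping leading to the clean identity $r=2-(\operatorname{rk}\mathcal{F})^{2}c_{2}(S)$, and, for the smoothness/identification step, verifying that $Ext^{2}_{S}(\mathcal{F},\mathcal{F}\otimes\Omega^{1}_{S})=0$ really does follow from (\ref{vanishing assumption}) via the surface identity $K_{S}\otimes T_{S}\cong\Omega^{1}_{S}$ and Serre duality on $S$.
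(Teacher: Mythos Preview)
Your proposal is correct and follows essentially the same route as the paper: compute the real virtual dimension via Hirzebruch--Riemann--Roch to obtain $v.d_{\mathbb{R}}=2-(\operatorname{rk}\mathcal{F})^{2}e(S)<0$, and conclude that the Euler class of the self-dual obstruction bundle vanishes for degree reasons. You supply more detail than the paper (the explicit reason why negative virtual dimension forces vanishing, and the Serre-duality check that $Ext^{2}_{S}(\mathcal{F},\mathcal{F}\otimes\Omega^{1}_{S})\cong Ext^{0}_{S}(\mathcal{F},\mathcal{F}\otimes\Omega^{1}_{S})^{*}=0$), but the argument is the same.
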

\begin{proof}
By the Hirzebruch-Riemann-Roch theorem and the assumption (\ref{vanishing assumption}), we have
\begin{equation}dim_{\mathbb{C}} Ext^{1}_{S}(\mathcal{F},\mathcal{F}\otimes \Omega_{S}^{1})=2dim_{\mathbb{C}} Ext^{1}_{S}(\mathcal{F},\mathcal{F})+
r^{2}e(S)-2 ,\nonumber \end{equation}
where $r\geq1$ is the rank of $\mathcal{F}$, $e(S)$ is the Euler characteristic of the surface $S$.

Then, we can see the real virtual dimension
\begin{equation}v.d_{\mathbb{R}}(\mathcal{M}_{c}^{S_{cpn}})\triangleq 2ext^{1}(\iota_{*}\mathcal{F},\iota_{*}\mathcal{F})-ext^{2}(\iota_{*}\mathcal{F},\iota_{*}\mathcal{F})=2-r^{2}e(S)< 0. \nonumber \end{equation}
\end{proof}
\begin{remark}
Note that since we have assumed $Ext^{2}_{S}(\mathcal{F},\mathcal{F})=0$, we automatically have $h^{0,2}(S)=0$.
\end{remark}
${}$ \\
\textbf{The reduced counting}.
Because of the above vanishing result, we take away the obvious trivial factor by considering trace-free part of the obstruction space and
get to the definition of reduced $DT_{4}$ virtual cycles.
\begin{definition}\label{red vir cycle}
Let $X=T^{*}S$ where $S$ is a compact algebraic surface with $h^{0,1}(S)=0$. Under assumption (\ref{vanishing assumption}),
we define the reduced $DT_{4}$ virtual cycle of $\mathcal{M}_{c}^{S_{cpn}}$ to be
the Poincar\'{e} dual of the Euler class of the self-dual trace-free obstruction bundle (if it is orientable)
\begin{equation}[\mathcal{M}_{c}^{S_{cpn}}]^{vir}_{red}\triangleq PD\big(e(ob_{0,+})\big)\in
H_{r_{red}}(\mathcal{M}_{c}\big(S,\mathcal{O}_{S}(1)\big),\mathbb{Z}), \nonumber \end{equation}
where $ob_{0,+}$ is the self-dual trace-free obstruction bundle, $r_{red}=h^{1,1}(S)+2-r^{2}(2+h^{1,1}(S))$ and $\mathcal{M}_{c}(S,\mathcal{O}_{S}(1))$ is
the Gieseker moduli space of $\mathcal{O}_{S}(1)$ stable sheaves on $S$.
\end{definition}
\begin{remark}
In the above definition, the Euler class involves a choice of orientation on each connected component of $\mathcal{M}_{c}^{S_{cpn}}$. We will see for most interesting cases, a natural orientation exists.
\end{remark}
We first show the vanishing of the reduced virtual cycle when sheaves in $\mathcal{M}_{c}(S,\mathcal{O}_{S}(1))$ have $rank\geq2$.
\begin{proposition}
$[\mathcal{M}_{c}^{S_{cpn}}]^{vir}_{red}=0$, if $c|_{H^{0}(S)}\geq2$.
\end{proposition}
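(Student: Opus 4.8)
The plan is to exploit the splitting of the (trace-free) obstruction bundle into a part coming from $S$-directions and a part coming from the cotangent (fibre) directions, and then show that the relevant self-dual piece contains a trivial line summand, forcing its Euler class to vanish. Concretely, under assumption (\ref{vanishing assumption}) the moduli space $\mathcal{M}_{c}^{S_{cpn}}$ is smooth and we have the canonical identifications $Ext^{1}_{X}(\iota_{*}\mathcal{F},\iota_{*}\mathcal{F})\cong Ext^{1}_{S}(\mathcal{F},\mathcal{F})$ and $Ext^{2}_{X}(\iota_{*}\mathcal{F},\iota_{*}\mathcal{F})\cong Ext^{1}_{S}(\mathcal{F},\mathcal{F}\otimes\Omega^{1}_{S})$. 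Passing to trace-free parts (which is where the reduced cycle lives), I would first record that, since $h^{0,1}(S)=h^{0,2}(S)=0$, the trace map splits off $H^{1}(S,\Omega^{1}_{S})$ from $Ext^{1}_{S}(\mathcal{F},\mathcal{F}\otimes\Omega^{1}_{S})$, and this $H^{1,1}(S)$-summand is a \emph{trivial} bundle over $\mathcal{M}_{c}(S,\mathcal{O}_{S}(1))$; it is precisely the cotangent-fibre contribution to the obstruction. So the reduced obstruction bundle $ob_{0}$ over $\mathcal{M}_{c}^{S_{cpn}}$ fits into a Serre-duality-isometric decomposition with $H^{1,1}(S)\otimes\mathcal{O}$ appearing, paired (by the Serre form on $X$) against the trace-free deformation bundle $ob^{1}_{0}$ with $ob^{1}_{0}|_{\mathcal{F}}=Ext^{1}_{S}(\mathcal{F},\mathcal{F})_{0}$.

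The next step is the rank count: by Lemma \ref{TS lemma1} and Hirzebruch–Riemann–Roch one has $\dim Ext^{1}_{S}(\mathcal{F},\mathcal{F}\otimes\Omega^{1}_{S})=2\dim Ext^{1}_{S}(\mathcal{F},\mathcal{F})+r^{2}e(S)-2$, and removing the trace parts turns this into $\operatorname{rk} ob_{0}=2\operatorname{rk} ob^{1}_{0}+\big(r^{2}-1\big)(2+h^{1,1}(S))$ — wait, more carefully, the trace-free obstruction has rank $h^{1,1}(S)+2r^{2}\operatorname{ext}^{1}_{0}$-type expression; in any case the point is that the trivial summand $H^{1,1}(S)\otimes\mathcal{O}$ of dimension $h^{1,1}(S)\geq1$ sits inside $ob_{0}$, and the self-dual bundle $ob_{0,+}$ is a maximal isotropic sub-bundle of $(ob_{0},Q_{Serre})$. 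When $c|_{H^{0}(S)}=r\geq 2$, the fibre-direction obstruction $Ext^{1}_{S}(\mathcal{F},\mathcal{F}\otimes\Omega^{1}_{S})_{0}$ is strictly larger than its $S$-direction counterpart even after removing the trivial factor, and one can choose the maximal isotropic sub-bundle $ob_{0,+}$ so that it contains the trivial summand $H^{1,1}(S)\otimes\mathcal{O}$ (or a nonzero trivial sub-bundle thereof): because $r\geq 2$ forces the dimension of the fibre part to exceed that of the base part, the trivial summand is not "used up" in the isotropic pairing and can be kept entirely inside $ob_{0,+}$. Then $e(ob_{0,+}) = e(H^{1,1}(S)\otimes\mathcal{O})\cup e(\text{rest}) = 0$ since the Euler class of a trivial bundle of positive rank vanishes, hence $[\mathcal{M}_{c}^{S_{cpn}}]^{vir}_{red}=PD(e(ob_{0,+}))=0$.

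The main obstacle I expect is the book-keeping around the maximal isotropic choice: one must verify that the self-dual trace-free obstruction bundle $ob_{0,+}$ can indeed be chosen (compatibly with the chosen orientation data) to contain a nontrivial trivial sub-bundle whenever $r\geq 2$. This comes down to checking that the Serre-duality quadratic form on $Ext^{2}_{X}(\iota_{*}\mathcal{F},\iota_{*}\mathcal{F})_{0}$ restricts to the trivial $H^{1,1}(S)$-summand as a pairing with the \emph{other} factor (not with itself) — i.e. that $H^{1,1}(S)\otimes\mathcal{O}$ is isotropic — which follows from the explicit cup-product/Yoneda description of the pairing on $X=T^{*}S$ together with $Ext^{2}_{S}(\mathcal{F},\mathcal{F})=0$ (so that the would-be self-pairing lands in $Ext^{2}_{S}(\mathcal{F},\mathcal{F})=0$, exactly as in the $K_{Y}$ case of the preceding lemma). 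Once isotropy of the trivial summand is established, Lemma \ref{ASD equivalent to max isotropic} lets us compute the half-Euler class as $\pm c_{\text{top}}$ of a maximal isotropic sub-bundle, and choosing that sub-bundle to contain $H^{1,1}(S)\otimes\mathcal{O}$ makes the top Chern class vanish. The rank inequality needed for that choice ($r\geq2$) is where the hypothesis $c|_{H^{0}(S)}\geq 2$ enters, and I would make it precise via the HRR computation above.
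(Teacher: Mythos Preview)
Your argument has a genuine gap, and the paper's proof is much simpler than what you attempt.

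The paper's proof is one line: the reduced virtual dimension is $r_{red}=h^{1,1}(S)+2-r^{2}(2+h^{1,1}(S))$, which is negative as soon as $r\geq 2$. Since the reduced virtual cycle lives in $H_{r_{red}}(\mathcal{M}_{c}(S,\mathcal{O}_{S}(1)),\mathbb{Z})$, a negative $r_{red}$ forces it to vanish; equivalently, the Euler class of a real bundle of rank exceeding the dimension of the base is zero. No analysis of isotropic sub-bundles is needed.

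Your route, by contrast, tries to exhibit a trivial summand inside $ob_{0,+}$, and here is where it breaks down. You write that ``the trace map splits off $H^{1}(S,\Omega^{1}_{S})$ from $Ext^{1}_{S}(\mathcal{F},\mathcal{F}\otimes\Omega^{1}_{S})$'' and then that ``the reduced obstruction bundle $ob_{0}$ \ldots\ fits into a Serre-duality-isometric decomposition with $H^{1,1}(S)\otimes\mathcal{O}$ appearing''. These two statements are contradictory: by Definition~\ref{red vir cycle}, the reduced obstruction bundle $ob_{0}$ is \emph{precisely} the trace-free part, i.e.\ the complement of the $H^{1,1}(S)$ summand. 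The trivial factor $H^{1,1}(S)\otimes\mathcal{O}$ you want to keep inside $ob_{0,+}$ has already been removed when passing from $ob$ to $ob_{0}$. So the mechanism you propose for killing the Euler class does not apply to the \emph{reduced} class. (It does apply to the unreduced class --- which is why the paper's preceding proposition shows $[\mathcal{M}_{c}^{S_{cpn}}]^{vir}=0$ unconditionally.)

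The hypothesis $r\geq 2$ does not provide a further trivial summand inside $ob_{0}$; what it does provide is the rank inequality $\operatorname{rk}(ob_{0,+})>\dim_{\mathbb{R}}\mathcal{M}_{c}(S,\mathcal{O}_{S}(1))$, and that is all you need.
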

\begin{proof}
The reduced virtual dimension $r_{red}=h^{1,1}+2-r^{2}(2+h^{1,1})< 0$ if $r\geq2$.
\end{proof}
As we can see, under assumption (\ref{vanishing assumption}) and $r=1$,  we have $r_{red}=0$.
The corresponding reduced $DT_{4}$ virtual cycle is zero dimensional. \\

For ideal sheaves of curves on $S$ which are just line bundles on $S$
\begin{equation}Ext^{1}_{S}(\mathcal{F},\mathcal{F})=H^{1}(S,\mathcal{O})=0,  \nonumber \end{equation}
which shows that both the tangent space and reduced obstruction space are zero. Then the moduli space is just one point and the reduced $DT_{4}$ invariant is 1 in this case.
\begin{proposition}
$[\mathcal{M}_{c}^{S_{cpn}}]^{vir}_{red}=1$, where $c=(1,c|_{H^{2}(S)},0)$.
\end{proposition}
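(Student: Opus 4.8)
The plan is to show that for $c=(1,c|_{H^{2}(S)},0)$ the moduli space $\mathcal{M}_{c}^{S_{cpn}}$ is a single reduced point and that the self-dual trace-free obstruction bundle over it is the zero bundle, so that the $0$-dimensional reduced virtual cycle is just the class of that point.

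First I would identify the moduli space. As recalled in the passage preceding the statement, a rank-one torsion-free sheaf on $S$ with vanishing second Chern class and $c_{1}=c|_{H^{2}(S)}=:D$ has no points to carry and is therefore a line bundle $L$ with $c_{1}(L)=D$; rank-one torsion-free sheaves are automatically slope-stable, so $\mathcal{M}_{c}(S,\mathcal{O}_{S}(1))=\mathrm{Pic}^{D}(S)$ (which we of course take nonempty, i.e. $D\in NS(S)$). Since $h^{0,1}(S)=0$ — an assumption already built into Definition \ref{red vir cycle} — we have $\mathrm{Pic}^{0}(S)=0$, so $\mathrm{Pic}^{D}(S)$ consists of the single line bundle $L$; its Zariski tangent space is $Ext^{1}_{S}(L,L)=H^{1}(S,\mathcal{O}_{S})=0$, so it is a reduced point. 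Hence $\mathcal{M}_{c}^{S_{cpn}}\cong\{\iota_{*}L\}$ is a reduced point.

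Next I would compute the reduced self-dual obstruction bundle $ob_{0,+}$ over this point. Assumption (\ref{vanishing assumption}) holds here: $Ext^{2}_{S}(L,L)=H^{2}(S,\mathcal{O}_{S})=0$ and $Ext^{0}_{S}(L,L\otimes\Omega^{1}_{S})=H^{0}(S,\Omega^{1}_{S})=0$ on the surfaces in question (for $S=\mathbb{P}^{2}$ this is case (ii); for a del-Pezzo one checks these two vanishings directly). Then by the degenerate spectral sequence of Lemma \ref{TS lemma1}, $Ext^{2}_{X}(\iota_{*}L,\iota_{*}L)\cong Ext^{1}_{S}(L,L\otimes\Omega^{1}_{S})=H^{1}(S,\Omega^{1}_{S})$, of complex dimension $h^{1,1}(S)$, with $Q_{Serre}$ the cup-product form on $H^{1,1}(S)$. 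The trace/trace-free splitting is governed by $\mathcal{E}nd\,L=\mathcal{E}nd_{0}\,L\oplus\mathcal{O}_{S}$, and since $L$ is a line bundle $\mathcal{E}nd_{0}\,L=0$; hence the whole obstruction space is the trace part, the trace-free obstruction bundle $ob_{0}$ vanishes, and so does $ob_{0,+}$. This is consistent with the reduced virtual dimension $r_{red}=h^{1,1}(S)+2-r^{2}\bigl(2+h^{1,1}(S)\bigr)=0$ for $r=1$.

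Finally, since $ob_{0,+}$ is the rank-zero bundle over a single reduced point, its Euler class is $1\in H^{0}(\{\iota_{*}L\},\mathbb{Z})$ (orientability being vacuous), so by Definition \ref{red vir cycle} (and Definition \ref{virtual cycle when Mc smooth}) one gets $[\mathcal{M}_{c}^{S_{cpn}}]^{vir}_{red}=PD\bigl(e(ob_{0,+})\bigr)=1$. I do not expect a genuine obstacle here: the only checkpoints are the two bookkeeping items flagged above — that the stated invariants force the sheaf to be a line bundle, and that assumption (\ref{vanishing assumption}) applies — together with the immediate identifications $Ext^{1}_{S}(L,L)=H^{1}(\mathcal{O}_{S})=0$ and $\mathcal{E}nd_{0}\,L=0$.
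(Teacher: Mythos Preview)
Your proposal is correct and follows essentially the same approach as the paper: identify the moduli space as a single reduced point via $Ext^{1}_{S}(L,L)=H^{1}(S,\mathcal{O}_{S})=0$, observe that the trace-free obstruction vanishes, and conclude the reduced virtual cycle is $1$. The paper's own argument is the terse sentence immediately preceding the proposition (``both the tangent space and reduced obstruction space are zero''); you have simply spelled out the reason the reduced obstruction vanishes, namely $\mathcal{E}nd_{0}L=0$ for a line bundle, which the paper leaves implicit.
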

Finally, we come the case of ideal sheaves of points on $S$.
\begin{lemma}
Let $S$ be a compact algebraic surface with $h^{0,i}(S)=0$, $i=1,2$. Let $I$ be an ideal sheaf of points on $S$,
then we have a canonical isomorphism
\begin{equation}Ext^{1}_{S}(I,I\otimes \Omega_{S}^{1})_{0}\cong Ext^{1}_{S}(\mathcal{O}_{Z},\mathcal{O}_{Z}\otimes \Omega_{S}^{1}).
\nonumber\end{equation}
Furthermore, under this identification, $Ext^{1}_{S}(\mathcal{O}_{Z},\mathcal{O}_{Z})$ is
a maximal isotropic subspace with respect to the Serre duality pairing.
\end{lemma}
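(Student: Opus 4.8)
The plan is to compare the deformation theory of $I=I_Z$ on $S$ with that of $\mathcal{O}_Z$ by feeding the structure sequence $0\to I\to\mathcal{O}_S\to\mathcal{O}_Z\to0$ and its twist $0\to I\otimes\Omega^1_S\to\Omega^1_S\to\mathcal{O}_Z\otimes\Omega^1_S\to0$ into long exact $Ext$-sequences, exploiting that $Z$ is $0$-dimensional and that $h^{0,1}(S)=h^{0,2}(S)=0$. First I would collect the relevant vanishings: $Hom_S(I,I\otimes\Omega^1_S)=0$ (it injects into $Hom_S(I,\Omega^1_S)=H^0(S,\Omega^1_S)=0$), $H^{>0}(S,\mathcal{O}_Z\otimes\Omega^1_S)=0$, $H^0(S,\Omega^1_S)=H^2(S,\Omega^1_S)=0$, $H^{>0}(S,\mathcal{O}_Z)=0$, $Ext^i_S(\mathcal{O}_Z,\Omega^1_S)=0$ for $i\neq2$, and the natural arrow $Ext^2_S(\mathcal{O}_Z,\Omega^1_S)\to Ext^2_S(\mathcal{O}_Z,\mathcal{O}_Z\otimes\Omega^1_S)$ is an isomorphism (by Serre duality it is precomposition with $\Omega^1_S\twoheadrightarrow\mathcal{O}_Z\otimes\Omega^1_S$, and any map from the locally free $\Omega^1_S$ to a sheaf supported on $Z$ factors uniquely through $\mathcal{O}_Z\otimes\Omega^1_S$). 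Since we are in characteristic zero I also record the trace splitting $Ext^1_S(I,I\otimes\Omega^1_S)=Ext^1_S(I,I\otimes\Omega^1_S)_0\oplus H^1(S,\Omega^1_S)$.

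For the isomorphism, apply $R\!Hom_S(-,I\otimes\Omega^1_S)$ to the triangle $\mathcal{O}_Z[-1]\to I\to\mathcal{O}_S$ and $R\!Hom_S(\mathcal{O}_Z,-)$ to the twisted structure sequence; the vanishings above collapse the two long exact sequences to connecting isomorphisms $Hom_S(\mathcal{O}_Z,\mathcal{O}_Z\otimes\Omega^1_S)\xrightarrow{\sim}Ext^1_S(\mathcal{O}_Z,I\otimes\Omega^1_S)$ and $Ext^1_S(\mathcal{O}_Z,\mathcal{O}_Z\otimes\Omega^1_S)\xrightarrow{\sim}Ext^2_S(\mathcal{O}_Z,I\otimes\Omega^1_S)$, together with the four-term exact sequence
$$0\to H^0(S,\mathcal{O}_Z\otimes\Omega^1_S)\to H^1(S,I\otimes\Omega^1_S)\to Ext^1_S(I,I\otimes\Omega^1_S)\to Ext^1_S(\mathcal{O}_Z,\mathcal{O}_Z\otimes\Omega^1_S)\to0 .$$
The cohomology sequence of the twisted structure sequence identifies its first two terms ($0\to H^0(\mathcal{O}_Z\otimes\Omega^1_S)\to H^1(I\otimes\Omega^1_S)\to H^1(\Omega^1_S)\to0$), so the kernel of the last surjection is a copy of $H^1(S,\Omega^1_S)$ inside $Ext^1_S(I,I\otimes\Omega^1_S)$. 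I would then check that this copy is precisely the trace summand — it suffices that the composite $H^1(\Omega^1_S)\hookrightarrow Ext^1_S(I,I\otimes\Omega^1_S)\xrightarrow{\mathrm{tr}}H^1(\Omega^1_S)$ is a nonzero scalar multiple of the identity, which follows by noting that precomposing a class $\phi\colon\mathcal{O}_S\to I\otimes\Omega^1_S[1]$ with $I\hookrightarrow\mathcal{O}_S$ and taking the trace returns the image of $\phi$ in $H^1(\Omega^1_S)$. Quotienting out the trace part yields $Ext^1_S(I,I\otimes\Omega^1_S)_0\cong Ext^1_S(\mathcal{O}_Z,\mathcal{O}_Z\otimes\Omega^1_S)$; a Hirzebruch--Riemann--Roch count (total dimension $4n+h^{1,1}$, reduced dimension $4n$ with $n=\mathrm{length}(Z)$) cross-checks every rank.

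For the maximal isotropy, I would first observe $Ext^1_S(\mathcal{O}_Z,\mathcal{O}_Z)\cong Hom_S(I,\mathcal{O}_Z)\cong Ext^2_S(\mathcal{O}_Z,I)$ — this is the tangent space $T_{[Z]}\mathrm{Hilb}^n(S)$ — and that $Ext^2_S(\mathcal{O}_Z,I)$ occurs naturally in the diagram of the previous step, producing a canonical injection $Ext^1_S(\mathcal{O}_Z,\mathcal{O}_Z)\hookrightarrow Ext^2_S(\mathcal{O}_Z,I\otimes\Omega^1_S)\cong Ext^1_S(\mathcal{O}_Z,\mathcal{O}_Z\otimes\Omega^1_S)$; by the dimension count ($2n\subseteq 4n$) the image is half-dimensional, so it only remains to prove it isotropic for the Serre pairing. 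Here I would transport the question to $X=T^{*}S$: under the identifications above and assumption (\ref{vanishing assumption}), the Serre form on $Ext^2_X(\iota_{*}I,\iota_{*}I)_0\cong Ext^1_S(\mathcal{O}_Z,\mathcal{O}_Z\otimes\Omega^1_S)$ is the Yoneda square followed by the alternating contraction $\Omega^1_S\otimes\Omega^1_S\to\wedge^2\Omega^1_S=K_S$ and the trace $Ext^2_S(\mathcal{O}_Z,\mathcal{O}_Z\otimes K_S)\to H^2(S,K_S)$. Since all $Ext$-sheaves involved are supported on the finite scheme $Z$, it is enough to check isotropy locally, where at a reduced point everything reduces to the Koszul computation for a skyscraper: $Ext^1_S(\mathcal{O}_p,\mathcal{O}_p\otimes\Omega^1_S)=\mathrm{End}(T_pS)$, the Serre form is (up to scale) the polarization of $\det$, and the image of $Ext^1_S(\mathcal{O}_p,\mathcal{O}_p)=T_pS$ is a plane of rank-$\le1$ endomorphisms on which $\det$ vanishes identically — the exact analogue of the vanishing $Ext^2_Y(\mathcal{F},\mathcal{F})\cdot Ext^2_Y(\mathcal{F},\mathcal{F})\subseteq Ext^4_Y=0$ used in the $K_Y$ case. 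Finally, all the maps are compatible with varying $Z$, so over $\mathrm{Hilb}^n(S)$ one obtains a maximal isotropic subbundle $\cong T\,\mathrm{Hilb}^n(S)$, which is what Definition \ref{red vir cycle} and Lemma \ref{ASD equivalent to max isotropic} will use.

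The genuinely delicate points, I expect, are the two compatibility checks rather than the homological bookkeeping of part (a): verifying that the $H^1(\Omega^1_S)$ produced by the diagram chase really is the trace summand, and — the heart of the matter — pinning down the canonical embedding $Ext^1_S(\mathcal{O}_Z,\mathcal{O}_Z)\hookrightarrow Ext^1_S(\mathcal{O}_Z,\mathcal{O}_Z\otimes\Omega^1_S)$ together with its isotropy uniformly in $Z$ (so that the ``plane of rank-$\le1$ endomorphisms'' picture survives non-reduced points and globalizes), i.e. making the analogy with the $K_Y$ degree vanishing rigorous through the $\iota_{*}$-degeneration spectral sequence on $T^{*}S$.
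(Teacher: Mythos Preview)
Your derivation of the isomorphism is essentially the paper's: both feed the structure sequence and its $\Omega^1_S$-twist into $R\!Hom$ in the two variables, collapse with the listed vanishings, obtain the same four-term sequence, and identify the kernel of the surjection $Ext^1_S(I,I\otimes\Omega^1_S)\twoheadrightarrow Ext^1_S(\mathcal{O}_Z,\mathcal{O}_Z\otimes\Omega^1_S)$ with $H^1(S,\Omega^1_S)$ via the cohomology of the twisted sequence. You are in fact more scrupulous than the paper in checking that this copy of $H^1(\Omega^1_S)$ is the trace summand.

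For the isotropy the paper takes a shorter and slightly different route. Rather than a pointwise Koszul computation, it writes a direct (non-canonical) splitting $Ext^1_S(\mathcal{O}_Z,\mathcal{O}_Z\otimes\Omega^1_S)\cong Ext^1_S(\mathcal{O}_Z,\mathcal{O}_Z)\oplus Ext^1_S(\mathcal{O}_Z,\mathcal{O}_Z\otimes\Omega^2_S)$, coming from a trivialization of $\Omega^1_S$ on the finite scheme $Z$, and then observes that the Serre pairing restricted to the first summand factors as
\[
Ext^1_S(\mathcal{O}_Z,\mathcal{O}_Z)\otimes Ext^1_S(\mathcal{O}_Z,\mathcal{O}_Z)\longrightarrow Ext^2_S(\mathcal{O}_Z,\mathcal{O}_Z)\xrightarrow{\ \mathrm{tr}\ } H^2(S,\mathcal{O}_S)=0,
\]
using $h^{0,2}(S)=0$ directly. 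This one-line degree/trace argument handles non-reduced $Z$ uniformly and avoids the reduced-point analysis you flagged as delicate. Conversely, your claimed \emph{canonical} injection $Ext^1_S(\mathcal{O}_Z,\mathcal{O}_Z)\hookrightarrow Ext^2_S(\mathcal{O}_Z,I\otimes\Omega^1_S)$ does not actually drop out of the diagrams you assembled: you have $Ext^1_S(\mathcal{O}_Z,\mathcal{O}_Z)\cong Ext^2_S(\mathcal{O}_Z,I)$, but there is no natural map $I\to I\otimes\Omega^1_S$ since $H^0(S,\Omega^1_S)=0$, so a choice---precisely the local splitting the paper makes---is unavoidable. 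Your instinct that globalization over $\mathrm{Hilb}^n(S)$ deserves care is correct, and neither argument spells it out in full.
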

\begin{proof}
Denote an ideal sheaf of $n$-points on $S$ by $I$. Taking cohomology of the short exact sequence of sheaves
\begin{equation}0\rightarrow I\otimes \Omega_{S}^{1}\rightarrow\Omega_{S}^{1}\rightarrow \mathcal{O}_{Z}\otimes \Omega_{S}^{1}\rightarrow 0,
\nonumber \end{equation}
where $\mathcal{O}_{Z}$ is the structure sheaf of $n$-points. We have
\begin{equation}\label{cotangent of S equ 0} 0\rightarrow H^{0}(S,\mathcal{O}_{Z}\otimes \Omega_{S}^{1})\rightarrow H^{1}(S,I\otimes \Omega_{S}^{1})\rightarrow H^{1}(S,\Omega_{S}^{1})\rightarrow 0, \end{equation}
and
\begin{equation}\label{cotangent of S equ 1} H^{2}(S,I\otimes\Omega_{S}^{1})\cong H^{2}(S,\Omega_{S}^{1})\cong H^{0}(S,\Omega_{S}^{1})=0.\end{equation}
Applying $Hom_{\mathcal{O}_{S}}(\mathcal{O}_{Z},\cdot)$ to
\begin{equation}0\rightarrow I\otimes \Omega_{S}^{1}\rightarrow\Omega_{S}^{1}\rightarrow \mathcal{O}_{Z}\otimes \Omega_{S}^{1}\rightarrow 0,
\nonumber \end{equation}
we get
\begin{equation}\label{cotangent of S equ 2}Ext^{0}_{S}(\mathcal{O}_{Z},I\otimes \Omega_{S}^{1})=0,
Ext^{0}_{S}(\mathcal{O}_{Z},\mathcal{O}_{Z}\otimes \Omega_{S}^{1})\cong Ext^{1}_{S}(\mathcal{O}_{Z},I\otimes \Omega_{S}^{1}), \end{equation}
\begin{equation}Ext^{1}_{S}(\mathcal{O}_{Z},\mathcal{O}_{Z}\otimes \Omega_{S}^{1})\cong Ext^{2}_{S}(\mathcal{O}_{Z},I\otimes \Omega_{S}^{1}).
\nonumber \end{equation}
Applying $Hom_{\mathcal{O}_{S}}(\cdot,I\otimes \Omega_{S}^{1})$ to
\begin{equation}0\rightarrow I\rightarrow\mathcal{O}_{S}\rightarrow \mathcal{O}_{Z} \rightarrow 0 ,
\nonumber \end{equation}
we have
\begin{equation}\rightarrow Ext^{i}_{S}(\mathcal{O}_{Z},I\otimes \Omega_{S}^{1})\rightarrow
Ext^{i}_{S}(\mathcal{O}_{S},I\otimes \Omega_{S}^{1})\rightarrow  Ext^{i}_{S}(I,I\otimes \Omega_{S}^{1})\rightarrow.\nonumber \end{equation}
By the condition $h^{0,1}(S)=0$, we have $Ext^{0}_{S}(I,I\otimes \Omega_{S}^{1})=0$ by Lemma \ref{TS lemma2}.
Using (\ref{cotangent of S equ 1}), (\ref{cotangent of S equ 2}), we can get
\begin{equation}0\rightarrow Ext^{0}_{S}(\mathcal{O}_{Z},\mathcal{O}_{Z}\otimes \Omega_{S}^{1})\rightarrow
H^{1}(S,I\otimes\Omega_{S}^{1})\rightarrow \nonumber \end{equation}
\begin{equation}\rightarrow Ext^{1}_{S}(I,I\otimes \Omega_{S}^{1})\rightarrow Ext^{1}_{S}(\mathcal{O}_{Z},\mathcal{O}_{Z}\otimes \Omega_{S}^{1})\rightarrow 0 .\nonumber \end{equation}
By (\ref{cotangent of S equ 0}), we get
\begin{equation}0\rightarrow H^{1}(S,\Omega_{S}^{1})\rightarrow Ext^{1}_{S}(I,I\otimes \Omega_{S}^{1})\rightarrow Ext^{1}_{S}(\mathcal{O}_{Z},\mathcal{O}_{Z}\otimes \Omega_{S}^{1})\rightarrow0.\nonumber\end{equation}
where the first injective map is the inclusion of the trace factor.

Taking into consideration of the Serre duality pairing,
\begin{equation}Ext^{1}_{S}(I,I\otimes \Omega_{S}^{1})_{0}\otimes Ext^{1}_{S}(I,I\otimes \Omega_{S}^{1})_{0}\rightarrow
Ext^{2}_{S}(I,I\otimes \Omega_{S}^{2})\rightarrow H^{2,2}(S)\nonumber \end{equation}
can be identified with
\begin{equation}Ext^{1}_{S}(\mathcal{O}_{Z},\mathcal{O}_{Z}\otimes \Omega_{S}^{1})\otimes Ext^{1}_{S}(\mathcal{O}_{Z},\mathcal{O}_{Z}\otimes \Omega_{S}^{1})\rightarrow Ext^{2}_{S}(\mathcal{O}_{Z},\mathcal{O}_{Z}\otimes \Omega_{S}^{2})\rightarrow H^{2,2}(S), \nonumber \end{equation}
where the last map is taking trace. Furthermore it can be identified with
\begin{equation}(Ext^{1}_{S}(\mathcal{O}_{Z},\mathcal{O}_{Z})\oplus Ext^{1}_{S}(\mathcal{O}_{Z},\mathcal{O}_{Z}\otimes \Omega_{S}^{2}))\otimes
(Ext^{1}_{S}(\mathcal{O}_{Z},\mathcal{O}_{Z})\oplus Ext^{1}_{S}(\mathcal{O}_{Z},\mathcal{O}_{Z}\otimes \Omega_{S}^{2}))
\rightarrow \mathbb{C}. \nonumber \end{equation}
Since
\begin{equation}Ext^{1}_{S}(\mathcal{O}_{Z},\mathcal{O}_{Z})\otimes Ext^{1}_{S}(\mathcal{O}_{Z},\mathcal{O}_{Z})\rightarrow
Ext^{2}_{S}(\mathcal{O}_{Z},\mathcal{O}_{Z})\rightarrow H^{2}(S,\mathcal{O}_{S})=0.\nonumber \end{equation}
$Ext^{1}_{S}(\mathcal{O}_{Z},\mathcal{O}_{Z})$ is a maximal isotropic subspace with respect to the Serre duality pairing.
\end{proof}
Hence, after taking away the trivial factor $H^{1}(S,\Omega_{S}^{1})$, the maximal isotropic sub-bundle of the reduced obstruction bundle exists and it can be identified with the tangent bundle of Hilbert scheme of points on $S$. Note that this gives a natural orientation on the self-dual trace-free obstruction bundle.
By Lemma \ref{ASD equivalent to max isotropic}, the reduced $DT_{4}$ virtual cycle is the Euler characteristic of Hilbert scheme of $n$-points on $S$.
\begin{theorem}\label{DT4 of cotangent bundle of S}
Let $X=T^{*}S$, where $S$ is a compact algebraic surface with $q(S)=0$. Assume (\ref{vanishing assumption}) which is satisfied when $S$ is del-Pezzo. Choose the natural complex orientation $o(\mathcal{O})$ on the self-dual trace-free obstruction bundle, then
\begin{equation}[\mathcal{M}_{c}^{S_{cpn}}]^{vir}_{red}=e(Hilb^{n}(S)) \nonumber \end{equation}
for $c=(1,0,-n)$, $n\geq 1$.

Furthermore, they fit into the following generating function
\begin{equation}\sum_{n\geq0}[\mathcal{M}_{(1,0,-n)}^{S_{cpn}}]^{vir}_{red}q^{n}=\prod_{k\geq1}\frac{1}{(1-q^{k})^{e(S)}}_{.} \nonumber \end{equation}
\end{theorem}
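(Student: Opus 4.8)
The plan is to reduce the computation of $[\mathcal{M}_{c}^{S_{cpn}}]^{vir}_{red}$ to the Euler class of the tangent bundle of $\mathrm{Hilb}^{n}(S)$ and then invoke Göttsche's formula. First I would identify the moduli space: under assumption (\ref{vanishing assumption}) with $c=(1,0,-n)$, the Gieseker moduli space $\mathcal{M}_{c}(S,\mathcal{O}_{S}(1))$ is just the Hilbert scheme $\mathrm{Hilb}^{n}(S)$ parametrizing ideal sheaves $I_{Z}$ of length-$n$ subschemes, which is smooth (Fogarty) of dimension $2n$; in particular Proposition \ref{gene DT4 if Mc smooth} applies and $\mathcal{M}_{c}^{S_{cpn}}\cong\mathrm{Hilb}^{n}(S)$.

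Next I would pin down the reduced obstruction bundle. By the preceding Lemma, after removing the trivial trace summand $H^{1}(S,\Omega_{S}^{1})$ from $Ext^{1}_{X}(\iota_{*}I,\iota_{*}I)\cong Ext^{1}_{S}(I,I\otimes\Omega_{S}^{1})$, the self-dual trace-free obstruction bundle $ob_{0,+}$ fits into a short exact sequence whose total space $Ext^{1}_{S}(I,I\otimes\Omega^{1}_{S})_{0}\cong Ext^{1}_{S}(\mathcal{O}_{Z},\mathcal{O}_{Z}\otimes\Omega^{1}_{S})$ carries a nondegenerate Serre-duality quadratic form, and $Ext^{1}_{S}(\mathcal{O}_{Z},\mathcal{O}_{Z})$ sits inside as a maximal isotropic subbundle. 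Since the latter is canonically the tangent space $T_{[Z]}\mathrm{Hilb}^{n}(S)$, the maximal isotropic subbundle of $ob_{0,+}$ is $T\mathrm{Hilb}^{n}(S)$. Applying Lemma \ref{ASD equivalent to max isotropic}(1) — the rank is even, being $2\cdot 2n=4n$ — the half Euler class of $ob_{0,+}$ equals $\pm c_{2n}(T\mathrm{Hilb}^{n}(S))$, and the natural complex orientation $o(\mathcal{O})$ fixes the sign as $+$. Hence
\begin{equation}[\mathcal{M}_{c}^{S_{cpn}}]^{vir}_{red}=\int_{\mathrm{Hilb}^{n}(S)}c_{2n}\big(T\mathrm{Hilb}^{n}(S)\big)=e\big(\mathrm{Hilb}^{n}(S)\big),\nonumber\end{equation}
using that $\mathrm{Hilb}^{n}(S)$ is smooth compact of complex dimension $2n$, so its top Chern number is the topological Euler characteristic; the reduced virtual dimension $r_{red}=h^{1,1}(S)+2-(2+h^{1,1}(S))=0$ confirms this is a number.

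Finally I would assemble the generating function. Göttsche's formula gives
\begin{equation}\sum_{n\geq0}e\big(\mathrm{Hilb}^{n}(S)\big)q^{n}=\prod_{k\geq1}\frac{1}{(1-q^{k})^{e(S)}},\nonumber\end{equation}
and substituting the identification above yields the claimed product. I would also note the $n=0$ term is $1$ (empty subscheme), consistent with the convention, and cross-check low $n$ (e.g. $n=1$ gives $e(S)$, matching $\mathrm{Hilb}^{1}(S)=S$). The main obstacle is the sign/orientation bookkeeping in Lemma \ref{ASD equivalent to max isotropic}: one must check that the natural complex orientation $o(\mathcal{O})$ on $ob_{0,+}$ coming from the maximal isotropic subbundle $T\mathrm{Hilb}^{n}(S)$ is exactly the one that makes the half Euler class $+c_{2n}$ rather than $-c_{2n}$, and that this orientation is globally consistent across $\mathrm{Hilb}^{n}(S)$ (which is connected, so only an overall sign is at stake); everything else is a routine combination of the deformation-theory identifications already established and Göttsche's theorem.
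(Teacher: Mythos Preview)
Your proof is correct and follows essentially the same approach as the paper: identify the maximal isotropic subbundle of the trace-free obstruction bundle with $T\mathrm{Hilb}^{n}(S)$ via the preceding lemma, apply Lemma~\ref{ASD equivalent to max isotropic} to obtain $e(\mathrm{Hilb}^{n}(S))$, and then invoke the G\"ottsche--Cheah generating function. The only cosmetic difference is that the paper cites \cite{cheah} rather than G\"ottsche for the product formula.
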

\begin{proof}
By the above discussions and \cite{cheah}.
\end{proof}

\newpage
\section{$DT_{4}$ invariants on toric $CY_{4}$ via localization}

In this section, we restrict to the moduli space of ideal sheaves of curves $I_{n}(X,\beta)$,
where $X$ is a toric Calabi-Yau four-fold.  $X$ admits a $(\mathbb{C}^{*})^{4}$-action which
can be naturally lifted to the moduli space. If we restrict to the three dimensional sub-torus $T\subseteq (\mathbb{C}^{*})^{4}$
which preserves the holomorphic top form of $X$, the action will also preserve the Serre duality pairing. \\

By the philosophy of virtual localization \cite{gp}, we will define the corresponding equivariant $DT_{4}$ invariants. Roughly speaking, we should have
\begin{equation}\int_{[\overline{\mathcal{M}}_{n,\beta}^{DT_{4}}(X)]^{vir}}\prod_{i=1}^{r}\gamma_{i}\thickapprox\sum_{[\mathcal{I}]
\in I_{n}(X,\beta)^{T}}\int_{[S(\mathcal{I})]^{vir}}\prod_{i=1}^{r}\gamma_{i}|_{\mathcal{I}}\cdot
\frac{\sqrt{e_{T}(Ext^{2}(\mathcal{I},\mathcal{I}))}}{e_{T}(Ext^{1}(\mathcal{I},\mathcal{I}))},
\nonumber \end{equation}
where $\overline{\mathcal{M}}_{n,\beta}^{DT_{4}}(X)$ denotes the undefined generalized $DT_{4}$ moduli space whose reduced structure
should be the same as the reduced structure of $I_{n}(X,\beta)$ and $\gamma_{i}$ are certain insertion fields we only
define on the right hand side. \\

Using a similar argument of Lemma 6, 8 in \cite{moop}, one can show that for $\mathcal{I}\in I_{n}(X,\beta)^{T}$ which is a $T$-fixed point, the $T$-representation
\begin{equation}Ext^{1}(\mathcal{I},\mathcal{I}), \quad Ext^{2}(\mathcal{I},\mathcal{I})  \nonumber \end{equation}
contain no trivial sub-representations.

Hence when we are reduced to the local contribution, we can get ride of the non-reduced structures and consider
everything on $I_{n}(X,\beta)$ instead of the generalized $DT_{4}$ moduli space. \\

For $\mathcal{I}\in I_{n}(X,\beta)^{T}$, form the following complex vector bundle whose fiber is $V_{\mathcal{I}}\triangleq Ext^{2}(\mathcal{I},\mathcal{I})$,
\begin{equation}
\begin{array}{lll}
      & \quad ET\times_{T}V_{\mathcal{I}}
      \\  &  \quad \quad \quad \downarrow \\   &   ET\times_{T}\{\mathcal{I}\}=BT.
\end{array}\nonumber\end{equation}
The Serre duality pairing naturally induces a non-degenerate pairing $Q_{Serre}$ on $ET\times_{T}V_{\mathcal{I}}$ because $T$ preserves the holomorphic top form. Thus, $(ET\times_{T}V_{\mathcal{I}},Q_{Serre})$ becomes a quadratic bundle.

By the theory of characteristic classes of quadratic bundles \cite{eg} (see also the next section), there exists a half Euler class $e(ET\times_{T}V_{\mathcal{I}},Q_{Serre})$ if $c_{1}(ET\times_{T}V_{\mathcal{I}})=0$.

\begin{definition}
For $\mathcal{I}\in I_{n}(X,\beta)^{T}$ with $c_{1}(ET\times_{T}V_{\mathcal{I}})=0$, we define
\begin{equation}e_{T}(Ext^{2}_{iso}(\mathcal{I},\mathcal{I}))\triangleq e(ET\times_{T}V_{\mathcal{I}},Q_{Serre})\in H^{*}_{T}(pt) \nonumber \end{equation}
to be the half Euler class of the above quadratic bundle \cite{eg}, where $Q_{Serre}$ denotes the induced Serre duality pairing.
\end{definition}

\begin{remark}
If $dim_{\mathbb{C}}V_{\mathcal{I}}$ is odd, the half Euler class is zero.
If $dim_{\mathbb{C}}V_{\mathcal{I}}$ is even, the half Euler class is unique up to a sign which is determined by an $SO(N,\mathbb{C})$ reduction of the structure group of the quadratic bundle
\begin{equation}(ET\times_{T}V_{\mathcal{I}},Q_{Serre})\rightarrow  ET\times_{T}\{\mathcal{I}\}. \nonumber \end{equation}
\end{remark}

Now let us define the $DT_{4}$ invariants for ideal sheaves of curves on toric $CY_{4}$ under the assumption that
$c_{1}(ET\times_{T}Ext^{2}(\mathcal{I},\mathcal{I}))=0$ for any $\mathcal{I}\in I_{n}(X,\beta)^{T}$.
Because we do not have Seidel-Thomas twist for toric $CY_{4}$, we do not know how to give a compatible orientation for different components of $ET\times_{T}I_{n}(X,\beta)^{T}$. We just arbitrarily choose an orientation for each component at the moment.
\begin{definition}
Assume for any $\mathcal{I}\in I_{n}(X,\beta)^{T}$, $c_{1}(ET\times_{T}V_{\mathcal{I}})=0$.
The toric orientation data is defined to be a choice of $SO(N,\mathbb{C})$ reduction of the structure group of the quadratic bundle
\begin{equation}(ET\times_{T}V_{\mathcal{I}},Q_{Serre})\rightarrow  ET\times_{T}\{\mathcal{I}\}, \nonumber \end{equation}
for each $\mathcal{I}\in I_{n}(X,\beta)^{T}$.
\end{definition}
\begin{definition}
Given $[\mathcal{I}]\in I_{n}(X,\beta)^{T}$, $P\in \mathbb{Z}[x_{1},x_{2},...,x_{m}]$ and $\gamma\in H^{*}_{T}(X,\mathbb{Z})$, define
\begin{equation}
\pi_{*}([\mathcal{I}],P,\gamma)\triangleq\pi_{*}\Big(P\big(c_{i}^{T}(\mathfrak{I}|_{[\mathcal{I}]\times X})\big)\cup \gamma\Big)\in H^{*}_{T}(pt),  \nonumber \end{equation}
where $\pi_{*}: H^{*}_{T}(X)\rightarrow H^{*}_{T}(pt)$ is the equivariant push-forward and $\mathfrak{I}\rightarrow I_{n}(X,\beta)\times X$ is the universal ideal sheaf.
\end{definition}

\begin{definition}
Given a toric Calabi-Yau four-fold $X$, $\beta\in H_{2}(X,\mathbb{Z})$, $n\in \mathbb{N}_{+}$, polynomials
$P_{i}\in \mathbb{Z}[x_{1},x_{2},...,x_{m_{i}}]$, insertion fields $\gamma_{i}\in H^{*}_{T}(X,\mathbb{Z})$ for $i=1,2,...,r$ and a toric orientation data,
the equivariant $DT_{4}$ invariant for ideal sheaves of curves associated with the above data is defined to be
\begin{equation}Z_{DT_{4}}\big(X,n\big|(P_{1},\gamma_{1}),...,(P_{r},\gamma_{r})\big)_{\beta}=\sum_{[\mathcal{I}]\in I_{n}(X,\beta)^{T}}
\bigg(\prod_{i=1}^{r}\big(\pi_{*}([\mathcal{I}],P_{i},\gamma_{i})\big)\bigg)\cdot
\frac{e_{T}(Ext^{2}_{iso}(\mathcal{I},\mathcal{I}))}{e_{T}(Ext^{1}(\mathcal{I},\mathcal{I}))},    \nonumber \end{equation}
where we assume $c_{1}^{T}(Ext^{2}(\mathcal{I},\mathcal{I}))=0$, for any $\mathcal{I}\in I_{n}(X,\beta)^{T}$ and the sign in $e_{T}(Ext^{2}_{iso}(\mathcal{I},\mathcal{I}))$ is compatible with the chosen toric orientation data.
\end{definition}
\begin{remark} ${}$ \\
$1$. The insertion fields in the above definition is just one plausible choice. If we want to match it with the corresponding $GW$ invariants, we may need some adjustment as toric 3-folds cases \cite{moop}. \\
$2$. As we can see
\begin{equation}\Bigg(\frac{e_{T}(Ext^{2}_{iso}(\mathcal{I},\mathcal{I}))}{e_{T}(Ext^{1}(\mathcal{I},\mathcal{I}))}\Bigg)^{2}
=\pm\frac{e_{T}(Ext^{2}(\mathcal{I},\mathcal{I}))}{e_{T}(Ext^{3}(\mathcal{I},\mathcal{I}))e_{T}(Ext^{1}(\mathcal{I},\mathcal{I}))}.
\nonumber \end{equation}
Meanwhile the right hand side has a generalization to any dimensional smooth toric variety. We do not know whether
there is any counting meaning for the right hand side for arbitrary dimensional smooth toric variety since
we do not have the corresponding virtual theory for sheaves counting at the moment. \\
$3$. If the base manifold is an algebraic surface, the moduli space of torsion-free stable sheaves is smooth with vanishing obstruction spaces at
least for $K3$ and del-Pezzo surfaces. The Euler characteristic of the moduli is automatically a virtual invariant.

For Calabi-Yau threefolds, if $\mathcal{M}_{c}$ is smooth, the
$DT_{3}$ invariant is the Euler characteristic of the moduli space up to a sign.

In the above two cases, the Euler characteristic
of the moduli space in some sense represents the sheaves virtual counting which is the partition function of certain topological quantum field theory.

But for $CY_{4}$, the Euler characteristic can not reflect the corresponding $DT_{4}$ invariants in general.
We should consider the Euler class of the self-dual obstruction bundle and the quadratic form takes its role in the invariants.
Maybe this is one of the reasons that we did not get a closed formula for the generating function of the Euler characteristics
of Hilbert schemes of points on a complex four-fold \cite{cheah}. We are wondering whether there is a closed formula for the generating function of
$DT_{4}$ invariants for ideal sheaves of points.
\end{remark}

\newpage
\section{Computational examples}
We will give some computational examples of $DT_{4}$ invariants in this section.
Our examples will concentrate on the case when $\mathcal{M}_{c}$ is smooth.
By definition \ref{virtual cycle when Mc smooth}, the virtual fundamental class of $\overline{\mathcal{M}}_{c}^{DT_{4}}$ is just the
Poincar\'{e} dual of the Euler class of the self-dual obstruction bundle.
In this case, the virtual cycle has its origin in the theory of characteristic classes \cite{eg}.

Let us recall the characteristic class theory for complex vector bundles with quadratic forms, say $(V,q)$ on
a projective manifold $X$ \cite{switzer}. We assume the structure group of the bundle is $SO(n,\mathbb{C})$,
$n$ is the rank of $V$.
By homotopy equivalence:
\begin{equation} SO(n,\mathbb{C})\sim SO(n,\mathbb{R}),\nonumber\end{equation}
we have
\begin{equation} H^{*}(BSO(n,\mathbb{C});\mathbb{Q})\cong H^{*}(BSO(n,\mathbb{R});\mathbb{Q}).\nonumber\end{equation}
Meanwhile
\begin{equation}H^{*}(BSO(2r,\mathbb{C});\mathbb{Q})\cong \mathbb{Q}[p_{1},...p_{r-1},e],\nonumber\end{equation}
\begin{equation}H^{*}(BSO(2r+1,\mathbb{C});\mathbb{Q})\cong \mathbb{Q}[p_{1},...p_{r}],\nonumber\end{equation}
where $p_{i}=(-1)^{i}c_{2i}(\omega^{+}\otimes \mathbb{C})$, $\omega^{+}$ is the universal $SO(n)$ bundle and
$e=e(\omega^{+})$ is called the half Euler class of $(V,q)$.

When $\mathcal{M}_{c}$ is smooth and $(V,q)=(ob,Q_{Serre})$, where $ob$ is the obstruction bundle with $ob|_{\mathcal{F}}=Ext^{2}(\mathcal{F},\mathcal{F})$, $Q_{Serre}$ is the
Serre duality pairing, then $\overline{\mathcal{M}}_{c}^{DT_{4}}$ exists and $\overline{\mathcal{M}}_{c}^{DT_{4}}=\mathcal{M}_{c}$. Furthermore $[\overline{\mathcal{M}}_{c}^{DT_{4}}]^{vir}=PD(e)$ with the chosen orientation.

By Lemma \ref{ASD equivalent to max isotropic}, we will use the top Chern class of the maximal isotropic subbundle (if it exists) to compute the half Euler class $e$.

\subsection{$DT_{4}/GW$ correspondence in some special cases }
As is well known \cite{briDTPT} \cite{moop} \cite{pandpixton} \cite{toda}, we have equivalence between
Donaldson-Thomas ideal sheaves invariants and Gromov-Witten invariants on Calabi-Yau three-folds. One may expect,
there will be a similar gauge-string duality for Calabi-Yau four-folds. We will study such a correspondence in some very special cases.

Fix $c=1-PD(\beta)-nPD(1)$,
where $\beta\in H_{2}(X,\mathbb{Z})$ is a fixed curve class and $n=\chi (\mathcal{O}_{C})$, $\mathcal{O}_{C}$ is the structure sheaf of a curve $C$.

We first compute the real virtual dimension of $\overline{\mathcal{M}}_{c}^{DT_{4}}$ which is defined to be $2ext^{1}(I_{C},I_{C})-ext^{2}(I_{C},I_{C})$, $I_{C}\in \mathcal{M}_{c}$.
\begin{lemma}\label{v.d of ideal sheaves of curves}The real virtual dimension of the generalized $DT_{4}$ moduli space $\overline{\mathcal{M}}_{c}^{DT_{4}}$
with $c=ch(I_{C})=1-PD(\beta)-nPD(1)$ on a compact Calabi-Yau four-fold $X$ satisfies
\begin{equation}\textrm{ } v.d_{\mathbb{R}}(\overline{\mathcal{M}}_{c}^{DT_{4}})=2n,  \quad \quad \textrm{ } \textrm{ }\textmd{if} \quad Hol(X)=SU(4), \nonumber \end{equation}
\begin{equation}v.d_{\mathbb{R}}(\overline{\mathcal{M}}_{c}^{DT_{4}})=2n-1, \quad \textmd{if} \quad  Hol(X)=Sp(2). \nonumber \end{equation}
\end{lemma}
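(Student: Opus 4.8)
The plan is to compute $\chi(I_C, I_C) = \sum_i (-1)^i \mathrm{ext}^i(I_C, I_C)$ by Hirzebruch--Riemann--Roch, and then use the vanishing $\mathrm{ext}^0(I_C,I_C) = 1$ (stability/simplicity of the ideal sheaf, or rather the fact that $I_C$ is a rank-one torsion-free sheaf with trivial determinant, so $\mathrm{Hom}(I_C,I_C) = H^0(\mathcal{O}_X) = \mathbb{C}$), together with Serre duality on the Calabi-Yau four-fold, $\mathrm{Ext}^i(I_C,I_C) \cong \mathrm{Ext}^{4-i}(I_C,I_C)^*$, to pin down the alternating sum in terms of $\mathrm{ext}^1$ and $\mathrm{ext}^2$. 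Writing $e := \mathrm{ext}^0 = \mathrm{ext}^4 = 1$ and $f := \mathrm{ext}^1 = \mathrm{ext}^3$, one gets $\chi(I_C,I_C) = 2e - 2f + \mathrm{ext}^2 = 2 - 2f + \mathrm{ext}^2$, hence $2f - \mathrm{ext}^2 = 2 - \chi(I_C,I_C)$. Since by definition $v.d_{\mathbb{R}}(\overline{\mathcal{M}}_c^{DT_4}) = 2\,\mathrm{ext}^1(I_C,I_C) - \mathrm{ext}^2(I_C,I_C) = 2f - \mathrm{ext}^2$, the whole computation reduces to evaluating $\chi(I_C,I_C)$, which depends only on $c = \mathrm{ch}(I_C)$ and the Todd class of $X$.

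The key step is therefore the HRR computation: $\chi(I_C,I_C) = \int_X \mathrm{ch}(I_C)^\vee \cdot \mathrm{ch}(I_C) \cdot \mathrm{td}(X)$, with $\mathrm{ch}(I_C) = 1 - PD(\beta) - n\,PD(\mathrm{pt})$ (so the degree-$2$ part vanishes, the degree-$4$ part is $-PD(\beta)$, the degree-$6$ part vanishes, the degree-$8$ part is $-n\,PD(\mathrm{pt})$). When I expand $\mathrm{ch}(I_C)^\vee \cdot \mathrm{ch}(I_C)$, the leading term is $1$, the cross terms in degrees $2,4,6$ either vanish or integrate against $\mathrm{td}(X)$ to something I must track, and the top-degree ($8$-form) contributions come from: $1 \otimes (-n\,PD(\mathrm{pt}))$ twice (giving $-2n$ after pairing with $\mathrm{td}_0 = 1$), the self-product $(-PD(\beta))\cdot(-PD(\beta)) = \beta \cdot \beta$, which vanishes for dimension reasons only if... no — actually $PD(\beta) \in H^6(X)$, so $PD(\beta) \cup PD(\beta) \in H^{12}(X) = 0$; good, that term dies. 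The term $(-PD(\beta)) \cdot \mathrm{td}_2(X)$ lives in $H^6 \cup H^4 = H^{10} = 0$; also dies. So the only surviving contribution in top degree beyond $-2n$ is $1 \cdot 1 \cdot \mathrm{td}_4(X) = \int_X \mathrm{td}_4(X) = \chi(\mathcal{O}_X)$. For a Calabi-Yau four-fold with $Hol(X) = SU(4)$ one has $h^{1,0} = h^{2,0} = h^{3,0} = 0$ and $h^{4,0} = 1$, so $\chi(\mathcal{O}_X) = 1 - 0 + 0 - 0 + 1 = 2$; whereas for $Hol(X) = Sp(2)$ (irreducible holomorphic symplectic, i.e. $K3^{[2]}$-type) one has $h^{2,0} = 1$, $h^{4,0} = 1$, $h^{1,0} = h^{3,0} = 0$, so $\chi(\mathcal{O}_X) = 1 - 0 + 1 - 0 + 1 = 3$. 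Thus $\chi(I_C,I_C) = \chi(\mathcal{O}_X) - 2n$, giving $v.d_{\mathbb{R}} = 2 - \chi(I_C,I_C) = 2n$ in the $SU(4)$ case and $2n - 1$ in the $Sp(2)$ case.

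The main obstacle — really the only subtle point — is making sure $\mathrm{ext}^0(I_C, I_C) = 1$ holds (equivalently $\mathrm{ext}^4 = 1$ by Serre duality) so that the odd-degree terms of the alternating sum collapse cleanly to $2\,\mathrm{ext}^1 - \mathrm{ext}^2 + 2$; this follows because $I_C$ is the ideal sheaf of a (connected) subscheme, hence $\mathrm{Hom}(I_C, I_C) = \mathrm{Hom}(I_C, \mathcal{O}_X) = H^0(X, \mathcal{O}_X) = \mathbb{C}$ using that $I_C$ is torsion-free of rank one with $c_1 = 0$. One should also note that $\mathrm{ext}^i$ here means $\mathrm{ext}^i_X$ of the sheaf with itself (not reduced/trace-free), which is exactly what enters the definition of $v.d_{\mathbb{R}}$ in the statement. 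After assembling these pieces the two formulas drop out immediately; I do not expect any genuine difficulty beyond the bookkeeping of the Chern character components and the standard Hodge-number inputs for the two holonomy types.
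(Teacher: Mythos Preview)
Your proposal is correct and follows essentially the same route as the paper: compute $\chi(I_C,I_C)$ via Hirzebruch--Riemann--Roch, use Serre duality plus $\mathrm{ext}^0=\mathrm{ext}^4=1$ to rewrite it as $2-(2\,\mathrm{ext}^1-\mathrm{ext}^2)$, and read off the virtual dimension from the Hodge numbers of $X$. The paper's proof simply records the HRR output as $\chi(I_C,I_C)=2\int_X \mathrm{ch}_4(I_C)+2h^{0}(X,\mathcal{O}_X)+h^{2}(X,\mathcal{O}_X)$, which is exactly your $\chi(\mathcal{O}_X)-2n$; your version is just more explicit about why the other terms in the expansion of $\mathrm{ch}(I_C)^\vee\cdot\mathrm{ch}(I_C)\cdot\mathrm{td}(X)$ vanish. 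One small slip to fix: in your parenthetical you place $-PD(\beta)$ in degree $4$, but as you yourself note a few lines later, $PD(\beta)\in H^6(X)$ (i.e.\ it is $\mathrm{ch}_3$, not $\mathrm{ch}_2$); this does not affect the computation since you use the correct degree thereafter.
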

\begin{proof}
By the Hirzebruch-Riemann-Roch theorem,
\begin{equation}\chi(I_{C},I_{C})
=2\int_{X}ch_{4}(I_{C})+ 2h^{0}(X,\mathcal{O}_{X})+h^{2}(X,\mathcal{O}_{X}).
\nonumber \end{equation}
Thus, for $Hol(X)=Sp(2)$, we have
\begin{equation}ext^{1}(I_{C},I_{C})-\frac{1}{2}ext^{2}(I_{C},I_{C})=n-\frac{1}{2},  \nonumber \end{equation}
\begin{equation}v.d_{\mathbb{R}}(\overline{\mathcal{M}}_{c}^{DT_{4}})=2n-1 .\nonumber \end{equation}
Similarly, we have the result when $Hol(X)=SU(4)$.
\end{proof}
\begin{remark} ${}$ \\
1. The generalized $DT_{4}$ moduli space of ideal sheaves of subschemes is not always defined since
it depends on the gluing assumption \ref{assumption on gluing}.
However, we still define its virtual dimension as stated above.
At least in the case when $\mathcal{M}_{c}$ is smooth, $\overline{\mathcal{M}}_{c}^{DT_{4}}$ exists and $\overline{\mathcal{M}}_{c}^{DT_{4}}=\mathcal{M}_{c}$. \\
2. As we know, $v.d_{\mathbb{R}}(\overline{\mathcal{M}}_{c}^{DT_{4}})=2n=2(1-g)$ if $Hol(X)=SU(4)$. The $DT_{4}$ invariants for ideal sheaves of curves with $g\geq 2$ vanish which coincides with the situation of Gromov-Witten invariants.
\end{remark}

Now let us come to the $DT_{4}/GW$ correspondence when $\mathcal{M}_{c}$ is smooth and $Hol(X)=SU(4)$ or $Sp(2)$.

\subsubsection{The case of $Hol(X)=SU(4)$ }
We start with one dimensional closed subschemes of $X$ with $Hol(X)=SU(4)$.
\begin{lemma}\label{DT GW 1} Let $X$ be a compact Calabi-Yau four-fold with $Hol(X)=SU(4)$.
Let $C\hookrightarrow X$ be a closed subscheme with $dim_{\mathbb{C}}C\leq 1$ and $H^{1}(X,\mathcal{O}_{C})=0$,
where $\mathcal{O}_{C}$ is the structure sheaf of $C$.
Then we have canonical isomorphisms
\begin{equation}Ext^{i}(I_{C},I_{C})\cong Ext^{i}(\mathcal{O}_{C},\mathcal{O}_{C}), \quad i=1,2, \nonumber \end{equation}
where $I_{C}$ is the ideal sheaf of $C$ in $X$.
\end{lemma}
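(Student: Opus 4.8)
The plan is to relate $I_C$ to $\mathcal{O}_C$ via the structure sequence
\begin{equation}
0 \to I_C \to \mathcal{O}_X \to \mathcal{O}_C \to 0
\nonumber
\end{equation}
and to compute $\mathrm{Ext}^*$ groups by applying the functors $\mathrm{Hom}(-,I_C)$, $\mathrm{Hom}(-,\mathcal{O}_C)$, $\mathrm{Hom}(I_C,-)$, $\mathrm{Hom}(\mathcal{O}_C,-)$ to this sequence and chasing the resulting long exact sequences. First I would record what is known about $\mathrm{Ext}^*_X(\mathcal{O}_X,-)=H^*(X,-)$: since $\mathrm{Hol}(X)=SU(4)$ we have $H^0(\mathcal{O}_X)=\mathbb{C}$, $H^1(\mathcal{O}_X)=H^2(\mathcal{O}_X)=H^3(\mathcal{O}_X)=0$, $H^4(\mathcal{O}_X)=\mathbb{C}$; also by hypothesis $H^1(X,\mathcal{O}_C)=0$, and $H^0(X,\mathcal{O}_C)=\mathbb{C}$ because $C$ is connected (or at least one should observe that $C$ being a curve/point subscheme with $H^1(\mathcal O_C)=0$ forces the relevant maps to behave). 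From $0\to I_C\to\mathcal O_X\to\mathcal O_C\to 0$ one gets $H^0(I_C)=0$ and $H^i(I_C)\cong H^i(\mathcal O_X)$ for $i\ge 2$ via the vanishing of $H^1(\mathcal O_C)$.

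The key steps, in order: (i) Apply $\mathrm{Hom}_X(-,I_C)$ to the structure sequence to compare $\mathrm{Ext}^i(\mathcal O_C,I_C)$, $\mathrm{Ext}^i(\mathcal O_X,I_C)=H^i(I_C)$ and $\mathrm{Ext}^i(I_C,I_C)$. (ii) Apply $\mathrm{Hom}_X(I_C,-)$ to the structure sequence to compare $\mathrm{Ext}^i(I_C,I_C)$, $\mathrm{Ext}^i(I_C,\mathcal O_X)$ and $\mathrm{Ext}^i(I_C,\mathcal O_C)$; by Serre duality on the $CY_4$, $\mathrm{Ext}^i(I_C,\mathcal O_X)\cong \mathrm{Ext}^{4-i}(\mathcal O_X,I_C\otimes\omega_X)^*=H^{4-i}(X,I_C)^*$, which is concentrated in degrees $0$ and $4$. (iii) Do the analogous two diagram chases with $\mathcal O_C$ replacing $I_C$ as the target/source, to pin down $\mathrm{Ext}^i(\mathcal O_C,\mathcal O_C)$ in terms of $\mathrm{Ext}^i(\mathcal O_C,\mathcal O_X)$ and $H^i(\mathcal O_C)$. (iv) Combine: in the range $i=1,2$ the contributions from $\mathcal O_X$ (i.e. from $H^*(\mathcal O_X)$ and its dual) vanish, so the connecting maps become isomorphisms and one reads off $\mathrm{Ext}^i(I_C,I_C)\cong \mathrm{Ext}^i(\mathcal O_C,I_C)\cong \mathrm{Ext}^i(\mathcal O_C,\mathcal O_C)$, with the middle step again using that $\mathrm{Ext}^*(\mathcal O_C,\mathcal O_X)$ is supported away from degrees $1,2$ (here one uses $\dim_{\mathbb C}C\le 1$, so $\mathrm{Ext}^i(\mathcal O_C,\mathcal O_X)=0$ for $i\le 2$ by the codimension-$\ge 3$ support, plus Serre duality for the top end). (v) Check that the isomorphisms so produced are canonical, i.e. independent of the choices, which is automatic since they are built from the canonical maps in the two structure sequences.

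The main obstacle I anticipate is bookkeeping the degree ranges carefully enough to be sure that \emph{both} the $H^*(\mathcal O_X)$-type terms \emph{and} the $\mathrm{Ext}^*(\mathcal O_C,\mathcal O_X)$-type (local-Ext/codimension) terms really do vanish in exactly the degrees needed so that every relevant connecting homomorphism is forced to be an isomorphism — in particular handling $i=2$, where $H^2(\mathcal O_X)=0$ is used but one must also be careful that no degree-$2$ piece of $\mathrm{Ext}(\mathcal O_C,\mathcal O_X)$ or its Serre dual sneaks in (this is where $\dim C\le1$, equivalently codimension $\ge 3$, is essential, and where the $CY_4$ condition enters through Serre duality with trivial canonical bundle). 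A secondary point to be careful about is the hypothesis $H^1(X,\mathcal O_C)=0$: it is exactly what kills the degree-$1$ obstruction to lifting, and I would flag precisely where it is invoked. Everything else is a routine diagram chase, so I would present it compactly rather than writing out all four long exact sequences in full.
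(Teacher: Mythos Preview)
Your approach is essentially the same as the paper's: both proofs run the structure sequence $0\to I_C\to\mathcal O_X\to\mathcal O_C\to 0$ through $\mathrm{Hom}(\mathcal O_C,-)$ and $\mathrm{Hom}(-,I_C)$, use $H^i(\mathcal O_X)=0$ for $i=1,2,3$, the vanishing $\mathrm{Ext}^i(\mathcal O_C,\mathcal O_X)\cong H^{4-i}(\mathcal O_C)^*=0$ for $i\le 2$ coming from $\dim C\le 1$, and the hypothesis $H^1(\mathcal O_C)=0$ to kill the degree-$3$ piece. One small slip: your intermediate step should read $\mathrm{Ext}^i(I_C,I_C)\cong \mathrm{Ext}^{\,i+1}(\mathcal O_C,I_C)\cong \mathrm{Ext}^i(\mathcal O_C,\mathcal O_C)$, since both isomorphisms are connecting maps and each shifts degree by one (the paper records exactly these shifted identifications). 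Also, you need not worry about $C$ being connected: the paper handles $i=1$ by proving the $i=3$ case and invoking Serre duality, and even in your direct route the potentially nonzero $H^1(I_C)$ gets absorbed because the map $\mathrm{Hom}(I_C,I_C)\hookrightarrow \mathrm{Ext}^1(\mathcal O_C,I_C)\cong \mathrm{Hom}(\mathcal O_C,\mathcal O_C)$ forces $\mathrm{Ext}^1(\mathcal O_C,I_C)\to H^1(I_C)$ to be surjective by a dimension count.
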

\begin{proof}
Taking cohomology of the following short exact sequence of sheaves
\begin{equation}0\rightarrow I_{C}\rightarrow \mathcal{O}_{X}\rightarrow \mathcal{O}_{C}\rightarrow 0,  \nonumber \end{equation}
we get
\begin{equation}\rightarrow H^{i}(X,\mathcal{O}_{X})\rightarrow H^{i}(X,\mathcal{O}_{C})
\rightarrow  H^{i+1}(X,I_{C})\rightarrow.   \nonumber \end{equation}
We have $H^{i}(X,\mathcal{O}_{X})=0$ for $i=1,2,3$ by $Hol(X)=SU(4)$.
$H^{i}(X,\mathcal{O}_{C})=0$ for $i=2,3,4$, because $dim_{\mathbb{C}}\mathcal{O}_{C}\leq1$. Thus
\begin{equation}\label{DT GW equation 0} H^{1}(X,\mathcal{O}_{C})\cong H^{2}(X,I_{C}), H^{3}(X,I_{C})=0,
H^{4}(X,I_{C})\cong H^{4}(X,\mathcal{O}_{X}). \end{equation}
Taking $Hom(\mathcal{O}_{C}, \cdot)$ to $0\rightarrow I_{C}\rightarrow \mathcal{O}_{X}\rightarrow \mathcal{O}_{C}\rightarrow 0$, we have
\begin{equation}\rightarrow Ext^{i}(\mathcal{O}_{C},\mathcal{O}_{X})\rightarrow
Ext^{i}(\mathcal{O}_{C},\mathcal{O}_{C})\rightarrow Ext^{i+1}(\mathcal{O}_{C},I_{C})\rightarrow.  \nonumber \end{equation}
By Serre duality, $Ext^{i}(\mathcal{O}_{C},\mathcal{O}_{X})\cong Ext^{4-i}(\mathcal{O}_{X},\mathcal{O}_{C})=0$ if $i=0,1,2$. Hence
\begin{equation}\label{DT GW equation 1} Ext^{0}(\mathcal{O}_{C},I_{C})=0, Ext^{k}(\mathcal{O}_{C},\mathcal{O}_{C})
\cong Ext^{k+1}(\mathcal{O}_{C},I_{C}), k=0,1. \end{equation}
Meanwhile $Ext^{3}(\mathcal{O}_{C},\mathcal{O}_{X})\cong H^{1}(X,\mathcal{O}_{C})=0$ by our assumption, thus
\begin{equation}\label{DT GW equation 2} Ext^{2}(\mathcal{O}_{C},\mathcal{O}_{C})\cong Ext^{3}(\mathcal{O}_{C},I_{C}). \end{equation}
Using
\begin{equation}H^{0}(X,\mathcal{O}_{C})\cong Ext^{4}(\mathcal{O}_{C},\mathcal{O}_{X})\twoheadrightarrow
Ext^{4}(\mathcal{O}_{C},\mathcal{O}_{C})\cong Ext^{0}(\mathcal{O}_{C},\mathcal{O}_{C}) \nonumber \end{equation}
and the above long exact sequence, we have
\begin{equation}\label{DT GW equation 3} Ext^{1}(\mathcal{O}_{C},\mathcal{O}_{C})\cong
Ext^{3}(\mathcal{O}_{C},\mathcal{O}_{C})\cong Ext^{4}(\mathcal{O}_{C},I_{C}). \end{equation}
Applying the functor $Hom(\cdot,I_{C})$ to $0\rightarrow I_{C}\rightarrow \mathcal{O}_{X}\rightarrow \mathcal{O}_{C}\rightarrow 0$, we get
\begin{equation}\rightarrow Ext^{i}(\mathcal{O}_{X},I_{C})\rightarrow Ext^{i}(I_{C},I_{C})\rightarrow
Ext^{i+1}(\mathcal{O}_{C},I_{C})\rightarrow .\nonumber \end{equation}
By (\ref{DT GW equation 0}), we know $H^{2}(X,I_{C})\cong H^{1}(X,\mathcal{O}_{C})=0$ and $H^{3}(X,I_{C})=0$. Hence
\begin{equation}\label{DT GW equation 4}Ext^{2}(I_{C},I_{C})\cong Ext^{3}(\mathcal{O}_{C},I_{C}).  \end{equation}
The remaining sequence of the long exact sequence is
\begin{equation}\label{DT GW equation 5} Ext^{3}(I_{C},I_{C})\cong Ext^{4}(\mathcal{O}_{C},I_{C}) , \end{equation}
because $H^{4}(X,\mathcal{O}_{X})\cong Ext^{4}(\mathcal{O}_{X},I_{C})\twoheadrightarrow Ext^{4}(I_{C},I_{C})$ and they have the same dimensions.

By (\ref{DT GW equation 2}),(\ref{DT GW equation 3}),(\ref{DT GW equation 4}),(\ref{DT GW equation 5}),
we have canonically identified the deformation and obstruction spaces of the structure sheaf and ideal sheaf of $C$ in $X$, i.e.
\begin{equation}Ext^{i}(I_{C},I_{C})\cong Ext^{i}(\mathcal{O}_{C},\mathcal{O}_{C}), \quad i=1,2. \nonumber \end{equation}
\end{proof}
If we further assume $C$ is a connected smooth imbedded curve inside $X$, we have
\begin{lemma}\label{DT GW 2}  If $C$ is a connected genus zero smooth imbedded curve inside $X$, then we have canonical isomorphisms
\begin{equation}Ext^{1}(I_{C},I_{C})\cong H^{0}(C,\mathcal{N}_{C/X}), \nonumber \end{equation}
\begin{equation}Ext^{2}(I_{C},I_{C})\cong H^{1}(C,\mathcal{N}_{C/X})\oplus H^{1}(C,\mathcal{N}_{C/X})^{*}, \nonumber \end{equation}
where $\mathcal{N}_{C/X}$ is the normal bundle of $C$.
Furthermore, under this identification, $H^{1}(C,\mathcal{N}_{C/X})$ is a maximal isotropic subspace of
$Ext^{2}(I_{C},I_{C})$ with respect to the Serre duality pairing.
\end{lemma}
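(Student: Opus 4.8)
The plan is to deduce this lemma from the previous one, Lemma \ref{DT GW 1}, by computing $\mathrm{Ext}^i(\mathcal{O}_C,\mathcal{O}_C)$ for $C$ a smooth connected rational curve and keeping track of the Serre duality pairing. First I would check the hypotheses of Lemma \ref{DT GW 1}: since $C\cong \mathbb{P}^1$, $H^1(C,\mathcal{O}_C)=0$, so $H^1(X,\mathcal{O}_C)=0$ and Lemma \ref{DT GW 1} gives $\mathrm{Ext}^i(I_C,I_C)\cong \mathrm{Ext}^i(\mathcal{O}_C,\mathcal{O}_C)$ for $i=1,2$. So it suffices to identify the right-hand sides. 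Because $C$ is smooth, $\mathcal{O}_C$ has a nice local-to-global description: the local $\mathcal{E}xt$ sheaves are $\mathcal{E}xt^p_X(\mathcal{O}_C,\mathcal{O}_C)\cong \iota_*\big(\wedge^p \mathcal{N}_{C/X}\big)$ (Koszul resolution of $\mathcal{O}_C$ along the smooth center $C$), where $\mathcal{N}_{C/X}$ is the rank-$3$ normal bundle. Then the local-to-global spectral sequence reads
\begin{equation}
E_2^{p,q}=H^q\big(C,\wedge^p \mathcal{N}_{C/X}\big)\Rightarrow \mathrm{Ext}^{p+q}_X(\mathcal{O}_C,\mathcal{O}_C).
\nonumber
\end{equation}
Since $C\cong \mathbb{P}^1$ has cohomological dimension $1$, only $q=0,1$ contribute, and the $E_2$ page is concentrated in a narrow band, so all differentials vanish and the spectral sequence degenerates.

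Next I would read off the terms in low degree. In degree $1$: $\mathrm{Ext}^1(\mathcal{O}_C,\mathcal{O}_C)$ is built from $H^0(C,\mathcal{N}_{C/X})$ and $H^1(C,\mathcal{O}_C)=0$; since the latter vanishes, $\mathrm{Ext}^1(\mathcal{O}_C,\mathcal{O}_C)\cong H^0(C,\mathcal{N}_{C/X})$, giving the first claimed isomorphism. In degree $2$: the contributions are $H^1(C,\mathcal{N}_{C/X})$ (from $p=1,q=1$) and $H^0(C,\wedge^2\mathcal{N}_{C/X})$ (from $p=2,q=0$); the term $p=0$ contributes nothing since $H^2(C,\mathcal{O}_C)=0$. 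Using the Calabi-Yau condition $\wedge^3\mathcal{N}_{C/X}\cong K_X|_C\otimes \omega_C^{-1}\cong \omega_C^{-1}\cong \mathcal{O}_{\mathbb{P}^1}(2)$ (since $K_X\cong\mathcal{O}_X$), I get $\wedge^2\mathcal{N}_{C/X}\cong \mathcal{N}_{C/X}^{*}\otimes \omega_C^{-1}$, hence by Serre duality on $C$,
\begin{equation}
H^0\big(C,\wedge^2\mathcal{N}_{C/X}\big)\cong H^0\big(C,\mathcal{N}_{C/X}^*\otimes\omega_C^{-1}\big)\cong H^1\big(C,\mathcal{N}_{C/X}\big)^{*}.
\nonumber
\end{equation}
This yields $\mathrm{Ext}^2(\mathcal{O}_C,\mathcal{O}_C)\cong H^1(C,\mathcal{N}_{C/X})\oplus H^1(C,\mathcal{N}_{C/X})^{*}$, the second claimed isomorphism (combined with Lemma \ref{DT GW 1}).

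Finally I would verify the maximal isotropy statement. The Serre duality pairing on $\mathrm{Ext}^2_X(I_C,I_C)\cong \mathrm{Ext}^2_X(\mathcal{O}_C,\mathcal{O}_C)$ is the Yoneda pairing into $\mathrm{Ext}^4_X(\mathcal{O}_C,\mathcal{O}_C\otimes K_X)\cong \mathbb{C}$; I would trace through the spectral sequence to see that this pairing restricted to the summand $H^1(C,\mathcal{N}_{C/X})$ (the $p=1,q=1$ piece) lands in the product $E^{2,2}_\infty$, which receives contributions only from $H^2(C,\wedge^2\mathcal{N})=0$ and neighboring vanishing terms — so the restriction is zero, i.e. $H^1(C,\mathcal{N}_{C/X})$ is isotropic; being half-dimensional, it is maximal isotropic. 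The pairing itself is then identified with the canonical pairing between $H^1(C,\mathcal{N}_{C/X})$ and $H^0(C,\wedge^2\mathcal{N}_{C/X})\cong H^1(C,\mathcal{N}_{C/X})^*$ coming from cup product $\wedge^1\mathcal{N}\otimes\wedge^2\mathcal{N}\to\wedge^3\mathcal{N}\cong\omega_C^{-1}$ composed with Serre duality on $C$. The main obstacle I anticipate is precisely this last bookkeeping step: making sure the spectral-sequence grading is compatible with the Yoneda product and that the isomorphism in Lemma \ref{DT GW 1} is an isometry for the Serre pairings (this requires the functoriality of Serre duality and the fact that the maps in the long exact sequences there are induced by the canonical morphisms $I_C\to\mathcal{O}_X\to\mathcal{O}_C$, so they respect the pairings up to sign). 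Everything else is routine Koszul-resolution and $\mathbb{P}^1$-cohomology computation.
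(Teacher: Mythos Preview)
Your approach is essentially the same as the paper's: reduce to $\mathrm{Ext}^i(\mathcal{O}_C,\mathcal{O}_C)$ via Lemma \ref{DT GW 1}, compute these via the Koszul/local-to-global spectral sequence as $H^*(C,\wedge^*\mathcal{N}_{C/X})$, use Serre duality on $C$ to rewrite $H^0(C,\wedge^2\mathcal{N}_{C/X})$, and obtain isotropy from the fact that the product of two classes in $E_2^{1,1}$ lands in $E_2^{2,2}=H^2(C,\wedge^2\mathcal{N}_{C/X})=0$.

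One slip to fix: adjunction reads $\omega_C\cong K_X|_C\otimes\det\mathcal{N}_{C/X}$, so on a Calabi--Yau four-fold $\wedge^3\mathcal{N}_{C/X}\cong\omega_C$, \emph{not} $\omega_C^{-1}$. Consequently $\wedge^2\mathcal{N}_{C/X}\cong\mathcal{N}_{C/X}^*\otimes\omega_C$, and then Serre duality on $C$ gives $H^0(C,\mathcal{N}_{C/X}^*\otimes\omega_C)\cong H^1(C,\mathcal{N}_{C/X})^*$ as you want. (With your stated $\omega_C^{-1}$ twist the Serre-duality identification you wrote would not hold; you reached the correct conclusion but the intermediate line is off by $\omega_C^{\otimes 2}$.) This is exactly how the paper phrases it, via the perfect pairing $\wedge^2\mathcal{N}_{C/X}\otimes\mathcal{N}_{C/X}\to\wedge^3\mathcal{N}_{C/X}\cong\Omega_C$.
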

\begin{proof}
We have the local to global spectral sequence which degenerates at $E_{2}$ terms
\begin{eqnarray*}Ext^{*}_{X}(\mathcal{O}_{C},\mathcal{O}_{C})&\cong&
H^{*}(X,\mathcal{E}xt^{*}_{X}(\mathcal{O}_{C},\mathcal{O}_{C})) \\
&\cong& H^{*}(X,\iota_{*}(\wedge^{*}\mathcal{N}_{C/X})) \\
&\cong& H^{*}(C,\wedge^{*}\mathcal{N}_{C/X}),
\end{eqnarray*}
where $\iota: C\hookrightarrow X$ is the imbedding map. Then we have
\begin{equation}Ext^{1}(\mathcal{O}_{C},\mathcal{O}_{C})\cong H^{0}(C,\mathcal{N}_{C/X})\oplus H^{1}(C,\mathcal{O}_{C}), \nonumber \end{equation}
\begin{equation}Ext^{2}(\mathcal{O}_{C},\mathcal{O}_{C})\cong H^{0}(C,\wedge^{2}\mathcal{N}_{C/X})\oplus H^{1}(C,\mathcal{N}_{C/X}).
\nonumber \end{equation}
By $H^{1}(X,\mathcal{O}_{C})=0$ and the perfect pairing
\begin{equation} \wedge^{2}\mathcal{N}_{C/X}\otimes\mathcal{N}_{C/X}\rightarrow \wedge^{3}\mathcal{N}_{C/X}\cong \Omega_{C}.\nonumber \end{equation}
We have canonical isomorphisms using Serre duality \cite{hart}
\begin{equation}Ext^{1}(I_{C},I_{C})\cong H^{0}(C,\mathcal{N}_{C/X}), \nonumber \end{equation}
\begin{equation}Ext^{2}(I_{C},I_{C})\cong H^{1}(C,\mathcal{N}_{C/X})^{*}\oplus H^{1}(C,\mathcal{N}_{C/X}). \nonumber \end{equation}
As for the pairing, we have
\begin{equation}Ext^{2}(I_{C},I_{C})\otimes Ext^{2}(I_{C},I_{C})\rightarrow Ext^{4}(I_{C},I_{C})\rightarrow H^{4}(X,\mathcal{O}_{X}),
\nonumber \end{equation}
where the last trace map is a canonical isomorphism. The above pairing can be identified with the pairing
\begin{equation}(H^{0}(C,\wedge^{2}\mathcal{N}_{C/X})\oplus H^{1}(C,\mathcal{N}_{C/X}))\otimes
(H^{0}(C,\wedge^{2}\mathcal{N}_{C/X})\oplus H^{1}(C,\mathcal{N}_{C/X})), \nonumber\end{equation}
\begin{equation}\rightarrow H^{1}(C,\wedge^{3}\mathcal{N}_{C/X})\cong\mathbb{C}.\nonumber\end{equation}
Meanwhile
\begin{equation} H^{1}(C,\mathcal{N}_{C/X})\otimes H^{1}(C,\mathcal{N}_{C/X})\rightarrow 0. \nonumber\end{equation}
Hence we know $H^{1}(C,\mathcal{N}_{C/X})$ is a maximal isotropic subspace under the above identification.
\end{proof}
We have canonically identified the deformation and obstruction spaces of $DT_{4}$ theory and $GW$ theory in the above case.
If we further assume $\mathcal{M}_{c}$ is smooth and the $GW$ moduli space can be identified with $\mathcal{M}_{c}$,
then we have the following $DT_{4}/GW$ correspondence by Definition \ref{virtual cycle when Mc smooth} and Lemma \ref{ASD equivalent to max isotropic}.
\begin{theorem}\label{DT=GW}
Let $X$ be a compact Calabi-Yau four-fold with $Hol(X)=SU(4)$. If $\mathcal{M}_{c}$ with given Chern character
$c=(1,0,0,-PD(\beta),-1) $
is smooth and consists of ideal sheaves of smooth connected genus zero imbedded curves only.
Assume the $GW$ moduli space $\overline{\mathcal{M}}_{0,0}(X,\beta)\cong \mathcal{M}_{c}$ and use the natural complex orientation $o(\mathcal{O})$. Then $\overline{\mathcal{M}}_{c}^{DT_{4}}$ exists, $\overline{\mathcal{M}}_{c}^{DT_{4}}\cong\overline{\mathcal{M}}_{0,0}(X,\beta)$ and  $[\overline{\mathcal{M}}_{c}^{DT_{4}}]^{vir}=[\overline{\mathcal{M}}_{0,0}(X,\beta)]^{vir}$.
\end{theorem}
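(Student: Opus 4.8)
The plan is to reduce the statement to a comparison of Euler classes of obstruction bundles over the common smooth moduli space, feeding in the deformation-theoretic identifications of Lemmas \ref{DT GW 1} and \ref{DT GW 2} on the $DT_4$ side and the standard genus-zero stable-map obstruction theory on the $GW$ side. Since $\mathcal{M}_c$ is smooth, Proposition \ref{gene DT4 if Mc smooth} shows that the generalized $DT_4$ moduli space exists and $\overline{\mathcal{M}}_c^{DT_4}=\mathcal{M}_c$ as real analytic spaces; combined with the hypothesis $\overline{\mathcal{M}}_{0,0}(X,\beta)\cong\mathcal{M}_c$ this gives $\overline{\mathcal{M}}_c^{DT_4}\cong\overline{\mathcal{M}}_{0,0}(X,\beta)$. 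By Definition \ref{virtual cycle when Mc smooth}, $[\overline{\mathcal{M}}_c^{DT_4}]^{vir}=PD(e(ob_{+}))$, where $ob$ is the obstruction bundle with fibre $Ext^2(I_C,I_C)$ carrying the Serre-duality quadratic form $Q_{Serre}$ and $ob_{+}$ is the corresponding self-dual real bundle.

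Next I identify the obstruction bundles. For every $I_C\in\mathcal{M}_c$ the curve $C$ is smooth, connected, of genus zero and embedded, hence $H^1(X,\mathcal{O}_C)=H^1(C,\mathcal{O}_C)=0$; Lemma \ref{DT GW 1} gives canonical isomorphisms $Ext^i(I_C,I_C)\cong Ext^i(\mathcal{O}_C,\mathcal{O}_C)$ for $i=1,2$, and Lemma \ref{DT GW 2} then yields canonical isomorphisms $Ext^2(I_C,I_C)\cong H^1(C,\mathcal{N}_{C/X})\oplus H^1(C,\mathcal{N}_{C/X})^{*}$ under which the summand $H^1(C,\mathcal{N}_{C/X})$ is maximal isotropic for $Q_{Serre}$. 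Because these isomorphisms are canonical --- assembled from the normal-bundle local-to-global spectral sequence and Serre duality on $C$ --- they globalize over $\mathcal{M}_c$: writing $\pi\colon\mathcal{C}\to\mathcal{M}_c$ for the universal curve and $\mathcal{N}$ for the universal normal bundle, one gets $ob\cong R^1\pi_*\mathcal{N}\oplus(R^1\pi_*\mathcal{N})^{*}$ with $\mathcal{V}:=R^1\pi_*\mathcal{N}$ a maximal isotropic subbundle, so in particular $\mathrm{rk}\,ob=2\,\mathrm{rk}\,\mathcal{V}$ is even. On the $GW$ side, since $\overline{\mathcal{M}}_{0,0}(X,\beta)\cong\mathcal{M}_c$ is smooth and parametrizes only smooth embedded rational curves, its perfect obstruction theory has obstruction bundle $R^1\pi_*f^{*}TX$; applying $H^\bullet$ to $0\to T\mathbb{P}^1\to f^{*}TX\to\mathcal{N}\to 0$ and using $H^1(\mathbb{P}^1,T\mathbb{P}^1)=0$ identifies it with $R^1\pi_*\mathcal{N}=\mathcal{V}$.

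It remains to compare the Euler classes. By Lemma \ref{ASD equivalent to max isotropic}(1) the half Euler class $e(ob_{+})$ equals $\pm c_m(\mathcal{V})$ with $m=\mathrm{rk}\,\mathcal{V}$, the sign being fixed by the choice of maximal isotropic family; the natural complex orientation $o(\mathcal{O})$ is precisely the orientation of $ob_{+}$ induced from the complex structure of $\mathcal{V}$ (cf. the proof of Proposition \ref{ob=v+v*} and Definition \ref{virtual cycle when ob=v+v*}), so that $e(ob_{+})=c_m(\mathcal{V})=e(\mathcal{V})$ and $[\overline{\mathcal{M}}_c^{DT_4}]^{vir}=PD(e(\mathcal{V}))$. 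On the other hand, since $\overline{\mathcal{M}}_{0,0}(X,\beta)$ is smooth with obstruction bundle the complex bundle $\mathcal{V}$, one has $[\overline{\mathcal{M}}_{0,0}(X,\beta)]^{vir}=PD(e(\mathcal{V}))$ as well; under the isomorphism $\overline{\mathcal{M}}_c^{DT_4}\cong\overline{\mathcal{M}}_{0,0}(X,\beta)$ the bundle $\mathcal{V}$ corresponds to itself, so the two cycles agree.

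I expect the main obstacle to be the rigour of two points. First, promoting the pointwise isomorphisms of Lemmas \ref{DT GW 1} and \ref{DT GW 2} to an isomorphism of the obstruction \emph{bundles} together with the maximal isotropic subbundle structure, i.e.\ checking that the construction is sufficiently functorial in families so that $ob$ really splits as $\mathcal{V}\oplus\mathcal{V}^{*}$ over all of $\mathcal{M}_c$. Second, pinning down that the natural complex orientation $o(\mathcal{O})$ selects the $+$ sign in Lemma \ref{ASD equivalent to max isotropic}, i.e.\ that the maximal isotropic family underlying $o(\mathcal{O})$ is exactly $\mathcal{V}=R^1\pi_*\mathcal{N}$, so that it matches the complex orientation of the Gromov--Witten obstruction bundle rather than its conjugate.
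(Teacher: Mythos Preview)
Your proposal is correct and follows essentially the same route as the paper: the paper's proof is only one sentence, invoking Lemmas \ref{DT GW 1} and \ref{DT GW 2} to identify the deformation and obstruction spaces and the maximal isotropic subspace, and then appealing to Definition \ref{virtual cycle when Mc smooth} and Lemma \ref{ASD equivalent to max isotropic} to conclude. You have simply written out in detail what the paper gestures at, and the two rigour concerns you flag (globalizing the pointwise splitting $ob\cong\mathcal{V}\oplus\mathcal{V}^*$ over $\mathcal{M}_c$, and matching $o(\mathcal{O})$ with the correct sign in Lemma \ref{ASD equivalent to max isotropic}) are exactly the points the paper itself leaves implicit.
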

\begin{remark}
If there exists an embedding $i: \mathbb{C}\mathbb{P}^{3}\hookrightarrow X$
and $\beta=i_{*}[H^{2}]$, where $[H^{2}]\in H_{2}(\mathbb{C}\mathbb{P}^{3})$ is the generator.
By the identification of the deformation spaces and the negativity of the normal bundle $\mathcal{N}_{\mathbb{P}^{3}/X}$,
the ideal sheaves of curves in class $\beta$ are always of the form $I_{C}$, where $C\hookrightarrow \mathbb{C}\mathbb{P}^{3}\hookrightarrow X$.

Meanwhile $\mathcal{M}_{c}$ with $c=ch(I_{C})=(1,0,0,-PD(\beta),-1)$,
$\beta=i_{*}[H^{2}]\in H_{2}(X)$ is smooth. Hence the conditions in Theorem \ref{DT=GW} are satisfied.
\end{remark}
Then we explicitly compute the $DT_{4}$ invariant in this case.
\begin{proposition}
Let $c=(1,0,0,-PD(\beta),-1)\in H^{even}(X,\mathbb{Z})$.
If there exists an embedding $i: \mathbb{P}^{3}\hookrightarrow X$ and $\beta=i_{*}[H^{2}]$,
then $\overline{\mathcal{M}}_{c}^{DT_{4}}$ exists and $\overline{\mathcal{M}}_{c}^{DT_{4}}\cong \overline{\mathcal{M}}_{0,0}(K_{\mathbb{P}^{3}},[H^{2}])$. Furthermore, $[\overline{\mathcal{M}}_{c}^{DT_{4}}]^{vir}=[\overline{\mathcal{M}}_{0,0}(K_{\mathbb{P}^{3}},[H^{2}])]^{vir}$.
\end{proposition}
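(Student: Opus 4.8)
The plan is to apply Theorem \ref{DT=GW} to the specific geometric situation at hand, so the work splits into three verifications: (i) that $\mathcal{M}_c$ with $c=(1,0,0,-PD(\beta),-1)$ consists only of ideal sheaves of smooth connected genus-zero embedded curves; (ii) that $\mathcal{M}_c$ is smooth; and (iii) that the Gromov--Witten side is correctly identified with $\overline{\mathcal{M}}_{0,0}(K_{\mathbb{P}^3},[H^2])$. First I would exploit the remark immediately preceding the statement: since $i\colon\mathbb{P}^3\hookrightarrow X$ and $\beta=i_*[H^2]$, Lemma \ref{DT GW 1} and Lemma \ref{DT GW 2} identify $Ext^1(I_C,I_C)\cong H^0(C,\mathcal{N}_{C/X})$ for curves $C$ in class $\beta$. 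A curve in class $i_*[H^2]$ inside $X$ must lie inside $\mathbb{P}^3$: here I would use the negativity of $\mathcal{N}_{\mathbb{P}^3/X}$ (which one should check equals $K_{\mathbb{P}^3}=\mathcal{O}(-4)$, as is forced by the Calabi--Yau condition on the normal-bundle sequence along $\mathbb{P}^3$) to argue that any deformation of such a curve stays inside $\mathbb{P}^3$, so $\mathcal{M}_c\cong$ the Hilbert scheme of conics in $\mathbb{P}^3$, which is smooth. This gives (i) and (ii) simultaneously, and $\overline{\mathcal{M}}_c^{DT_4}$ exists by Proposition \ref{gene DT4 if Mc smooth}.

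Next I would identify this Hilbert scheme of conics in $\mathbb{P}^3$ with the moduli space $\overline{\mathcal{M}}_{0,0}(K_{\mathbb{P}^3},[H^2])$ of genus-zero stable maps to the total space of $K_{\mathbb{P}^3}$. Since $\mathcal{N}_{\mathbb{P}^3/X}=K_{\mathbb{P}^3}$ is negative, the Grauert contraction/Grothendieck existence argument (as invoked in Remark 6.x after Theorem \ref{compact supp DT4}) realizes a neighbourhood of $\mathbb{P}^3$ in $X$ as an open subset of $K_{\mathbb{P}^3}$, and all the relevant sheaves and stable maps are supported there; moreover smooth conics in $\mathbb{P}^3$ have no stable-map degenerations in the total space because $K_{\mathbb{P}^3}$ carries no effective curve class mapping to a point under $\pi$ that could bubble off. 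Thus $\overline{\mathcal{M}}_{0,0}(X,\beta)\cong\overline{\mathcal{M}}_{0,0}(K_{\mathbb{P}^3},[H^2])\cong\mathcal{M}_c$ as spaces, and the deformation/obstruction data match via Lemma \ref{DT GW 2}, with $H^1(C,\mathcal{N}_{C/X})$ the maximal isotropic subspace of $Ext^2(I_C,I_C)$.

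Finally I would invoke Theorem \ref{DT=GW} directly: with $o(\mathcal{O})$ the natural complex orientation, $[\overline{\mathcal{M}}_c^{DT_4}]^{vir}=[\overline{\mathcal{M}}_{0,0}(X,\beta)]^{vir}$, and then I would note that since everything is supported in $K_{\mathbb{P}^3}\subseteq X$ with matching obstruction theories, $[\overline{\mathcal{M}}_{0,0}(X,\beta)]^{vir}=[\overline{\mathcal{M}}_{0,0}(K_{\mathbb{P}^3},[H^2])]^{vir}$ (the virtual class being computed from the same self-dual obstruction bundle, namely the half-Euler class of $R^1\pi_*$ of the universal normal bundle, which only depends on data along $\mathbb{P}^3$). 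The main obstacle I anticipate is the third step: carefully justifying that no components or non-reduced structure appear on the stable-map side that are absent on the sheaf side, i.e.\ that the isomorphism $\overline{\mathcal{M}}_{0,0}(X,\beta)\cong\overline{\mathcal{M}}_{0,0}(K_{\mathbb{P}^3},[H^2])$ holds as schemes (or at least that virtual classes agree), and pinning down that $\mathcal{N}_{\mathbb{P}^3/X}\cong K_{\mathbb{P}^3}$ rather than merely being negative — the Calabi--Yau hypothesis on $X$ is what forces this, via $\det\mathcal{N}_{\mathbb{P}^3/X}=K_{\mathbb{P}^3}\otimes K_X^{-1}|_{\mathbb{P}^3}=K_{\mathbb{P}^3}$, but one must also rule out, e.g., $\mathcal{N}=\mathcal{O}(-1)\oplus\mathcal{O}(-1)\oplus\mathcal{O}(-2)$ versus $\mathcal{O}(-4)\oplus\mathcal{O}\oplus\mathcal{O}$ by the embedding geometry, or handle whichever splitting type actually occurs.
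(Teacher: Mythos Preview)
Your overall strategy matches the paper's: use the remark preceding the proposition (curves in class $\beta$ stay in $\mathbb{P}^3$ by negativity of $\mathcal{N}_{\mathbb{P}^3/X}$, so $\mathcal{M}_c$ is smooth), then apply Theorem \ref{DT=GW}, and finally identify the obstruction spaces with those of $\overline{\mathcal{M}}_{0,0}(K_{\mathbb{P}^3},[H^2])$ via the normal-bundle exact sequence along $\mathbb{P}^3$. The paper's proof is terser but proceeds exactly this way, computing $H^1(C,\mathcal{N}_{C/X})\cong H^1(C,\iota^*TX)\cong H^1(C,\iota^*K_{\mathbb{P}^3})$.

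Two slips to fix. First, $[H^2]\in H_2(\mathbb{P}^3)$ is the class of a \emph{line}, not a conic; your $\mathcal{M}_c$ is the Grassmannian of lines in $\mathbb{P}^3$, not the Hilbert scheme of conics. Second, and more importantly, the ``main obstacle'' you raise at the end dissolves immediately: $\mathbb{P}^3\hookrightarrow X$ has codimension \emph{one} in the four-fold $X$, so $\mathcal{N}_{\mathbb{P}^3/X}$ is a line bundle, and the Calabi--Yau condition forces $\mathcal{N}_{\mathbb{P}^3/X}=K_{\mathbb{P}^3}^{-1}\otimes K_X|_{\mathbb{P}^3}$ wait, rather $\det\mathcal{N}_{\mathbb{P}^3/X}=K_{\mathbb{P}^3}$ directly --- there is no splitting-type ambiguity to worry about. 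The rank-$3$ bundles you list would be candidates for $\mathcal{N}_{C/X}$, not $\mathcal{N}_{\mathbb{P}^3/X}$; you have conflated the two normal bundles.
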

\begin{proof}
By the positivity of $TC\cong T\mathbb{P}^{1}$, we have $H^{1}(C,\mathcal{N}_{C/X})\cong H^{1}(C,\iota^{*}TX)$, where
$\iota: C\hookrightarrow X$.
Meanwhile,
\begin{equation}0\rightarrow T\mathbb{P}^{3}\rightarrow TX\mid_{\mathbb{P}^{3}}\rightarrow \mathcal{N}_{\mathbb{P}^{3}/X}\rightarrow 0
\nonumber \end{equation}
induces $H^{1}(C,\iota^{*}TX)\cong H^{1}(C,\iota^{*}K_{\mathbb{P}^{3}})$
which is the obstruction space of Gromov-Witten theory of $K_{\mathbb{P}^{3}}$ by
$H^{1}(C,\iota^{*}TK_{\mathbb{P}^{3}})\cong H^{1}(C,\iota^{*}K_{\mathbb{P}^{3}})$.
\end{proof}

\subsubsection{The case of $Hol(X)=Sp(2)$ }
When $Hol(X)=Sp(2)$, we similarly have
\begin{lemma}Let $X$ be a compact irreducible hyper-K\"ahler four-fold.
Under the assumption that $C\hookrightarrow X$ is a closed subscheme with $dim_{\mathbb{C}}C\leq 1$ and $H^{1}(X,\mathcal{O}_{C})=0$, we have
\begin{equation}Ext^{1}(I_{C},I_{C})\cong Ext^{1}(\mathcal{O}_{C},\mathcal{O}_{C}), \nonumber \end{equation}
\begin{equation}0\rightarrow H^{2}(X,\mathcal{O}_{X})\rightarrow Ext^{2}(I_{C},I_{C})\rightarrow Ext^{2}(\mathcal{O}_{C},\mathcal{O}_{C})\rightarrow 0 .\nonumber \end{equation}
\end{lemma}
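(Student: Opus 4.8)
The plan is to mimic the proof of the $Hol(X)=SU(4)$ case (Lemma \ref{DT GW 1}), tracking the one place where the vanishing $H^2(X,\mathcal{O}_X)=0$ was used and seeing what survives when $H^2(X,\mathcal{O}_X)\cong\mathbb{C}$ instead (recall $h^{2,0}(X)=1$ for an irreducible hyper-K\"ahler fourfold). First I would run the same three long exact sequences: the cohomology sequence of $0\to I_C\to\mathcal{O}_X\to\mathcal{O}_C\to 0$, then $\mathrm{Hom}(\mathcal{O}_C,-)$ applied to the same triple, then $\mathrm{Hom}(-,I_C)$ applied to it. Since $\dim_{\mathbb{C}}C\le 1$ we still have $H^i(X,\mathcal{O}_C)=0$ for $i\ge 2$, and the hypothesis $H^1(X,\mathcal{O}_C)=0$ is imposed. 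The only inputs that change are $H^1(X,\mathcal{O}_X)=0$ (still true, since $b_1=0$ for hyper-K\"ahler) and $H^2(X,\mathcal{O}_X)\ne 0$; also Serre duality now gives $\mathrm{Ext}^i(\mathcal{O}_C,\mathcal{O}_X)\cong H^{4-i}(X,\mathcal{O}_C)^*$, which still vanishes for $i=0,1,2$ exactly as before.

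The degree-one statement should go through verbatim: from the cohomology sequence one still gets $H^2(X,I_C)\cong H^1(X,\mathcal{O}_C)=0$ and $H^3(X,I_C)=0$, and from $\mathrm{Hom}(-,I_C)$ applied to the triple one reads off $\mathrm{Ext}^1(I_C,I_C)\cong\mathrm{Ext}^2(\mathcal{O}_C,I_C)\cong\mathrm{Ext}^1(\mathcal{O}_C,\mathcal{O}_C)$, where the last isomorphism comes from $\mathrm{Ext}^1(\mathcal{O}_C,\mathcal{O}_X)=0$ and $\mathrm{Ext}^2(\mathcal{O}_C,\mathcal{O}_X)=0$ — none of these used $H^2(X,\mathcal{O}_X)$. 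For degree two, the chain $\mathrm{Ext}^2(I_C,I_C)\cong\mathrm{Ext}^3(\mathcal{O}_C,I_C)$ still holds (it needed only $H^2(X,I_C)=H^3(X,I_C)=0$), but the identification $\mathrm{Ext}^3(\mathcal{O}_C,I_C)\cong\mathrm{Ext}^2(\mathcal{O}_C,\mathcal{O}_C)$ breaks: from $\mathrm{Hom}(\mathcal{O}_C,-)$ on the triple we get
\begin{equation}
\mathrm{Ext}^2(\mathcal{O}_C,\mathcal{O}_X)\to\mathrm{Ext}^2(\mathcal{O}_C,\mathcal{O}_C)\to\mathrm{Ext}^3(\mathcal{O}_C,I_C)\to\mathrm{Ext}^3(\mathcal{O}_C,\mathcal{O}_X)\to\mathrm{Ext}^3(\mathcal{O}_C,\mathcal{O}_C),
\nonumber
\end{equation}
and now $\mathrm{Ext}^3(\mathcal{O}_C,\mathcal{O}_X)\cong H^1(X,\mathcal{O}_C)^*=0$ is still fine, but wait — the relevant extra term is $\mathrm{Ext}^2(\mathcal{O}_C,\mathcal{O}_X)=0$ on the left, so actually $\mathrm{Ext}^2(\mathcal{O}_C,\mathcal{O}_C)\hookrightarrow\mathrm{Ext}^3(\mathcal{O}_C,I_C)$ and the cokernel injects into $\mathrm{Ext}^3(\mathcal{O}_C,\mathcal{O}_X)=0$; so this map is still an isomorphism. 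The discrepancy must instead enter through $\mathrm{Ext}^2(I_C,I_C)$ versus $\mathrm{Ext}^3(\mathcal{O}_C,I_C)$: re-examining $\mathrm{Hom}(-,I_C)$ on the triple, the piece is
\begin{equation}
H^2(X,I_C)\to\mathrm{Ext}^2(I_C,I_C)\to\mathrm{Ext}^3(\mathcal{O}_C,I_C)\to H^3(X,I_C),
\nonumber
\end{equation}
which with $H^2(X,I_C)=H^3(X,I_C)=0$ still gives an isomorphism — so I need to recheck $H^2(X,I_C)$: the cohomology sequence reads $H^2(X,\mathcal{O}_X)\to H^2(X,\mathcal{O}_C)\to H^3(X,I_C)\to H^3(X,\mathcal{O}_X)$ and separately $H^1(X,\mathcal{O}_C)\to H^2(X,I_C)\to H^2(X,\mathcal{O}_X)\to H^2(X,\mathcal{O}_C)=0$, so $H^2(X,I_C)\cong H^2(X,\mathcal{O}_X)\cong\mathbb{C}$ — \emph{this} is the source of the extra factor.

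So the correct statement to prove is: $\mathrm{Ext}^1(I_C,I_C)\cong\mathrm{Ext}^1(\mathcal{O}_C,\mathcal{O}_C)$, and there is a short exact sequence $0\to H^2(X,\mathcal{O}_X)\to\mathrm{Ext}^2(I_C,I_C)\to\mathrm{Ext}^2(\mathcal{O}_C,\mathcal{O}_C)\to 0$, where the injection is $H^2(X,\mathcal{O}_X)\cong H^2(X,I_C)\hookrightarrow\mathrm{Ext}^2(I_C,I_C)$ coming from $\mathrm{Ext}^*(\mathcal{O}_X,I_C)=H^*(X,I_C)\to\mathrm{Ext}^*(I_C,I_C)$, and the surjection onto $\mathrm{Ext}^2(\mathcal{O}_C,\mathcal{O}_C)\cong\mathrm{Ext}^3(\mathcal{O}_C,I_C)$ is the connecting-type map. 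The main thing to verify carefully is exactness in the middle of
\begin{equation}
H^2(X,I_C)\to\mathrm{Ext}^2(I_C,I_C)\to\mathrm{Ext}^3(\mathcal{O}_C,I_C)\to H^3(X,I_C)=0,
\nonumber
\end{equation}
together with injectivity of the first map, which follows once one checks $\mathrm{Ext}^1(I_C,I_C)\to\mathrm{Ext}^2(\mathcal{O}_C,I_C)$ is surjective — equivalently that $\mathrm{Ext}^2(\mathcal{O}_X,I_C)=H^2(X,I_C)\to\mathrm{Ext}^2(I_C,I_C)$ has the displayed kernel, i.e. that the preceding map $\mathrm{Ext}^1(I_C,I_C)\to\mathrm{Ext}^2(\mathcal{O}_C,I_C)$ is onto; this in turn reduces to $\mathrm{Ext}^2(\mathcal{O}_X,I_C)\to$ nothing obstructs, using $\mathrm{Ext}^1(\mathcal{O}_C,\mathcal{O}_C)\xrightarrow{\sim}\mathrm{Ext}^2(\mathcal{O}_C,I_C)$ from the degree-one analysis. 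The only genuine obstacle is bookkeeping: making sure the $h^{2,0}=1$ class that appears is exactly the ``trivial'' $H^2(X,\mathcal{O}_X)$ summand that later gets removed when passing to the reduced virtual cycle, and that all the Serre-duality identifications $\mathrm{Ext}^i(\mathcal{O}_C,\mathcal{O}_X)\cong H^{4-i}(X,\mathcal{O}_C)^*$ are used consistently with the $Sp(2)$ (rather than $SU(4)$) cohomology of the structure sheaf.
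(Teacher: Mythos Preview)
Your approach is exactly the one the paper intends: the paper's proof is the single line ``By a similar argument as the case of $Hol(X)=SU(4)$,'' and you have carried that out, correctly locating the sole difference in the cohomology long exact sequence at $H^2(X,I_C)\cong H^2(X,\mathcal{O}_X)\cong\mathbb{C}$ (whereas it vanished in the $SU(4)$ case) and threading this extra term through to the displayed short exact sequence.

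The one point you leave slightly unfinished is the injectivity of $H^2(X,I_C)\to \mathrm{Ext}^2(I_C,I_C)$. Rather than chasing surjectivity of $\mathrm{Ext}^1(I_C,I_C)\to\mathrm{Ext}^2(\mathcal{O}_C,I_C)$ as you suggest, it is cleaner to observe that under the identification $H^2(X,I_C)\cong H^2(X,\mathcal{O}_X)$ this map is the canonical one $\alpha\mapsto \alpha\cdot\mathrm{id}_{I_C}$, which is split by the trace $\mathrm{Ext}^2(I_C,I_C)\to H^2(X,\mathcal{O}_X)$; hence it is injective. With that remark your argument is complete and matches the paper.
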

\begin{proof}By a similar argument as the case of $Hol(X)=SU(4)$.
\end{proof}
\begin{lemma}If $C$ is a connected genus zero smooth imbedded curve inside $X$, then we have
\begin{equation}Ext^{1}(I_{C},I_{C})\cong H^{0}(C,\mathcal{N}_{C/X}), \nonumber \end{equation}
\begin{equation}0\rightarrow H^{2}(X,\mathcal{O}_{X})\rightarrow Ext^{2}(I_{C},I_{C})\rightarrow H^{1}(C,\mathcal{N}_{C/X})\oplus H^{1}(C,\mathcal{N}_{C/X})^{*} \rightarrow 0. \nonumber \end{equation}
\end{lemma}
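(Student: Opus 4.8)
The plan is to mimic the proof of the preceding lemma (the $Hol(X)=SU(4)$ case, Lemma \ref{DT GW 2}), tracking where the vanishing $H^1(X,\mathcal{O}_X)=H^3(X,\mathcal{O}_X)=0$ is used and replacing it by the correct statement for an irreducible hyper-K\"ahler fourfold, namely $H^1(X,\mathcal{O}_X)=H^3(X,\mathcal{O}_X)=0$ still holds but $H^2(X,\mathcal{O}_X)\cong\mathbb{C}$ (generated by the holomorphic symplectic form). So the first statement, $Ext^1(I_C,I_C)\cong H^0(C,\mathcal{N}_{C/X})$, should come out verbatim as before: from $0\to I_C\to\mathcal{O}_X\to\mathcal{O}_C\to 0$ and $\dim_{\mathbb{C}}C\le 1$ one still gets $H^2(X,I_C)\cong H^1(X,\mathcal{O}_C)=0$ and $H^3(X,I_C)=0$, hence $Ext^1(I_C,I_C)\cong Ext^1(\mathcal{O}_C,\mathcal{O}_C)$, and then the local-to-global spectral sequence together with $H^1(X,\mathcal{O}_C)=0$ gives $Ext^1(\mathcal{O}_C,\mathcal{O}_C)\cong H^0(C,\mathcal{N}_{C/X})$ for $C$ a smooth rational curve.

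\textbf{The main point} is the short exact sequence for $Ext^2$. Here I would run the same three long exact sequences as in Lemma \ref{DT GW 2} but keep the $H^2(X,\mathcal{O}_X)$ terms. Applying $\mathrm{Hom}(-,I_C)$ to $0\to I_C\to\mathcal{O}_X\to\mathcal{O}_C\to 0$ gives
\begin{equation}
\to Ext^i(\mathcal{O}_X,I_C)\to Ext^i(I_C,I_C)\to Ext^{i+1}(\mathcal{O}_C,I_C)\to,
\nonumber
\end{equation}
and now $Ext^2(\mathcal{O}_X,I_C)=H^2(X,I_C)$ which, from the first sequence, sits in $0\to H^2(X,\mathcal{O}_X)\to H^2(X,I_C)\to H^2(X,\mathcal{O}_C)$; since $\dim C\le 1$ we have $H^2(X,\mathcal{O}_C)=0$, so $H^2(X,I_C)\cong H^2(X,\mathcal{O}_X)$. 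Combined with $H^3(X,I_C)\cong H^3(X,\mathcal{O}_X)=0$ (the tail of the same sequence, using $H^2(X,\mathcal{O}_C)=H^3(X,\mathcal{O}_C)=0$), the sequence above at $i=2$ becomes $0\to H^2(X,\mathcal{O}_X)\to Ext^2(I_C,I_C)\to Ext^3(\mathcal{O}_C,I_C)\to 0$. Then, exactly as in Lemma \ref{DT GW 2}, one identifies $Ext^3(\mathcal{O}_C,I_C)\cong Ext^2(\mathcal{O}_C,\mathcal{O}_C)$ using $\mathrm{Hom}(\mathcal{O}_C,-)$ on the structure sequence together with Serre duality ($Ext^3(\mathcal{O}_C,\mathcal{O}_X)\cong H^1(X,\mathcal{O}_C)^\vee=0$), and then the local-to-global spectral sequence gives $Ext^2(\mathcal{O}_C,\mathcal{O}_C)\cong H^0(C,\wedge^2\mathcal{N}_{C/X})\oplus H^1(C,\mathcal{N}_{C/X})$. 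For $C\cong\mathbb{P}^1$ smooth in a Calabi–Yau fourfold one has $\wedge^3\mathcal{N}_{C/X}\cong\omega_C$, so the perfect pairing $\wedge^2\mathcal{N}_{C/X}\otimes\mathcal{N}_{C/X}\to\omega_C$ plus Serre duality identifies $H^0(C,\wedge^2\mathcal{N}_{C/X})\cong H^1(C,\mathcal{N}_{C/X})^*$, yielding $Ext^2(\mathcal{O}_C,\mathcal{O}_C)\cong H^1(C,\mathcal{N}_{C/X})\oplus H^1(C,\mathcal{N}_{C/X})^*$, which is the right-hand term of the desired sequence.

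\textbf{The only real subtlety} is to be sure the extension does not split in a way that changes the statement and that the middle map is genuinely the one induced by $\mathcal{O}_X\to\mathcal{O}_C$ on $Ext^2(-,I_C)$; I would simply present the four-term exactness as the conclusion and not worry about splitting, since the statement is phrased as a short exact sequence rather than a direct sum (unlike the $SU(4)$ case). I would also remark, as the lemma's proof does in one line, that everything is parallel to the $Hol(X)=SU(4)$ computation with $H^2(X,\mathcal{O}_X)=0$ replaced by $H^2(X,\mathcal{O}_X)\cong\mathbb{C}$, so in practice the write-up can just say ``by a similar argument as in Lemma \ref{DT GW 2}, keeping track of the $H^2(X,\mathcal{O}_X)$ contribution coming from the holomorphic symplectic form.'' The deformation part ($Ext^1$) needs no change at all; the obstruction part picks up precisely the two-dimensional-real (one-dimensional-complex) trivial summand $H^2(X,\mathcal{O}_X)$, which is exactly the ``trivial part of the obstruction bundle'' that gets removed when passing to the reduced virtual cycle referenced in Theorem \ref{DT=GW2}.
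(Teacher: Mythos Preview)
Your overall approach matches the paper's, which for this lemma literally gives no proof and for the preceding one says only ``by a similar argument as the case of $Hol(X)=SU(4)$.'' You have correctly identified that the only change is tracking the nonvanishing $H^{2}(X,\mathcal{O}_{X})\cong\mathbb{C}$, and your second paragraph carries this out correctly.

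There is, however, a slip in your first paragraph: you write ``one still gets $H^{2}(X,I_{C})\cong H^{1}(X,\mathcal{O}_{C})=0$,'' but this is precisely what \emph{fails} in the hyper-K\"ahler case. The long exact sequence of $0\to I_{C}\to\mathcal{O}_{X}\to\mathcal{O}_{C}\to 0$ now gives $H^{2}(X,I_{C})\cong H^{2}(X,\mathcal{O}_{X})\neq 0$ (as you yourself compute correctly in the next paragraph). Consequently the direct argument for $Ext^{1}(I_{C},I_{C})\cong Ext^{1}(\mathcal{O}_{C},\mathcal{O}_{C})$ at degree $i=1$ does not close up immediately: the five-term sequence
\[
0\to Ext^{1}(I_{C},I_{C})\to Ext^{2}(\mathcal{O}_{C},I_{C})\to H^{2}(X,\mathcal{O}_{X})\to Ext^{2}(I_{C},I_{C})\to Ext^{3}(\mathcal{O}_{C},I_{C})\to 0
\]
has a nonzero term in the middle. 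The clean fix is to argue via Serre duality at $i=3$ instead, exactly as in the paper's Lemma~\ref{DT GW 1}: since $H^{3}(X,I_{C})=0$ still holds (you note this), one gets $Ext^{3}(I_{C},I_{C})\cong Ext^{4}(\mathcal{O}_{C},I_{C})\cong Ext^{3}(\mathcal{O}_{C},\mathcal{O}_{C})$, and dualizing gives $Ext^{1}(I_{C},I_{C})\cong Ext^{1}(\mathcal{O}_{C},\mathcal{O}_{C})$. Once that is in hand, the injection $Ext^{1}(I_{C},I_{C})\hookrightarrow Ext^{2}(\mathcal{O}_{C},I_{C})\cong Ext^{1}(\mathcal{O}_{C},\mathcal{O}_{C})$ is between spaces of equal dimension, hence an isomorphism, so the connecting map to $H^{2}(X,\mathcal{O}_{X})$ vanishes and your short exact sequence for $Ext^{2}$ follows. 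With this correction your write-up is complete and agrees with the paper's intended argument.
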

Note the $GW$ obstruction space $H^{1}(C,\mathcal{N}_{C/X})\cong H^{1}(C,\iota^{*}TX)$, if $C\cong \mathbb{P}^{1}$.
Meanwhile, the short exact sequence
\begin{equation}0\rightarrow \mathcal{N}_{C/X}^{*}\rightarrow \iota^{*}\Omega_{X}\rightarrow \Omega_{C}\rightarrow 0
\nonumber \end{equation}
induces
\begin{equation}0\cong H^{0}(C,\Omega_{C})\rightarrow H^{1}(C,\mathcal{N}_{C/X}^{*})\rightarrow
H^{1}(C,\iota^{*}\Omega_{X})\cong H^{1}(C,\iota^{*}TX)\rightarrow H^{1}(C,\Omega_{C})\rightarrow 0,
\nonumber \end{equation}
where the second isomorphism is by the existence of global nonzero holomorphic symplectic form $\sigma\in H^{0}(X,\Omega^{2}_{X})$.
The above sequence establishes the surjective cosection of the obstruction sheaf of $GW$ theory for hyper-K\"ahler manifolds \cite{kiem0}, \cite{kiem}.

As vector space,
\begin{equation}\label{hyper cosection}Ext^{2}(I_{C},I_{C})\cong H^{1}(C,\mathcal{N}_{C/X}^{*})\oplus H^{1}(C,\Omega_{C})
\oplus H^{1}(C,\mathcal{N}_{C/X}^{*})^{*}\oplus H^{1}(C,\Omega_{C})^{*}\oplus H^{2}(X,\mathcal{O}_{X}). \end{equation}
Here the dimension of the trivial factors in the $DT_{4}$ obstruction space is bigger than one.
Taking away all the trivial factors and restrict to the maximal isotropic subspace, the hyper-reduced $DT_{4}$ obstruction space is defined to be
\begin{equation}Ext^{2}_{\emph{hyper-red}}(I_{C},I_{C})\triangleq H^{1}(C,\mathcal{N}_{C/X}^{*}) \nonumber \end{equation}
which coincides with the reduced $GW$ obstruction space \cite{kiem}.

We define the hyper-reduced virtual fundamental class of $\overline{\mathcal{M}}_{c}^{DT_{4}}$ (denoted by $[\overline{\mathcal{M}}_{c}^{DT_{4}}]^{vir}_{hyper-red}$) to be the Poincar\'{e} dual of the Euler class of the
hyper-reduced obstruction bundle.
\begin{theorem}\label{DT=GW2}
Let $X$ be a compact irreducible hyper-K\"ahler four-fold. If $\mathcal{M}_{c}$ with given Chern character $c=(1,0,0,-PD(\beta),-1)$
is smooth and consists of ideal sheaves of smooth connected genus zero imbedded curves only. Assume the $GW$ moduli space
$\overline{\mathcal{M}}_{0,0}(X,\beta)\cong \mathcal{M}_{c}$ and use the natural complex orientation $o(\mathcal{O})$. Then $\overline{\mathcal{M}}_{c}^{DT_{4}}$ exists, $\overline{\mathcal{M}}_{c}^{DT_{4}}\cong\overline{\mathcal{M}}_{0,0}(X,\beta)$
and $[\overline{\mathcal{M}}_{c}^{DT_{4}}]^{vir}=0$.

Furthermore, $[\overline{\mathcal{M}}_{c}^{DT_{4}}]^{vir}_{hyper-red}=[\overline{\mathcal{M}}_{0,0}(X,\beta)]^{vir}_{red}$, where
$[\overline{\mathcal{M}}_{0,0}(X,\beta)]^{vir}_{red}$ is the reduced virtual fundamental class of the $GW$ moduli space \cite{kiem}.
\end{theorem}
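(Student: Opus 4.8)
The first three assertions are largely formal given the earlier results. Because $\mathcal{M}_c$ is assumed smooth, Proposition~\ref{gene DT4 if Mc smooth} supplies $\overline{\mathcal{M}}_c^{DT_4}$ together with the identification $\overline{\mathcal{M}}_c^{DT_4}=\mathcal{M}_c$ as real analytic spaces, and the hypothesis $\overline{\mathcal{M}}_{0,0}(X,\beta)\cong\mathcal{M}_c$ then gives $\overline{\mathcal{M}}_c^{DT_4}\cong\overline{\mathcal{M}}_{0,0}(X,\beta)$. By Definition~\ref{virtual cycle when Mc smooth} we have $[\overline{\mathcal{M}}_c^{DT_4}]^{vir}=PD(e(ob_+))$ with $ob_+$ the self-dual obstruction bundle, $ob_+\otimes_{\mathbb{R}}\mathbb{C}\cong ob$, $ob|_{I_C}=Ext^2(I_C,I_C)$, so the vanishing reduces to $e(ob_+)=0$. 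The plan is to read the rank and first Chern class of the quadratic bundle $ob$ off the decomposition~(\ref{hyper cosection}): over the smooth moduli space $Ext^2(I_C,I_C)$ is the sum of the varying hyperbolic pair $H^1(C,\mathcal{N}_{C/X}^*)\oplus H^1(C,\mathcal{N}_{C/X}^*)^*$ together with three trivial lines $H^1(C,\Omega_C)$, $H^1(C,\Omega_C)^*$ and $H^2(X,\mathcal{O}_X)$. Hence $rk_{\mathbb{C}}(ob)=2m+3$ is odd, where $m=\dim H^1(C,\mathcal{N}_{C/X}^*)$ (consistently with $v.d_{\mathbb{R}}=2n-1$ in Lemma~\ref{v.d of ideal sheaves of curves}), and $c_1(ob)=0$ since the varying part is hyperbolic and the rest is trivial. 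Therefore the structure group of $ob$ reduces to $SO(2m+1,\mathbb{C})$ and Lemma~\ref{ASD equivalent to max isotropic}(2) forces the half Euler class $e(ob_+)$ to vanish, so $[\overline{\mathcal{M}}_c^{DT_4}]^{vir}=0$.

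For the reduced equality I would first identify $[\overline{\mathcal{M}}_c^{DT_4}]^{vir}_{hyper-red}$ explicitly. By construction the hyper-reduced obstruction bundle is obtained from~(\ref{hyper cosection}) by discarding the three trivial line summands and then keeping a maximal isotropic subbundle of the remaining trivial-free hyperbolic piece, which is exactly $Ext^2_{hyper-red}(I_C,I_C)=H^1(C,\mathcal{N}_{C/X}^*)$; as $I_C$ moves over the smooth moduli space this assembles into a complex subbundle $V$ (a maximal isotropic family) of an even-rank quadratic bundle. By Lemma~\ref{ASD equivalent to max isotropic}(1) the half Euler class of that quadratic bundle is $\pm c_m(V)$, and the natural complex orientation $o(\mathcal{O})$ pins down the sign, so $[\overline{\mathcal{M}}_c^{DT_4}]^{vir}_{hyper-red}=PD(c_m(V))$.

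On the Gromov--Witten side, since $\overline{\mathcal{M}}_{0,0}(X,\beta)\cong\mathcal{M}_c$ is smooth, the cosection of the obstruction sheaf induced by the holomorphic symplectic form $\sigma\in H^0(X,\Omega^2_X)$ is surjective (cf.\ \cite{kiem0}, \cite{kiem}), so the reduced obstruction sheaf is locally free with fibre $\ker\big(H^1(C,\iota^*\Omega_X)\to H^1(C,\Omega_C)\big)\cong H^1(C,\mathcal{N}_{C/X}^*)$ by the conormal sequence; call this bundle $V'$. Hence $[\overline{\mathcal{M}}_{0,0}(X,\beta)]^{vir}_{red}=PD(e(V'))=PD(c_m(V'))$. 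The last step is to match $V$ and $V'$: the canonical isomorphisms of the lemmas preceding this theorem (summarized in~(\ref{hyper cosection})) identify, fibrewise and compatibly as $I_C$ varies, the $DT_4$ obstruction data of $I_C$ with the $GW$ obstruction data of the associated stable map, whence $V\cong V'$ as complex bundles over the common moduli space and the two reduced classes agree. The main obstacle I expect is the global bookkeeping of orientations and trivial factors rather than any single computation: one must check that the three trivial summands in~(\ref{hyper cosection}) split off globally over all of $\mathcal{M}_c$ (so that the hyper-reduced bundle is well defined, not merely fibrewise, and so that $c_1(ob)=0$ is honest), and that the natural complex orientation $o(\mathcal{O})$ on $Ind_{\mathbb{C}}$ restricts to the complex orientation of $V$ for which the \cite{eg} half Euler class is $+c_m(V)$, matching the sign convention used in the reduced $GW$ class.
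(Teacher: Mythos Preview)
Your proposal is correct and follows essentially the same approach as the paper: the preparatory lemmas and the decomposition~(\ref{hyper cosection}) do all the work, and you have made the concluding steps explicit where the paper leaves them to the reader. The one minor difference is that for the vanishing $[\overline{\mathcal{M}}_c^{DT_4}]^{vir}=0$ you invoke the odd complex rank of $ob$ together with Lemma~\ref{ASD equivalent to max isotropic}(2), whereas the paper's preferred mechanism (cf.\ Axiom~(3) and~(\ref{nu+})) is the trace cosection $\nu_+:Ext^2_+\twoheadrightarrow H^2_+(X,\mathcal O_X)\cong\mathbb R$, which splits off a trivial real line from $ob_+$; both arguments are valid and rest on the same structural input, namely the extra summand $H^2(X,\mathcal O_X)$ in the hyper-K\"ahler obstruction space.
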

A simple application is the following result due to Mukai \cite{mukai}.
\begin{corollary}$\mathbb{P}^{3}$ can't be embedded into any compact irreducible hyper-K\"ahler four-fold. \end{corollary}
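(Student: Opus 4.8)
The plan is to argue by contradiction using Theorem~\ref{DT=GW2} together with the standard facts about lines in $\mathbb{P}^3$ and the negativity of the normal bundle. Suppose $\mathbb{P}^3$ embeds into a compact irreducible hyper-K\"ahler four-fold $X$ via $i:\mathbb{P}^3\hookrightarrow X$. Since $\dim_{\mathbb{C}}X=4$ and $\dim_{\mathbb{C}}\mathbb{P}^3=3$, the normal bundle $\mathcal{N}_{\mathbb{P}^3/X}$ is a line bundle on $\mathbb{P}^3$, hence $\mathcal{N}_{\mathbb{P}^3/X}\cong\mathcal{O}_{\mathbb{P}^3}(d)$ for some $d\in\mathbb{Z}$. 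The adjunction formula gives $K_{\mathbb{P}^3}\cong i^*K_X\otimes\det\mathcal{N}^*_{\mathbb{P}^3/X}$; since $X$ is hyper-K\"ahler, $K_X\cong\mathcal{O}_X$, so $\mathcal{O}_{\mathbb{P}^3}(-4)\cong\mathcal{O}_{\mathbb{P}^3}(-d)$, forcing $d=4$. Wait --- this is positive, which is the opposite of what one expects for a subvariety to be rigid; I should instead take $\beta=i_*[\text{line}]$ with $[\text{line}]\in H_2(\mathbb{P}^3)$ the class of a line, and exploit that lines in $\mathbb{P}^3$ have positive normal bundle but their GW/DT count in $X$ is nonetheless computable.

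First I would fix $\beta=i_*\ell$ with $\ell$ the line class and consider the Chern character $c=(1,0,0,-PD(\beta),-1)$ (genus zero, $\chi(\mathcal{O}_C)=1$). By the identification of deformation spaces in Lemma~\ref{DT GW 1} and Lemma~\ref{DT GW 2}, together with the negativity of $\mathcal{N}_{\mathbb{P}^3/X}$ relative to $X$ (more precisely: a curve $C$ deforming inside $X$ in class $\beta$ must stay inside $\mathbb{P}^3$, since $H^0(C,\mathcal{N}_{\mathbb{P}^3/X}|_C)=H^0(\mathbb{P}^1,\mathcal{O}(4))$ contributes to movement within $\mathbb{P}^3$ only once one checks the direction transverse to $\mathbb{P}^3$ is obstructed --- this needs the precise sign), I would show $\mathcal{M}_c(X)\cong\mathcal{M}_c(\text{lines in }\mathbb{P}^3)=(\mathbb{P}^3)^\vee$-type Grassmannian, namely $\mathrm{Gr}(2,4)$, which is smooth, and consists of ideal sheaves of smooth genus zero curves. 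Next I would verify the GW moduli space $\overline{\mathcal{M}}_{0,0}(X,\beta)$ is isomorphic to this same smooth space (lines being rigid as maps up to reparametrization, with no stable-map degenerations in the primitive line class). This places us exactly in the hypotheses of Theorem~\ref{DT=GW2}.

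Then Theorem~\ref{DT=GW2} yields $[\overline{\mathcal{M}}_c^{DT_4}]^{vir}_{hyper\text{-}red}=[\overline{\mathcal{M}}_{0,0}(X,\beta)]^{vir}_{red}$, and I would compute the right-hand side as the Euler class of the reduced obstruction bundle, whose fiber is $H^1(C,\mathcal{N}^*_{C/X})$. Using the exact sequence relating $\mathcal{N}_{C/X}$, $\mathcal{N}_{C/\mathbb{P}^3}$ and $\mathcal{N}_{\mathbb{P}^3/X}|_C\cong\mathcal{O}_{\mathbb{P}^1}(4)$, and the hyper-K\"ahler cosection identification, this Euler class should evaluate to a \emph{nonzero} number on $\mathrm{Gr}(2,4)$ (e.g.\ via a Chern-class computation analogous to the $K_{\mathbb{P}^3}$ case, where the analogous reduced GW invariant of lines is a nonzero rational number, cf.\ \cite{kiem}). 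On the other hand, any \emph{compact} surface or higher-dimensional submanifold of $\mathbb{P}^3$ can be moved / the line class has genuine rational curves, so consistency would already be fine --- the contradiction instead comes from the fact that on a compact irreducible hyper-K\"ahler manifold the \emph{reduced} genus zero GW invariants in a class of square zero (and a line in $\mathbb{P}^3\subset X$ has $\beta^2=0$ under the Beauville--Bogomolov form, since it lies in a $\mathbb{P}^3$) are constrained, whereas the direct $\mathrm{Gr}(2,4)$ computation forces a value violating this constraint. The main obstacle will be pinning down this last numerical contradiction rigorously: showing that the embedded $\mathbb{P}^3$ forces $\beta$ to be isotropic for the Beauville--Bogomolov form and then deriving that the reduced line count must vanish, against the nonzero value computed on $\mathrm{Gr}(2,4)$; alternatively, one can bypass GW theory and argue purely inside $DT_4$ theory that $[\overline{\mathcal{M}}_c^{DT_4}]^{vir}=0$ by Theorem~\ref{DT=GW2}(2) while a separate localization or excess-intersection computation on the smooth $\mathrm{Gr}(2,4)$ shows the self-dual obstruction Euler class is nonzero, the contradiction being immediate. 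I would pursue the second route, since it avoids the Beauville--Bogomolov bookkeeping and uses only material already in the excerpt.
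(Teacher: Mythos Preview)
Your route (b) contains a genuine gap. The vanishing $[\overline{\mathcal{M}}_c^{DT_4}]^{vir}=0$ in Theorem~\ref{DT=GW2} is automatic: for $Hol(X)=Sp(2)$ the real virtual dimension is $2n-1$ (Lemma~\ref{v.d of ideal sheaves of curves}), so the self-dual obstruction bundle has odd rank and its Euler class is identically zero. A ``separate computation'' on $\mathrm{Gr}(2,4)$ will simply confirm this; there is no contradiction at the unreduced level. Route (a) appeals to Beauville--Bogomolov constraints on reduced GW invariants that are neither stated nor proved in the paper, so it does not constitute a self-contained argument.

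The paper's proof operates entirely at the \emph{reduced} level and the contradiction is different from either of your routes. After removing the trace factor $H^2(X,\mathcal{O}_X)$ and passing to the maximal isotropic subspace, the reduced obstruction bundle has fibre $H^1(C,\mathcal{N}_{C/X})$. Using the exact sequence $0\to T\mathbb{P}^3\to TX|_{\mathbb{P}^3}\to\mathcal{N}_{\mathbb{P}^3/X}\to 0$ (and $\mathcal{N}_{\mathbb{P}^3/X}\cong K_{\mathbb{P}^3}$, which is \emph{negative} --- your adjunction formula has a sign error, it should read $K_{\mathbb{P}^3}\cong i^*K_X\otimes\det\mathcal{N}_{\mathbb{P}^3/X}$, giving $\mathcal{N}_{\mathbb{P}^3/X}\cong\mathcal{O}(-4)$), one identifies this with $H^1(C,\iota^*K_{\mathbb{P}^3})$, the GW obstruction bundle for $K_{\mathbb{P}^3}$, whose Euler class is known to be nonzero by localization \cite{klempand}. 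On the other hand, the hyper-K\"ahler cosection (equation~(\ref{hyper cosection})) shows that this same bundle $H^1(C,\mathcal{N}_{C/X})\cong H^1(C,\iota^*\Omega_X)$ admits a trivial quotient $H^1(C,\Omega_C)\cong\mathbb{C}$, forcing its Euler class to vanish. That is the contradiction. The crucial point you did not isolate is that the extra trivial factor $H^1(C,\Omega_C)$ sits \emph{inside} the rank-$3$ reduced obstruction bundle, not merely in the full $Ext^2$.
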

\begin{proof} Taking away the trivial factor $H^{2}(X,\mathcal{O}_{X})$
and then restrict to the maximal isotropic subspace, we have
\begin{eqnarray*}Ext^{2}_{red}(I_{C},I_{C})&\triangleq&H^{1}(C,\mathcal{N}_{C/X}) \\
&=& H^{1}(C,\iota^{*}TX) \\
&=& H^{1}(C,\iota^{*}K_{\mathbb{P}^{3}}).
\end{eqnarray*}
Meanwhile, the Euler class of the $GW$ obstruction bundle whose fiber is $H^{1}(C,\iota^{*}K_{\mathbb{P}^{3}})$ can not be trivial by localization calculation \cite{klempand}.

If $\mathbb{P}^{3}\hookrightarrow X$, where $X$ is a compact irreducible hyper-K\"ahler four-fold,
we have a extra trivial factor $H^{1}(C,\Omega_{C})$ in the $DT_{4}$ obstruction space by the above construction which leads to the vanishing of the virtual cycle.
\end{proof}

\subsection{Some remarks on cosection localizations for hyper-K\"ahler four-folds}
In this subsection, we assume $X$ to be a compact hyper-K\"ahler four-fold with holomorphic symplectic two form $\sigma\in H^{0}(X,\Omega_{X}^{2})$.

There is an obvious surjective cosection map of the obstruction sheaf of $\mathcal{M}_{c}$,
\begin{equation}\nu: ob \twoheadrightarrow \mathcal{O}_{\mathcal{M}_{c}},   \nonumber \end{equation}
given by the trace map:
\begin{equation}\nu: Ext^{2}(\mathcal{F},\mathcal{F})\rightarrow H^{2}(X,\mathcal{O}_{X})\cong \mathbb{C} .\nonumber \end{equation}
Then we have the surjective cosection map for $DT_{4}$ obstruction space
\begin{equation}\label{nu+}\nu_{+}:Ext^{2}_{+}(\mathcal{F},\mathcal{F})\twoheadrightarrow H^{2}_{+}(X,\mathcal{O}_{X})\cong \mathbb{R}, \end{equation}
which leads to the vanishing of the virtual fundamental class of (generalized) $DT_{4}$ moduli spaces. \\

If we fix the determinant of the torsion-free sheaf, there is a less obvious cosection map \cite{bflenner}
\begin{equation}\nu_{hyper}: Ext^{2}_{0}(\mathcal{F},\mathcal{F})\twoheadrightarrow H^{4,4}(X), \nonumber \end{equation}
defined to be the composition of
\begin{equation} \xymatrix@1{
Ext^{2}_{0}(\mathcal{F},\mathcal{F})\ar[r]^{\cdot\frac{(At(\mathcal{F}))^{2}}{2}\quad} & Ext^{4}(\mathcal{F},\mathcal{F}\otimes\Omega_{X}^{2})\ar[r]^{\quad tr}
& H^{4}(X,\Omega_{X}^{2})\ar[r]^{\wedge \sigma} & H^{4,4}(X) } . \nonumber \end{equation}
Similar to the case of \cite{maulik}, one can show that $\nu_{hyper}$ is surjective if $ch_{3}(\mathcal{F})\neq0$. But it does not
factor through $Ext^{2}_{+}(\mathcal{F},\mathcal{F})$ and in general does not give a surjective cosection map of the trace-free $DT_{4}$ obstruction space.

Recall that in the subsection of $DT_{4}/GW$ correspondence, we have established a surjective cosection map for the trace-free $DT_{4}$ obstruction space (\ref{hyper cosection}) which turns out to be the same as the cosection map of $GW$ theory for hyper-K\"ahler four-folds.

\subsection{Li-Qin's examples}
Actually, we have examples when $\overline{\mathcal{M}}_{c}=\mathcal{M}_{c}=\mathcal{M}_{c}^{o}$ on certain Calabi-Yau four-folds.
We will study the examples when the rank $2$ bundles always come from non-trivial extensions of two line bundles with specific Chern class.
The construction is due to W. P. Li and Z.Qin \cite{lq}.

Let $X$ be a generic smooth hyperplane section in $W=\mathbb{P}^{1}\times\mathbb{P}^{4}$ of bi-degree $(2,5)$ which is a Calabi-Yau four-fold. Let
\begin{equation}cl=[1+(-1,1)|_{X}]\cdot[1+(\epsilon_{1}+1,\epsilon_{2}-1)|_{X}],\nonumber \end{equation}
\begin{equation}k=(1+\epsilon_{1})\left(\begin{array}{l}6-\epsilon_{2} \\ \quad 4\end{array}\right), \quad \epsilon_{1},\epsilon_{2}=0,1.
\nonumber\end{equation}
Define $\overline{\mathcal{M}}_{c}(L_{r})$ to be the moduli space of Gieseker $L_{r}-$semistable rank-2 torsion-free sheaves
with Chern character $c$ (which can be easily read from the total Chern class $cl$), where $L_{r}=\mathcal{O}_{W}(1,r)|_{X}$.

Then by Theorem 5.7 \cite{lq}, we have
\begin{lemma}(Li-Qin)\cite{lq}\label{lq example} \\
The moduli space of rank two bundles on $X$ with the given Chern class stated above satisfies the following properties:

(i) The moduli space is isomorphic to $\mathbb{P}^{k}$ and consists of all the rank-2 bundles in the nonsplitting extensions
\begin{equation}0\rightarrow \mathcal{O}_{X}(-1,1)\rightarrow E\rightarrow \mathcal{O}_{X}(\epsilon_{1}+1,\epsilon_{2}-1)\rightarrow 0,
\nonumber\end{equation}
when
\begin{equation}\frac{15(2-\epsilon_{2})}{6+5\epsilon_{1}+2\epsilon_{2}}<r<\frac{15(2-\epsilon_{2})}{\epsilon_{1}(1+2\epsilon_{2})}.
\nonumber\end{equation}
(ii) $\overline{\mathcal{M}}_{c}(L_{r})$  is empty when
\begin{equation} 0<r<\frac{15(2-\epsilon_{2})}{6+5\epsilon_{1}+2\epsilon_{2}}.\nonumber\end{equation}
\end{lemma}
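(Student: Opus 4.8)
The statement is Theorem~5.7 of Li--Qin \cite{lq}, and I would reconstruct its proof in four moves: (i) realise the relevant rank-two bundles as nonsplit extensions of line bundles and organise them into a projective family; (ii) verify $L_r$-(semi)stability of these bundles exactly in the stated range of $r$; (iii) show conversely that every $L_r$-semistable rank-two torsion-free sheaf with the prescribed Chern character is one of these extensions, and upgrade the resulting bijection to an isomorphism of moduli spaces; (iv) rule out the existence of any $L_r$-semistable sheaf with that Chern character below the lower wall.

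For (i) I would first note that, since $X$ is a smooth ample divisor of dimension $4\ge 3$ in $W=\mathbb{P}^1\times\mathbb{P}^4$, the Grothendieck--Lefschetz theorem gives $\operatorname{Pic}(X)\cong\operatorname{Pic}(W)=\mathbb{Z}\langle H_1,H_2\rangle$, so each $\mathcal{O}_X(a,b)$ is defined and every rank-two sheaf has a bidegree. The extensions $0\to\mathcal{O}_X(-1,1)\to E\to\mathcal{O}_X(\epsilon_1+1,\epsilon_2-1)\to 0$ are classified by $\operatorname{Ext}^1_X(\mathcal{O}_X(\epsilon_1+1,\epsilon_2-1),\mathcal{O}_X(-1,1))=H^1(X,\mathcal{O}_X(-\epsilon_1-2,2-\epsilon_2))$, which I would compute by restricting from $W$ along $0\to\mathcal{O}_W(-\epsilon_1-4,-\epsilon_2-3)\to\mathcal{O}_W(-\epsilon_1-2,2-\epsilon_2)\to\mathcal{O}_X(\cdots)\to 0$ and applying Künneth with Bott vanishing on the two factors; this identifies the space of nonsplit extensions up to isomorphism with the $\mathbb{P}^k$ of the statement. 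Any such $E$ is automatically locally free of rank two (an extension of locally free sheaves), and one checks $\operatorname{Hom}(\mathcal{O}_X(-1,1),E)=\mathbb{C}$ while $\operatorname{Hom}(E,\mathcal{O}_X(-1,1))=0$, so the sub-line bundle $\mathcal{O}_X(-1,1)\hookrightarrow E$ is unique up to scalar; hence distinct extension classes give non-isomorphic bundles and we obtain a morphism $\mathbb{P}^k\hookrightarrow\overline{\mathcal{M}}_c(L_r)$.

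For (ii) I would use $L_r=\mathcal{O}_W(1,r)|_X$ and the intersection ring of $W$ to get $\deg_{L_r}\mathcal{O}_X(a,b)=(aH_1+bH_2)\cdot(H_1+rH_2)^3\cdot(2H_1+5H_2)=r^2\big((5a+2b)r+15b\big)$. Since $c_1(E)=c_1(\mathcal{O}_X(-1,1))+c_1(\mathcal{O}_X(\epsilon_1+1,\epsilon_2-1))$, a rank-two bundle whose unique maximal destabilising sub-line bundle is $\mathcal{O}_X(-1,1)$ is $\mu_{L_r}$-stable precisely when $\deg_{L_r}\mathcal{O}_X(-1,1)<\deg_{L_r}\mathcal{O}_X(\epsilon_1+1,\epsilon_2-1)$; substituting $a=-1,b=1$ on the left and $a=\epsilon_1+1,b=\epsilon_2-1$ on the right turns this into $r>\tfrac{15(2-\epsilon_2)}{6+5\epsilon_1+2\epsilon_2}$. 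The upper bound $r<\tfrac{15(2-\epsilon_2)}{\epsilon_1(1+2\epsilon_2)}$ (vacuous when $\epsilon_1=0$) is the next wall of the chamber decomposition for $c$: beyond it a different sub-line bundle of the extension bundles becomes destabilising or new strictly semistable sheaves enter, so the identification with $\mathbb{P}^k$ holds only on the stated interval. On this chamber no rank-two sheaf with this Chern character is strictly $L_r$-semistable, so Gieseker and slope semistability coincide and $\overline{\mathcal{M}}_c(L_r)$ is a fine moduli space of $L_r$-stable bundles.

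For (iii) and (iv) — which I expect to be the main obstacle — given an $L_r$-semistable rank-two torsion-free $\mathcal{F}$ with $\operatorname{ch}(\mathcal{F})=c$, I would normalise $\det\mathcal{F}=\mathcal{O}_X(\epsilon_1,\epsilon_2)$ and use Hirzebruch--Riemann--Roch on $X$ together with the semistability inequality and Serre duality to force $H^0(X,\mathcal{F}(1,-1))\ne 0$; a nonzero section gives $\mathcal{O}_X(-1,1)\hookrightarrow\mathcal{F}$, whose saturation a degree/stability count on the chamber pins down to $\mathcal{O}_X(-1,1)$ itself with torsion-free rank-one quotient, which Chern-class bookkeeping forces to be $\mathcal{O}_X(\epsilon_1+1,\epsilon_2-1)$; the extension is nonsplit because the split bundle is $L_r$-unstable on this chamber, so $\mathcal{F}$ lies in our family. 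For $0<r<\tfrac{15(2-\epsilon_2)}{6+5\epsilon_1+2\epsilon_2}$ the same Riemann--Roch estimate still produces a sub-line bundle at least as $L_r$-positive as $\mathcal{O}_X(-1,1)$, but there that sub-line bundle strictly destabilises $\mathcal{F}$, contradicting semistability, so $\overline{\mathcal{M}}_c(L_r)=\emptyset$. The morphism of (i) is then a bijection, and since $\overline{\mathcal{M}}_c(L_r)$ is a fine moduli space the universal extension over $\mathbb{P}^k$ realises its inverse, so $\overline{\mathcal{M}}_c(L_r)\cong\mathbb{P}^k$. The delicate point will be making the Riemann--Roch and semistability estimates in (iii)--(iv) sharp enough both to guarantee the section of $\mathcal{F}(1,-1)$ and to control its vanishing locus and saturation; this is precisely the numerical analysis carried out in \cite{lq}, where genericity of $X$ and the exact bidegree $(2,5)$ are used.
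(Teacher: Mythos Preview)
The paper does not give its own proof of this lemma: it is quoted verbatim as Theorem~5.7 of Li--Qin \cite{lq}, introduced by the phrase ``Then by Theorem 5.7 \cite{lq}, we have'', and is used only as input to the subsequent computation of $\chi(E,E)$ and the virtual cycle. So there is nothing in the present paper to compare your argument against.

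That said, your outline is a faithful reconstruction of how the Li--Qin argument actually proceeds, and your degree computation $\deg_{L_r}\mathcal{O}_X(a,b)=r^2\big((5a+2b)r+15b\big)$ and the resulting lower wall $r>\frac{15(2-\epsilon_2)}{6+5\epsilon_1+2\epsilon_2}$ check out. The one place where your sketch is vaguer than the actual Li--Qin paper is step~(iii): producing the nonzero section of $\mathcal{F}(1,-1)$ for an arbitrary $L_r$-semistable torsion-free $\mathcal{F}$, and showing its saturation is exactly $\mathcal{O}_X(-1,1)$ with locally free (not merely torsion-free) quotient, requires a more detailed Bogomolov-type inequality and a genericity argument for $X$ that you only allude to. For the purposes of this paper, however, simply citing \cite{lq} is all that is expected.
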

By the Hirzebruch-Riemann-Roch theorem, we can get
\begin{equation}\epsilon_{1}=0,\textrm{ }\epsilon_{2}=1 \Rightarrow \chi(E,E)=-6, \textrm{ }\textrm{ }k=4 ,\nonumber\end{equation}
\begin{equation}\textrm{ }\epsilon_{1}=1,\textrm{ }\epsilon_{2}=1 \Rightarrow \chi(E,E)=-16, \textrm{ } k=9, \nonumber\end{equation}
\begin{equation}\textrm{ }\textrm{ }\epsilon_{1}=0,\textrm{ }\epsilon_{2}=0 \Rightarrow \chi(E,E)=-26,\textrm{ } k=14, \nonumber\end{equation}
\begin{equation}\textrm{ }\textrm{ }\epsilon_{1}=1,\textrm{ }\epsilon_{2}=0 \Rightarrow \chi(E,E)=-56,\textrm{ } k=29 ,\nonumber\end{equation}
By Lemma 5.2 \cite{lq}, $k=dim Ext^{1}(E,E)$. By simple computations we have $Ext^{2}(E,E)=0$ in all the above four cases.

Thus $\overline{\mathcal{M}}_{c}^{DT_{4}}$ exists and $\overline{\mathcal{M}}_{c}^{DT_{4}}=\overline{\mathcal{M}}_{c}(L_{r})$ is compact smooth whose
virtual fundamental class is the usual fundamental class of $\overline{\mathcal{M}}_{c}(L_{r})$. \\

Using the $\mu$-map to define corresponding $DT_{4}$ invariants, we need the universal bundle of the moduli space. Here, bundles in the moduli space always
come from extensions of two line bundles, we know the universal family of the moduli space comes from the universal extension \cite{lange}
\begin{equation}0\rightarrow \pi_{1}^{*}L_{2}\otimes \pi_{2}^{*}\mathcal{O}(1)\rightarrow\mathcal{E}\rightarrow\pi_{1}^{*}L_{1}\rightarrow 0,
\nonumber\end{equation}
\begin{equation}L_{1}=\mathcal{O}_{X}(\epsilon_{1}+1,\epsilon_{2}-1), L_{2}=\mathcal{O}_{X}(-1,1), \nonumber\end{equation}
where $\pi_{1}: X\times\overline{\mathcal{M}}_{c}(L_{r})\rightarrow X$,
$\pi_{2}: X\times\overline{\mathcal{M}}_{c}(L_{r})\rightarrow \overline{\mathcal{M}}_{c}(L_{r})$ are the projection maps.
And the Chern class of the universal bundle is
\begin{equation}c(\mathcal{E})=\big(1+\pi_{1}^{*}c_{1}(L_{1})\big)\big(1+\pi_{1}^{*}c_{1}(L_{2})+\pi_{2}^{*}c_{1}(\mathcal{O}(1))\big),
\nonumber\end{equation}
which ensures us to compute all $DT_{4}$ invariants in this example. \\
${}$ \\
\textbf{The wall crossing phenomenon}.
By Lemma \ref{lq example}, when the parameter $r$ is small,
the moduli space is empty and the invariant is zero. When $r$ crosses the critical value
$\frac{15(2-\epsilon_{2})}{\epsilon_{1}(1+2\epsilon_{2})}$, the virtual cycle will be nontrivial and produces nonzero invariants.
Hence, in general the wall-crossing phenomenon exists in $DT_{4}$ theory.

To sum up, we have
\begin{theorem}\label{liqin eg}
Let $X$ be a generic smooth hyperplane section in $W=\mathbb{P}^{1}\times\mathbb{P}^{4}$ of $(2,5)$ type.
Let
\begin{equation}cl=[1+(-1,1)|_{X}]\cdot[1+(\epsilon_{1}+1,\epsilon_{2}-1)|_{X}],
\nonumber
\end{equation}
\begin{equation}k=(1+\epsilon_{1})\left(\begin{array}{l}6-\epsilon_{2} \\ \quad 4\end{array}\right), \quad \epsilon_{1},\epsilon_{2}=0,1.
\nonumber\end{equation}
Denote $\overline{\mathcal{M}}_{c}(L_{r})$ to be the moduli space of Gieseker $L_{r}$-semistable rank-2
torsion-free sheaves with Chern character $c$ (which can be easily read from the total Chern class $cl$), where $L_{r}=\mathcal{O}_{W}(1,r)|_{X}$. \\
$(1)$ If
\begin{equation}\frac{15(2-\epsilon_{2})}{6+5\epsilon_{1}+2\epsilon_{2}}<r<\frac{15(2-\epsilon_{2})}{\epsilon_{1}(1+2\epsilon_{2})},
\nonumber\end{equation}
then $\overline{\mathcal{M}}_{c}^{DT_{4}}$ exists and $\overline{\mathcal{M}}_{c}^{DT_{4}}=\overline{\mathcal{M}}_{c}(L_{r})=\mathbb{P}^{k}$, $[\overline{\mathcal{M}}_{c}^{DT_{4}}]^{vir}=[\mathbb{P}^{k}]$. \\
${}$ \\
$(2)$ If
\begin{equation} 0<r<\frac{15(2-\epsilon_{2})}{6+5\epsilon_{1}+2\epsilon_{2}},\nonumber\end{equation}
then $\overline{\mathcal{M}}_{c}^{DT_{4}}=\emptyset $ and $[\overline{\mathcal{M}}_{c}^{DT_{4}}]^{vir}=0$. \\
\end{theorem}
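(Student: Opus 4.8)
The plan is to read off the structure of the moduli space from the work of Li and Qin and then feed it into the virtual cycle machinery of Section~5. By Lemma~\ref{lq example}, in the chamber
\begin{equation}
\frac{15(2-\epsilon_{2})}{6+5\epsilon_{1}+2\epsilon_{2}}<r<\frac{15(2-\epsilon_{2})}{\epsilon_{1}(1+2\epsilon_{2})}
\nonumber
\end{equation}
every $L_{r}$-semistable torsion-free sheaf with Chern character $c$ is one of the rank-two bundles $E$ sitting in a non-split extension
\begin{equation}
0\rightarrow \mathcal{O}_{X}(-1,1)\rightarrow E\rightarrow \mathcal{O}_{X}(\epsilon_{1}+1,\epsilon_{2}-1)\rightarrow 0,
\nonumber
\end{equation}
and $\overline{\mathcal{M}}_{c}(L_{r})\cong \mathbb{P}^{k}$; in particular $\mathcal{M}_{c}=\mathcal{M}_{c}^{o}=\overline{\mathcal{M}}_{c}(L_{r})$ is a compact moduli space of slope-stable bundles, so that Corollary~\ref{cptness by stable bdl} and Theorem~\ref{mo mDT4} apply. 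In the chamber $0<r<\frac{15(2-\epsilon_{2})}{6+5\epsilon_{1}+2\epsilon_{2}}$ the same lemma gives $\overline{\mathcal{M}}_{c}(L_{r})=\emptyset$, hence $\mathcal{M}_{c}=\emptyset$, so $\overline{\mathcal{M}}_{c}^{DT_{4}}=\emptyset$ and $[\overline{\mathcal{M}}_{c}^{DT_{4}}]^{vir}=0$; this disposes of part~$(2)$.

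For part~$(1)$ the key point is that the moduli space is \emph{unobstructed}. First I would apply Hirzebruch--Riemann--Roch to $E$, using the Chern character read off from $cl$ together with the Chern numbers of $X$ (a generic $(2,5)$-divisor in $\mathbb{P}^{1}\times\mathbb{P}^{4}$), to obtain the four values $\chi(E,E)=-6,-16,-26,-56$ in the respective cases $(\epsilon_{1},\epsilon_{2})=(0,1),(1,1),(0,0),(1,0)$. By Lemma~5.2 of~\cite{lq} one has $k=\dim Ext^{1}(E,E)$, and $Ext^{0}(E,E)=\mathbb{C}$ since $E$ is slope-stable. Then I would compute $Ext^{2}(E,E)=0$ directly: applying $\mathrm{Hom}(E,-)$ and $\mathrm{Hom}(-,\mathcal{O}_{X}(\ast))$ to the defining extension reduces all the $Ext$-groups to the cohomology groups $H^{\bullet}(X,\mathcal{O}_{X}(a,b))$ for the finitely many twists $(a,b)$ that occur, and these are computed from the Koszul sequence $0\rightarrow\mathcal{O}_{W}(-2,-5)\rightarrow\mathcal{O}_{W}\rightarrow\mathcal{O}_{X}\rightarrow 0$ on $W=\mathbb{P}^{1}\times\mathbb{P}^{4}$ together with the K\"unneth formula. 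The identity $\chi(E,E)=2-2k$ (forced by Serre duality on the Calabi--Yau four-fold once $Ext^{2}=0$) then cross-checks against the HRR values above.

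Granting $Ext^{2}(E,E)=0$, all Kuranishi maps of $\mathcal{M}_{c}$ vanish, so $\mathcal{M}_{c}=\mathbb{P}^{k}$ is smooth in the strong sense. By Proposition~\ref{gene DT4 if Mc smooth} the generalized $DT_{4}$ moduli space exists and $\overline{\mathcal{M}}_{c}^{DT_{4}}=\mathcal{M}_{c}=\overline{\mathcal{M}}_{c}(L_{r})=\mathbb{P}^{k}$ (and $\mathcal{M}_{c}^{DT_{4}}=\mathcal{M}_{c}^{o}$ as sets by Theorem~\ref{mo mDT4}). By Definition~\ref{virtual cycle when Mc smooth} one has $[\overline{\mathcal{M}}_{c}^{DT_{4}}]^{vir}=PD(e(ob_{+}))\in H_{2k}(\mathbb{P}^{k},\mathbb{Z})$, where $ob_{+}$ is the self-dual obstruction bundle; but the obstruction sheaf $ob$ has fibre $Ext^{2}(E,E)=0$, so $ob_{+}=0$, $e(ob_{+})=1$, and $[\overline{\mathcal{M}}_{c}^{DT_{4}}]^{vir}=PD(1)=[\mathbb{P}^{k}]$. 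Note the orientation data is vacuous here, the obstruction bundle having rank zero.

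The main obstacle is the explicit vanishing $Ext^{2}(E,E)=0$ in all four cases: this demands careful bookkeeping of the cohomology of line bundles on the hypersurface $X$, since the relevant twists lie close to the boundary of the vanishing range and the Koszul sequence mixes positive twists on one projective factor with negative twists on the other. Everything else---compactness, smoothness of the moduli space, and the identification of the virtual class with the fundamental class of $\mathbb{P}^{k}$---is then formal, given Section~5 and Lemma~\ref{lq example}.
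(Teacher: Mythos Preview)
Your proposal is correct and follows essentially the same route as the paper: invoke Li--Qin's Lemma~\ref{lq example} for the moduli structure, compute $\chi(E,E)$ via Hirzebruch--Riemann--Roch, use Lemma~5.2 of \cite{lq} for $\dim Ext^{1}(E,E)=k$, deduce $Ext^{2}(E,E)=0$, and then apply Proposition~\ref{gene DT4 if Mc smooth} and Definition~\ref{virtual cycle when Mc smooth} to identify the virtual cycle with the fundamental class. You are in fact more explicit than the paper about how the vanishing $Ext^{2}(E,E)=0$ is obtained (the paper simply asserts it ``by simple computations''), and your Serre-duality cross-check $\chi(E,E)=2-2k$ is a useful sanity test.
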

\begin{remark}From the above example, we can see the $Spin(7)$ instanton moduli space is not spin in general.
\end{remark}

\subsection{Moduli space of ideal sheaves of one point}
In this subsection, we assume $X$ is a compact Calabi-Yau four-fold. And we consider the Hilbert scheme of one point on $X$,
i.e. $\mathcal{M}_{c}=X$. Here we prefer (actually equivalent to ideal sheaves of one point if $Hol(X)=SU(4)$) considering the moduli space of structure sheaf of one point.

By the standard Koszul resolution, we have
\begin{equation}\cdot\cdot\cdot\rightarrow \mathcal{O}\otimes {\wedge}^{2}T_{p}^{*}\rightarrow \mathcal{O}\otimes
 T_{p}^{*}\rightarrow \mathcal{O}\rightarrow \mathcal{O}_{p}\rightarrow 0.  \nonumber\end{equation}
Then
\begin{eqnarray*}Ext^{i}(\mathcal{O}_{p},\mathcal{O}_{p})&=&Ext^{i}(\mathcal{O}\otimes {\wedge}^{\bullet}T_{p}^{*},\mathcal{O}_{p} ) \\
&=& H^{i}(\mathcal{O}\otimes {\wedge}^{\bullet}T_{p}\otimes \mathcal{O}_{p}) \\
&=& {\wedge}^{i}T_{p}\otimes \mathcal{O}_{p}. \nonumber\end{eqnarray*}
The last equality is because the differentials in Koszul complex are 0 at point $p$. The above isomorphism is canonical and
we can identify the obstruction bundle as ${\wedge}^{2}T\mathcal{M}_{c}={\wedge}^{2}TX$.

Now we want to give a representation method to determine the Euler class of the self-dual obstruction bundle.

Because $SU(4)=Spin(6)$, let $V$ be a fundamental representation of $Spin(6)$ such that
\begin{equation} V\otimes_{\mathbb{R}}\mathbb{C}=\wedge^{2}T^{*}X. \nonumber \end{equation}
We take a complex bundle $U$ such that $V$ is its underlying real bundle, then the spinor bundle $S^{+}(V)=\wedge^{even}U\otimes K^{\frac{1}{2}}$,
where $K=\wedge^{3}U^{*}$ and $c_{3}\big(S^{+}(V)\big)=-c_{3}(U)$.

If we identify $T^{*}X=S^{+}(V)$, we get
\begin{equation} e(V)=c_{3}(U)=c_{3}(X). \nonumber \end{equation}
Because $ob_{+}=\wedge^{2}_{+}TX=V^{*}$, we have
\begin{equation}e(ob_{+})=e(V^{*})=-c_{3}(X). \nonumber \end{equation}
\begin{theorem}\label{moduli of one point}
If $Hol(X)=SU(4)$ and $c=(1,0,0,0,-1)$, then $\overline{\mathcal{M}}_{c}^{DT_{4}}$ exists and $\overline{\mathcal{M}}_{c}^{DT_{4}}=X$, $[\overline{\mathcal{M}}_{c}^{DT_{4}}]^{vir}=\pm PD\big(c_{3}(X)\big)$.
\end{theorem}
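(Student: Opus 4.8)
The plan is to verify the three assertions of Theorem \ref{moduli of one point} in turn: that $\overline{\mathcal{M}}_{c}^{DT_{4}}$ exists, that it equals $X$, and that its virtual class is $\pm PD(c_{3}(X))$. For the first two, I would note that for $c=(1,0,0,0,-1)$ the Gieseker moduli space $\mathcal{M}_{c}$ is the Hilbert scheme of one point on $X$, which is just $X$ itself and hence smooth. By Proposition \ref{gene DT4 if Mc smooth} the generalized $DT_{4}$ moduli space exists and $\overline{\mathcal{M}}_{c}^{DT_{4}}=\mathcal{M}_{c}=X$. For the identification of the obstruction bundle, I would use the Koszul resolution of $\mathcal{O}_{p}$ as carried out just before the theorem statement: the canonical isomorphism $Ext^{i}(\mathcal{O}_{p},\mathcal{O}_{p})\cong \wedge^{i}T_{p}X$ (the differentials in the Koszul complex vanishing at $p$) identifies the obstruction bundle $ob$ with $ob|_{p}=Ext^{2}(\mathcal{O}_{p},\mathcal{O}_{p})\cong \wedge^{2}T_{p}X$, so $ob\cong\wedge^{2}TX$ globally over $X$.

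Next, by Definition \ref{virtual cycle when Mc smooth} the virtual fundamental class is $PD(e(ob_{+}))$, where $ob_{+}$ is the self-dual real sub-bundle of $(ob,Q_{Serre})$ supplied by Lemma 5 of \cite{eg}. The content of the computation is thus to evaluate the half-Euler class of $(\wedge^{2}TX, Q_{Serre})$. Here I would invoke the exceptional isomorphism $SU(4)\cong Spin(6)$: the bundle $\wedge^{2}T^{*}X$ is (the complexification of) the real vector representation $V$ of $Spin(6)$, of real rank $6$, and $T^{*}X$ itself is the positive spinor bundle $S^{+}(V)$. Using the standard spinor formulas $S^{+}(V)=\wedge^{\mathrm{even}}U\otimes K^{1/2}$ with $K=\wedge^{3}U^{*}$ for a complex bundle $U$ underlying $V$, together with $c_{3}(S^{+}(V))=-c_{3}(U)$, one gets $e(V)=c_{3}(U)=c_{3}(X)$ after identifying $T^{*}X=S^{+}(V)$. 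Since $ob_{+}=\wedge^{2}_{+}TX=V^{*}$, this yields $e(ob_{+})=e(V^{*})=-c_{3}(X)$, and hence $[\overline{\mathcal{M}}_{c}^{DT_{4}}]^{vir}=\pm PD(c_{3}(X))$, the sign being the usual ambiguity coming from the choice of orientation data / the $SO(6,\mathbb{C})$-reduction.

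The main obstacle, and the step deserving the most care, is the representation-theoretic identification of the half-Euler class with $c_{3}(X)$: one must check that the real structure on $\wedge^{2}TX$ coming from the Serre/Calabi-Yau pairing $Q_{Serre}$ is genuinely the one making $V$ the real $Spin(6)$-representation (equivalently, that $ob_{+}$ is $\wedge^{2}_{+}TX$ in the sense of Corollary \ref{cor star4 cut coho into ASD}), and that the spinor-bundle bookkeeping $c_{3}(S^{+}(V))=-c_{3}(U)$, $e(V)=c_{3}(U)$ is applied with consistent conventions. Once the holonomy reduction to $Spin(6)$ and the spinor identifications are pinned down, the Chern-class manipulation is formal. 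I would also remark that the sign cannot be fixed without committing to a specific orientation data $o(\mathcal{L})$, which is why the statement carries the $\pm$; this is consistent with Axiom $(1)$ (orientation reversal changes the sign of the invariant).
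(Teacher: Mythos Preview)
Your proposal is correct and follows essentially the same argument as the paper: the Koszul resolution identifies the obstruction bundle with $\wedge^{2}TX$, and then the exceptional isomorphism $SU(4)\cong Spin(6)$ together with the spinor-bundle identification $T^{*}X=S^{+}(V)$ and the formula $c_{3}(S^{+}(V))=-c_{3}(U)$ yield $e(ob_{+})=-c_{3}(X)$. Your remarks about the care needed in matching the Serre pairing with the $Spin(6)$ real structure and about the orientation ambiguity are apt, but the overall route is the paper's own.
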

\begin{remark}If $Hol(X)=Sp(2)$, the real virtual dimension of $\overline{\mathcal{M}}_{c}^{DT_{4}}$ (moduli of ideal sheaves instead of structure sheaves of one point) is $1$
by Lemma {\ref{v.d of ideal sheaves of curves}}. By fixing the determinants of the ideal sheaves, the real virtual dimension becomes $2$ and
the reduced virtual cycle will still be zero.
\end{remark}

\newpage
\section{Appendix}

\subsection{Local Kuranishi models of $\mathcal{M}_{c}^{o}$ }
We will review several different local Kuranishi models of $\mathcal{M}_{c}^{o}$. They are isomorphic and related by some re-parametrizations of the deformation space.  \\
${}$ \\
\textbf{Standard Kuranishi model of $\mathcal{M}_{c}^{o}$ with gauge fixing condition $\overline{\partial}_{A}^{*}a''=0$ }
Let us first recall the standard local Kuranishi model of $\mathcal{M}_{c}^{o}$
using linear gauge fixing condition $\overline{\partial}_{A}^{*}a''=0$
\cite{fm}.
Denote
\begin{equation}\kappa: H^{0,1}(X,EndE)\rightarrow H^{0,2}(X,EndE)  \nonumber \end{equation}
to be the Kuranishi map defined by
\begin{equation}\kappa(\alpha)=\mathbb{H}^{0,2}\big(g^{-1}(\alpha)\wedge g^{-1}(\alpha)\big), \nonumber \end{equation}
where
\begin{equation}g(a'')=a''+\overline{\partial}_{A}^{-1}P_{\overline{\partial}_{A}}(a''\wedge a'').  \nonumber \end{equation}
Here $a''\in Ker\overline{\partial}_{A}^{*}\subseteq\Omega^{0,1}(X,EndE)_{k}$ and $\alpha\in H^{0,1}(X,EndE)$.
\begin{remark} ${}$ \\
1. The map
\begin{equation}\overline{\partial}_{A}: Im(\overline{\partial}_{A}^{*})\rightarrow \Omega^{0,2}(X,EndE)_{k-1}  \nonumber \end{equation}
is an isomorphism onto its image and $\overline{\partial}_{A}^{-1}$ is defined as its inverse. \\
2. By the linear gauge fixing condition $\overline{\partial}_{A}^{*}a''=0$ and the definition of the map in (1),
we know that $\overline{\partial}_{A}^{*}g(a'')=0$ which implies
that the above Kuranishi map $\kappa$ is well defined. \\
3. By the standard Kuranishi theory, we have
\begin{equation} \kappa^{-1}(0)=\left\{ \begin{array}{lll}
    a'' \textrm{ }\big{|} & \|a''\|_{k} < \epsilon'', F^{0,2}(\overline{\partial}_{A}+a'')=0 , \textrm{ } \overline{\partial}_{A}^{*}a''=0      
\nonumber\end{array}\right\}.\end{equation}
By a suitable complex gauge transformation, we have $\kappa^{-1}(0)\cong Q_{A}\cap P^{-1}(0)$ (\ref{QA def}).
\end{remark}
${}$ \\
\textbf{Standard Kuranishi model of $\mathcal{M}_{c}^{o}$ with gauge fixing condition $F\wedge \omega^{3}=0$}
If we use the gauge fixing condition $F\wedge \omega^{3}=0$ instead of $\overline{\partial}_{A}^{*}a''=0$, we have the following local
Kuranishi model of $\mathcal{M}_{c}^{o}$ near $\overline{\partial}_{A}$
\begin{equation}\widetilde{\kappa}: H^{0,1}(X,EndE)\rightarrow H^{0,2}(X,EndE),  \nonumber \end{equation}
which is defined by
\begin{equation}\label{kappa tilta}\widetilde{\kappa}(\alpha)=\mathbb{H}^{0,2}\big(\widetilde{g}^{-1}(\alpha)\wedge \widetilde{g}^{-1}(\alpha)\big),
\end{equation}
where $\alpha\in H^{0,1}(X,EndE) $ and
\begin{equation}\widetilde{g}: \Omega^{0,1}(EndE)_{k}\rightarrow H^{0,1}(EndE)\oplus {\overline{\partial}_{A}^{*}\Omega^{0,1}(EndE)}_{k}\oplus
{\overline{\partial}_{A}^{*}\Omega^{0,2}(EndE)}_{k-1}  \nonumber \end{equation}
satisfies
\begin{equation}\label{g tilta}\widetilde{g}(a'')=\bigg(\mathbb{H}(a''),\overline{\partial}_{A}^{*}a''-\frac{i}{2}\wedge(a'\wedge a''+a''\wedge a'),
\overline{\partial}_{A}^{*}\big(\overline{\partial}_{A}a''+P_{\overline{\partial}_{A}}(a''\wedge a'')
\big)\bigg). \end{equation}
We know that $\widetilde{\kappa}^{-1}(0)=Q_{A}\cap P^{-1}(0)$ by standard Kuranishi theory. \\
${}$ \\
\textbf{Another Kuranishi model of $\mathcal{M}_{c}^{o}$ with gauge fixing condition $F\wedge \omega^{3}=0$}
As appeared in the $DT_{4}$ equations, we have the following Kuranishi model of $\mathcal{M}_{c}^{o}$ with gauge fixing condition $F\wedge \omega^{3}=0$ near $\overline{\partial}_{A}$
\begin{equation}\tilde{\tilde{\kappa}}: H^{0,1}(X,EndE)\rightarrow H^{0,2}(X,EndE),  \nonumber \end{equation}
which is defined by
\begin{equation}\label{kappa double tilta}\tilde{\tilde{\kappa}}(\alpha)=\mathbb{H}^{0,2}\big(q^{-1}(\alpha)\wedge q^{-1}(\alpha)\big),
\end{equation}
where $\alpha\in H^{0,1}(X,EndE) $ and
\begin{equation}q: \Omega^{0,1}(EndE)_{k}\rightarrow H^{0,1}(EndE)\oplus {\overline{\partial}_{A}^{*}\Omega^{0,1}(EndE)}_{k}\oplus
{\overline{\partial}_{A}^{*}\Omega^{0,2}(EndE)}_{k-1}  \nonumber \end{equation}
satisfies
\begin{equation}\label{g double tilta}q(a'')=\bigg(\mathbb{H}(a''),\overline{\partial}_{A}^{*}a''-\frac{i}{2}\wedge(a'\wedge a''+a''\wedge a'),
\overline{\partial}_{A}^{*}\big(\overline{\partial}_{A}a''+P_{\overline{\partial}_{A}}(a''\wedge a'')+*_{4}P_{\overline{\partial}_{A}^{*}}(a''\wedge a'')\big)\bigg). \end{equation}
By Proposition \ref{QA intesect P=0 equals NA}, we know $\tilde{\tilde{\kappa}}^{-1}(0)$ really gives a local model of $\mathcal{M}_{c}^{o}$ near $\overline{\partial}_{A}$.

\subsection{Some remarks on the orientability of the determinant line bundles on the (generalized) $DT_{4}$ moduli spaces}
In this subsection, we prove the orientability of the determinant line bundles on the (generalized) $DT_{4}$ moduli spaces under certain assumptions. The technique is suggested by Zheng Hua which originates from an idea of A. Okounkov. \\

We first assume $\mathcal{M}_{c}^{o}\neq\emptyset$ and $H^{*}(\mathcal{M}_{c}^{o},\mathbb{Z}_{2})$ is finitely generated (which is satisfied if $\mathcal{M}_{c}^{o}=\mathcal{M}_{c}\neq\emptyset$). Hence $\mathcal{M}_{c}^{DT_{4}}=\mathcal{M}_{c}^{o}$ as topological space with finitely generated cohomologies in $\mathbb{Z}_{2}$ coefficient. \\

We denote the index bundle on the $DT_{4}$ moduli space $\mathcal{M}_{c}^{DT_{4}}$ to be $Ind$ such that
\begin{equation}Ind|_{E}=Ext^{1}(E,E)-Ext^{2}_{+}(E,E). \nonumber \end{equation}
After complexifying it, we get
\begin{equation}Ind_{\mathbb{C}}|_{E}=Ext^{1}(E,E)-Ext^{2}(E,E)+Ext^{3}(E,E). \nonumber \end{equation}
Then we can extend the complexified index bundle to $\mathcal{M}_{c}^{DT_{4}}\times \mathcal{M}_{c}^{DT_{4}}$ and
define its determinant line bundle $\mathcal{L}_{\mathbb{C}}=det(Ind_{\mathbb{C}})$. \\

Define an involution map between topological spaces
\begin{equation}\sigma: \mathcal{M}_{c}^{DT_{4}}\times \mathcal{M}_{c}^{DT_{4}}\rightarrow \mathcal{M}_{c}^{DT_{4}}\times \mathcal{M}_{c}^{DT_{4}},  \nonumber \end{equation}
\begin{equation}\sigma([A_{1}],[A_{2}])=([A_{2}],[A_{1}]).
\nonumber \end{equation}
By Serre duality pairing, we have
\begin{equation}\label{skew-sym}\sigma^{*}(\mathcal{L}_{\mathbb{C}})\cong \mathcal{L}_{\mathbb{C}}^{*}.  \end{equation}

We then refer to a result on algebraic topology from Theorem 3.16 of \cite{hatcher}
\begin{lemma}\label{Kunneth formula}
The cross product $H^{*}(X,R)\otimes_{R}H^{*}(Y,R)\rightarrow H^{*}(X\times Y,R)$ is an isomorphism of rings if $X$ and $Y$ are $CW$
complexes and $H^{k}(Y,R)$ is a finitely generated free $R$-module for all $k$.
\end{lemma}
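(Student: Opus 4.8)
The plan is to fix the space $Y$ (with $H^k(Y;R)$ finitely generated and free over $R$ for every $k$) and regard the cross product $a\times b=p_X^*(a)\cup p_Y^*(b)$, where $p_X,p_Y$ denote the projections from $X\times Y$, as a natural transformation of generalized cohomology theories in the variable $X$, then deduce the ring statement from graded commutativity of the cup product.

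First I would set up two functors on the category of CW pairs: $h^n(-)=\bigoplus_{i+j=n}H^i(-;R)\otimes_R H^j(Y;R)$ and $k^n(-)=H^n\big((-)\times Y;R\big)$. The point is that both are cohomology theories. For $k^*$ this is because $(-)\times Y$ sends CW pairs to CW pairs, cofibre sequences to cofibre sequences and wedges to wedges, using that $Y$ is a CW complex. For $h^*$ the hypothesis that each $H^j(Y;R)$ is finitely generated free makes $(-)\otimes_R H^j(Y;R)$ an exact additive functor, so tensoring the long exact sequences and Mayer--Vietoris sequences of ordinary cohomology with it keeps them exact and preserves the wedge axiom. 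The cross product assembles into a natural transformation $\Phi\colon h^*\to k^*$, the formal compatibilities (bilinearity over $R$, commutation with connecting maps) being routine.

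Next I would check $\Phi$ is an isomorphism on a point, $h^n(\mathrm{pt})=R\otimes_R H^n(Y;R)=H^n(Y;R)=H^n(\mathrm{pt}\times Y;R)=k^n(\mathrm{pt})$ with $\Phi$ the identity there. From this the standard skeletal induction — the five lemma on the long exact sequences of the pairs $(X^{(m)},X^{(m-1)})$, with wedges of spheres, hence ultimately the point, as the base case — gives that $\Phi$ is an isomorphism for every finite-dimensional CW complex $X$; then the wedge axiom together with the Milnor sequence $0\to{\varprojlim}^1 H^{n-1}(X^{(m)}\times Y)\to H^n(X\times Y)\to\varprojlim H^n(X^{(m)}\times Y)\to 0$, applied compatibly to both theories (note that tensoring with the finitely generated free module $H^j(Y;R)$ commutes with products, hence with $\varprojlim$ and ${\varprojlim}^1$), upgrades this to arbitrary CW complexes $X$.

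Finally, for the ring structure, give the left-hand side the graded tensor product multiplication $(a_1\otimes b_1)\cdot(a_2\otimes b_2)=(-1)^{|b_1|\,|a_2|}(a_1a_2)\otimes(b_1b_2)$. Since $p_X^*$ and $p_Y^*$ are ring homomorphisms and the cup product on $X\times Y$ is graded commutative, $(a_1\times b_1)\cup(a_2\times b_2)=p_X^*a_1\cup p_Y^*b_1\cup p_X^*a_2\cup p_Y^*b_2=(-1)^{|b_1|\,|a_2|}\,p_X^*(a_1a_2)\cup p_Y^*(b_1b_2)=(-1)^{|b_1|\,|a_2|}(a_1a_2)\times(b_1b_2)$, so $\Phi$ is a ring homomorphism, and being a bijection it is a ring isomorphism. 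I expect the main obstacle to be the passage from finite-dimensional to arbitrary CW complexes $X$ — controlling the ${\varprojlim}^1$ term over the skeletal tower — which is exactly where the finite-generation hypothesis on $H^*(Y;R)$ is doing essential work. An alternative route, cleaner when $X$ or $Y$ is a finite complex, is to use the Eilenberg--Zilber chain equivalence $C_\bullet(X\times Y;R)\simeq C_\bullet(X;R)\otimes_R C_\bullet(Y;R)$ together with the algebraic K\"unneth theorem and then dualize, freeness of $H^*(Y;R)$ killing the $\operatorname{Tor}$ and $\operatorname{Ext}$ correction terms; globalising that argument to infinite $X$, however, runs into the same limit issue.
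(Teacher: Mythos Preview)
Your argument is correct and is essentially the standard proof; the paper does not give its own argument for this lemma but simply cites Theorem~3.16 of Hatcher's \emph{Algebraic Topology}, whose proof proceeds exactly along the lines you sketch (the two functors $h^*$ and $k^*$ in the variable $X$, naturality of the cross product, reduction to a point via the five lemma on skeletal pairs, and the wedge axiom---the latter being where finite generation of $H^*(Y;R)$ enters to make $-\otimes_R H^j(Y;R)$ commute with products). Your explicit handling of the passage to infinite-dimensional $X$ via the Milnor $\varprojlim^1$ sequence is slightly more careful than Hatcher's presentation, but the content is the same.
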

\begin{remark}\label{use Kunneth for}
We have isomorphism
\begin{equation}H^{n}(\mathcal{M}_{c}^{DT_{4}}\times \mathcal{M}_{c}^{DT_{4}},\mathbb{Z}_{2})
\cong \oplus_{i+j=n}H^{i}(\mathcal{M}_{c}^{DT_{4}},\mathbb{Z}_{2})\otimes
H^{j}(\mathcal{M}_{c}^{DT_{4}},\mathbb{Z}_{2}). \nonumber \end{equation}
\end{remark}

Combine the involution map and the above remark, we get
\begin{proposition}\label{w1 square =0}
$w_{2}(\mathcal{L}_{\mathbb{C}}|_{\Delta})=0$, where $\Delta$ is the diagonal of $\mathcal{M}_{c}^{DT_{4}}\times \mathcal{M}_{c}^{DT_{4}}$.
\end{proposition}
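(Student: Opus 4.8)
The plan is to exploit the skew-symmetry relation $\sigma^{*}(\mathcal{L}_{\mathbb{C}})\cong \mathcal{L}_{\mathbb{C}}^{*}$ from (\ref{skew-sym}) together with the K\"unneth decomposition of Remark \ref{use Kunneth for}. First I would compute $w_{2}$ of both sides of (\ref{skew-sym}). Over $\mathbb{Z}_{2}$ a line bundle and its dual have the same Stiefel--Whitney classes, so $w_{2}(\sigma^{*}\mathcal{L}_{\mathbb{C}}) = w_{2}(\mathcal{L}_{\mathbb{C}}^{*}) = w_{2}(\mathcal{L}_{\mathbb{C}})$; on the other hand $w_{2}(\sigma^{*}\mathcal{L}_{\mathbb{C}}) = \sigma^{*}w_{2}(\mathcal{L}_{\mathbb{C}})$. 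Hence $\sigma^{*}w_{2}(\mathcal{L}_{\mathbb{C}}) = w_{2}(\mathcal{L}_{\mathbb{C}})$ in $H^{2}(\mathcal{M}_{c}^{DT_{4}}\times\mathcal{M}_{c}^{DT_{4}},\mathbb{Z}_{2})$. Actually, since $\mathcal{L}_{\mathbb{C}}$ is a \emph{complex} line bundle one should be slightly more careful: the relevant torsion class is the mod-$2$ reduction of $c_{1}$, and $c_{1}(\mathcal{L}_{\mathbb{C}}^{*}) = -c_{1}(\mathcal{L}_{\mathbb{C}})$, so $\sigma^{*}c_{1}(\mathcal{L}_{\mathbb{C}}) = -c_{1}(\mathcal{L}_{\mathbb{C}})$, which after mod-$2$ reduction again gives $\sigma^{*}w_{2}(\mathcal{L}_{\mathbb{C}}) = w_{2}(\mathcal{L}_{\mathbb{C}})$; the sign is what will ultimately force a square to vanish.

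Next I would write out what the $\sigma$-invariance says under the K\"unneth isomorphism. By Lemma \ref{Kunneth formula} and Remark \ref{use Kunneth for},
\begin{equation}
H^{2}(\mathcal{M}_{c}^{DT_{4}}\times\mathcal{M}_{c}^{DT_{4}},\mathbb{Z}_{2}) \cong \big(H^{2}\otimes H^{0}\big)\oplus\big(H^{1}\otimes H^{1}\big)\oplus\big(H^{0}\otimes H^{2}\big),
\nonumber
\end{equation}
where all cohomology is that of $\mathcal{M}_{c}^{DT_{4}}$ with $\mathbb{Z}_{2}$ coefficients, and $\sigma^{*}$ acts by swapping the two tensor factors (with the Koszul sign, which is trivial mod $2$). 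Write $w_{2}(\mathcal{L}_{\mathbb{C}}) = \alpha\otimes 1 + \sum_i x_i\otimes y_i + 1\otimes\beta$ with $\alpha,\beta\in H^{2}$ and $x_i,y_i\in H^{1}$. Combining the computation of the previous paragraph: actually the relation I really get from $c_{1}(\mathcal{L}_{\mathbb{C}}^{*})=-c_{1}(\mathcal{L}_{\mathbb{C}})$ before reduction is most cleanly used by restricting first to the diagonal. Restricting to $\Delta$ gives the diagonal (cup-product) map $\mu\colon H^{*}\otimes H^{*}\to H^{*}$, under which $\sigma^{*}$ becomes the identity, so the $\sigma$-anti-invariance of $c_{1}$ forces $2\,c_{1}(\mathcal{L}_{\mathbb{C}}|_{\Delta}) = 0$, i.e. $c_{1}(\mathcal{L}_{\mathbb{C}}|_{\Delta})$ is $2$-torsion; hence its image in rational cohomology, and more to the point the relevant integral statement, gives that $\mathcal{L}_{\mathbb{C}}|_{\Delta}\otimes\mathcal{L}_{\mathbb{C}}|_{\Delta}$ is topologically trivial. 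Therefore $w_{2}(\mathcal{L}_{\mathbb{C}}|_{\Delta}\otimes\mathcal{L}_{\mathbb{C}}|_{\Delta}) = 0$, and since for complex line bundles $w_2$ of a tensor square is $2\,w_2 = 0$ automatically — so the content must instead be extracted at the level of $c_1$: one concludes $2c_1(\mathcal{L}_{\mathbb{C}}|_\Delta)=0$, and then $w_2(\mathcal{L}_{\mathbb{C}}|_\Delta)$, being the mod-$2$ reduction of $c_1(\mathcal{L}_{\mathbb{C}}|_\Delta)$, need not vanish just from torsion — so the diagonal restriction alone is not enough, and one genuinely needs the off-diagonal K\"unneth information.

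So the real argument is: from $\sigma^{*}c_{1}(\mathcal{L}_{\mathbb{C}}) = -c_{1}(\mathcal{L}_{\mathbb{C}})$ and the K\"unneth decomposition of $H^{2}(\mathcal{M}_{c}^{DT_{4}}\times\mathcal{M}_{c}^{DT_{4}})$ (over $\mathbb{Z}$ after noting the relevant freeness, or by a careful mod-$2$/Bockstein bookkeeping), one gets that the $H^{2}\otimes H^{0}$ and $H^{0}\otimes H^{2}$ components of $c_{1}(\mathcal{L}_{\mathbb{C}})$ are negatives of each other, i.e. $\alpha_{\mathbb{Z}} = -\beta_{\mathbb{Z}}$, while the $H^{1}\otimes H^{1}$ component is anti-symmetric. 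Restricting to $\Delta$ collapses the first two terms to $\alpha_{\mathbb{Z}}+\beta_{\mathbb{Z}} = 0$, and collapses the antisymmetric middle term $\sum_i x_i\otimes y_i$ to $\sum_i x_i\cup y_i = -\sum_i x_i\cup y_i$ over $\mathbb{Z}$ (since $x_i\cup y_i = -y_i\cup x_i$ for odd-degree classes), so $2c_{1}(\mathcal{L}_{\mathbb{C}}|_{\Delta}) = 0$. Then to upgrade this to $w_{2}=0$ I would argue that in fact the middle-term contribution to $w_{2}(\mathcal{L}_{\mathbb{C}}|_{\Delta})$ is $\sum_i x_i\cup y_i = \sum_i x_i\cup x_i$ type — more precisely, anti-symmetry mod $2$ forces the off-diagonal part to pull back under $\mu$ to something of the form $\sum x_i\cup x_i = \big(\sum x_i\big)^{2} = 0$ by freshman's dream in characteristic $2$... and here is where I expect the main obstacle: making this last collapse rigorous requires knowing the precise mod-$2$ form of the skew-symmetry (a Bockstein-type subtlety in passing from the integral relation $\sigma^*c_1 = -c_1$ to a usable mod-$2$ statement on $w_2$), and controlling the diagonal of an antisymmetric element of $H^{1}\otimes H^{1}$. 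I would handle it by choosing a basis of $H^{1}(\mathcal{M}_{c}^{DT_{4}},\mathbb{Z}_{2})$, writing the antisymmetric tensor in that basis, and checking directly that $\mu$ of an antisymmetric $2$-tensor is a sum of squares of $1$-classes, hence vanishes mod $2$; combined with $\alpha+\beta = 0$ this yields $w_{2}(\mathcal{L}_{\mathbb{C}}|_{\Delta}) = 0$.
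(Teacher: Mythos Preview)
Your approach is the same as the paper's: use the K\"unneth isomorphism over $\mathbb{Z}_{2}$, the invariance $\sigma^{*}w_{2}(\mathcal{L}_{\mathbb{C}})=w_{2}(\mathcal{L}_{\mathbb{C}})$ coming from (\ref{skew-sym}), and restrict to the diagonal. The paper works directly with $w_{2}$ over $\mathbb{Z}_{2}$ throughout, choosing bases $\{a_{i}\},\{b_{i}\},\{c_{i}\}$ of $H^{0},H^{2},H^{1}$ and writing $w_{2}(\mathcal{L}_{\mathbb{C}})=\sum n_{ij}\,a_{i}\otimes b_{j}+\sum m_{ij}\,b_{i}\otimes a_{j}+\sum k_{ij}\,c_{i}\otimes c_{j}$; $\sigma$-invariance forces $m_{ji}\equiv n_{ij}$ and $k_{ji}\equiv k_{ij}$ mod $2$, and on the diagonal the $H^{0}\otimes H^{2}$ and $H^{2}\otimes H^{0}$ pieces pair up to $\sum n_{ij}(a_{i}\cup b_{j}+b_{j}\cup a_{i})\equiv 0$. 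Your detours through integral $c_{1}$ are unnecessary and only obscure this.

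However, your handling of the $H^{1}\otimes H^{1}$ piece contains a genuine error. You correctly observe that its diagonal restriction is a sum of squares of degree-$1$ classes, namely $\sum_{i}k_{ii}\,c_{i}^{2}$, but your conclusion that such a sum ``vanishes mod $2$'' is false: squares of degree-$1$ classes need not vanish in $\mathbb{Z}_{2}$-cohomology (e.g.\ $H^{*}(\mathbb{RP}^{\infty},\mathbb{Z}_{2})=\mathbb{Z}_{2}[x]$ with $|x|=1$ and $x^{2}\neq 0$). The paper's written proof simply does not display this $H^{1}\otimes H^{1}$ contribution in its final line. To close the gap you need an extra input: either work integrally with the genuine antisymmetry $\sigma^{*}c_{1}=-c_{1}$, which forces the diagonal coefficients $k_{ii}$ to vanish \emph{before} reduction mod $2$ (this requires an integral K\"unneth statement, hence a torsion-freeness hypothesis on $H^{1}$), or impose a hypothesis ruling out odd $\mathbb{Z}_{2}$-cohomology of $\mathcal{M}_{c}^{DT_{4}}$. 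Your instinct that ``one genuinely needs the off-diagonal K\"unneth information'' and that the integral antisymmetry is the real content was right; the mistake was only in the last step.
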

\begin{proof}
By definition and Remark \ref{use Kunneth for}, we have $w_{2}(\mathcal{L}_{\mathbb{C}})\in H^{2}(\mathcal{M}_{c}^{DT_{4}}\times \mathcal{M}_{c}^{DT_{4}},\mathbb{Z}_{2})$,
\begin{equation}H^{2}(\mathcal{M}_{c}^{DT_{4}}\times \mathcal{M}_{c}^{DT_{4}},\mathbb{Z}_{2})=
H^{0}(\mathcal{M}_{c}^{DT_{4}},\mathbb{Z}_{2})\otimes H^{2}(\mathcal{M}_{c}^{DT_{4}},\mathbb{Z}_{2})\oplus H^{2}(\mathcal{M}_{c}^{DT_{4}},\mathbb{Z}_{2})\otimes H^{0}(\mathcal{M}_{c}^{DT_{4}},\mathbb{Z}_{2})\nonumber \end{equation}
\begin{equation}\oplus H^{1}(\mathcal{M}_{c}^{DT_{4}},\mathbb{Z}_{2})\otimes H^{1}(\mathcal{M}_{c}^{DT_{4}},\mathbb{Z}_{2}).
\nonumber \end{equation}
Assume $\{a_{i}\}$ is a basis of $H^{0}(\mathcal{M}_{c}^{DT_{4}},\mathbb{Z}_{2})$, $\{b_{i}\}$ is a basis of $H^{1}(\mathcal{M}_{c}^{DT_{4}},\mathbb{Z}_{2})$,
$\{c_{i}\}$ is a basis of $H^{1}(\mathcal{M}_{c}^{DT_{4}},\mathbb{Z}_{2})$ then
\begin{equation}w_{2}(\mathcal{L}_{\mathbb{C}})=\sum_{ij}n_{ij}a_{i}\otimes b_{j}+ \sum_{ij}m_{ij}b_{i}\otimes a_{j}+ \sum_{ij}k_{ij}c_{i}\otimes c_{j}.
\nonumber \end{equation}
Under the action of involution map
\begin{equation}\sigma^{*}\big(w_{2}(\mathcal{L}_{\mathbb{C}})\big)=\sum_{ij}m_{ij}a_{j}\otimes b_{i}+ \sum_{ij}n_{ij}b_{j}\otimes a_{i}+
\sum_{ij}k_{ij}c_{j}\otimes c_{i}.\nonumber \end{equation}
By (\ref{skew-sym}), we have
\begin{equation}\sigma^{*}\big(w_{2}(\mathcal{L}_{\mathbb{C}})\big)=w_{2}(\sigma^{*}\mathcal{L}_{\mathbb{C}})=w_{2}(\mathcal{L}_{\mathbb{C}}) . \nonumber \end{equation}
Hence $m_{ji}\equiv n_{ij}(mod\textrm{ }2)$, $k_{ji}\equiv k_{ij}(mod\textrm{ } 2)$ and when we restrict to the diagonal \textrm{ }
\begin{equation}w_{2}(\mathcal{L}_{\mathbb{C}}|_{\Delta})\equiv\sum_{ij}n_{ij}(a_{i}\cup b_{j}+b_{j}\cup a_{i})\equiv0 (mod\textrm{ } 2).  \nonumber \end{equation}
\end{proof}
By Proposition \ref{w1 square =0}, we know that
\begin{equation}\label{c1 no 2 torsion}c_{1}(\mathcal{L}_{\mathbb{C}}|_{\Delta})=0 (mod \textrm{ }2). \end{equation}
\begin{corollary}\label{c1 equals zero if no 4k torsion}
If $H^{2}(\mathcal{M}_{c}^{DT_{4}},\mathbb{Z})$ $($equivalently $H_{1}(\mathcal{M}_{c}^{DT_{4}},\mathbb{Z})$$)$ does not have torsion of type $\mathbb{Z}_{4k}$, where $k\geq1$. Then $c_{1}(\mathcal{L}_{\mathbb{C}}|_{\Delta})=0$.
\end{corollary}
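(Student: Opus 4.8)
The strategy is to play off against each other the two constraints the paper has already imposed on the class $c_{1}(\mathcal{L}_{\mathbb{C}}|_{\Delta})\in H^{2}(\mathcal{M}_{c}^{DT_{4}},\mathbb{Z})$. The first is a divisibility statement: the integral consequence (\ref{c1 no 2 torsion}) of Proposition \ref{w1 square =0} says that the mod $2$ reduction of $c_{1}(\mathcal{L}_{\mathbb{C}}|_{\Delta})$ vanishes. By the Bockstein long exact sequence attached to $0\to\mathbb{Z}\xrightarrow{2}\mathbb{Z}\to\mathbb{Z}_{2}\to 0$, the kernel of the reduction map $H^{2}(\mathcal{M}_{c}^{DT_{4}},\mathbb{Z})\to H^{2}(\mathcal{M}_{c}^{DT_{4}},\mathbb{Z}_{2})$ is exactly $2\cdot H^{2}(\mathcal{M}_{c}^{DT_{4}},\mathbb{Z})$, so the first thing I would record is that $c_{1}(\mathcal{L}_{\mathbb{C}}|_{\Delta})=2x$ for some $x$.

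The second constraint is a torsion statement. Restricting the isomorphism (\ref{skew-sym}) to the diagonal $\Delta\subset\mathcal{M}_{c}^{DT_{4}}\times\mathcal{M}_{c}^{DT_{4}}$, on which the involution $\sigma$ acts as the identity, gives $\mathcal{L}_{\mathbb{C}}|_{\Delta}\cong(\mathcal{L}_{\mathbb{C}}|_{\Delta})^{*}$, hence $c_{1}(\mathcal{L}_{\mathbb{C}}|_{\Delta})=-c_{1}(\mathcal{L}_{\mathbb{C}}|_{\Delta})$, that is, $2\,c_{1}(\mathcal{L}_{\mathbb{C}}|_{\Delta})=0$. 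Combining the two facts yields $4x=0$, so $x$ lies in the $4$-torsion of $H^{2}(\mathcal{M}_{c}^{DT_{4}},\mathbb{Z})$.

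Now I would bring in the hypothesis. Writing the torsion subgroup of $H^{2}(\mathcal{M}_{c}^{DT_{4}},\mathbb{Z})$ — equivalently that of $H_{1}(\mathcal{M}_{c}^{DT_{4}},\mathbb{Z})$, by universal coefficients — as a direct sum of cyclic groups $\bigoplus_{i}\mathbb{Z}_{d_{i}}$, the assumption of no summand $\mathbb{Z}_{4k}$ with $k\ge 1$ means no $d_{i}$ is divisible by $4$. An elementary check then shows that in such a group every element annihilated by $4$ is already annihilated by $2$: on an odd cyclic factor $4y=0$ forces $y=0$, while on a factor $\mathbb{Z}_{2m}$ with $m$ odd the subgroup killed by $4$ coincides with the subgroup killed by $2$. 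Applying this to $x$ gives $2x=0$, whence $c_{1}(\mathcal{L}_{\mathbb{C}}|_{\Delta})=2x=0$.

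I expect the only real subtlety to be bookkeeping: justifying that the relevant cohomology may be decomposed into cyclic summands (this is where the standing finiteness assumption on $H^{*}(\mathcal{M}_{c}^{o},\mathbb{Z}_{2})$, together with $\mathcal{M}_{c}^{DT_{4}}=\mathcal{M}_{c}^{o}$ as topological spaces, should be used), and fixing the precise reading of "torsion of type $\mathbb{Z}_{4k}$" as "cyclic summands whose order is divisible by $4$." Once those conventions are in place, the argument is the short chain of implications above, combining (\ref{skew-sym}) and (\ref{c1 no 2 torsion}).
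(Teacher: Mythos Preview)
Your proposal is correct and follows exactly the same route as the paper: combine the two-torsion constraint $2c_{1}(\mathcal{L}_{\mathbb{C}}|_{\Delta})=0$ coming from (\ref{skew-sym}) with the divisibility constraint $c_{1}(\mathcal{L}_{\mathbb{C}}|_{\Delta})\equiv 0\pmod 2$ from (\ref{c1 no 2 torsion}), and then invoke the hypothesis on $4k$-torsion. The paper's proof is a one-line ``we are done'' that leaves implicit precisely the bookkeeping you spell out---writing $c_{1}=2x$, deducing $4x=0$, and checking that the absence of $\mathbb{Z}_{4k}$ summands forces the $4$-torsion to coincide with the $2$-torsion.
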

\begin{proof}
By (\ref{skew-sym}), we know $2c_{1}(\mathcal{L}_{\mathbb{C}}|_{\Delta})=0$. Combined with (\ref{c1 no 2 torsion}) and the assumption, we are done.
\end{proof}
\begin{remark}
Similar to a construction in $K$-theory \cite{atiyah}, we can find a trivial complex bundle with standard quadratic form $(\mathbb{C}^{N},q)$ such that
$(Ind_{\mathbb{C}}|_{\Delta},Q_{Serre})\oplus (\mathbb{C}^{N},q)$ becomes a quadratic bundle over $\mathcal{M}_{c}^{DT_{4}}$. By Corollary \ref{c1 equals zero if no 4k torsion}, the structure group of the quadratic bundle is reduced to $SO(n,\mathbb{C})$. Thus, $w_{1}(Ind|_{\Delta})=0$
if $H_{1}(\mathcal{M}_{c}^{DT_{4}},\mathbb{Z})$ does not have torsion of type $\mathbb{Z}_{4k}$.
\end{remark}
${}$ \\
Then we come to the case of the generalized $DT_{4}$ moduli space. As we know, the generalized $DT_{4}$ moduli space
exists and $\overline{\mathcal{M}}_{c}^{DT_{4}}=\mathcal{M}_{c}$ when the Gieseker moduli space of stable sheaves $\mathcal{M}_{c}$ is smooth. The index bundle and the above involution map can be similarly defined, then we know
\begin{corollary}
If the Gieseker moduli space of stable sheaves $\mathcal{M}_{c}$ is smooth and $H_{1}(\mathcal{M}_{c},\mathbb{Z})$ does not have torsion of type $\mathbb{Z}_{4k}$, where $k\geq1$. Then the index bundle of the generalized $DT_{4}$ moduli space is oriented.
\end{corollary}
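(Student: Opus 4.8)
The plan is to repeat, almost verbatim, the argument already carried out for the $DT_{4}$ moduli space (Proposition \ref{w1 square =0} and Corollary \ref{c1 equals zero if no 4k torsion}), now on $\overline{\mathcal{M}}_{c}^{DT_{4}}$. First I would invoke Proposition \ref{gene DT4 if Mc smooth}: since $\mathcal{M}_{c}$ is smooth, the generalized $DT_{4}$ moduli space exists and $\overline{\mathcal{M}}_{c}^{DT_{4}}=\mathcal{M}_{c}$ as real analytic spaces, so all sheaf-theoretic data $Ext^{i}(\mathcal{F},\mathcal{F})$ vary in honest families over $\mathcal{M}_{c}$ and one may form the index virtual bundle $Ind$ with $Ind|_{\mathcal{F}}=Ext^{1}(\mathcal{F},\mathcal{F})-Ext^{2}_{+}(\mathcal{F},\mathcal{F})$ and its complexification $Ind_{\mathbb{C}}$ with $Ind_{\mathbb{C}}|_{\mathcal{F}}=Ext^{1}(\mathcal{F},\mathcal{F})-Ext^{2}(\mathcal{F},\mathcal{F})+Ext^{3}(\mathcal{F},\mathcal{F})$ (using Remark \ref{remark1}, exactly as in the body). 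Note that $\mathcal{M}_{c}$ is a projective scheme, hence a finite $CW$ complex, so $H^{*}(\mathcal{M}_{c},\mathbb{Z}_{2})$ is finitely generated and the hypotheses of the Künneth statement are met.

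Next I would extend $Ind_{\mathbb{C}}$ to a bundle over $\mathcal{M}_{c}\times \mathcal{M}_{c}$ and set $\mathcal{L}_{\mathbb{C}}=\det(Ind_{\mathbb{C}})$. The involution $\sigma:\mathcal{M}_{c}\times \mathcal{M}_{c}\to \mathcal{M}_{c}\times \mathcal{M}_{c}$, $\sigma([\mathcal{F}_{1}],[\mathcal{F}_{2}])=([\mathcal{F}_{2}],[\mathcal{F}_{1}])$, together with the Serre duality pairing $Ext^{i}(\mathcal{F}_{1},\mathcal{F}_{2})\cong Ext^{4-i}(\mathcal{F}_{2},\mathcal{F}_{1})^{*}$ on a Calabi--Yau four-fold, yields $\sigma^{*}(\mathcal{L}_{\mathbb{C}})\cong \mathcal{L}_{\mathbb{C}}^{*}$, exactly as in equation (\ref{skew-sym}). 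Applying Lemma \ref{Kunneth formula} with $R=\mathbb{Z}_{2}$ to decompose $H^{2}(\mathcal{M}_{c}\times \mathcal{M}_{c},\mathbb{Z}_{2})$, expressing $w_{2}(\mathcal{L}_{\mathbb{C}})$ in a basis and comparing with $\sigma^{*}w_{2}(\mathcal{L}_{\mathbb{C}})=w_{2}(\mathcal{L}_{\mathbb{C}})$, the off-diagonal terms forced by the symmetry cancel upon restriction to the diagonal $\Delta$, giving $w_{2}(\mathcal{L}_{\mathbb{C}}|_{\Delta})=0$ and hence $c_{1}(\mathcal{L}_{\mathbb{C}}|_{\Delta})\equiv 0 \pmod 2$. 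Since $\sigma^{*}\mathcal{L}_{\mathbb{C}}\cong \mathcal{L}_{\mathbb{C}}^{*}$ already gives $2\,c_{1}(\mathcal{L}_{\mathbb{C}}|_{\Delta})=0$, the absence of $\mathbb{Z}_{4k}$-torsion in $H^{2}(\mathcal{M}_{c},\mathbb{Z})$ (equivalently in $H_{1}(\mathcal{M}_{c},\mathbb{Z})$) forces $c_{1}(\mathcal{L}_{\mathbb{C}}|_{\Delta})=0$, just as in Corollary \ref{c1 equals zero if no 4k torsion}.

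Finally I would conclude by the $K$-theoretic stabilization of \cite{atiyah}: choose a trivial quadratic bundle $(\mathbb{C}^{N},q)$ so that $(Ind_{\mathbb{C}}|_{\Delta},Q_{Serre})\oplus(\mathbb{C}^{N},q)$ is an honest quadratic vector bundle; vanishing of its first Chern class reduces the structure group to $SO(n,\mathbb{C})$, and by \cite{eg} the structure group of the underlying real bundle $Ind|_{\Delta}$ then lies in $SO(n,\mathbb{R})$, i.e. $w_{1}(Ind|_{\Delta})=0$. Because $Ext^{1}$ contributes a complex, hence oriented, summand, this is precisely the assertion that the self-dual obstruction bundle $ob_{+}$ (equivalently the index bundle of the generalized $DT_{4}$ moduli space) is orientable.

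As for the main obstacle: none of the steps is genuinely new — the whole content lies in checking that the earlier argument transfers. The only points requiring care are (i) that smoothness of $\mathcal{M}_{c}$ is exactly what makes $Ind$ and $\mathcal{L}_{\mathbb{C}}$ well-defined global bundles over $\overline{\mathcal{M}}_{c}^{DT_{4}}=\mathcal{M}_{c}$, so that no Kuranishi obstruction intervenes, and (ii) that the Serre duality identities hold with the correct signs for arbitrary coherent $\mathcal{F}$ on the $CY_{4}$, not merely for locally free sheaves; both are standard. The slightly delicate bookkeeping step is the translation between $Ind_{\mathbb{C}}$ and the $\mathbb{Z}_{2}$-data: one uses $w_{2}(\det V)=c_{1}(V)\bmod 2$ and $w_{1}=0$ for a complex bundle, and then passes from $c_{1}(\mathcal{L}_{\mathbb{C}}|_{\Delta})=0$ to $w_{1}(Ind|_{\Delta})=0$ via Lemma~5 of \cite{eg}, which is exactly the mechanism used in the preceding corollary.
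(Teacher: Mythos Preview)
Your proposal is correct and follows essentially the same route as the paper: the paper's proof simply cites the earlier results (\ref{skew-sym}) and (\ref{c1 no 2 torsion}), having noted just before the corollary that ``the index bundle and the above involution map can be similarly defined'' on $\mathcal{M}_{c}$, and then reduces the structure group of $ob$ to $SO(n,\mathbb{C})$ via the Serre duality pairing. The only cosmetic difference is that the paper immediately strips off the complex (hence oriented) tangent bundle $Ext^{1}$ and works directly with $ob$ and $ob_{+}$, whereas you carry the full index bundle through and make this reduction at the end; the content is identical.
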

\begin{proof}
Because  $\mathcal{M}_{c}$ is smooth, we are reduced to prove the orientability of the self-dual obstruction bundle $ob_{+}$.

By (\ref{skew-sym}), (\ref{c1 no 2 torsion}) and the assumption, we know
\begin{equation}c_{1}(ob_{+}\otimes\mathbb{C})=c_{1}(ob)=0. \nonumber \end{equation}

With the help of Serre duality pairing, the structure group of complex bundle $ob$ is reduced to $SO(n,\mathbb{C})$. Hence the structure group of the corresponding real bundle $ob_{+}$ sits inside $SO(n,\mathbb{R})$.
\end{proof}

\subsection{Seidel-Thomas twists}
In this subsection, we recall the definition of Seidel-Thomas twist \cite{js}.
\begin{definition}
Let $(X,\mathcal{O}_{X}(1))$ be a projective Calabi-Yau $m$-fold with $Hol(X)=SU(m)$. For each $n\in \mathbb{Z}$,
the Seidel-Thomas twist $T_{\mathcal{O}_{X}(-n)}$ by $\mathcal{O}_{X}(-n)$ is the Fourier-Mukai transform from D(X) to D(X) with kernel
\begin{equation}K=cone(\mathcal{O}_{X}(n)\boxtimes\mathcal{O}_{X}(-n))\rightarrow \mathcal{O}_{\Delta}.
\nonumber \end{equation}
\end{definition}
In general, $T_{n}\triangleq T_{\mathcal{O}_{X}(-n)}[-1]$ maps sheaves to complex of sheaves, but for $n\gg 0$ we have
\begin{lemma}\cite{js} \label{seidel thomas twist lemma}
Let $U$ be a finite type $\mathbb{C}$-scheme and $\mathcal{F}_{U}$ is a coherent sheaf on $U\times X$ flat over $U$ i.e.
it is a $U$-family of coherent sheaves on $X$. Then for $n\gg 0$,
$T_{n}(\mathcal{F}_{U})$ is also a $U$-family of coherent sheaves on $X$.
\end{lemma}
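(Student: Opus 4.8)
The final statement to prove is Lemma \ref{seidel thomas twist lemma}: for $n\gg 0$, the Seidel--Thomas twist $T_n = T_{\mathcal{O}_X(-n)}[-1]$ sends a $U$-flat family of coherent sheaves $\mathcal{F}_U$ on $U\times X$ to another $U$-flat family of coherent sheaves. Here is how I would proceed.

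\textbf{Setup and strategy.} By definition, $T_{\mathcal{O}_X(-n)}(\mathcal{F})$ is the Fourier--Mukai transform with kernel $K = \mathrm{cone}\big(\mathcal{O}_X(n)\boxtimes\mathcal{O}_X(-n)\to\mathcal{O}_\Delta\big)$, so applying it to $\mathcal{F}$ yields the cone of the evaluation map
\begin{equation}
\mathrm{RHom}_X(\mathcal{O}_X(-n),\mathcal{F})\otimes\mathcal{O}_X(-n)\longrightarrow \mathcal{F},\nonumber
\end{equation}
i.e. $H^\bullet(X,\mathcal{F}(n))\otimes\mathcal{O}_X(-n)\to\mathcal{F}$. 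The plan is: first handle the absolute case (a single sheaf $\mathcal{F}$), then upgrade to families by base change. For the absolute case, the key point is Serre vanishing: for $n\gg 0$ (depending only on the Chern character, which is locally constant on $U$ and fixed for the family), $H^i(X,\mathcal{F}(n)) = 0$ for all $i>0$, so $\mathrm{RHom}_X(\mathcal{O}_X(-n),\mathcal{F})$ is concentrated in degree $0$, equal to the vector space $V:=H^0(X,\mathcal{F}(n))$. Moreover for $n\gg 0$ the sheaf $\mathcal{F}(n)$ is globally generated, so the evaluation map $V\otimes\mathcal{O}_X(-n)\to\mathcal{F}$ is surjective. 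Hence its cone is the two-term complex $[V\otimes\mathcal{O}_X(-n)\to\mathcal{F}]$ in degrees $-1,0$, and after the shift $[-1]$ in the definition of $T_n$, we get $[V\otimes\mathcal{O}_X(-n)\to\mathcal{F}]$ placed in degrees $0,1$. Wait --- I need $T_n(\mathcal{F})$ to be a genuine \emph{sheaf}; so the right statement is that $T_n(\mathcal{F}) = T_{\mathcal{O}_X(-n)}(\mathcal{F})[-1]$ is quasi-isomorphic to the kernel sheaf $\ker(V\otimes\mathcal{O}_X(-n)\twoheadrightarrow\mathcal{F})$, sitting in cohomological degree $0$. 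So the content of the absolute case is exactly: Serre vanishing plus global generation force the twist to be the syzygy sheaf, which is coherent.

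\textbf{The family case.} Now let $\mathcal{F}_U$ be $U$-flat on $U\times X$ with fixed Hilbert polynomial. Choose $n$ large enough (using boundedness of the family --- the Hilbert polynomial is fixed, so one $n$ works for all fibers) that $R^i p_{U*}(\mathcal{F}_U\otimes p_X^*\mathcal{O}_X(n)) = 0$ for $i>0$ and that $p_U^* p_{U*}(\mathcal{F}_U(n))\to \mathcal{F}_U(n)$ is surjective, where $p_U, p_X$ are the projections from $U\times X$. By cohomology-and-base-change (Grauert's theorem, valid since higher direct images vanish and $U$ can be taken reduced / or one argues over the generic point then spreads out), $\mathcal{V}_U := p_{U*}(\mathcal{F}_U(n))$ is a locally free sheaf on $U$ whose fiber at $u$ is $H^0(X,\mathcal{F}_u(n))$, and its formation commutes with arbitrary base change $U'\to U$. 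Then $T_n(\mathcal{F}_U)$ is represented by the relative version: the kernel $\mathcal{K}_U := \ker\big(p_U^*\mathcal{V}_U\otimes p_X^*\mathcal{O}_X(-n)\to\mathcal{F}_U\big)$. This is a coherent sheaf on $U\times X$; to see it is $U$-flat, note we have a short exact sequence $0\to\mathcal{K}_U\to p_U^*\mathcal{V}_U\otimes p_X^*\mathcal{O}_X(-n)\to\mathcal{F}_U\to 0$ in which the middle term is $U$-flat (pullback of locally free tensored with a line bundle) and the right term is $U$-flat by hypothesis; flatness of $\mathcal{K}_U$ then follows from the long exact sequence of $\mathrm{Tor}$ applied fiberwise, i.e. $\mathrm{Tor}_1^{\mathcal{O}_U}(\mathcal{F}_u,\kappa(u)) = 0$ forces $\mathcal{K}_U$ to have vanishing higher $\mathrm{Tor}$ against residue fields, which (together with coherence and $U$ Noetherian) gives flatness. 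Finally, compatibility with base change: because $\mathcal{V}_U$ commutes with base change and the evaluation map is surjective with $U$-flat target, the formation of $\mathcal{K}_U$ commutes with base change, so $T_n(\mathcal{F}_U)$ genuinely represents the family $u\mapsto T_n(\mathcal{F}_u)$.

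\textbf{Main obstacle.} The routine parts are Serre vanishing and the $\mathrm{Tor}$ computation for flatness. The one point that needs care is \emph{uniformity of $n$ across the family}: one must invoke that a flat family with fixed Hilbert polynomial over a finite-type base is bounded (Kleiman's boundedness / the Hilbert scheme is of finite type), so a single $n$ achieves vanishing and global generation on every fiber simultaneously; and then cohomology-and-base-change must be applied in the form that guarantees local freeness of $p_{U*}(\mathcal{F}_U(n))$ and base-change compatibility --- this is where one uses that the higher direct images already vanish, which is the cleanest hypothesis under which Grauert applies. Once these are in place, everything else is a formal consequence, and in fact this is precisely the argument sketched in \cite{js}; I would simply cite their Lemma for the details and reproduce the skeleton above.
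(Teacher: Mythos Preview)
The paper does not prove this lemma at all: it is stated with the citation \cite{js} and no proof is supplied, as it is simply being recalled from Joyce--Song. Your sketch is the standard argument (Serre vanishing and global generation uniformly over a bounded family, cohomology and base change to get a locally free $\mathcal{V}_U$, then flatness of the kernel from the short exact sequence), and it is correct; this is indeed essentially what one finds in the cited reference, so there is nothing to compare against in the present paper.
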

Then let us see how the twist can map sheaves to vector bundles. We first recall the definition of homological dimension of a sheaf.
\begin{definition}
For a nonzero coherent sheaf $\mathcal{F}$, the homological dimension $hd(\mathcal{F})$ is the smallest $n\geq0$ for
which there exists an exact sequence in $coh(X)$
\begin{equation}0\rightarrow E_{n}\rightarrow E_{n-1}\cdot\cdot\cdot\rightarrow E_{0}\rightarrow \mathcal{F}\rightarrow 0
\nonumber \end{equation}
with $\{E_{i}\}_{i=0,...,n}$ are vector bundles.
\end{definition}
\begin{lemma}\cite{js}
Let $\mathcal{F}_{U}$, $n\gg0$ be the same as in Lemma \ref{seidel thomas twist lemma}, then for any $u\in U$,
we have $hd(T_{n}(\mathcal{F}_{u}))=max(hd(\mathcal{F}_{u})-1,0)$.
\end{lemma}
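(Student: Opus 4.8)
The plan is to make the Seidel--Thomas twist completely explicit for $n\gg 0$ and then reduce the statement to a standard syzygy/dimension-shift argument for projective dimension over the local rings of $X$. Writing $\mathcal{F}=\mathcal{F}_{u}$ for the coherent sheaf on $X$ obtained by restricting the family to the fiber $\{u\}\times X$, I would first unwind the Fourier--Mukai transform attached to the kernel $K=\mathrm{cone}\big(\mathcal{O}_{X}(n)\boxtimes\mathcal{O}_{X}(-n)\to\mathcal{O}_{\Delta}\big)$. Since the cone kernel splits the transform into the difference of the two summands, the $\mathcal{O}_{\Delta}$-piece contributing the identity and the $\mathcal{O}_{X}(n)\boxtimes\mathcal{O}_{X}(-n)$-piece contributing $R\Gamma(X,\mathcal{F}(n))\otimes\mathcal{O}_{X}(-n)$ via the projection formula and $Rp_{2*}$, one obtains
\[
T_{\mathcal{O}_{X}(-n)}(\mathcal{F})=\mathrm{cone}\big(R\Gamma(X,\mathcal{F}(n))\otimes\mathcal{O}_{X}(-n)\xrightarrow{\ \mathrm{ev}\ }\mathcal{F}\big).
\]

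For $n\gg 0$ I would invoke Serre vanishing and global generation: $R\Gamma(X,\mathcal{F}(n))$ is the vector space $V:=H^{0}(X,\mathcal{F}(n))$ concentrated in degree $0$, and the evaluation map $\mathrm{ev}\colon V\otimes\mathcal{O}_{X}(-n)\to\mathcal{F}$ is surjective. The cone of a surjection of sheaves is its kernel placed in degree $-1$, so $T_{\mathcal{O}_{X}(-n)}(\mathcal{F})=\mathcal{K}[1]$ with $\mathcal{K}:=\ker(\mathrm{ev})$, and therefore
\[
T_{n}(\mathcal{F})=T_{\mathcal{O}_{X}(-n)}(\mathcal{F})[-1]=\mathcal{K},
\]
a genuine coherent sheaf (consistent with Lemma \ref{seidel thomas twist lemma}). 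This yields the short exact sequence $0\to\mathcal{K}\to V\otimes\mathcal{O}_{X}(-n)\to\mathcal{F}\to 0$ whose middle term is locally free.

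The homological dimension computation is then purely local. Recalling $hd(\mathcal{G})=\sup_{x\in X}\mathrm{pd}_{\mathcal{O}_{X,x}}(\mathcal{G}_{x})$, I would apply the dimension-shifting lemma over each Noetherian local ring $\mathcal{O}_{X,x}$: for $0\to K\to P\to M\to 0$ with $P$ free one has $\mathrm{pd}(K)=\mathrm{pd}(M)-1$ when $\mathrm{pd}(M)\ge 1$ (the sequence splices minimal free resolutions, dropping the length by one), while if $M$ is free the sequence splits and $K$ is free, so $\mathrm{pd}(K)=0$; together these give $\mathrm{pd}(K)=\max(\mathrm{pd}(M)-1,0)$ stalkwise. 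Passing to the supremum over $x$, using that $X$ is smooth (so every $\mathrm{pd}$ is finite, bounded by $\dim X$) and that $x\mapsto\mathrm{pd}(\mathcal{F}_{x})$ is upper semicontinuous with its supremum $hd(\mathcal{F})$ attained, the stalkwise maxima are dominated by the value $hd(\mathcal{F})-1$ realized at a worst point when $hd(\mathcal{F})\ge 1$, and are identically $0$ when $hd(\mathcal{F})=0$. This gives $hd(T_{n}(\mathcal{F}))=hd(\mathcal{K})=\max(hd(\mathcal{F})-1,0)$.

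The main obstacle I expect is the bookkeeping in the first step: pinning down the shift and sign conventions in $T_{n}=T_{\mathcal{O}_{X}(-n)}[-1]$ so that the cone genuinely collapses to the kernel sheaf $\mathcal{K}$ rather than to a shift of it, and verifying that Serre vanishing together with global generation hold for the single sheaf $\mathcal{F}_{u}$ at one common threshold $n$ (this uniformity is precisely what Lemma \ref{seidel thomas twist lemma} supplies). Once the identification $T_{n}(\mathcal{F})=\mathcal{K}$ is secured, the remaining syzygy argument is routine commutative algebra.
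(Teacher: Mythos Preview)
The paper does not actually prove this lemma; it is quoted from \cite{js} without argument. Your proof is correct and is essentially the standard one: for $n\gg 0$ the twist $T_{n}(\mathcal{F})$ is nothing but the first syzygy sheaf $\mathcal{K}=\ker\big(H^{0}(\mathcal{F}(n))\otimes\mathcal{O}_{X}(-n)\twoheadrightarrow\mathcal{F}\big)$, and then the claim reduces to the elementary dimension-shift $\mathrm{pd}(K)=\max(\mathrm{pd}(M)-1,0)$ for $0\to K\to P\to M\to 0$ with $P$ free over a regular local ring, applied stalkwise and passed to the supremum. This is exactly the argument in \cite{js}, so there is nothing to compare.

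One small remark on your last paragraph: the supremum in $hd(\mathcal{F})=\sup_{x}\mathrm{pd}_{\mathcal{O}_{X,x}}(\mathcal{F}_{x})$ is automatically attained simply because the values lie in the finite set $\{0,1,\dots,\dim X\}$; you do not need upper semicontinuity for this. Also, strictly speaking one should note that $\mathcal{K}\neq 0$ when $\mathcal{F}\neq 0$ and $\dim X\geq 1$ (so that $hd(\mathcal{K})$ is defined in the case $hd(\mathcal{F})=0$), which follows since $h^{0}(\mathcal{F}(n))$ eventually exceeds $\mathrm{rk}(\mathcal{F})$; this is harmless here as $X$ is a fourfold.
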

\begin{corollary}\cite{js}
Let $U$ be a finite type $\mathbb{C}$-scheme and $\mathcal{F}_{U}$ is a $U$-family of coherent sheaves on $X$.
Then there exists $n_{1},...n_{m}\gg0$ such that for $T_{n_{m}}\circ\cdot\cdot\cdot\circ T_{n_{1}}(\mathcal{F}_{U})$
is a $U$-family of vector bundles on $X$.
\end{corollary}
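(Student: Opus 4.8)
The plan is to prove the statement by a straightforward induction on the maximal homological dimension appearing in the family, using the previous lemma as the engine. First I would observe that since $U$ is of finite type over $\mathbb{C}$ and $X$ is smooth projective of dimension $m$, the homological dimension $hd(\mathcal{F}_u)$ is bounded above by $m$ for every closed point $u\in U$; indeed, on a smooth $m$-fold every coherent sheaf admits a locally free resolution of length at most $m$. Set $d=\max_{u\in U} hd(\mathcal{F}_u)$, which is therefore a well-defined integer with $0\le d\le m$. If $d=0$ then $\mathcal{F}_U$ is already a $U$-family of vector bundles (each fiber is locally free, and flatness over $U$ plus local freeness on fibers gives local freeness of the family on $U\times X$ by the standard cohomology-and-base-change / Nakayama argument), so there is nothing to prove and we take the empty composition of twists.

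The inductive step is where the earlier results do the work. Assuming $d\ge 1$, apply the preceding lemma: for $n_1\gg 0$, $T_{n_1}(\mathcal{F}_U)$ is again a $U$-family of coherent sheaves on $X$, and for every $u\in U$ we have
\begin{equation}
hd\big(T_{n_1}(\mathcal{F}_u)\big)=\max\big(hd(\mathcal{F}_u)-1,\,0\big).
\end{equation}
Hence $\max_{u\in U} hd\big(T_{n_1}(\mathcal{F}_u)\big)=d-1$. One subtlety to address: the bound "$n\gg 0$" in the lemma is a priori a condition that could depend on the point $u$; I would note that it only depends on $U$ and $\mathcal{F}_U$ (as stated in Lemma~\ref{seidel thomas twist lemma} and the homological-dimension lemma, which are phrased uniformly for the whole $U$-family, using properness of $X$ and finite-type-ness of $U$ so that the relevant Castelnuovo–Mumford regularity bounds are uniform over $U$). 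Granting that, $T_{n_1}(\mathcal{F}_U)$ is a $U$-family of coherent sheaves whose maximal homological dimension has dropped by one.

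Now iterate: choose $n_2\gg 0$ (depending on the family $T_{n_1}(\mathcal{F}_U)$) so that $T_{n_2}\circ T_{n_1}(\mathcal{F}_U)$ is a $U$-family of coherent sheaves with maximal homological dimension $d-2$, and continue. After $m_0\le d$ steps we obtain $n_1,\dots,n_{m_0}\gg 0$ such that $T_{n_{m_0}}\circ\cdots\circ T_{n_1}(\mathcal{F}_U)$ has maximal homological dimension $0$, i.e. is a $U$-family of vector bundles on $X$, which is exactly the assertion (take $m=m_0$, or pad with further large twists if a fixed length is desired, since twisting a family of bundles by $T_n$ for $n\gg0$ keeps $hd$ equal to $\max(0-1,0)=0$). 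The main obstacle in writing this out carefully is purely a matter of uniformity: making sure that at each stage the threshold "$n\gg0$" can be chosen uniformly over all of $U$ rather than pointwise, which is handled by the fact that the cited lemmas are already stated for $U$-families and rely on boundedness statements (regularity, length of resolutions) that are uniform for a flat family over a finite-type base with proper fibers. No further input is needed beyond the lemmas already recorded above.
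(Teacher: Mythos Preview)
Your proposal is correct and follows the natural argument. Note that the paper does not actually supply its own proof of this corollary: it is stated with a citation to \cite{js} and no proof environment follows. Your induction on the maximal homological dimension in the family, using the preceding lemma to drop that maximum by one at each step, is precisely the standard argument (and the one in \cite{js}); the uniformity of the threshold $n\gg0$ over $U$ is indeed already built into the family versions of the preceding lemmas, as you observe.
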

By the above successive Seidel-Thomas twists, we obtain an isomorphism between $\mathcal{M}_{c}$ and
some component of the moduli space of simple holomorphic bundles.

In general, Gieseker stability condition is lost after the above twist.
However the twist is a derived equivalence which preserves the extension groups. We fix such an isomorphism, then
\begin{equation}Ext^{i}(\mathcal{F},\mathcal{F})\cong Ext^{i}(E_{b},E_{b}), \textrm{ } i=1,2, \nonumber\end{equation}
\begin{equation}\label{seidel thomas twist}\kappa^{-1}(0)\cong \kappa'^{-1}(0), \end{equation}
where $\mathcal{F}\in\mathcal{M}_{c}$ and $\kappa$ is a Kuranishi map near $\mathcal{F}$ ,
$E_{b}$ is the corresponding holomorphic bundle and $\kappa'$ is a Kuranishi map near $E_{b}$.

One possible way to prove the vanishing result $\kappa_{+}=0 \Rightarrow \kappa =0$ in Assumption \ref{assumption on gluing} is to use
Seidel-Thomas twist transform everything to the bundle side and then prove it using gauge theory.

\subsection{A quiver representation of $\mathcal{M}_{c} $ }
Now let's recall a quiver representation of $\mathcal{M}_{c}$ given by \cite{bchr}. \\
\textbf{Set up}. Let $(Y,\mathcal{O}_{Y}(1))$ be a smooth projective variety and
$A\triangleq \bigoplus_{n\geq0}H^{0}(Y,\mathcal{O}_{Y}(n))$. Fix the Chern character $c$ and
denote the Hilbert polynomial of elements in $\mathcal{M}_{c}$ by $\alpha(t)$.
Let $V$ be a finite dimensional complex vector space with grading
\begin{equation}V=\bigoplus_{i=p}^{q}V_{i}, \nonumber \end{equation}
where $dim V_{i}=\alpha(i)$. Denote the dimension vector by $\alpha=(dim V_{p},\cdot\cdot\cdot,dim V_{q})$. \\
${}$ \\
\textbf{A differential graded Lie algebra}. Consider the following dgla $(L^{\cdot},d,[ , ])$
\begin{equation}L^{n}=Hom_{gr}(A^{\bigotimes_{\mathbb{C}}n},End_{\mathbb{C}}V). \nonumber \end{equation}
The differential
\begin{equation}d: L^{n}\rightarrow L^{n+1}  \nonumber \end{equation}
is defined by
\begin{equation}d\mu(a_{1},\cdot\cdot\cdot,a_{n+1})=\sum_{i=1}^{n}(-1)^{n-i}\mu(\cdot\cdot\cdot,a_{i}a_{i+1},\cdot\cdot\cdot).
\nonumber \end{equation}
The bracket
\begin{equation}[,]: L^{m}\times L^{n}\rightarrow L^{m+n} \nonumber \end{equation}
satisfies
\begin{equation}[\mu,\mu^{'}]=\mu\circ\mu^{'}-(-1)^{mn}\mu^{'}\circ\mu, \nonumber \end{equation}
where $\mu\in L^{m}$, $\mu^{'}\in L^{n}$ and $\mu\circ\mu^{'}\in L^{m+n}$ is defined by
\begin{equation}\mu\circ\mu^{'}(a_{1},\cdot\cdot\cdot,a_{m+n})=(-1)^{mn}\mu(a_{1},\cdot\cdot\cdot,a_{m})\circ\mu^{'}(a_{m+1},\cdot\cdot\cdot,a_{m+n}).
\nonumber \end{equation}
The differential $d$ acts as derivation with respect to the bracket i.e. for any $\mu\in L^{m}$, $\mu^{'}\in L^{n}$
\begin{equation}d[\mu,\mu^{'}]=[d\mu,\mu^{'}]+(-1)^{m}[\mu,d\mu^{'}]. \nonumber \end{equation}
Now we consider a gauge group whose Lie algebra is $L^{0}$
\begin{equation}G=\prod_{i=p}^{q}GL(V_{i}),   \nonumber \end{equation}
which acts on $L^{n}$ by conjugation
\begin{equation}(g\cdot\mu)(a_{1},\cdot\cdot\cdot,a_{n})=g\circ\mu(a_{1},\cdot\cdot\cdot,a_{n})\circ g^{-1}. \nonumber \end{equation}
It preserves the differential and acts as automorphism of the differential graded Lie algebra structure on $L$, i.e.
\begin{equation}d(g\cdot\mu)=g\cdot d\mu, \quad\quad g\cdot[\mu,\mu^{'}]=[g\cdot\mu,g\cdot\mu^{'}]. \nonumber \end{equation}
As for the corresponding Maurer-Cartan equation
\begin{equation}d\mu+\frac{1}{2}[\mu,\mu]=0 , \quad\quad \mu\in L^{1}. \nonumber \end{equation}
The zero loci of the above equation consists of elements of $L^{1}$ such that
\begin{equation}\mu(ab)=\mu(a)\circ\mu(b). \nonumber \end{equation}
Hence the Maurer-Cartan elements up to gauge equivalence are the graded $A$- modules up to isomorphism.

To do GIT quotient, we recall the definition of stable $A$-modules.
\begin{definition}\cite{bchr},\cite{king}
We call a $[p,q]$-graded $A$-module $M$ (denote the corresponding point in MC(L) by $\mu$) \textbf{stable}, if for every graded $\mu$-submodule
$0<W<V$ we have
\begin{equation}\frac{dim W_{p}}{dim V_{p}}<\frac{dim W_{q}}{dim V_{q}}.  \nonumber \end{equation}
\end{definition}
We denote $\widetilde{\mathfrak{M}}od_{\alpha\mid_{[p,q]}}^{s}(A)=MC(L)^{s}//\widetilde{G}$ be the GIT quotient of
stable $A$-modules with dimension vector $\alpha\mid_{[p,q]}$ by the reduced gauge group $\widetilde{G}=G/\Delta$,
where $\Delta$ is the diagonal scalar subgroup acting trivially on $L^{\cdot}$. \\
${}$ \\
\textbf{A quiver representation of $\mathcal{M}_{c}$}. Now we associate the above dgla with $\mathcal{M}_{c}$ on $Y$ by the following functor
\begin{equation}\Gamma_{[p,q]}\mathcal{F}=\bigoplus_{i=p}^{q}\Gamma(Y,\mathcal{F}(i)).  \nonumber \end{equation}
\begin{theorem}\cite{bchr} If $\alpha(t)$ is a primitive polynomial, there exists $0\ll p\ll q$ such that
\begin{equation}\Gamma_{[p,q]}: \mathcal{M}_{c}\rightarrow \widetilde{\mathfrak{M}}od_{\alpha\mid_{[p,q]}}^{s}(A)
\nonumber \end{equation}
is an open immersion with image as a union of connected components of $\widetilde{\mathfrak{M}}od_{\alpha\mid_{[p,q]}}^{s}(A)$.
Furthermore we have a closed imbedding
\begin{equation}\label{Ls//G}\mathcal{M}_{c}\hookrightarrow L^{s}//\widetilde{G},  \end{equation}
where $L^{s}//\widetilde{G}$ is a smooth projective scheme of finite dimension.
\end{theorem}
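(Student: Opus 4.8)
The plan is to follow the functorial GIT construction of \'Alvarez-C\'onsul and King, transported into the graded-module / differential graded Lie algebra language of \cite{bchr}: one must show in turn that, for a suitable pair $0\ll p\ll q$, the functor $\Gamma_{[p,q]}$ (a) is a morphism of schemes which reconstructs $\mathcal{F}$ from its truncated module of sections, (b) matches Gieseker stability with King $\theta$-stability, and (c) lands inside a GIT quotient of a finite acyclic quiver that is smooth and projective. First I would fix $p$. Since the Chern character $c$ is fixed and every $\mathcal{F}\in\mathcal{M}_{c}$ is Gieseker semistable, the family $\mathcal{M}_{c}$ is bounded (Grothendieck--Maruyama--Simpson boundedness); enlarging the bound, the subsheaves of such $\mathcal{F}$ that can be Gieseker-destabilizing also form a bounded family. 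Hence there is $m_{0}$ such that every such sheaf is $m$-regular for all $m\ge m_{0}$; take $p\ge m_{0}$. Castelnuovo--Mumford regularity then gives $H^{>0}(Y,\mathcal{F}(i))=0$ and $\mathcal{F}(i)$ globally generated for $i\ge p$, and for a flat family the pushforwards $\pi_{*}\mathcal{F}(i)$ are locally free of rank $\alpha(i)$ and commute with base change. This is precisely what makes $\mathcal{F}\mapsto\bigoplus_{i=p}^{q}H^{0}(Y,\mathcal{F}(i))$ a morphism of the moduli functors, hence of the coarse schemes.

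Next comes reconstruction. Serre's theorem gives that for $p\gg 0$ the functor $\mathcal{F}\mapsto\bigoplus_{i\ge p}H^{0}(Y,\mathcal{F}(i))$ is fully faithful, with $\mathcal{F}$ recovered as the sheaf associated to this graded $A$-module. It then remains to check that the finite truncation to degrees $[p,q]$ already determines this module once $q$ is large: $p$-regularity forces the module to be generated in degree $p$ over $A$ (and $A$ is generated in degree $1$ by very ampleness of $\mathcal{O}_{Y}(1)$), while boundedness yields a uniform bound $q_{0}$ on the regularity of the syzygy module, so that for $q\ge q_{0}$ the slice $[p,q]$ carries all generators and all relations. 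Thus $\Gamma_{[p,q]}$ is injective on isomorphism classes and fully faithful onto its image, and by cohomology-and-base-change in families it is an immersion of schemes.

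The technical heart, and the step I expect to be the main obstacle, is the stability comparison: for a suitable quantifier order $0\ll p\ll q$, $\mathcal{F}$ is Gieseker (semi)stable if and only if the $[p,q]$-graded $A$-module $\Gamma_{[p,q]}\mathcal{F}$ is King $\theta$-(semi)stable, i.e.\ $\dim W_{p}/\dim V_{p}<\dim W_{q}/\dim V_{q}$ for all graded submodules $0<W<V$ (with the non-strict version for semistability), $\theta$ encoding the normalized leading coefficients. One matches graded submodules $W\subseteq V$ with saturated subsheaves $\mathcal{F}'\subseteq\mathcal{F}$, using that destabilizers lie in a bounded --- hence eventually-regular --- family so that $\dim W_{i}=h^{0}(\mathcal{F}'(i))=P_{\mathcal{F}'}(i)$ for $i\in[p,q]$; the statement then reduces to the elementary but delicate fact that for $q\gg p\gg 0$ the ordering of reduced Hilbert polynomials is detected by comparing normalized values at $p$ and $q$. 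Making the choice of $(p,q)$ work simultaneously for all competing subsheaves is the real difficulty, and this is exactly where \cite{king} and the \'Alvarez-C\'onsul--King argument enter.

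Finally, the GIT packaging. By construction $\widetilde{\mathfrak{M}}od^{s}_{\alpha|_{[p,q]}}(A)=\mathrm{MC}(L)^{s}/\!/\widetilde{G}$, where $\mathrm{MC}(L)\subseteq L^{1}$ is the closed subscheme of associative multiplications and $L^{s}\subseteq L^{1}$ is the $\theta$-stable locus; forgetting the Maurer--Cartan equation gives a closed immersion $\mathrm{MC}(L)^{s}\hookrightarrow L^{s}$, hence $\widetilde{\mathfrak{M}}od^{s}\hookrightarrow L^{s}/\!/\widetilde{G}$. Since $V$ is concentrated in degrees $[p,q]$ and $A$ is generated in degree $1$, $L^{1}$ is a finite-dimensional affine space, namely the representation space of the finite quiver with vertices $\{p,\dots,q\}$ and $\dim_{\mathbb{C}}A_{1}$ arrows $i\to i+1$, with dimension vector $\alpha|_{[p,q]}$; this quiver is acyclic, so $\mathbb{C}[L^{1}]^{G}=\mathbb{C}$ and the $\theta$-GIT quotient $L^{s}/\!/\widetilde{G}$ is projective, and since $\alpha$ is primitive one may pick $\theta$ generic relative to it, forcing $\theta$-stable $=$ $\theta$-semistable and the $\widetilde{G}$-action on $L^{s}$ free, so $L^{s}/\!/\widetilde{G}$ is smooth. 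Composing $\mathcal{M}_{c}\xrightarrow{\ \Gamma_{[p,q]}\ }\widetilde{\mathfrak{M}}od^{s}\hookrightarrow L^{s}/\!/\widetilde{G}$ and using that $\mathcal{M}_{c}=\overline{\mathcal{M}}_{c}$ is projective while the target is separated, the locally closed immersion $\Gamma_{[p,q]}$ followed by a closed immersion is itself a closed immersion $\mathcal{M}_{c}\hookrightarrow L^{s}/\!/\widetilde{G}$; openness of the image among the connected components of $\widetilde{\mathfrak{M}}od^{s}$ then follows from $\Gamma_{[p,q]}$ being an open immersion onto its image together with properness of $\mathcal{M}_{c}$.
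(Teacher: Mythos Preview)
The paper does not give a proof of this theorem: it is quoted verbatim from \cite{bchr} (Behrend--Ciocan-Fontanine--Hwang--Rose) and used as a black box, so there is no ``paper's own proof'' to compare against. Your outline is therefore not being checked against anything in this thesis.

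That said, your sketch is a faithful summary of the \'Alvarez-C\'onsul--King strategy that underlies \cite{bchr}: boundedness and Castelnuovo--Mumford regularity to make $\Gamma_{[p,q]}$ a morphism, Serre's theorem plus a syzygy bound for reconstruction, the Le Potier--Simpson type comparison of Gieseker stability with King $\theta$-stability for the extremal character, and finally the observation that $L^{1}$ is the representation space of an acyclic Kronecker-type quiver so that the GIT quotient is smooth projective when the dimension vector is primitive. One point to tighten: you assert that $\Gamma_{[p,q]}$ is an \emph{open} immersion, but your argument only yields a locally closed immersion (injective on points, immersion in families); openness requires the further check that any $\theta$-stable module in a neighborhood of the image actually comes from a sheaf, which in \cite{bchr} is handled by identifying the image with the locus where a certain cohomology vanishes and is both open and closed --- hence a union of connected components. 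Without that step your final sentence, deducing ``union of connected components'' from properness of $\mathcal{M}_{c}$ alone, is circular.
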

${}$ \\
\textbf{The corresponding moment maps}. For the purpose of gluing local Kuranishi models
\begin{equation}\kappa_{+}=\pi_{+}\circ\kappa:Ext^{1}(\mathcal{F},\mathcal{F})\rightarrow Ext^{2}_{+}(\mathcal{F},\mathcal{F}).
\nonumber \end{equation}
We need a moment map equation abstractly denoted by $\Lambda=0$ to kill the ambiguity of complex automorphism of $\kappa$
which is achieved via a Kempf-Ness type result in this specific case \cite{king}. Thus we get
\begin{equation}\widetilde{\mathfrak{M}}od_{\alpha\mid_{[p,q]}}^{s}(A)=(MC(L)\cap\Lambda^{-1}(0))/U(\alpha). \nonumber \end{equation}

To describe $\Lambda=0$, let us choose metrics on $A$, $\{V_{i}=\Gamma(X,\mathcal{F}(i))\}_{p\leq i\leq q}$ and denote the Lie algebra of $K$ by $\mathfrak{k}$,
where $K$ is the maximal compact subgroup of $G=\prod_{i=p}^{q}GL(V_{i})$ preserving the induced metric on $L^{1}$.
For the purpose of matching the stability with the Gieseker stability, we use the extremal character $\theta_{i}$ \cite{bchr}, where
\begin{equation}\theta_{p}=-dim V_{q}, \quad \theta_{q}=dim V_{p}, \quad \theta_{i,i\neq p,q}=0 . \nonumber \end{equation}
For these given $\theta_{i}$, define the characters of $G$ by
\begin{equation}\chi_{\theta}(g)=\prod_{i=p}^{q}det(g_{i})^{\theta_{i}}, \nonumber \end{equation}
where $g\in G=\prod_{i=p}^{q}GL(V_{i})$ and $g_{i}$ is the $i$-th component of $g$. By King \cite{king},
we have the moment map equation for $\mu\in L^{1}$
\begin{equation}\label{Moment map for Mc}(A\mu,\mu)=d\chi_{\theta}(A)      \end{equation}
with respect to the metric $(,)$ and any $A\in i\mathfrak{k}$. The action of $A$ on $L^{1}$ is given by $(A\mu)_{a}=A_{ta}\mu_{a}-\mu_{a}A_{ha}$,
where $a\in Q_{1}$ is an arrow connecting two vertices $i,j$ and $ha (ta)$ denote the head (tail) vertices of the arrow $a$ respectively. \\
${}$ \\
\textbf{Some facts on dgla}: \\
Locally around $\mu_{0}\in MC(L)$, we have
\begin{equation}d(\mu_{0}+\mu)+\frac{1}{2}[\mu_{0}+\mu,\mu_{0}+\mu]=0 \Leftrightarrow d^{\mu_{0}}(\mu)+\frac{1}{2}[\mu,\mu]=0,
\nonumber \end{equation}
where $d^{\mu_{0}}=d+[\mu_{0},]$ is the twisted differential with respect to $\mu_{0}$. \\

If we choose metrics on $L^{\cdot}$ and consider the complex $(L^{\cdot},d^{\mu_{0}})$
\begin{equation}0\rightarrow L^{0}\rightarrow L^{1}\rightarrow L^{2}\rightarrow\cdot\cdot\cdot,  \nonumber \end{equation}
we get an associated short exact sequence
\begin{equation}0\rightarrow H^{1}(L^{\cdot},d^{\mu_{0}})\rightarrow L^{1}/Im(d^{\mu_{0}})\rightarrow d^{\mu_{0}}L^{1}\rightarrow 0.
\nonumber \end{equation}
Using the metrics, we have
\begin{equation}L^{1}\cong H^{1}(L^{\cdot},d^{\mu_{0}})\oplus d^{\mu_{0}}L^{0}\oplus \delta^{\mu_{0}} L^{2},  \nonumber \end{equation}
where $\delta^{\mu_{0}}: L^{2}\rightarrow L^{1}$ with $\delta^{\mu_{0}}L^{2}\cong d^{\mu_{0}}L^{1}$ can be described as composition of \\
$1$. The projection $L^{2}\rightarrow d^{\mu_{0}}L^{1}$, \\
$2$. $(d^{\mu_{0}})^{-1}: d^{\mu_{0}}L^{1}\rightarrow (d^{\mu_{0}})^{-1}(d^{\mu_{0}}L^{1})\subset L^{1} $. \\

The Hodge decomposition theorem,
\begin{equation} d^{\mu_{0}}\delta^{\mu_{0}}+\delta^{\mu_{0}}d^{\mu_{0}}=I-\mathbb{H}^{\mu_{0}} \nonumber \end{equation}
follows from the standard theory of differential graded Lie algebra \cite{ma}. \\

\begin{lemma}\label{Moment map of dgla}
Around a stable sheaf $\mathcal{F}$ which corresponds to $\mu_{0}\in MC(L^{1})$, the point $\mu_{0}+\mu$ is stable if and only if
\begin{equation}Re(A\mu_{0},\mu)+(A\mu,\mu)=0\nonumber \end{equation}
for any $A\in i\mathfrak{k}$. Furthermore the combination of linearization of the above moment map equation and the $U(n)$
gauge fixing equation $Im(A\mu_{0},\mu)=0$ is equivalent to the linear $G$-gauge fixing equation $\delta^{\mu_{0}}\mu=0$.
\end{lemma}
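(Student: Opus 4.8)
\textbf{Proof proposal for Lemma \ref{Moment map of dgla}.}

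The plan is to translate the abstract moment map criterion of King \cite{king}, stated in (\ref{Moment map for Mc}) for points $\mu\in MC(L)$, into a statement about the local slice $\mu_0+\mu$ around the given stable module $\mu_0$, and then to identify the resulting linearized equations with the twisted-differential gauge fixing $\delta^{\mu_0}\mu=0$. First I would recall that $\mu_0\in MC(L)^s$ corresponds to the stable sheaf $\mathcal{F}$, so by King's Kempf-Ness result the point $\mu_0+\mu$ lies in the stable locus $MC(L)^s$ (equivalently its $G$-orbit meets $\Lambda^{-1}(0)$) precisely when the real moment map equation $(A(\mu_0+\mu),\mu_0+\mu)=d\chi_\theta(A)$ holds for all $A\in i\mathfrak{k}$. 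Since $\mu_0$ itself already satisfies $(A\mu_0,\mu_0)=d\chi_\theta(A)$, expanding the quadratic form and using that $(A\cdot,\cdot)$ is symmetric for $A\in i\mathfrak{k}$ gives
\begin{equation}
(A(\mu_0+\mu),\mu_0+\mu)-(A\mu_0,\mu_0)=2\,\mathrm{Re}(A\mu_0,\mu)+(A\mu,\mu),\nonumber
\end{equation}
so the displayed equation $\mathrm{Re}(A\mu_0,\mu)+(A\mu,\mu)=0$ (up to the harmless factor of $2$) is exactly the condition for $\mu_0+\mu$ to be stable. This is the first half of the statement; it is essentially bookkeeping once King's theorem is invoked.

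For the second half I would observe that the $U(\alpha)$-gauge (the compact gauge fixing for the slice through $\mu_0$) is the imaginary counterpart $\mathrm{Im}(A\mu_0,\mu)=0$ for all $A\in i\mathfrak{k}$, coming from the infinitesimal $K$-action $A\mapsto A\mu_0$ at $\mu_0$. Adding this to the linearization of the moment map equation, which at first order in $\mu$ reads $\mathrm{Re}(A\mu_0,\mu)=0$, produces the single complex condition $(A\mu_0,\mu)=0$ for all $A\in i\mathfrak{k}$, equivalently for all $A\in L^0=\mathrm{Lie}(G)$ by complex linearity of the pairing in the appropriate sense. Now $(A\mu_0,\mu)=(d^{\mu_0}A,\mu)$ because the action of $A\in L^0$ on $L^1$ via the twisted differential is $d^{\mu_0}A=dA+[\mu_0,A]$ and the derivation/invariance identities for the dgla together with the trace form identify the infinitesimal gauge action $A\mapsto A\mu_0$ with $d^{\mu_0}A$ up to sign. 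Hence $(A\mu_0,\mu)=0$ for all $A\in L^0$ says $\mu\perp \mathrm{Im}(d^{\mu_0}\colon L^0\to L^1)$, which by the Hodge decomposition $L^1\cong H^1(L^\bullet,d^{\mu_0})\oplus d^{\mu_0}L^0\oplus \delta^{\mu_0}L^2$ and self-adjointness of the Laplacian is precisely the equation $\delta^{\mu_0}\mu=0$, i.e. $\mu$ has no $d^{\mu_0}$-exact component. This is the linear $G$-gauge fixing slice.

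The main obstacle I anticipate is the careful identification, with correct signs and metric conventions, of the infinitesimal $K$-action $A\mapsto A\mu_0$ with the twisted differential $d^{\mu_0}$, and the compatibility of the chosen Hermitian metrics on $A$ and on the $V_i=\Gamma(X,\mathcal{F}(i))$ with the metric on $L^1$ used to define $\delta^{\mu_0}$ and the adjoint; one must check that the metric induced on $L^1$ makes $d^{\mu_0}$ and its formal adjoint behave as in the standard dgla Hodge theory of \cite{ma}, so that the orthogonal complement of $d^{\mu_0}L^0$ is exactly $\ker\delta^{\mu_0}$. A secondary point is to ensure the factor-of-two and reality conventions in $\mathrm{Re}(A\mu_0,\mu)$ versus $d\chi_\theta(A)$ are consistent with the normalization of the extremal character $\theta_i$; these are routine but need to be stated. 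Once these conventions are pinned down, both assertions follow by the elementary algebra sketched above together with King's Kempf-Ness theorem and the Hodge decomposition already recorded in the excerpt.
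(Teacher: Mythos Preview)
Your proposal is correct and follows essentially the same approach as the paper: expand the moment map equation at $\mu_0+\mu$ using $A^*=A$ for $A\in i\mathfrak{k}$, then combine the linearized real part with the imaginary gauge-fixing condition to obtain $(A\mu_0,\mu)=0$, and finally identify $A\mu_0$ with $d^{\mu_0}A=[\mu_0,A]$ so that orthogonality to $d^{\mu_0}L^0$ is exactly $\delta^{\mu_0}\mu=0$. Your treatment is in fact slightly more careful than the paper's (you flag the factor of $2$ and the passage from $i\mathfrak{k}$ to all of $L^0$, both of which the paper leaves implicit).
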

\begin{proof}
${}$\\
1. By the stability of $\mu_{0}$, we have
\begin{equation}(A\mu_{0},\mu_{0})=d\chi_{\theta}(A) \nonumber \end{equation}
for any $A\in i\mathfrak{k}$. Then
\begin{equation}(A(\mu_{0}+\mu),\mu_{0}+\mu)=d\chi_{\theta}(A) \Leftrightarrow (A\mu_{0},\mu)+(A\mu,\mu_{0})+(A\mu,\mu)=0 .  \nonumber \end{equation}
While $A\in i\mathfrak{k}$, we get
\begin{equation}(A\mu,\mu_{0})=(\mu,A^{*}\mu_{0})=(\mu,A\mu_{0})=\overline{(A\mu_{0},\mu)} . \nonumber \end{equation}
Thus
\begin{equation}(A(\mu_{0}+\mu),\mu_{0}+\mu)=d\chi_{\theta}(A) \Leftrightarrow Re(A\mu_{0},\mu)+(A\mu,\mu)=0.  \nonumber \end{equation}
2. The combination of the two equations is
\begin{equation}(A\mu_{0},\mu)=0
\nonumber \end{equation}
for any $A\in i\mathfrak{k}$. While $\delta^{\mu_{0}}\mu=0$ is equivalent to $(\mu,d^{\mu_{0}}L^{0})=0$
with respect to the inner product and Hodge decomposition. Meanwhile
\begin{equation}(\mu,d^{\mu_{0}}A)=(\mu,[\mu_{0},A])=(\mu,\mu_{0}\circ A-A\circ\mu_{0})=(\mu,A\mu_{0}), \nonumber \end{equation}
by the definition. Thus we finish the proof.
\end{proof}
\begin{lemma}\label{lemma on dgla}
If $H^{\mu_{0}}([\mu,\mu])=0$ and $\mu$ is small with respect to the chosen metric, we have
\begin{equation} d^{\mu_{0}}(\mu+\frac{1}{2}\delta^{\mu_{0}}[\mu,\mu])=0 \Leftrightarrow  d^{\mu_{0}}\mu+\frac{1}{2}[\mu,\mu]=0.
\nonumber \end{equation}
\end{lemma}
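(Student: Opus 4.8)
The plan is to work entirely inside the twisted complex $(L^\bullet, d^{\mu_0})$ with its chosen metric, exploiting the Hodge decomposition $d^{\mu_0}\delta^{\mu_0} + \delta^{\mu_0} d^{\mu_0} = I - \mathbb{H}^{\mu_0}$ quoted just above the statement. The forward implication is the easy direction: if $d^{\mu_0}\mu + \frac{1}{2}[\mu,\mu] = 0$, then applying $d^{\mu_0}$ and using $(d^{\mu_0})^2 = 0$ together with the Leibniz rule for $d^{\mu_0}$ acting on the bracket gives $d^{\mu_0}[\mu,\mu] = 2[d^{\mu_0}\mu,\mu] = -[[\mu,\mu],\mu] = 0$ by the graded Jacobi identity. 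Hence $[\mu,\mu]$ is $d^{\mu_0}$-closed, and since $\mathbb{H}^{\mu_0}([\mu,\mu]) = 0$ by hypothesis, $[\mu,\mu] = \delta^{\mu_0}d^{\mu_0}[\mu,\mu] + d^{\mu_0}\delta^{\mu_0}[\mu,\mu] = d^{\mu_0}\delta^{\mu_0}[\mu,\mu]$. Substituting, $d^{\mu_0}\big(\mu + \frac{1}{2}\delta^{\mu_0}[\mu,\mu]\big) = d^{\mu_0}\mu + \frac{1}{2}[\mu,\mu] = 0$, which is the left-hand equation.

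For the reverse implication — the substantive one — I would start from $d^{\mu_0}\big(\mu + \frac{1}{2}\delta^{\mu_0}[\mu,\mu]\big) = 0$, i.e.
\begin{equation}
d^{\mu_0}\mu + \tfrac{1}{2} d^{\mu_0}\delta^{\mu_0}[\mu,\mu] = 0. \nonumber
\end{equation}
Using the Hodge identity and $\mathbb{H}^{\mu_0}([\mu,\mu]) = 0$, write $d^{\mu_0}\delta^{\mu_0}[\mu,\mu] = [\mu,\mu] - \delta^{\mu_0}d^{\mu_0}[\mu,\mu]$, so the equation becomes
\begin{equation}
d^{\mu_0}\mu + \tfrac{1}{2}[\mu,\mu] = \tfrac{1}{2}\delta^{\mu_0}d^{\mu_0}[\mu,\mu]. \nonumber
\end{equation}
Now I would apply $d^{\mu_0}$ to this identity. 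The left side gives $d^{\mu_0}(d^{\mu_0}\mu + \frac12[\mu,\mu]) = \frac12 d^{\mu_0}[\mu,\mu] = [d^{\mu_0}\mu,\mu]$, while the right side gives $\frac12 d^{\mu_0}\delta^{\mu_0}d^{\mu_0}[\mu,\mu] = \frac12 d^{\mu_0}[\mu,\mu] = [d^{\mu_0}\mu,\mu]$ again (using Hodge plus $(d^{\mu_0})^2=0$), so this is a tautology and gives no new information directly. The right way is instead a fixed-point / uniqueness argument: set $\eta := d^{\mu_0}\mu + \frac12[\mu,\mu]$, so $\eta = \frac12\delta^{\mu_0}d^{\mu_0}[\mu,\mu]$ lies in $\mathrm{Im}(\delta^{\mu_0})$, in particular $\eta$ is "co-exact." But from $\eta = d^{\mu_0}\mu + \frac12[\mu,\mu]$ one shows $\eta$ is also $d^{\mu_0}$-closed up to terms controlled by $\|\mu\|$: indeed $d^{\mu_0}\eta = [d^{\mu_0}\mu,\mu] = [\eta,\mu] - \frac12[[\mu,\mu],\mu] = [\eta,\mu]$. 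Thus $d^{\mu_0}\eta = [\eta,\mu]$ and $\eta \in \mathrm{Im}(\delta^{\mu_0})$. Pairing with $\eta$: $\|d^{\mu_0}\eta\|$-type estimates are not quite what is needed; rather, since $\eta$ is co-exact, write $\eta = \delta^{\mu_0}\xi$; then $\|\eta\|^2 = (\delta^{\mu_0}\xi,\eta) = (\xi, d^{\mu_0}\eta) = (\xi,[\eta,\mu])$, and bounding $\|[\eta,\mu]\| \le C\|\eta\|\,\|\mu\|$ together with $\|\xi\| \le C'\|\eta\|$ (elliptic estimate for $\delta^{\mu_0}$ on its image) yields $\|\eta\|^2 \le C''\|\mu\|\,\|\eta\|^2$, forcing $\eta = 0$ when $\|\mu\|$ is small. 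This is exactly where the smallness hypothesis enters.

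The main obstacle I anticipate is making the final estimate clean: one must be careful that the operator $\delta^{\mu_0}$ and the Green's operator $G^{\mu_0}$ have norms that are uniform in a neighborhood of $\mu_0$ (they are, since $\mu_0$ is fixed), and that the bracket $[\cdot,\cdot]: L^1 \times L^2 \to L^3$ is a bounded bilinear map in the chosen norms — this is automatic here because $L^\bullet$ is finite-dimensional, so all the functional-analytic subtleties collapse and the estimate is just a statement about a bilinear map on finite-dimensional vector spaces. Given that simplification, the smallness of $\mu$ makes $I - C''\|\mu\|$ invertible and the argument closes. I would also double-check the sign conventions in the Leibniz rule $d^{\mu_0}[\mu,\mu] = 2[d^{\mu_0}\mu,\mu]$ and in the graded Jacobi identity $[[\mu,\mu],\mu]=0$ for $\mu$ of odd degree, since these are the only places a stray sign could derail the computation.
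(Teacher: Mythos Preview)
Your proposal is essentially correct and follows the same strategy as the paper: both directions start from the Hodge identity to rewrite $d^{\mu_0}\mu+\tfrac12[\mu,\mu]=\tfrac12\,\delta^{\mu_0}d^{\mu_0}[\mu,\mu]$, and then close with a contraction/smallness argument using the Jacobi identity $[[\mu,\mu],\mu]=0$.

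The only difference worth noting is the very last step. You set $\eta=d^{\mu_0}\mu+\tfrac12[\mu,\mu]$, derive $d^{\mu_0}\eta=[\eta,\mu]$, write $\eta=\delta^{\mu_0}\xi$, and then invoke the adjointness relation $(\delta^{\mu_0}\xi,\eta)=(\xi,d^{\mu_0}\eta)$ to get $\|\eta\|^2\le C\|\mu\|\,\|\eta\|^2$. The paper instead substitutes $d^{\mu_0}[\mu,\mu]=2d^{\mu_0}\eta=2[\eta,\mu]$ back into $\eta=\tfrac12\delta^{\mu_0}d^{\mu_0}[\mu,\mu]$ to obtain the clean fixed-point equation $\eta=\delta^{\mu_0}[\eta,\mu]$, and concludes by boundedness of $\delta^{\mu_0}$ and the bracket (citing Manetti). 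The paper's route is slightly more robust because in this dgla setup $\delta^{\mu_0}$ is defined as a contracting homotopy (projection to $d^{\mu_0}L^1$ followed by a one-sided inverse of $d^{\mu_0}$), not literally as the metric adjoint of $d^{\mu_0}$, so your pairing identity would need an extra word of justification. In the finite-dimensional situation here this is harmless, and in fact from the two equations you already have ($\eta=\tfrac12\delta^{\mu_0}d^{\mu_0}[\mu,\mu]$ and $d^{\mu_0}\eta=[\eta,\mu]$) you can read off $\eta=\delta^{\mu_0}[\eta,\mu]$ directly and bypass the adjointness step entirely.
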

\begin{proof}
${}$ \\
$\Leftarrow)$ Trivial by Hodge decomposition. \\
$\Rightarrow)$ We have
\begin{eqnarray*}d^{\mu_{0}}\mu+\frac{1}{2}[\mu,\mu]
&=&d^{\mu_{0}}\mu+\frac{1}{2}(d^{\mu_{0}}\delta^{\mu_{0}}+\delta^{\mu_{0}}d^{\mu_{0}}+\mathbb{H}^{\mu_{0}})[\mu,\mu]\\
&=& \frac{1}{2}\delta^{\mu_{0}}d^{\mu_{0}}[\mu,\mu].
\end{eqnarray*}
Thus we are reduced to show the vanishing of $\delta^{\mu_{0}}d^{\mu_{0}}[\mu,\mu] $,
\begin{eqnarray*}\delta^{\mu_{0}}d^{\mu_{0}}[\mu,\mu]&=&\delta^{\mu_{0}}([d^{\mu_{0}}\mu,\mu]-[\mu,d^{\mu_{0}}\mu]) \\
&=& 2\delta^{\mu_{0}}[d^{\mu_{0}}\mu,\mu] \\
&=& -\delta^{\mu_{0}}[d^{\mu_{0}}\delta^{\mu_{0}}[\mu,\mu],\mu] \\
&=& -\delta^{\mu_{0}}[(Id-\mathbb{H}^{\mu_{0}}-\delta^{\mu_{0}}d^{\mu_{0}})[\mu,\mu],\mu] \\
&=& -\delta^{\mu_{0}}[[\mu,\mu],\mu]+\delta^{\mu_{0}}[\delta^{\mu_{0}}d^{\mu_{0}}[\mu,\mu],\mu]   \\
&=& \delta^{\mu_{0}}[\delta^{\mu_{0}}d^{\mu_{0}}[\mu,\mu],\mu],
\end{eqnarray*}
where the last equality is by the Jacobi identity $[[\mu,\mu],\mu]=0$

Thus we get
\begin{equation}\delta^{\mu_{0}}d^{\mu_{0}}[\mu,\mu]=\delta^{\mu_{0}}[\delta^{\mu_{0}}d^{\mu_{0}}[\mu,\mu],\mu]. \nonumber \end{equation}
Then by Proposition 4.6 of \cite{ma}, we know $\delta^{\mu_{0}}d^{\mu_{0}}[\mu,\mu]=0$ if $\mu$ is small.
\end{proof}
${}$ \\
\textbf{Definition of the candidate local models of generalized $DT_{4}$ moduli spaces}.
Now we define the candidate local model of the generalized $DT_{4}$ moduli space near $\mathcal{F}\in \mathcal{M}_{c}$. We pick $\mu_{0}\in MC(L^{1})$
which corresponds to $\mathcal{F}$.
\begin{definition}\label{local model of gene DT4 moduli space}
The candidate local Kuranishi model of the generalized $DT_{4}$ moduli space near a stable sheaf $(\mathcal{F},\mu_{0})$ is defined to be
\begin{equation}\kappa_{+}=\pi_{+}\kappa: Ext^{1}(\mathcal{F},\mathcal{F})\rightarrow  Ext^{2}_{+}(\mathcal{F},\mathcal{F}), \nonumber \end{equation}
where $\kappa$ is a canonical Kuranishi map for $\mathcal{M}_{c}$ near $\mathcal{F}$ defined by
\begin{equation}\kappa(\alpha)=\mathbb{H}^{\mu_{0}}[f^{-1}(\alpha),f^{-1}(\alpha)]. \nonumber \end{equation}
The map
\begin{equation}f: U_{\mu_{0}}\rightarrow H^{1}(L^{\cdot},d^{\mu_{0}})=Ext^{1}(\mathcal{F},\mathcal{F}) \nonumber \end{equation}
is the harmonic projection map which is a local diffeomorphism and
\begin{equation}
U_{\mu_{0}}=\{d^{\mu_{0}}+\mu \textrm{ }\big{|} \mid \mu\mid <\epsilon , \textrm{  }
\Lambda^{\mu_{0}}=0,  \quad  \delta^{\mu_{0}}_{Im}=0,   \quad d^{\mu_{0}}(\mu+\frac{1}{2}\delta^{\mu_{0}}[\mu,\mu])=0 \},
\nonumber \end{equation}
where $\Lambda^{\mu_{0}}=0$ is the moment map equation (\ref{Moment map for Mc}), $\delta^{\mu_{0}}_{Im}=0$ is
the linear $U(n)$-gauge fixing equation in Lemma \ref{Moment map of dgla}
and $\mathbb{H}^{\mu_{0}}$ is the projection map to $d^{\mu_{0}}$ harmonic subspace $H^{2}(L^{\cdot},d^{\mu_{0}})=Ext^{2}(\mathcal{F},\mathcal{F})$.
$\pi_{+}$ is the projection
\begin{equation}\pi_{+}: Ext^{2}(\mathcal{F},\mathcal{F})\rightarrow Ext^{2}_{+}(\mathcal{F},\mathcal{F})  \nonumber \end{equation}
in Corollary \ref{cutting for sheaves}.
\end{definition}
\begin{remark}
By Lemma \ref{lemma on dgla}, $\kappa^{-1}(0)\subseteq \mathcal{M}_{c}$ as an open subset.
Hence, if we assume $\kappa_{+}=0 \Rightarrow \kappa=0$ in Definition \ref{local model of gene DT4 moduli space}, we know
that $\kappa_{+}^{-1}(0)$ can be identified with $\kappa^{-1}(0)\subseteq \mathcal{M}_{c}$ as sets.
\end{remark}

\newpage

The Institute of Mathematical Sciences and Department of Mathematics, The Chinese University of Hong Kong, Shatin, Hong Kong \\
\textit{E-mail address}: \texttt{ylcao@math.cuhk.edu.hk}


\end{document}